\documentclass[12pt]{article}
\usepackage{graphicx}
\usepackage{amssymb,amsmath,amsthm,secdot,bbm,mathrsfs}

\usepackage{authblk}

\usepackage[
bookmarks=true,
bookmarksnumbered=true,
colorlinks=true, pdfstartview=FitV, linkcolor=blue, citecolor=blue,
urlcolor=blue]{hyperref}

\usepackage[nameinlink,capitalize]{cleveref}

\numberwithin{equation}{section}
\numberwithin{figure}{section}

\crefname{figure}{Figure}{figures}

\crefrangeformat{section}{Sections~#3#1#4--#5#2#6}
\crefrangeformat{exercise}{Exercises~#3#1#4--#5#2#6}

\topmargin = -2cm %
\oddsidemargin =0cm%
\textwidth = 16cm%
\textheight= 24.5cm%

\marginparwidth=57pt

\newcommand{\dnew}[4]{[ #1 ]_{{ \mathbf{C}}^{#2}{L_#3}(#4)}}

\newcommand{\ms}[1]{\Delta^{shift}_{#1}}
\usepackage[shortlabels]{enumitem}

\usepackage{thmtools}

\newtheorem{theorem}{Theorem}[section]
\newtheorem{lemma}[theorem]{Lemma}
\newtheorem{proposition}[theorem]{Proposition}

\newtheorem{corollary}[theorem]{Corollary}

\theoremstyle{definition}
\newtheorem{remark}[theorem]{Remark}
\newtheorem{exercise}[theorem]{Exercise}
\crefname{exercise}{Exercise}{Exercises}

\newtheorem{definition}[theorem]{Definition}

\newcommand{\eps}{\varepsilon}

\newcommand{\wt}{\widetilde}
\newcommand{\wh}{\widehat}

\renewcommand{\phi}{\varphi}

\newcommand{\hn}{\textbf{Hint}: }
\newcommand{\hns}{\textbf{Hints}: }

\def\E{\hskip.15ex\mathsf{E}\hskip.10ex}
\def\P{\mathsf{P}}
\def\Q{\mathsf{Q}}

\renewcommand{\d}{\partial}

\newcommand{\A}{\mathcal{A}}
\newcommand{\B}{\mathcal{B}}
\newcommand{\C}{\mathcal{C}}
\newcommand{\D}{D}
\newcommand{\F}{\mathcal{F}}
\newcommand{\G}{\mathcal{G}}
\newcommand{\FF}{\mathbb{F}}
\newcommand{\I}{\mathbbm{1}}

\newcommand{\N}{\mathbb{N}}
\newcommand{\PP}{\mathcal{P}}
\newcommand{\R}{\mathbb{R}}
\renewcommand{\S}{\mathcal{S}}
\renewcommand{\t}{\mathsf{T}}
\newcommand{\V}{\mathcal{V}}
\newcommand{\W}{\mathbb{W}}
\newcommand{\Z}{\mathbb{Z}}

\newcommand{\pa}{p^{\alpha}}
\newcommand{\Pa}{P^{\alpha}}

\newcommand{\la}{\langle}
\newcommand{\ra}{\rangle}

\newcommand{\nn}{\nonumber}

\newcommand{\evalat}[1]{\bigg\rvert_{#1}}

\DeclareMathOperator{\law}{Law}%

\DeclareMathOperator{\sign}{sign}

\newcommand{\lm}{L_m(\Omega)}

\begin{document}
	
\title{Lectures on  stochastic sewing with applications}

\author[1,2,3]{Oleg Butkovsky\thanks{Email: \texttt{oleg.butkovskiy@gmail.com}}}

\renewcommand\Affilfont{\small}
	
\affil[1]{Weierstrass Institute (WIAS), Berlin}
\affil[2]{Institut für Mathematik, Humboldt-Universität zu Berlin}
\affil[3]{Simons Laufer Mathematical Sciences Institute (MSRI), Berkeley}

\maketitle
\begin{abstract}
These are lecture notes for a mini-course on stochastic sewing, taught at the University of Edinburgh and Beijing Institute of Technology in Spring/Summer 2025. The aim is to introduce the reader to stochastic sewing techniques and to show how they can be successfully applied to study various problems in stochastic analysis, including: regularization by noise for stochastic differential equations driven by Brownian motion or fractional Brownian motion, well-posedness of stochastic PDEs with irregular drift, the study of averaging operators and local times, and the analysis of numerical algorithms.
\end{abstract}

\tableofcontents

%
%
%
%
%
%
%
%
%
%
%
%
%

\section{Introduction}

The stochastic sewing lemma (SSL) is a tool invented by Khoa L\^e \cite{LeSSL} in 2018  for analyzing random perturbations of differential equations. 
It is a stochastic generalization of the classical sewing lemma (Gubinelli \cite{Gubi}, Feyel and De La Pradelle \cite{PressF}). Since then, this research area has developed rapidly, and stochastic sewing (and its further extensions) has found numerous applications in stochastic PDEs, rough stochastic differential equations, rough path theory, stochastic numerics, the study of fast--slow systems, and related topics. The goal of these lecture notes is to provide the first systematic study of stochastic sewing techniques and their applications, combining recent developments and highlighting the main ideas.

For the convenience of the reader, we present the key techniques and methods in the simplest possible yet nontrivial setting. This way the technical calculations are greatly reduced, while the reader can still fully grasp the main ideas and arguments. Further extensions are left as exercises.

It turns out that the problems described above have the following common feature: they rely on accurate bounds of integrals of the form
\begin{equation}\label{type} 
\int f(\text{noise}(t) + \text{perturbation}(t))\,dt, \qquad
\int f(\text{noise}(t) + \text{perturbation}(t))\,d\,\text{noise}(t),
\end{equation}
where $f$ is a measurable, non-smooth function or even a Schwartz distribution, the ``noise'' is a given highly irregular process with a known law, and the “perturbation’’ is a process whose law is unknown but is more regular (in some sense) than the noise.  Such bounds are sometimes called Krylov-type bounds. The key point is that although $f$ itself may be quite rough, its integral along the noise is much more regular, thanks to the averaging effect of the noise. 

When the noise is a finite-dimensional Brownian motion, such bounds can be obtained using the so-called Veretennikov–Zvonkin transformation method, as beautifully explained in \cite{F11}. However, the method strongly relies on a good Itô's formula. Therefore, its application is very difficult or even impossible if the noise is a fractional Brownian motion or in the context of stochastic PDEs

Stochastic sewing has emerged as a very efficient alternative to the Veretennikov–Zvonkin transformation method, as it does not rely on the Markov or semimartingale property of the noise and is applicable to a broad range of processes. 

Let us stress that the stochastic sewing lemma is a very flexible technique that can be adapted to the specific problem at hand. This will be the main guiding principle for these lecture notes: to show the reader that even when the original SSL is not enough to obtain a certain result, a suitable modification may work. We begin with SSL and apply it to an initial problem where it suffices. We then proceed to more challenging problems, and whenever the current set of tools is no longer sufficient, we introduce a new tool and use it together with the previous ones to overcome the next difficulty. In this way the collection of methods gradually grows, and we will also point out open problems for which the present set of tools still falls short.

Thus, we start with the strong well-posedness of SDEs driven by Brownian or fractional Brownian motion with Hölder drift (\cref{s:s3}), where the original SSL is sufficient. We then move to weak well-posedness (uniqueness in law) for SDEs with distributional drift, where SSL alone is no longer enough, and we introduce generalized coupling techniques (\cref{s:WU}) to handle this challenge. Next, we consider the rate of convergence of the Euler scheme for SDEs with irregular drift. When the noise is Brownian and the drift is Hölder continuous, SSL still applies directly (\cref{s:numbd}). We then study SDEs driven by a Lévy process: to establish well-posedness, we introduce the shifted stochastic sewing lemma (\cref{s:levywp}), and to analyze the rate of convergence of the Euler scheme, we additionally rely on the John-Nirenberg inequality (\cref{s:levynum}). Finally, we look at SDEs with Sobolev drifts, for which we introduce  the taming singularities lemma (\cref{s:levyex}).

Each chapter is accompanied by exercises. Some of them provide a path to the proof of classical results which we use in the lecture note, while others focus on further applications of stochastic sewing.

\textbf{Comments on literature.} \cref{s:2} is based on \cite{Gubi,PressF,LeSSL}, \cref{s:s3} follows the approach of \cite{LeSSL}, \cref{s:WU} builds on \cite{BM24}, and \cref{s:5} draws on \cite{BDG,BDGLevy,Gerreg22,DGL23}.

\textbf{Further directions}. These lecture notes are necessarily limited in scope, and it is not possible to present all the interesting problems where stochastic sewing ideas have found effective applications. To give the reader at least a glimpse of this broader landscape, we briefly mention a few further directions here. 

First, one can use stochastic sewing to study well-posedness and qualitative properties of a variety of models. Examples include rough and Young differential equations \cite{MP22,matsuda2023pathwise}, stochastic McKean–Vlasov equations \cite{GHMmv23}, and stochastic PDEs \cite{ABLMmw,ABLM,D24,djurdjevac2024higher}. Further applications to SDEs, such as the existence of a flow of solutions and Malliavin differentiability, can be found in \cite{GG22,HP20,BGallay}. Equations with multiplicative noise are treated in \cite{dareiotis2024regularisation}.

Second, \cite{Hairerli} uses the stochastic sewing lemma to analyze slow–fast systems in which the slow component is driven by a fractional Brownian motion.

Third, \cite{FHL,BCN,BFS24} introduce and study an entirely new class of equations, called \textit{Rough SDEs}, where stochastic sewing is a key ingredient of the proofs.

Additional applications include the study of local times of stochastic processes \cite{BLM23} and the analysis of densities of solutions \cite{anzeletti2025density}, where stochastic sewing is combined with Romito’s lemma \cite{Romito}.

\textbf{Convention on constants}. Throughout the paper, $C$ denotes a positive constant whose value may change from line to line; its dependence is always specified in the corresponding statement.

\textbf{Acknowledgements}. The author is grateful to the participants of his lectures at the University of Edinburgh and the Beijing Institute of Technology for their insightful questions and stimulating discussions. The author acknowledges funding by the Deutsche Forschungsgemeinschaft (DFG, German Research Foundation) – CRC/TRR 388 ``Rough Analysis, Stochastic Dynamics and Related Fields'' – Project ID 516748464, subproject B08.  The lecture notes are based upon work supported by the National Science Foundation under Grant No. DMS-2424139, while the author was in residence at the Simons Laufer Mathematical Sciences Institute in Berkeley, California, during the Fall 2025 semester.

\subsection{Example 1: Regularization by noise for SDEs}\label{sect:11}
It has been known for a long time that an ill-posed deterministic system can become well-posed when randomly perturbed. Indeed, 
consider a differential equation
\begin{equation}\label{ode}
dX_t=\frac{1}{1-\alpha} \sign(X_t)|X_t|^\alpha dt, \quad X_0=0,
\end{equation}
where $\alpha\in(0,1)$. It is easy to see that it has infinitely many solutions: for example one can fix any $t_0>0$ and take $X_t=\I_{t\ge t_0}(t-t_0)^{\frac{1}{1-\alpha}}$, $t\ge0$. It is also clear that the equation
\begin{equation*}
dX_t= -\sign(X_t)dt,\quad X_0=0
\end{equation*}
has no solutions.  On the other hand, if we perturb these equations by a random Brownian forcing, that is if we consider  an SDE 
\begin{equation}\label{mainSDE} 
dX_t = b(X_t)\,dt + dW_t,\quad X_0=x,
\end{equation}
 where $d\in\N$, $x\in\R^d$, $W$ is a $d$-dimensional standard Brownian motion, then this equation has a unique solution when the drift $b$ is merely bounded measurable.
This phenomenon is called \textit{regularization by noise}; we refer the
reader to monograph \cite{F11} and  a  short note \cite{djurdjevac2024randomness} for many interesting examples and further discussion.

The intuition behind regularization by noise is that the stochastic perturbation prevents the solution from getting stuck at points where the deterministic dynamics are singular. For example, the dynamics of equation \eqref{ode} allow the solution to remain at zero for an arbitrary amount of time, move up, or move down. Adding a noise term creates fluctuations that immediately push the process away from these unstable points, effectively selecting a unique trajectory. 

Let us be a bit more precise and recall the standard definitions. Fix $d\in\N$. Let $X\colon\Omega\times[0,1]\to\R^d$ be a continuous process. We say  that a couple $(X,W)$ on a complete filtered probability space $(\Omega,\F,(\F_t)_{t\in[0,1]},\P$) is  a \textit{weak solution} to SDE \eqref{mainSDE}, if $W$ is an $(\F_t)$-Brownian motion, $X$ is adapted to $(\F_t)$, and 
\begin{equation*}
X_t=x+\int_0^t b(X_r)\,dr + W_t,\quad \text{a.s. for $t\in[0,1]$}.
\end{equation*}
 A weak solution $(X,W)$ is called a \textit{strong solution} if $X$ is adapted to $(\F^W_t)$, that is  the completion of the filtration generated by $W$. We say that \textit{strong (pathwise) uniqueness} holds for \eqref{mainSDE} if for any two weak solutions of  \eqref{mainSDE} $(X,W)$ and
$(Y,W)$ with common noise $W$ on a common probability space  one has $\P (X_t = Y_t \text{ for all $t\in[0,1]$}) = 1$.

A seminal result of Veretennikov and Zvonkin states that regularization by noise happens for any bounded measurable drift. We refer to \cite{kr_rock05} for further extensions. 
\begin{theorem}[\cite{ver80,zvonkin74}]\label{t:VK}
Let $d\in \N$, $x\in\R^d$, $b\colon\R^d\to\R^d$ be a bounded measurable function. Then SDE \eqref{mainSDE}
has a unique strong solution.
\end{theorem}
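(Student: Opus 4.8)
The plan is to follow Zvonkin's transformation (the ``Veretennikov--Zvonkin method'' alluded to in the introduction): remove the irregular drift by a PDE-induced change of variables, at the price of turning the additive noise into a multiplicative one whose diffusion coefficient is merely Hölder continuous but still uniformly elliptic, and then invoke classical SDE theory reinforced by Krylov-type occupation-time bounds. Concretely, for $\lambda>0$ I would first solve, componentwise, $\tfrac12\Delta u-\lambda u+b\cdot\nabla u=-b$ on $\R^d$. Boundedness of $b$ and $L^p$ elliptic (Calderón--Zygmund) estimates give a solution $u\in W^{2,p}(\R^d;\R^d)$ for every $p\in(1,\infty)$, with $\|u\|_{W^{2,p}}\le C(\lambda,p)\|b\|_\infty$ and, by the Sobolev embedding $W^{2,p}\hookrightarrow\C^{1,\alpha}$ ($p>d$) together with resolvent decay, $\|\nabla u\|_\infty\to 0$ as $\lambda\to\infty$. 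Fixing $\lambda$ so large that $\|\nabla u\|_\infty\le\tfrac12$, the map $\phi:=\mathrm{id}+u$ is a $\C^{1,\alpha}$-diffeomorphism of $\R^d$, bi-Lipschitz, with $\phi-\mathrm{id}$ and $\phi^{-1}-\mathrm{id}$ bounded.

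Next, for any weak solution $(X,W)$ of \eqref{mainSDE}: since $b$ is bounded, Girsanov reduces $X$ to Brownian motion and yields the Krylov bound $\E\int_0^1|g(X_r)|\,dr\le C\|g\|_{L^p(\R^d)}$ for $p>d$, which is precisely what legitimizes Itô's formula for the $W^{2,p}$-function $\phi$. Using the PDE, the Itô--Krylov formula gives
\[
\phi(X_t)=\phi(x)+\lambda\int_0^t u(X_r)\,dr+\int_0^t\nabla\phi(X_r)\,dW_r ,
\]
so $Y:=\phi(X)$ solves $dY_t=\wt b(Y_t)\,dt+\sigma(Y_t)\,dW_t$ with $\wt b:=\lambda\,u\circ\phi^{-1}$ bounded and Lipschitz, and $\sigma:=\nabla\phi\circ\phi^{-1}$ bounded, uniformly elliptic, Hölder of every exponent below $1$, with $\sigma-I\in W^{1,p}$ for all $p$. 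Since $\phi$ is a homeomorphism, $X\mapsto\phi(X)$ is a bijection between solutions on a fixed $(\Omega,\F,(\F_t),\P,W)$ preserving adaptedness, so it suffices to prove strong well-posedness for the $Y$-equation.

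Weak existence for \eqref{mainSDE} is immediate from Girsanov, and for the $Y$-equation weak existence and uniqueness in law also follow classically ($\sigma$ continuous and nondegenerate, $\wt b$ bounded). For pathwise uniqueness of the $Y$-equation I would take two solutions $Y,Y'$ driven by the same $W$, set $\eta:=Y-Y'$, write $\sigma(Y_r)-\sigma(Y'_r)=\Theta_r\eta_r$ with $\Theta_r:=\int_0^1\nabla\sigma\big((1-\theta)Y'_r+\theta Y_r\big)\,d\theta$, and apply Itô to $|\eta_t|^2$ to get $|\eta_t|^2\le\int_0^t(C+|\Theta_r|^2)|\eta_r|^2\,dr+M_t$ with $M$ a local martingale and $M_0=0$. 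Each interpolating process is again a nondegenerate Itô process with bounded coefficients, and $\nabla\sigma\in L^p$ for $p$ arbitrarily large, so the Krylov bound gives $\int_0^1|\Theta_r|^2\,dr<\infty$ a.s.; hence $t\mapsto|\eta_t|^2\exp\!\big(-\int_0^t(C+|\Theta_r|^2)\,dr\big)$ is a nonnegative local supermartingale issued from $0$, so it vanishes and $\eta\equiv0$. Pulling back through $\phi^{-1}$ gives pathwise uniqueness for \eqref{mainSDE}, and Yamada--Watanabe then promotes weak existence together with pathwise uniqueness to the existence of a unique strong solution, which is the claim.

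The difficulty is concentrated in the last two steps: first, justifying Itô's formula for a map that is only twice \emph{weakly} differentiable, which is exactly why the Krylov occupation-time estimate must enter the argument; and second, the fact that the transformation cannot upgrade $\sigma$ to Lipschitz regularity — only Hölder/Sobolev — so pathwise uniqueness cannot rest on a bare Gronwall inequality and instead needs the quantitative $L^p$-control of $\nabla\sigma$ along solutions together with the supermartingale (stochastic Gronwall) argument. I note that the stochastic-sewing techniques developed later in these notes give an Itô-formula-free alternative that moreover reaches far more singular (even distributional) drifts; the bounded measurable case treated here is the historical point of departure.
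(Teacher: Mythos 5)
Your argument reconstructs the classical Zvonkin--Veretennikov transformation proof, and (modulo the usual technical points: the elliptic $W^{2,p}$ estimates for $\tfrac12\Delta u-\lambda u+b\cdot\nabla u=-b$ are really uniform $W^{2,p}_{\mathrm{loc}}$ estimates since $b\in L^\infty(\R^d)\not\subset L^p(\R^d)$; the Itô--Krylov formula for $W^{2,p}$-functions; and the Krylov occupation-time bound applied for each $\theta$ to the interpolating Itô process $\theta Y+(1-\theta)Y'$ to control $\int_0^1|\Theta_r|^2\,dr$) it is essentially correct and, notably, addresses the theorem at the full stated generality of bounded measurable $b$. The paper, however, deliberately takes a different route: its stated aim is to prove \cref{t:VK} \emph{without} a PDE-induced change of variables, because the Zvonkin method relies on a good Itô formula and on the Markov/semimartingale structure of the noise and therefore does not transfer to the fractional-Brownian and SPDE settings studied later in the notes. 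The paper's own proof instead applies the stochastic sewing lemma with germ $A_{s,t}=\E^s\int_s^t\bigl(b(W_r+\phi_s)-b(W_r+\psi_s)\bigr)\,dr$ to obtain the key $L_2$-contraction bound \eqref{seckey} of \cref{e:step1uni}, then runs a buckling argument on the seminorm $[\phi-\psi]_{\C^{\frac12}L_2([0,T])}$ over small intervals and iterates; existence then follows from Girsanov and Yamada--Watanabe exactly as in your last step. The trade-off is instructive: your proof reaches bounded measurable $b$ directly, whereas the paper's proof as written is only carried out for $b\in\C^\gamma$, $\gamma>0$ (the bounded-measurable case via stochastic sewing is deferred to \cite{le2021taming}); conversely, your proof invokes precisely the analytic machinery (elliptic $L^p$ theory, Itô--Krylov, stochastic Gronwall) that the remainder of the lecture notes is designed to do without, so it is not a viable starting point for the generalizations that follow.
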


Establishing this result using stochastic sewing will be our primary goal in the first part of the course. Note that \cite{ver80,zvonkin74} employed a very different proof strategy, namely a suitable transformation of the equation. Although their method is very powerful, it relies heavily on It\^o’s calculus and PDE techniques, which are largely inapplicable beyond the semimartingale setting, for instance when the noise is a fractional Brownian motion or space–time white noise. In contrast, stochastic sewing is much more general and requires minimal assumptions on the driving noise.

Let us understand the main difficulty in proving \cref{t:VK} and see what we need to overcome it. 

Fix $d\in\N$. For a function $f\colon\R^d\to\R$, $\gamma\in(0,1]$ define H\"older norms and seminorms:
\begin{equation*}
\|f\|_{\C^0}:=\sup_{x\in\R^d}|f(x)|;\qquad [f]_{\C^\gamma}:=\sup_{x,y\in\R^d}\frac{|f(x)-f(y)|}{|x-y|^\gamma}; \qquad \|f\|_{\C^\gamma}:=\|f\|_{\C^0}+[f]_{\C^\gamma}.
\end{equation*}
Denote by $\C^\gamma$ the space of all bounded continuous functions $f\colon\R^d\to\R$ such that
${\|f\|_{\C^\gamma}<\infty}$. 

Let $X=:W+\phi$, $Y=:W+\psi$ be two solutions to the SDE \eqref{mainSDE} with the initial condition $x\in\R^d$. We would like to show that $X=Y$. A naive attempt to prove this is to write
\begin{equation}\label{firstattempt}
|X_t-Y_t|=|\phi_t-\psi_t|=\Bigl|\int_0^t (b(W_r+\phi_r)-b(W_r+\psi_r))\,dr \Bigr|\le [b]_{\C^\gamma}\int_0^t |\phi_r-\psi_r|^\gamma\,dr, \quad  t\in[0,1].
\end{equation}
However, this does not give much: if $\gamma = 1$, Gronwall’s lemma implies that $X_t-Y_t \equiv 0$, but for $\gamma < 1$ (our main case of interest), we obtain nothing. Instead, we aim to replace the bound in \eqref{firstattempt} with the following:
\begin{equation}\label{whatwewantmore}
\Bigl\|\int_0^t (b(W_r+\phi_r)-b(W_r+\psi_r))\,dr \Bigr\|_{L_2(\Omega)}\le C [b]_{\C^\gamma} t^{\rho}\sup_{r\le t} \|\phi_r-\psi_r\|_{L_2(\Omega)}, \quad  t\in[0,T]
\end{equation}
for some $\rho > 0$. This is exactly a bound of the type \eqref{type}! At the very least, as a starting point, we would like to show that for $x, y \in \R^d$
\begin{equation}\label{whatwewant}
\Bigl\|\int_0^t (b(W_r+x)-b(W_r+y))\,dr \Bigr\|_{L_2(\Omega)}\stackrel{???}{\le} C \|b\|_{\C^\gamma} t^{\rho}|x-y|, \quad  t\in[0,T]
\end{equation}

We establish bounds \eqref{whatwewantmore} and \eqref{whatwewant} in \cref{s:s3} and their extension to the fractional Brownian case will be done in \cref{e:313,e:ext,e:strongfbm}.

\cref{s:WU} is devoted to \textit{weak} regularization by noise, that is, uniqueness in law of solutions to \eqref{mainSDE} with even less regular drifts: we assume that $b$ is a Schwartz distribution. Here a different stochastic sewing-type argument will be needed as for such irregular $b$ bounds 
\eqref{whatwewantmore} and \eqref{whatwewant} do not hold. We will prove the following result.

\begin{theorem}[\cite{bib:zz17,FIR17}]\label{t:weak}
Let $d\in \N$, $x\in\R^d$, $b\in\C^\gamma(\R^d,\R^d)$, $\gamma>-\frac12$. Then SDE \eqref{mainSDE}
has a weak solution $(X,W)$. If $(\wt X,\wt W)$ is another weak solution to \eqref{mainSDE} belonging to a certain natural class, then $\law(X)=\law(\wt X)$.
\end{theorem}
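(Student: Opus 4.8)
The plan is to establish existence via compactness on mollified equations and uniqueness in law via a generalized coupling argument; the stochastic sewing lemma supplies the quantitative bounds in both parts. Fix mollifications $b^n\in\C_b^\infty(\R^d,\R^d)$ with $\sup_n\|b^n\|_{\C^\gamma}<\infty$ and $b^n\to b$ in $\C^{\gamma'}$ for every $\gamma'<\gamma$, and let $X^n=x+W+K^n$ be the classical strong solution of \eqref{mainSDE} with drift $b^n$ from $x$, so that $K^n_t=\int_0^t b^n(X^n_r)\,dr$. A suitable application of the stochastic sewing lemma — with the two-parameter quantity chosen so as to exploit the heat-kernel smoothing $\|P_t b^n\|_{\C^0}\lesssim\|b^n\|_{\C^\gamma}t^{\gamma/2}$, in the same way that \eqref{whatwewant}--\eqref{whatwewantmore} are obtained in \cref{s:s3} — yields the uniform a priori bound
\[
\bigl\|K^n_t-K^n_s\bigr\|_{L_m(\Omega)}\le C\bigl(1+{\textstyle\sup_n}\|b^n\|_{\C^\gamma}\bigr)\,|t-s|^{1+\gamma/2},\qquad 0\le s\le t\le1,\ m\ge2,
\]
with $C=C(d,m,\gamma)$. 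Since $\gamma>-1/2$ the exponent $1+\gamma/2$ exceeds $1/2$, so $K^n$ is — uniformly in $n$ — Hölder of an exponent strictly larger than that of $W$; hence $(X^n,W)_n$ is tight in $C([0,1];\R^d)^2$, and along a subsequence it converges in law to a pair $(X,W)$, which by Skorokhod's representation we realize on a common space with $X^n\to X$ a.s., $W$ a Brownian motion for the (augmented) filtration generated by $(X,W)$, and $\psi:=X-x-W$ inheriting the same Hölder and $L_m$-bounds.

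The second step passes to the limit in the drift term, which for a distribution-valued $b$ forces us first to \emph{define} $\int_0^t b(X_r)\,dr$: for an adapted process of the form $X=x+W+\psi$ with $\psi$ Hölder of exponent $>1/2$ and moments of all orders, one sets $\int_0^t b(X_r)\,dr$ to be the stochastic-sewing limit of $\int_0^t b^n(X_r)\,dr$, the limit being legitimized by a stability estimate of the same type as above — roughly $\|\int_s^t(b^n(X^n_r)-b^{n'}(X^{n'}_r))\,dr\|_{L_p(\Omega)}\lesssim|t-s|^{1+\gamma/2}\bigl(\|b^n-b^{n'}\|_{\C^{\gamma'}}+\sup_{r\le1}\|X^n_r-X^{n'}_r\|_{L_q(\Omega)}^{\theta}\bigr)$ for suitable $p,q,\theta>0$. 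Combined with $X^n\to X$ a.s. this gives uniform-on-$[0,1]$ convergence of $\int_0^\cdot b^n(X^n_r)\,dr$ to a continuous process equal to $\psi$, so $(X,W)$ is a weak solution of \eqref{mainSDE}. We take the \emph{natural class} of the statement to be exactly the weak solutions $(\wt X,\wt W)$ for which $\wt X-x-\wt W$ has this regularity and these moments (equivalently, whose drift part is produced by the sewing procedure); this settles existence.

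For uniqueness, put $\nu^\ast:=\law(\bar X)$, where $\bar X$ is the a.s. limit of the strong solutions $\bar X^n$ of \eqref{mainSDE} with drift $b^n$ (all driven by one fixed Brownian motion) — the stability estimate shows this limit exists and is independent of the mollification. Let $(X,W)$ be any weak solution in the natural class on $(\Omega,\F,(\F_t),\P)$; we show $\law(X)=\nu^\ast$. Fix $\lambda_n\to\infty$ (slowly) and let $Y^n$ solve on $(\Omega,\P)$ the coupled equation
\[
dY^n_t=b^n(Y^n_t)\,dt+\lambda_n(X_t-Y^n_t)\,dt+dW_t,\qquad Y^n_0=x.
\]
On the one hand, $Z^n:=X-Y^n$ satisfies $Z^n_t=\int_0^t e^{-\lambda_n(t-r)}\bigl(b(X_r)-b^n(Y^n_r)\bigr)\,dr$; writing $b(X_r)-b^n(Y^n_r)=\bigl(b(X_r)-b^n(X_r)\bigr)+\bigl(b^n(X_r)-b^n(Y^n_r)\bigr)$ and estimating the second term by the stochastic-sewing regularization bound for two perturbations of the \emph{same} noise (the analogue of \eqref{whatwewantmore}, with a time exponent $(1+\gamma)/2$) balanced against the weight $e^{-\lambda_n(t-r)}$ in a buckling argument, one gets $\int_0^1\|Z^n_r\|_{L_2(\Omega)}^2\,dr\to0$ and $\sup_{r\le1}|Z^n_r|\to0$ in probability. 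On the other hand, by Girsanov's theorem $\wt W^n_t:=W_t+\lambda_n\int_0^t Z^n_r\,dr$ is a Brownian motion under $\Q^n:=\mathcal E^n\P$, where $\mathcal E^n:=\exp\bigl(-\lambda_n\int_0^1 Z^n_r\,dW_r-\tfrac{\lambda_n^2}{2}\int_0^1|Z^n_r|^2\,dr\bigr)$, and $(Y^n,\wt W^n)$ is under $\Q^n$ a classical weak solution of \eqref{mainSDE} with drift $b^n$, whence $\law_{\Q^n}(Y^n)=\law(\bar X^n)$. Therefore, for bounded continuous $F$ on path space,
\[
\E_\P[F(X)]=\lim_n\E_\P[F(Y^n)]=\lim_n\E_\P[\mathcal E^n F(Y^n)]=\lim_n\E_{\Q^n}[F(Y^n)]=\lim_n\int F\,d\law(\bar X^n)=\int F\,d\nu^\ast,
\]
the first equality using $\sup_{r\le1}|Z^n_r|\to0$ in probability and the second using $\mathcal E^n\to1$ in $L^1(\P)$ — itself a consequence of $\int_0^1|Z^n_r|^2\,dr\to0$ in probability together with uniform integrability of $(\mathcal E^n)_n$. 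Hence $\law(X)=\nu^\ast$ for every weak solution in the natural class.

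The hard part is the quantitative core of the last step. First, one must win $\int_0^1\|Z^n_r\|_{L_2}^2\,dr\to0$ from the sewing estimate for $b^n(X)-b^n(Y^n)$, whose time exponent $(1+\gamma)/2$ is strictly below $1$ when $\gamma<1$, so that no naive Gronwall closes and the decay must be extracted by carefully playing this estimate against the exponential weight produced by $\lambda_n$. Second, one must deduce from it the $L^1(\P)$-convergence $\mathcal E^n\to1$, which requires uniform integrability of the Girsanov densities and hence uniform exponential-type moments of the (distribution-valued) drift functionals — a John--Nirenberg-type consequence of the $L_m$-bounds of the first two steps. This is precisely where $\gamma>-1/2$ is used: it is the range in which $1+\gamma/2$ and $(1+\gamma)/2$ are large enough for the sewing estimates, and hence for the change-of-measure densities, to be controllable. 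By contrast, for $\gamma\in(0,1)$ the genuine contraction \eqref{whatwewantmore} of \cref{s:s3} makes all of this unnecessary, and the existence half above is comparatively soft, its only delicate ingredient being the sewing definition of $\int_0^\cdot b(X_r)\,dr$.
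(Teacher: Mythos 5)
Your plan matches the paper's proof in its essentials: mollification, stochastic-sewing a priori bounds and tightness for existence, then generalized coupling via the control term $\lambda(X-Y^n)$ together with Girsanov for uniqueness, with the threshold $\gamma>-\tfrac12$ entering at exactly the right place. Two details differ, and both concern the last Girsanov step. First, the paper converts the density estimate into a total-variation bound via Pinsker's inequality (\cref{pinsker}, applied inside \cref{l:dtvlemma}), which after a localization yields $d_{TV}(\P,\wt\P)\le \tfrac12\lambda\bigl(\int_0^1\E|Z^n_r|^2\,dr\bigr)^{1/2}$; since $L_1$-convergence of the Girsanov density to $1$ is equivalent to $d_{TV}(\P,\wt\P)\to0$ once the density integrates to $1$, this is already the convergence you want, with no uniform-integrability or exponential-moment (John--Nirenberg-type) input. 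The step you flag as the hard part is therefore soft in the paper's route; the John--Nirenberg inequality only enters much later, for the L\'evy-driven Euler scheme. Second, defining $\nu^\ast$ as the almost-sure limit of $\bar X^n$ is not immediate when $\gamma<0$: your stability estimate carries a H\"older exponent $\theta<1$ on $\sup_r\|X^n_r-X^{n'}_r\|_{L_q}^{\theta}$, so it does not by itself produce a Cauchy sequence. The paper sidesteps this by bounding $\W_{\|\cdot\|\wedge1}(\law(X^n),\law(X))$ directly for an \emph{arbitrary} weak solution $X$ in the regularity class $\V(1+\tfrac\gamma2)$ and letting $n\to\infty$; uniqueness of weak limits then identifies the laws of any two such solutions without having to construct a reference law beforehand.
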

We refer to \cref{s:41} for the precise definitions, including the definition of the space $\C^\gamma$ for negative $\gamma$. Let us note here that the technique of \cite{bib:zz17,FIR17} was again the Zvonkin transformation method and thus relies on the Markov properties of the noise. We will show how this result can be established using stochastic sewing, which has to be combined with some ideas from ergodic theory. Since stochastic sewing does not require a ``good'' It\^o formula to be applicable, the same ideas also lead to  weak uniqueness for SDEs driven by a fractional Brownian motion and for stochastic PDEs.

Finally, in \cref{s:5} we see how stochastic sewing works for SDEs driven by a discontinuous driver. Let   $\alpha\in(0,2)$. Recall that $L\colon\Omega\times[0,1]\to\R^d$ is a symmetric $\alpha$-stable process if it is c\`adl\`ag, has independent stationary increments and 
\begin{equation}\label{charlevy}
	\E \exp(i\lambda L_t)=\exp(- c_{\alpha,d} |\lambda|^\alpha t),\qquad\lambda\in\R^d,\, t\in[0,1]
\end{equation}
for some $c_{\alpha,d}>0$. We will prove the following result.

\begin{theorem}[\cite{Pr12,chen2017well,bib:csz15}]\label{t:VKlevy}
Let $d\in \N$, $x\in\R^d$, $\alpha\in(0,2)$. Suppose that 
\begin{equation}\label{levycond}
\gamma>1-\frac\alpha2.
\end{equation}
Let $b\in\C^\gamma(\R^d,\R^d)$. Then SDE
\begin{equation}\label{mainSDElevy}
X_t=x+\int_0^t b(X_r)\,dr +L_t
\end{equation}
 has a unique strong solution.
\end{theorem}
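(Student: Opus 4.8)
The plan is to reduce the theorem, via the Yamada--Watanabe theorem (which applies verbatim to L\'evy-driven equations), to two statements: \emph{weak existence} and \emph{pathwise uniqueness}, which together yield a unique strong solution. Weak existence is the soft part: since condition \eqref{levycond} forces $\gamma>0$, the drift $b$ is bounded and continuous, so one runs the Euler scheme $X^n$, notes that the drift parts $\phi^n_\cdot := X^n_\cdot - L_\cdot$ are uniformly $\|b\|_{\C^0}$-Lipschitz in time, deduces tightness of the laws of $(L,\phi^n)$ (Arzel\`a--Ascoli for the $\phi^n$ component, $L$ being fixed), and identifies any subsequential limit as a weak solution using boundedness and continuity of $b$ and dominated convergence. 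Everything below concerns pathwise uniqueness.

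The engine is a Krylov-type estimate obtained through the shifted stochastic sewing lemma of \cref{s:levywp}. Write $(P_t)$ for the transition semigroup of $L$; for $\gamma\le 1$ it obeys the standard smoothing bound $[P_u b]_{\C^1}\le C\,u^{-(1-\gamma)/\alpha}\|b\|_{\C^\gamma}$, which is the only analytic input (the case $\gamma>1$ being easier). Let $X=L+\phi$ and $Y=L+\psi$ be two solutions of \eqref{mainSDElevy} with the same noise $L$, and put $v:=\phi-\psi$, so that $v_0=0$ and $v$ is $2\|b\|_{\C^0}$-Lipschitz in time in every $L_m(\Omega)$. I would prove
\begin{equation}\label{eq:krylovdiff}
\Bigl\|\int_s^t\bigl(b(L_r+\phi_r)-b(L_r+\psi_r)\bigr)\,dr\Bigr\|_{L_m(\Omega)}\le C\,\|b\|_{\C^\gamma}\,(t-s)^{1+\frac{\gamma-1}{\alpha}}\sup_{q\in[0,1]}\|v_q\|_{L_m(\Omega)}
\end{equation}
by applying stochastic sewing to $v$ with the germ
\begin{equation*}
A_{s,t}=\int_s^t\bigl[(P_{r-\hat s}\,b)(L_{\hat s}+\phi_{\hat s})-(P_{r-\hat s}\,b)(L_{\hat s}+\psi_{\hat s})\bigr]\,dr,
\end{equation*}
where $\hat s=\hat s(s,t)$ is the shifted time (morally $\hat s=(2s-t)\vee 0$), chosen so that the elapsed time $r-\hat s$ is always comparable to $t-s$. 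The first sewing bound is immediate from $|(P_{r-\hat s}b)(L_{\hat s}+\phi_{\hat s})-(P_{r-\hat s}b)(L_{\hat s}+\psi_{\hat s})|\le[P_{r-\hat s}b]_{\C^1}\,|v_{\hat s}|$ together with the smoothing bound evaluated at a time $\gtrsim t-s$; it produces the exponent $1+\tfrac{\gamma-1}{\alpha}$, which exceeds $\tfrac12$ precisely when \eqref{levycond} holds, and it is linear in $\sup_q\|v_q\|$. One then checks that $v_t-v_s-A_{s,t}$ is a genuine sewing remainder and that the consistency quantity $\E[\delta A_{s,u,t}\mid\F_{\hat s}]$ has order $(t-s)^{1+\eps}$ for some $\eps>0$, using the Lipschitz-in-time control of $\phi$ and $\psi$; a short-time self-referential argument is needed to keep the correct homogeneity (the factor $\sup_q\|v_q\|$) in this last bound.

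Granting \eqref{eq:krylovdiff}, pathwise uniqueness follows by a Gronwall-type iteration: taking $s=0$ and using $v_0=0$ gives $\sup_{t\le T_0}\|v_t\|_{L_m}\le C\,\|b\|_{\C^\gamma}\,T_0^{1+(\gamma-1)/\alpha}\sup_{t\le T_0}\|v_t\|_{L_m}$, so $v\equiv 0$ on $[0,T_0]$ once $T_0$ is chosen small, and one iterates over $[T_0,2T_0],\dots$ to reach $[0,1]$; hence $X=Y$ almost surely. Combined with weak existence and Yamada--Watanabe, this proves the theorem.

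I expect the genuine obstacle to be the consistency estimate for $\delta A_{s,u,t}$, and in particular the necessity of the shift there. With the plain SSL the consistency step forces one to differentiate the kernel once more, bringing in a factor $u^{-(2-\gamma)/\alpha}$ that must be integrated near $0$; this is harmless only for $\gamma>2-\alpha$, strictly stronger than \eqref{levycond} and altogether vacuous when $\alpha\le 1$. Conditioning instead on the slightly earlier $\sigma$-algebra $\F_{\hat s}$ is engineered precisely to remove this loss, so that only the condition coming from the first sewing bound, namely $\gamma>1-\tfrac\alpha2$, remains. Arranging the shift so that the germ scales appearing in $\delta A_{s,u,t}$ fit together, and carrying the resulting conditional expectations through the estimates, is where the technical effort lies; weak existence, the Yamada--Watanabe step, and the Gronwall iteration are routine.
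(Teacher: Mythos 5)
Your diagnosis of where the plain SSL fails is correct, but your claim that the \emph{shift alone} repairs it is not, and this is the gap. Work out $\E^{\hat s}\delta A_{s,u,t}$ with your germ: after the shift the kernel in the dominant term is evaluated at an elapsed time $\gtrsim t-s$ rather than $\gtrsim r-u$, so the singular integral becomes finite, but the resulting power is $(t-s)^{(\gamma-2)/\alpha}$ (kernel smoothing after shift) $\times\;(t-s)$ (integration in $r$) $\times\;(t-s)$ (your Lipschitz control $|\phi_{\hat s}-\phi_{\hat u}|\lesssim t-s$) $\times\;\sup_q\|v_q\|$. That exponent $(\gamma-2)/\alpha+2$ exceeds $1$ exactly when $\gamma>2-\alpha$ — still the suboptimal range, vacuous for $\alpha\le 1$. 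The shift converts a divergent integral into a finite one, but it does not improve the power of $t-s$; with only Lipschitz control you land back at the very restriction you set out to remove.

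What raises the power is a second ingredient that your germ omits: instead of freezing $\phi$ at the shifted time $\phi_{\hat s}$, one plugs in the conditional predictor $\E^{\hat s}\phi_r$ (compare the paper's germ \eqref{astfinal}). The quantity that then appears in the consistency estimate reduces, via Jensen (\cref{lem:useful-lemma}), to the stochastic-regularity modulus $\E^{s}|\phi_r-\E^{s}\phi_r|$, which for $\phi_t=x+\int_0^t b(X_r)\,dr$ is of order $(r-s)^{1+\gamma/\alpha}$ — strictly better than the crude Lipschitz bound $r-s$ (\cref{c:413}). The extra factor $(t-s)^{\gamma/\alpha}$ lifts the consistency exponent to $2+(2\gamma-2)/\alpha$, and requiring this to exceed $1$ gives precisely $\gamma>1-\alpha/2$. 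Correspondingly, the buckling norm in your Krylov estimate \eqref{eq:krylovdiff} cannot be $\sup_q\|v_q\|_{L_m}$ alone; it must be the conditional seminorm $\dnew{\phi-\psi}{\tau}{m}{[S,T]}$ from \cref{sec:44}, which is exactly why the paper introduces those norms in tandem with the shifted SSL. Your existence plan (Euler tightness plus Yamada--Watanabe) is a legitimate alternative to the paper's direct Cauchy-sequence construction of strong solutions, but the uniqueness estimate as you have set it up cannot reach the stated range of $\gamma$ without the conditional-norm refinement.
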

We note again that the above-mentioned papers \cite{Pr12,chen2017well,bib:csz15} established this result using a Zvonkin transformation method, whereas in \cref{s:5} we prove  it directly using stochastic sewing combined with a number of new tools.

\subsection{Example 2: Numerical methods}\label{s:12}

Next, once the well-posedness of the SDE \eqref{mainSDE} is established, let us discuss how this equation can be solved numerically. It turns out that stochastic sewing techniques are also very useful for analyzing numerical algorithms. 

We consider the SDE \eqref{mainSDE} and study the convergence of the standard Euler scheme. That is,  we fix $n\in\N$, consider a partition $\{0=t_0<t_1<\dots<t_n=1\}$ of the interval $[0,1]$ and define recursively the process
\begin{equation}\label{Eulerscheme}
	X_{t_{i+1}}^n=X_{t_i}^n+(t_{i+1}-t_i)b(X^n_{t_i})+(W_{t_{i+1}}-W_{t_{i}}),\quad i=0,\hdots,n-1.
\end{equation}
To simplify the notation we work only on  uniform grids, so $t_i=i/n$. For an integer $n\ge1$ denote 
\begin{equation}\label{kappanr}
	\kappa_n(t):=\frac{\lfloor nt\rfloor}n,\quad t\in[0,1],
\end{equation}	
that is the  largest point of the grid $\{0,\frac1n,\frac2n,\hdots\}$ which is smaller or equal than $t$. Then we can rewrite the scheme \eqref{Eulerscheme} as 
\begin{equation}\label{mainSDEn} 
	dX^n_t=b(X^n_{\kappa_n(t)})\,dt+\,dW_t,\qquad X_0=x_0,\quad  t\in[0,1].
\end{equation}
Let $b\in\C^\gamma$, $\gamma\in(0,1]$. It is clear that for any $t\in[0,1]$ we have the following naive bound: 
\begin{align*}
\Bigl |X^n_t-X_t\Bigr|&=\Bigl|\int_0^t b(X^n_{\kappa_n(r)})-b(X_r)\,dr\Bigr|\\
&\le \Bigl|\int_0^t b(X^n_{\kappa_n(r)})-b(X^n_r)\,dr \Bigr|+\Bigl|\int_0^t b(X^n_r)-b(X_r)\,dr \Bigr|\\
&\le \|b\|_{\C^\gamma}\int_0^t |X^n_{\kappa_n(r)}-X^n_r|^\gamma \,dr +\Bigl|\int_0^t b(X^n_r)-b(X_r)\,dr \Bigr|\\
&\le \|b\|_{\C^\gamma}\|b\|_{\C^0}n^{-\gamma}+\|b\|_{\C^\gamma}\int_0^t |W_{\kappa_n(r)}-W_r|^\gamma \,dr +\Bigl|\int_0^t b(X^n_r)-b(X_r)\,dr \Bigr|.
\end{align*}
Since $\E |W_{\kappa_n(r)}-W_r|^\gamma=C n^{-\gamma/2}$, we see that the best rate one can obtain  with the naive approach (even ignoring all other terms) is $n^{-\gamma/2}$. In \cref{s:numbd}, we will use stochastic sewing to improve this rate by $1/2$. More precisely, we will show the  following result.

\begin{theorem}[\cite{BDG}]\label{t:mainnumt}
Let $d\in \N$, $m\ge1$, $x\in\R^d$, $\gamma,\eps>0$, $b\in\C^\gamma(\R^d,\R^d)$. Let $X$ solve \eqref{mainSDE} and $X^n$ be its Euler scheme \eqref{mainSDEn}. Then there exists a constant  $C=C(\|b\|_{\C^\gamma},\gamma,\eps,d,m)>0$ such that for any $n\in\N$ we have
	\begin{equation}\label{mbt35}
		\|\sup_{t\in[0,1]}|X_t-X^n_t|\|_{L_m(\Omega)} \le Cn^{-\frac12-\frac\gamma2+\eps}.
	\end{equation}
\end{theorem}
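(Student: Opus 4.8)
The plan is to bound $\psi_t := X_t - X^n_t$ in $L_m(\Omega)$ uniformly in $t$ and then upgrade to the supremum; since $L_m$-norms are nondecreasing in $m$ on a probability space, I may assume $m\ge 2$. Writing
\begin{equation*}
\psi_t = \int_0^t\bigl(b(X_r)-b(X^n_{\kappa_n(r)})\bigr)\,dr = J_t + I_t,
\end{equation*}
with $J_t:=\int_0^t\bigl(b(X_r)-b(X^n_r)\bigr)\,dr$ and $I_t:=\int_0^t\bigl(b(X^n_r)-b(X^n_{\kappa_n(r)})\bigr)\,dr$, the term $J_t$ is a regularization-by-noise difference while $I_t$ is the discretization (quadrature) error, which carries the rate. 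For $J_t$ I would use the process-level estimate behind \eqref{whatwewantmore}, proved by stochastic sewing in \cref{s:s3}: with $X_r = W_r + \phi_r$, $X^n_r = W_r + \phi^n_r$ for the Lipschitz perturbations $\phi_r = x+\int_0^r b(X_u)\,du$, $\phi^n_r = x+\int_0^r b(X^n_{\kappa_n(u)})\,du$, one has $\phi_r - \phi^n_r = \psi_r$, and that estimate gives, in its $L_m$-form, $\|J_t\|_{L_m}\le C\|b\|_{\C^\gamma}t^{\rho}\sup_{r\le t}\|\psi_r\|_{L_m}$ for some $\rho>0$, as well as the same bound with $\sup_{t\le T}|J_t|$ inside the norm (sewing delivers the integral as a process with H\"older-in-time control).

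The heart of the matter is $\|\sup_{t\le 1}|I_t|\|_{L_m}\le Cn^{-1/2-\gamma/2+\eps}$, which I would obtain by applying the stochastic sewing lemma to the germ $A_{s,t}:=\E^{\F_s}[I_t-I_s]$. Since $A$ is the conditional expectation of an adapted additive functional, $\E^{\F_s}\delta A_{s,u,t}=0$ identically, so the drift hypothesis of SSL is free and the sewn process is $I$ itself; it then remains to control $\|\delta A_{s,u,t}\|_{L_m}$ and $\|A_{0,t}\|_{L_m}$. These are Krylov-type bounds, and the gain splits into two halves. First, conditionally on $\F_{\kappa_n(r)}$ the increment $W_r-W_{\kappa_n(r)}$ is a fresh Gaussian, so $\E^{\F_{\kappa_n(r)}}[b(X^n_r)]=(P_{r-\kappa_n(r)}b)\bigl(X^n_{\kappa_n(r)}+b(X^n_{\kappa_n(r)})(r-\kappa_n(r))\bigr)$ with $P$ the heat semigroup; the contribution of $b(X^n_r)-\E^{\F_{\kappa_n(r)}}[b(X^n_r)]$ to $I$ is, grid cell by grid cell, a martingale difference for $(\F_{t_i})$, and Burkholder--Davis--Gundy together with the crude cell bound $\|b(X^n_r)-b(X^n_{\kappa_n(r)})\|_{L_m}\lesssim\|b\|_{\C^\gamma}(r-\kappa_n(r))^{\gamma/2}$ converts $n$ martingale-difference cells of $L_m$-size $\lesssim\|b\|_{\C^\gamma}n^{-1-\gamma/2}$ into a term of order $(n\cdot n^{-2-\gamma})^{1/2}\|b\|_{\C^\gamma}=n^{-1/2-\gamma/2}\|b\|_{\C^\gamma}$ --- this ``$1/2$''-gain needs no more than $b\in\C^\gamma$. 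Second, the remaining ``predictable'' part $\int_s^t\bigl((P_{r-\kappa_n(r)}b)(\cdot)-b(X^n_{\kappa_n(r)})\bigr)\,dr$ is not pointwise small, and here the noise regularizes: by the density bounds of \cref{s:s3} the conditional law of $X^n_{\kappa_n(r)}$ given $\F_s$ has a density $\rho$ with $\|\nabla\rho\|_{L^1}\lesssim(\kappa_n(r)-s)^{-1/2}$, so pairing $P_\tau b-b=\tfrac12\int_0^\tau\Delta P_ab\,da$ against $\rho$, integrating by parts, and using Schauder's bound $\|\nabla P_ab\|_{\C^0}\lesssim a^{(\gamma-1)/2}\|b\|_{\C^\gamma}$ yields $|\E^{\F_s}[(P_\tau b-b)(X^n_{\kappa_n(r)})]|\lesssim\|b\|_{\C^\gamma}\tau^{(1+\gamma)/2}(\kappa_n(r)-s)^{-1/2}$; summing over grid cells gives $\|A_{s,t}\|_{L_m}\lesssim\|b\|_{\C^\gamma}n^{-(1+\gamma)/2}(t-s)^{1/2}$, and a similar estimate for $\|\delta A_{s,u,t}\|_{L_m}$ follows by bounding the dependence of $A_{u,t}$ on the state at time $u$ through its gradient. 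Assembling these through SSL (or, equivalently, a discrete iteration of the conditional expectations) one arrives at $\|I_t\|_{L_m}\lesssim\|b\|_{\C^\gamma}n^{-1/2-\gamma/2+\eps}$, the $\eps$ absorbing logarithmic factors; the supremum over $t$ is recovered from the time-regularity of the sewn process.

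Putting the pieces together gives, for all $t\in[0,1]$, $\|\psi_t\|_{L_m}\le C\|b\|_{\C^\gamma}t^{\rho}\sup_{r\le t}\|\psi_r\|_{L_m}+Cn^{-1/2-\gamma/2+\eps}$. Choosing $T_0=T_0(\|b\|_{\C^\gamma},\gamma,\eps)$ with $C\|b\|_{\C^\gamma}T_0^{\rho}\le\tfrac12$ and absorbing the first term yields $\sup_{t\le T_0}\|\psi_t\|_{L_m}\le Cn^{-1/2-\gamma/2+\eps}$; iterating over the intervals $[kT_0,(k+1)T_0]$, where the argument repeats with the extra initial error $\psi_{kT_0}$ merely added to the bound, and using the constantly-many steps needed to cover $[0,1]$, gives $\sup_{t\le 1}\|\psi_t\|_{L_m}\le Cn^{-1/2-\gamma/2+\eps}$ with $C$ depending only on $\|b\|_{\C^\gamma},\gamma,\eps,d,m$. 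Feeding this back into the $\sup$-versions of the bounds for $J$ and $I$ then produces $\|\sup_{t\le 1}|\psi_t|\|_{L_m}\le Cn^{-1/2-\gamma/2+\eps}$, which is \eqref{mbt35}.

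I expect the main obstacle to be the quadrature estimate for $I_t$: it requires exploiting at once the martingale cancellation inside each grid cell (the $n^{-1/2}$-gain) and the smoothing of $b\in\C^\gamma$ by the noise --- quantified through heat-kernel Schauder estimates and the density bounds of \cref{s:s3} (the $n^{-\gamma/2}$-gain) --- while keeping the stochastic-sewing bookkeeping tight, in particular handling the fact that the natural time exponent in the germ bound is the borderline value $\tfrac12$, so that logarithmic corrections appear and are responsible for the $\eps$-loss in the rate.
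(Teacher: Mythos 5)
Your high-level plan is the same as the paper's: decompose $\psi=X-X^n=J+I$ into the regularization-by-noise difference $J_t=\int_0^t\bigl(b(X_r)-b(X^n_r)\bigr)dr$ and the quadrature error $I_t=\int_0^t\bigl(b(X^n_r)-b(X^n_{\kappa_n(r)})\bigr)dr$, estimate $I$ via stochastic sewing, plug into a Gronwall/buckling step, and upgrade to the supremum. The cell-by-cell martingale-difference heuristic you give for the $n^{-1/2}$-gain in $I$ is sound and is a reasonable alternative lens on the dyadic SSL cancellation. But there is a genuine gap in the closing argument, and a secondary one in how you treat the drift inside $I$.

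The fatal step is the claim $\|J_t\|_{L_m}\le C\|b\|_{\C^\gamma}t^{\rho}\sup_{r\le t}\|\psi_r\|_{L_m}$. What the stochastic-sewing machinery of \cref{s:s3} actually proves (\cref{e:step1uni}) is, with $\tau=\tfrac12$,
\begin{equation*}
\Bigl\|\int_S^T \bigl(b(W_r+\phi_r)-b(W_r+\phi^n_r)\bigr)\,dr\Bigr\|_{L_m(\Omega)}\le C(T-S)^{\frac12+\frac\gamma2}\|\psi\|_{\C^0L_m([S,T])}+C(T-S)^{1+\frac\gamma2}[\psi]_{\C^{\frac12}L_m([S,T])},
\end{equation*}
and the H\"older seminorm $[\psi]_{\C^{1/2}L_m}$ on the right is not superfluous and cannot be discarded or absorbed into $\sup_r\|\psi_r\|_{L_m}$. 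The only a priori bound on $[\psi]_{\C^{1/2}L_m([0,t])}$ available pointwise is the trivial Lipschitz bound $\lesssim\|b\|_{\C^0}t^{1/2}$, which does not decay in $n$, so the sup-norm buckling you set up does not close. (The form \eqref{whatwewantmore} you quote from the introduction is a simplified motivating target; the result actually established is \cref{e:step1uni} with the extra seminorm term.) The paper resolves this precisely by buckling on the seminorm $[\phi-\phi^n]_{\C^{1/2}L_m([S,T])}$: one shows it is bounded by half of itself plus a term $\lesssim n^{-1/2-\gamma/2+\eps}$ on short intervals (using $\|\psi\|_{\C^0L_m([S,T])}\le\|\psi_S\|_{L_m}+(T-S)^{1/2}[\psi]_{\C^{1/2}L_m([S,T])}$), then passes to $[0,1]$ via \cref{l:ltg}(ii), and finally applies Kolmogorov. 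Your iteration over $[kT_0,(k+1)T_0]$ would need to be carried out in this seminorm, not the sup norm.

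A secondary, smaller gap is in the quadrature bound itself. Your ``predictable-part'' argument relies on a bound $\|\nabla\rho\|_{L^1}\lesssim(\kappa_n(r)-s)^{-1/2}$ for the conditional density $\rho$ of $X^n_{\kappa_n(r)}$ given $\F_s$, attributed to ``the density bounds of \cref{s:s3}''. But \cref{s:s3} provides only Gaussian heat-kernel/Schauder estimates (\cref{l:gb,l:gb2}) and the identity \eqref{1bound} for the pure Brownian motion $W$; it contains no density estimate for the Euler scheme $X^n$, whose drift increment between $s$ and $\kappa_n(r)$ is a non-trivial adapted functional of the Brownian path. The paper avoids this by first proving the quadrature estimate for $W$ alone (\cref{t:appradf}) and then inserting the drift by Girsanov's theorem (\cref{c:appradf}). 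Your argument either needs that Girsanov step, or a separate lemma on conditional densities of the Euler scheme, before it can be considered complete.
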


Comparing this result with \cref{t:VK}, we see that \cref{t:mainnumt} establishes convergence in the entire regime (up to a loss of an arbitrary $\eps>0$) of $\gamma$ for which uniqueness of a strong solution is known. Furthermore, the new convergence rate does not deteriorate to $0$ as the regularity of the drift approaches the threshold $\gamma=0$, and it is at least $n^{-\frac12}$. Moreover, it is known that this rate is optimal \cite{ellinger2025optimal}; however, showing the optimality is out of the scope of these lecture notes.

In \cref{s:JN}--\cref{s:levynum}, we extend this result to SDEs driven by a jump process. This requires a significant extension of the stochastic sewing arguments and the introduction of several new tools. In the end, we will prove the following bound on the convergence rate.

\begin{theorem}[\cite{BDGLevy}]\label{t:levy}
Let $d\in \N$, $m\ge1$, $x\in\R^d$, $\alpha\in(1,2)$, $\eps>0$. Let $b\in\C^\gamma(\R^d,\R^d)$, where $\gamma$ satisfies condition \eqref{levycond}. Let $X$ be a solution to  \eqref{mainSDElevy}, and let $X^n$ be the corresponding Euler scheme
\begin{equation*}
	dX^n_t=b(X^n_{\kappa_n(t)})\,dt+\,dL_t,\qquad X_0=x_0,\quad t\ge0.
\end{equation*}

 Then there exists a constant  $C=C(\|b\|_{\C^\gamma},\alpha,\gamma,\eps,d,m)>0$ such that for any $n\in\N$ we have
	\begin{equation*}
		\|\sup_{t\in[0,1]}|X_t-X^n_t|\|_{L_m(\Omega)} \le Cn^{-\frac12-\frac\gamma\alpha+\eps}.
	\end{equation*}
\end{theorem}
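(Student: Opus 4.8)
\emph{Proof proposal.}
Write $\Delta_t:=X_t-X^n_t$ and abbreviate $\|\cdot\|_{L_m}:=\|\cdot\|_{L_m(\Omega)}$ (the stochastic sewing estimates below are in fact carried out in the corresponding conditional norm $\|\cdot\|_{L_m\mid\F_s}$, which we suppress here for readability). Since $b\in\C^\gamma$ with $\gamma>0$ is bounded, both $X$ and $X^n$ are of the form ``$L$ plus an $L_m$-Lipschitz process'': $X=L+\phi$, $X^n=L+\phi^n$ with $[\phi]_{\C^1},[\phi^n]_{\C^1}\le\|b\|_{\C^0}$; here the restriction $\alpha\in(1,2)$ enters, since it makes the drift a genuine first-order perturbation (indeed $\E|L_1|<\infty$ precisely when $\alpha>1$). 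We have $\Delta_t=\phi_t-\phi^n_t=\int_0^t\bigl(b(X_r)-b(X^n_{\kappa_n(r)})\bigr)\,dr$; splitting the integrand as $\bigl(b(X_r)-b(X^n_r)\bigr)+\bigl(b(X^n_r)-b(X^n_{\kappa_n(r)})\bigr)$, the plan is to prove, for all $0\le s\le t\le1$,
\begin{equation*}
\|\Delta_t-\Delta_s\|_{L_m}\le C\,(t-s)^{\rho}\,\sup_{r\in[s,t]}\|\Delta_r\|_{L_m}+C\,n^{-\frac12-\frac\gamma\alpha+\eps},\qquad\rho:=\tfrac{\alpha-1+\gamma}{\alpha},
\end{equation*}
and then to close this by a buckling (discrete Gr\"onwall) argument; the first term on the right will come from the stability part of the integrand, the second from the quadrature part.

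For the stability part I would invoke the regularization-by-noise bound for the $\alpha$-stable flow — the L\'evy analogue of \eqref{whatwewantmore}, established in \cref{s:levywp} and \cref{s:levyex} via the shifted stochastic sewing lemma: for adapted $L_m$-Lipschitz processes $\phi,\psi$,
\begin{equation*}
\Bigl\|\int_s^t\bigl(b(L_r+\phi_r)-b(L_r+\psi_r)\bigr)\,dr\Bigr\|_{L_m}\le C\,(t-s)^{\frac{\alpha-1+\gamma}{\alpha}}\,\sup_{r\in[s,t]}\|\phi_r-\psi_r\|_{L_m}.
\end{equation*}
This exponent is what comes out of integrating the $\alpha$-stable heat-kernel bound $\|P^L_h f\|_{\C^1}\lesssim h^{-(1-\gamma)/\alpha}\|f\|_{\C^\gamma}$ over $h\in[0,t-s]$, where $(P^L_h)$ denotes the transition semigroup of $L$; and condition \eqref{levycond}, $\gamma>1-\tfrac\alpha2$, is exactly what makes it exceed $\tfrac12$ — the threshold required for the (shifted) SSL to apply, the complementary second-difference condition being automatically satisfied. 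Applying this with $\psi=\phi^n$ and using $\phi_r-\phi^n_r=\Delta_r$ bounds the stability part by $C(t-s)^{\rho}\sup_{r\le t}\|\Delta_r\|_{L_m}$.

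The quadrature error is the heart of the matter. The naive estimate $\E|L_r-L_{\kappa_n(r)}|^{\gamma}\lesssim n^{-\gamma/\alpha}$ only yields the rate $n^{-\gamma/\alpha}$; the extra half power $n^{-1/2}$ is a second-order cancellation which I would extract with a further stochastic sewing argument. On each sub-step $[t_i,t_{i+1}]$ one has $X^n_r-X^n_{t_i}=(L_r-L_{t_i})+(r-t_i)b(X^n_{t_i})$, so the $\F_{t_i}$-conditional mean of the sub-increment is $P^L_{r-t_i}b\bigl(X^n_{t_i}+(r-t_i)b(X^n_{t_i})\bigr)-b(X^n_{t_i})$, whose supremum norm is $\lesssim(r-t_i)^{\gamma/\alpha}$ by $\|P^L_h f-f\|_{\C^0}\lesssim h^{\gamma/\alpha}[f]_{\C^\gamma}$, while the compensated remainders form a martingale difference sequence along the grid whose $\ell^2$-summation supplies the $n^{-1/2}$. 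These two contributions must be treated jointly, and the delicate point is the choice of germ: one freezes the slow variable at dyadic times rather than at the grid points $\kappa_n(\cdot)$, so that the ``drift'' of the germ over $[s,t]$ is itself smoothed by the noise. Two genuine obstacles then appear. First, the increments of $L$ have finite moments only of order $<\alpha$, so to obtain the estimate in $L_m$ for \emph{every} $m\ge1$ one truncates the large jumps of $L$ and bootstraps the moments via the John--Nirenberg inequality of \cref{s:JN}; this bootstrap is the source of the arbitrarily small loss $n^{\eps}$. Second, the compensator of $L$ is not square-integrable, which again forces the use of the shifted SSL in place of the original one. The net outcome is
\begin{equation*}
\Bigl\|\int_s^t\bigl(b(X^n_r)-b(X^n_{\kappa_n(r)})\bigr)\,dr\Bigr\|_{L_m}\le C\,n^{-\frac12-\frac\gamma\alpha+\eps}\,(t-s)^{1/2},
\end{equation*}
that is, an honest $O\bigl(n^{-1/2-\gamma/\alpha+\eps}\bigr)$ quadrature error over $[0,1]$.

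Combining the two parts gives the displayed conditional inequality with $\rho=(\alpha-1+\gamma)/\alpha>\tfrac12$. To conclude I would choose a partition $0=u_0<\dots<u_N=1$ with $C(u_{k+1}-u_k)^{\rho}\le\tfrac12$, so that $N$ depends only on $\|b\|_{\C^\gamma},\alpha,\gamma,m,d$; on each $[u_k,u_{k+1}]$ one gets $\sup_{r\in[u_k,u_{k+1}]}\|\Delta_r\|_{L_m}\le\|\Delta_{u_k}\|_{L_m}+\tfrac12\sup_{r\in[u_k,u_{k+1}]}\|\Delta_r\|_{L_m}+Cn^{-\frac12-\frac\gamma\alpha+\eps}$, and after absorbing the middle term and iterating from $\Delta_{u_0}=0$ one obtains $\sup_{t\in[0,1]}\|\Delta_t\|_{L_m}\le C'n^{-\frac12-\frac\gamma\alpha+\eps}$. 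Upgrading to $\|\sup_{t\in[0,1]}|\Delta_t|\|_{L_m}$ is then done either by rerunning the argument with the supremum kept inside the norms throughout (the shifted SSL and the regularization estimate are naturally stated for supremum norms) or by a Kolmogorov/chaining bound fed by the uniform moment control just obtained; the dependence of $C$ on $\|b\|_{\C^\gamma},\alpha,\gamma,\eps,d,m$ is then as claimed, and uniformity in $n$ is automatic. I expect the main obstacle to be the quadrature-error estimate: extracting the extra $n^{-1/2}$ through a second, carefully-germed sewing argument while simultaneously handling the heavy tails of $L$ — which force the large-jump truncation, the John--Nirenberg bootstrap, and the passage to the shifted SSL — is precisely why \cref{s:levywp}, \cref{s:JN} and the surrounding sections build up that machinery.
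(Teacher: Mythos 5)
Your overall architecture is close to the paper's, but there is a concrete structural gap: you decompose the error into only two pieces, a stability term and a single quadrature term
\begin{equation*}
\int_s^t\bigl(b(X^n_r)-b(X^n_{\kappa_n(r)})\bigr)\,dr
=\int_s^t\bigl(b(L_r+\phi^n_r)-b(L_{\kappa_n(r)}+\phi^n_{\kappa_n(r)})\bigr)\,dr,
\end{equation*}
and propose to attack this quadrature term directly by a second sewing argument. The paper argues explicitly that this does not close: in the Brownian case one absorbs the drift $\phi^n$ by Girsanov's theorem and reduces to the drift-free quantity $\int(f(W_r)-f(W_{\kappa_n(r)}))\,dr$, but no Girsanov transform is available for $\alpha$-stable noise. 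For that reason the paper is forced to split the quadrature error once more, into a drift-shift part $Err_{n,21}(s,t)=\int_s^t\bigl(b(L_r+\phi^n_r)-b(L_r+\phi^n_{\kappa_n(r)})\bigr)\,dr$ and a noise-shift part $Err_{n,22}(s,t)=\int_s^t\bigl(b(L_r+\phi^n_{\kappa_n(r)})-b(L_{\kappa_n(r)}+\phi^n_{\kappa_n(r)})\bigr)\,dr$, handled by \cref{L:32} and \cref{l:err3} respectively. Your ``freeze the slow variable at dyadic times'' heuristic, if made precise, would force you to quantify the error committed when replacing $\phi^n_{\kappa_n(r)}$ by its $\F_s$-approximation inside the germ --- and that error \emph{is} the $Err_{n,21}$-type term, so the further split is not optional.

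Closely tied to this is a second omission: the conditional H\"older seminorm $\dnew{\cdot}{\tau}{m}{[S,T]}$ from \cref{sec:44}. When you bound $Err_{n,21}$ via the shifted-sewing estimate \cref{L:32}, the drift increment that shows up is $\phi^n-\phi^n_{\kappa_n(\cdot)}$, and its ordinary seminorm $[\phi^n-\phi^n_{\kappa_n(\cdot)}]_{\C^\tau L_2}$ is \emph{infinite} for every $\tau>0$ (the process jumps at every grid point; see~\eqref{bad}). Only the conditional seminorm, which measures the distance to the conditional expectation rather than to the earlier value, is finite and gives the $n^{-\frac12-(\frac\gamma\alpha\wedge\frac12)+\eps}$ bound. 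Your sketch never introduces this object and implicitly uses the plain $\C^\tau L_m$ scale throughout, which would break at exactly this point. Two smaller misstatements: the shifted SSL is needed because $\|\nabla\Pa_t f\|_{\C^1}\lesssim t^{(\gamma-2)/\alpha}$ yields a nonintegrable singularity when $\gamma<2-\alpha$ --- not because ``the compensator of $L$ is not square-integrable'' (for symmetric $\alpha$-stable $L$ with $\alpha>1$ the process is already a martingale); and the paper's John--Nirenberg bootstrap does not involve truncating large jumps, it works directly on the continuous drift process $\A_t$. Finally, note that the paper's own proof of \cref{t:levy} is carried out only for $m=2$ and without the supremum inside the norm, deferring the full statement to~\cite{BDGLevy}; your sketch of the general-$m$ and sup-norm upgrade therefore goes beyond what the text actually verifies.
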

We see again that for $\alpha\in(1,2)$, \cref{t:levy} provides a convergence rate in the entire range of $\gamma$ where strong well-posedness is known.

Finally, in \cref{e:mbb2}--\cref{e:531} we will study numerics for SDEs with Sobolev rather than H\"older drifts. This will require further generalization of stochastic sewing.


\section{Sewing and  stochastic sewing}\label{s:2}

We begin with the presentation of the sewing lemma and its stochastic analogue.  For $0 \le S < T$, consider the simplices
\begin{equation*}
	\Delta_{[S,T]}:=\{(s,t)\in[S,T]^2\colon s< t\};\quad \Delta^3_{[S,T]}:=\{(s,u,t)\in[S,T]^3\colon s< u< t\}.
\end{equation*}
If $A_{\cdot,\cdot}$ is a function $\Delta_{[S,T]}\to\R^d$, then we put
\begin{equation}\label{deltadiff}
	\delta A_{s,u,t}:= A_{s,t}-A_{s,u}-A_{u,t},\quad  (s,u,t)\in\Delta^3_{[S,T]}. 
\end{equation}

The next statement is the celebrated sewing lemma, obtained independently
by Feyel and De La Pradelle \cite{PressF}  and Gubinelli \cite{Gubi}.
\begin{theorem}[Sewing lemma, \cite{PressF,Gubi}]\label{t:SL}
Let $E$ be a Banach space, $0\le S\le T$. Suppose that there exist functions $\A\colon [S,T]\to E$, $A\colon\Delta_{[S,T]}\to E$, such  that the following holds:
\begin{enumerate}[(i)]
\item  for any sequence of partitions $\Pi_N:=\{S=t^N_0,...,t^N_{k(N)}=T\}$ of $[S,T]$ with $|\Pi_N|\to0$  one has
\begin{equation}\label{conssl:ds3}
	\A_T-\A_S=\lim_{N\to\infty} \sum_{i=0}^{k(N)-1} A_{t^N_i,t_{i+1}^N};
\end{equation}
\item there exist $\gamma>1$, $\Gamma>0$ such that for every $(s,u,t)\in\Delta^3_{[S,T]}$ we have
\begin{equation}\label{keycondSL}	
\| \delta A_{s,u,t}\|_E\le \Gamma|t-s|^{\gamma}.
\end{equation}
\end{enumerate}	
Then there exists a constant $C=C(\gamma)>0$  independent of $S$, $T$, $\Gamma$ such that 
\begin{equation}
		\|\A_{T}-\A_{S}\|_E\le \|A_{S,T}\|_E+	C\Gamma |T-S|^{\gamma}.
		\label{est:dssl1}
	\end{equation}
\end{theorem}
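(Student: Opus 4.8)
The plan is to construct the limit in \eqref{conssl:ds3} explicitly along the sequence of dyadic-type partitions obtained by successive bisection, and to control the error accumulated at each refinement step using the key bound \eqref{keycondSL}. Concretely, for a partition $\Pi$ of $[S,T]$ let $A(\Pi):=\sum_i A_{t_i,t_{i+1}}$ be the associated Riemann-type sum. First I would fix a partition $\Pi$ with $N\ge 1$ intervals and let $\Pi'$ be the partition obtained by removing one interior point, say $t_j$, merging $[t_{j-1},t_j]$ and $[t_j,t_{j+1}]$ into $[t_{j-1},t_{j+1}]$. Then $A(\Pi)-A(\Pi') = A_{t_{j-1},t_j}+A_{t_j,t_{j+1}}-A_{t_{j-1},t_{j+1}} = -\delta A_{t_{j-1},t_j,t_{j+1}}$, so by \eqref{keycondSL} this single-point-removal error is at most $\Gamma|t_{j+1}-t_{j-1}|^\gamma$.

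The standard trick is then to always remove a point $t_j$ for which $|t_{j+1}-t_{j-1}|$ is small: by a pigeonhole/averaging argument, in any partition with $N\ge 2$ intervals there is an interior point $t_j$ with $t_{j+1}-t_{j-1}\le \tfrac{2}{N-1}(T-S)$ (the gaps $t_{j+1}-t_{j-1}$ over $j=1,\dots,N-1$ sum to at most $2(T-S)$). Removing points one at a time down from $N$ intervals to a single interval $[S,T]$, and summing the errors, gives
\begin{equation*}
\|A(\Pi)-A_{S,T}\|_E \le \sum_{k=2}^{N}\Gamma\Bigl(\tfrac{2}{k-1}(T-S)\Bigr)^{\gamma} = 2^\gamma\Gamma|T-S|^\gamma\sum_{k=1}^{N-1}k^{-\gamma} \le C(\gamma)\,\Gamma|T-S|^\gamma,
\end{equation*}
where $C(\gamma)=2^\gamma\sum_{k\ge 1}k^{-\gamma}<\infty$ precisely because $\gamma>1$. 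This is the only place the hypothesis $\gamma>1$ is used, and it is the crux of the argument. Note this bound is uniform over \emph{all} partitions $\Pi$, with no reference to the mesh.

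From here the conclusion follows quickly. Given two partitions $\Pi_1,\Pi_2$, pass to their common refinement $\Pi$; applying the uniform bound on each subinterval of $\Pi_1$ (resp.\ $\Pi_2$) and summing shows $\|A(\Pi_1)-A(\Pi_2)\|_E$ is small when both meshes are small, so $A(\Pi_N)$ is Cauchy and converges to a limit $\mathcal A_T-\mathcal A_S$ that by hypothesis (i) is independent of the approximating sequence; passing to the limit $N\to\infty$ in the uniform bound above yields \eqref{est:dssl1}. I expect the main obstacle to be purely bookkeeping: making the "remove a good point and iterate" induction precise (choosing the point, tracking how the gap bounds evolve as the partition shrinks) and verifying that the common-refinement comparison genuinely forces the Cauchy property; both are routine but must be stated carefully so that the constant $C(\gamma)$ is manifestly independent of $S$, $T$, and $\Gamma$.
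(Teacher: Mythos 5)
Your proof is correct, but it takes a genuinely different route from the paper's. The paper restricts attention to the dyadic partitions $t_i^n = S + \tfrac{i}{2^n}(T-S)$ and compares consecutive Riemann sums $A^{(n)}$ and $A^{(n+1)}$: each step introduces $2^n$ error terms of size $\Gamma(2^{-n}(T-S))^\gamma$, so $\|A^{(n+1)}-A^{(n)}\|_E \le \Gamma|T-S|^\gamma 2^{-n(\gamma-1)}$, and the telescoping sum is a geometric series. You instead use the classical Young-type "remove a point" argument: starting from an arbitrary partition with $N$ intervals, a pigeonhole step locates an interior node whose removal costs at most $\Gamma(2(T-S)/(N-1))^\gamma$, and iterating down to the trivial partition gives the uniform bound $\|A(\Pi)-A_{S,T}\|_E \le 2^\gamma\Gamma|T-S|^\gamma\sum_{k\ge1}k^{-\gamma}$ over \emph{all} partitions $\Pi$, not just dyadic ones. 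Your approach therefore buys more: the mesh-uniform control plus the common-refinement comparison actually proves that the Riemann sums converge along any vanishing-mesh sequence, which is exactly why condition \eqref{conssl:ds3} can be dropped (as the remark after the theorem points out). The cost is the extra bookkeeping you flag: the iterated pigeonhole selection versus a one-line geometric series. Both arguments use $\gamma>1$ in the same essential way, to make a sum over scales converge, and both are perfectly valid; the paper opts for the dyadic version precisely because it keeps hypothesis \eqref{conssl:ds3} and wants the shortest path to \eqref{est:dssl1}. One small note: your Cauchy/common-refinement paragraph is redundant once you are willing to invoke hypothesis (i) — you may simply take $\Pi = \Pi_N$ in the uniform bound and let $|\Pi_N|\to 0$ — but it is harmless and shows the stronger statement.
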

\begin{remark}
Actually, condition \eqref{conssl:ds3} can be omitted. One can show that condition (ii) of the lemma alone implies that the Riemann sum $\sum A_{t^N_i,t_{i+1}^N}$ always converges and that its limit satisfies \eqref{est:dssl1}, see, e.g., \cite[Lemma 4.2]{FH}. However, to significantly simplify the proof, and since we will only apply the sewing lemma in this form, we keep condition~\eqref{conssl:ds3}.
\end{remark}

\begin{proof}[Proof of \cref{t:SL}]
Fix $0\le S\le T$. For $n\in\Z_+$, consider a dyadic partition $t^n_i=S+\frac{i}{2^{n}}(T-S)$, $i=0, 1,\hdots, 2^{n}$, of the interval $[S,T]$. Denote the $n$th Riemann sum by 
\begin{equation}\label{abrn}
A^{(n)}:=\sum_{i=0}^{2^n-1} A_{t_i^n,t_{i+1}^n}.
\end{equation}
Then, by \eqref{conssl:ds3} we have
\begin{equation}\label{whatwehaveSL}
\|\A_T-\A_S\|_E=\lim_{n\to\infty}\| A^{(n)}\|_E\le \|A_{S,T}\|_{E}+ \sum_{n=0}^\infty\| A^{(n+1)}-A^{(n)}\|_E.
\end{equation}
Using condition \eqref{keycondSL}, it is easy to see that for any $n\in\N$
\begin{equation*}
\|A^{(n+1)}-A^{(n)}\|_E=\Bigl\|\sum_{i=0}^{2^n-1}\delta A_{t_i^n,t_{2i+1}^{n+1},t_{i+1}^n}\Bigr\|_E\le
\sum_{i=0}^{2^n-1}\bigl\|\delta A_{t_i^n,t_{2i+1}^{n+1},t_{i+1}^n}\bigr\|_E\le \Gamma|T-S|^\gamma 2^{-n(\gamma-1)}.
\end{equation*}
Summing this over $n\in\N$ (recall that $\gamma>1$ by assumption) and substituting into \eqref{whatwehaveSL}, we finally get
\begin{equation*}
	\|\A_T-\A_S\|_E\le \|A_{S,T}\|_{E}+ C\Gamma |T-S|^\gamma,
\end{equation*}
for $C=C(\gamma)$ which is the desired bound \eqref{est:dssl1}.
\end{proof}

The sewing  lemma is crucial in the context of rough path theory, see, e.g., \cite{FH}. We can also try to apply it for our purposes. By taking $E=L_m(\Omega)$, $m \ge 1$, in \cref{t:SL}, we see that we need to verify the following condition on increments
\begin{equation}\label{notgood}
\| \delta A_{s,u,t}\|_{L_m(\Omega)}\le \Gamma|t-s|^{\gamma}
\end{equation}
for $\gamma>1$. However, this condition can be significantly improved if we switch to the stochastic setting. As an analogy, recall that to define the deterministic integral  $\int f_s d g_s$, where $f\in\C^\alpha$, $g\in\C^\beta$ one generally requires ${\alpha+\beta>1}$, whereas  the stochastic integral $\int f_s dW_s$ can be defined under much weaker regularity assumptions due to stochastic cancellations.

Let $(\Omega, \F,\P)$ be a probability space, $T>0$. If $(\F_t)_{t\in[0,T]}$ is a complete filtration, then we write
\begin{equation*}
\E^t[\cdot]:=\E[\cdot|\F_t],\quad t\in[S,T].
\end{equation*}	
We will also frequently use the following inequality, which follows directly from Jensen’s inequality: if $m \ge 1$, $X \colon \Omega \to \mathbb{R}^d$ is a random variable, and $\mathcal{G} \subset \mathcal{F}$ is a $\sigma$-algebra, then
\begin{equation}\label{useful}
	\| \E [X|\mathcal{G}]\|_{L_m(\Omega)}\le 
	\| X\|_{L_m(\Omega)}.
\end{equation}

We are now ready to discover the central tool of these lecture notes: the stochastic sewing lemma of L\^e. 
\begin{theorem}[Stochastic sewing lemma, \cite{LeSSL}]\label{T:SSLst}
Let $m\in[2,\infty)$, $0\le S\le T$. 
Suppose that there exist measurable functions ${\A\colon \Omega\times[S,T]\to \R^d}$, $A\colon\Omega\times \Delta_{[S,T]}\to \R^d$ and a complete filtration $(\F_t)_{t\in[S,T]}$ such that the following holds:
\begin{enumerate}[(i)]
\item 
for  any sequence of partitions $\Pi_N:=\{S=t^N_0,...,t^N_{k(N)}=T\}$ of $[S,T]$ with $|\Pi_N|\to0$  one has
	\begin{equation}\label{con:s3}
		\sum_{i=0}^{k(N)-1} A_{t^N_i,t_{i+1}^N}\to  \A_{T}-\A_{S}\quad\text{in probability as $N\to\infty$};
	\end{equation}	
	\item for any $(s,t)\in \Delta_{[S,T]}$, the random variable $A_{s,t}$ is $\F_t$--measurable;
	\item there exists $\Gamma_1, \Gamma_2\ge0$, $\gamma_1>\frac12$, $\gamma_2>1$ such that for every $(s,u,t)\in\Delta^3_{[S,T]}$ we have 
	\begin{align}	
		&\|\delta A_{s,u, t}\|_{L_m(\Omega)}\le \Gamma_1|t-s|^{\gamma_1},\label{con:s1}	\\
		&\|\E^s \delta A_{s,u,t}\|_{L_m(\Omega)}\le \Gamma_2|t-s|^{\gamma_2}.\label{con:s2}
	\end{align}
\end{enumerate}	
Then there exists a constant $C=C(\gamma_1,\gamma_2,d,m)$ independent of $S$, $T$, $\Gamma_1$, $\Gamma_2$ such that 
	\begin{equation}
		\|\A_{T}-\A_{S}\|_{L_m(\Omega)} \le \|A_{S,T}\|_{L_m(\Omega)}+ C \Gamma_1 |T-S|^{\gamma_1}+
		C \Gamma_2 |T-S|^{\gamma_2}.
		\label{est:ssl1}
	\end{equation}
\end{theorem}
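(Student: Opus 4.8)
The plan is to follow the dyadic-partition scheme from the proof of \cref{t:SL}, but to gain an exponent by splitting each three-point increment into a conditional-expectation part, controlled exactly as in the deterministic case by \eqref{con:s2}, and a centered part which forms a martingale difference sequence and can therefore be summed with a square-function estimate instead of the triangle inequality; this is precisely what lets $\gamma_1>\tfrac12$ replace the requirement $\gamma>1$.

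Concretely, fix $0\le S\le T$, set $t^n_i:=S+\tfrac{i}{2^n}(T-S)$ for $0\le i\le 2^n$, and let $A^{(n)}:=\sum_{i=0}^{2^n-1}A_{t^n_i,t^n_{i+1}}$ be the $n$th dyadic Riemann sum, so that $A^{(0)}=A_{S,T}$ and, by \eqref{con:s3}, $A^{(n)}\to\A_T-\A_S$ in probability. Just as in \eqref{whatwehaveSL}, it suffices to bound $\sum_{n\ge0}\|A^{(n+1)}-A^{(n)}\|_{L_m(\Omega)}$: if this series converges, then $(A^{(n)})_n$ is Cauchy, hence convergent, in $L_m(\Omega)$; its $L_m$-limit agrees almost surely with the in-probability limit $\A_T-\A_S$ (so in particular $\A_T-\A_S\in L_m(\Omega)$ a posteriori), and therefore $\|\A_T-\A_S\|_{L_m(\Omega)}=\lim_n\|A^{(n)}\|_{L_m(\Omega)}\le\|A_{S,T}\|_{L_m(\Omega)}+\sum_{n\ge0}\|A^{(n+1)}-A^{(n)}\|_{L_m(\Omega)}$.

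For the increments, let $m^n_i:=t^{n+1}_{2i+1}$ denote the midpoint of $[t^n_i,t^n_{i+1}]$; exactly as in the deterministic proof one has $A^{(n+1)}-A^{(n)}=-\sum_{i=0}^{2^n-1}\delta A_{t^n_i,m^n_i,t^n_{i+1}}$. Writing $\Xi_i:=\delta A_{t^n_i,m^n_i,t^n_{i+1}}$, I would decompose
\[
\Xi_i=\E^{t^n_i}\Xi_i+\bigl(\Xi_i-\E^{t^n_i}\Xi_i\bigr)=:D_i+V_i.
\]
By the triangle inequality, \eqref{con:s2} gives $\|\sum_i D_i\|_{L_m(\Omega)}\le 2^n\Gamma_2(2^{-n}|T-S|)^{\gamma_2}=\Gamma_2|T-S|^{\gamma_2}2^{-n(\gamma_2-1)}$, which is summable in $n$ because $\gamma_2>1$. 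For the $V_i$: hypothesis (ii) makes each $A_{s,t}$, hence $\delta A_{s,u,t}$, hence $\Xi_i$ and $V_i$, measurable with respect to $\F_t$ (here $t=t^n_{i+1}$), while $\E^{t^n_i}V_i=0$ by construction; thus $\bigl(\sum_{i\le k}V_i\bigr)_k$ is a discrete martingale with respect to the filtration $(\F_{t^n_{k+1}})_k$. Applying the Burkholder--Davis--Gundy inequality (using $m\ge2$) followed by Minkowski's inequality in $L_{m/2}(\Omega)$ yields $\|\sum_i V_i\|_{L_m(\Omega)}^2\le C_{m,d}\sum_i\|V_i\|_{L_m(\Omega)}^2$, and \eqref{con:s1} together with \eqref{useful} bound $\|V_i\|_{L_m(\Omega)}\le 2\Gamma_1(2^{-n}|T-S|)^{\gamma_1}$, whence
\[
\Bigl\|\sum_{i=0}^{2^n-1}V_i\Bigr\|_{L_m(\Omega)}\le 2C_{m,d}^{1/2}\Gamma_1|T-S|^{\gamma_1}2^{n(\frac12-\gamma_1)},
\]
which is summable in $n$ precisely because $\gamma_1>\tfrac12$. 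Summing both contributions over $n\ge0$ and inserting into the bound from the previous paragraph produces \eqref{est:ssl1} with a constant $C=C(\gamma_1,\gamma_2,d,m)$ of the stated form.

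The only genuinely nontrivial step is this martingale estimate for the $V_i$: one must recognize the centered increments as $\F_{t^n_{i+1}}$-measurable with vanishing $\F_{t^n_i}$-conditional expectation, which is exactly what allows a square-function (BDG) bound in place of the triangle inequality and thereby converts the prohibitive factor $2^n\cdot 2^{-n\gamma_1}$ into the summable $2^{n/2}\cdot 2^{-n\gamma_1}$. Everything else --- the dyadic telescoping, identifying the $L_m$-limit with $\A_T-\A_S$ via the Cauchy property (needed because \eqref{con:s3} only asserts convergence in probability), and bookkeeping of the constant's dependence --- is routine.
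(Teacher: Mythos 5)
Your proof is correct and follows essentially the same path as the paper's: the dyadic telescoping, the decomposition of $\delta A_{t^n_i,m^n_i,t^n_{i+1}}$ into its $\F_{t^n_i}$-conditional expectation $D_i$ (handled by \eqref{con:s2} and the triangle inequality) and the centered remainder $V_i$ (a martingale difference sequence handled by BDG together with \eqref{con:s1} and \eqref{useful}), and the summation over $n$ exploiting $\gamma_1>\tfrac12$ and $\gamma_2>1$ are all exactly what the paper does. The only small difference is how you pass from the convergence in probability \eqref{con:s3} to the $L_m$ estimate: the paper invokes Fatou's lemma to write $\|\A_T-\A_S\|_{L_m}\le\lim_n\|A^{(n)}\|_{L_m}$, while you first observe that the bounded series makes $(A^{(n)})_n$ Cauchy in $L_m$ and then identify the $L_m$-limit with the in-probability limit. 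Your version is slightly more explicit about why the limit identification is legitimate, but the two arguments are equivalent in substance.
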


\begin{remark}
It is interesting to compare the conditions \eqref{con:s1} and \eqref{con:s2} of the stochastic sewing lemma with condition \eqref{notgood}. We observe that assumption \eqref{con:s2}, compared to \eqref{notgood}, additionally involves a conditional expectation. As we will see below, $\E^s \delta A_{s,u,t}$ is often much smaller than $\delta A_{s,u,t}$. We also note that the exponent $\gamma_1$ in condition \eqref{con:s1} is assumed to be greater than $1/2$, whereas the corresponding parameter $\gamma$ in \eqref{notgood} was required to be greater than $1$. All this will be crucial in our applications of the stochastic sewing lemma later.
\end{remark}

To prove the stochastic sewing lemma, we recall the Burkholder-Davis-Gundy inequality (BDG-inequality).
\begin{proposition}[BDG-inequality, \cite{Bu66,BDG72}]\label{p:BDG}
Let $n\in\N$, let $(\F_i)_{i=0,\hdots,n}$ be a filtration. Let  $(X_i,\F_i)_{i=1,\hdots,n}$ be a sequence of martingale differences, that is each $X_i$ is $\F_i$ measurable and $\E (X_i|\F_{i-1})=0$. Then for any $m\ge1$ there exists a constant $C=C(m)>0$ so that
\begin{equation*}
\E \Bigl|\sum_{i=1}^n X_i \Bigr|^m\le C\E\Bigl[ \bigl(\sum_{i=1}^n | X_i|^2\bigr)^{\frac{m}2}\Bigr]
\end{equation*}
\end{proposition}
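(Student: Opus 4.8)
\textbf{Proof plan for the stochastic sewing lemma (\cref{T:SSLst}).}

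The plan is to follow the dyadic-telescoping strategy used in the proof of the deterministic sewing lemma (\cref{t:SL}), but to exploit the adaptedness assumption (ii) together with the improved increment bound \eqref{con:s1} by splitting each telescoping increment into a martingale-difference part (controlled via BDG) and a predictable part (controlled via the conditional bound \eqref{con:s2}). Fix $0\le S\le T$ and, as in the proof of \cref{t:SL}, introduce the dyadic partition $t^n_i=S+\frac{i}{2^n}(T-S)$ and the Riemann sum $A^{(n)}:=\sum_{i=0}^{2^n-1}A_{t^n_i,t^n_{i+1}}$. By assumption (i) we have $\|\A_T-\A_S\|_{L_m(\Omega)}=\lim_{n\to\infty}\|A^{(n)}\|_{L_m(\Omega)}$, so by the triangle inequality it suffices to bound $\|A^{(0)}\|_{L_m(\Omega)}=\|A_{S,T}\|_{L_m(\Omega)}$ together with $\sum_{n\ge0}\|A^{(n+1)}-A^{(n)}\|_{L_m(\Omega)}$. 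As before, $A^{(n+1)}-A^{(n)}=\sum_{i=0}^{2^n-1}\delta A_{t^n_i,t^{n+1}_{2i+1},t^n_i{+1}}$; write $D^n_i:=\delta A_{t^n_i,\,t^{n+1}_{2i+1},\,t^n_{i+1}}$, so $A^{(n+1)}-A^{(n)}=\sum_{i=0}^{2^n-1}D^n_i$.

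The key step is the decomposition $D^n_i=(D^n_i-\E^{t^n_i}D^n_i)+\E^{t^n_i}D^n_i$. For the first (centered) part, note that by assumption (ii) each $D^n_i$ is $\F_{t^n_{i+1}}$-measurable, and $\E^{t^n_i}(D^n_i-\E^{t^n_i}D^n_i)=0$; moreover the points $t^n_0<t^n_1<\dots<t^n_{2^n}$ furnish a filtration $(\F_{t^n_i})$ along which $(D^n_i-\E^{t^n_i}D^n_i)_i$ is a martingale-difference sequence. Hence by the BDG inequality (\cref{p:BDG}) and then the triangle inequality in $L_{m/2}(\Omega)$ (using $m\ge2$),
\begin{align*}
\Bigl\|\sum_{i}(D^n_i-\E^{t^n_i}D^n_i)\Bigr\|_{L_m(\Omega)}^2
&\le C\Bigl\|\sum_i |D^n_i-\E^{t^n_i}D^n_i|^2\Bigr\|_{L_{m/2}(\Omega)}
\le C\sum_i \bigl\||D^n_i-\E^{t^n_i}D^n_i|^2\bigr\|_{L_{m/2}(\Omega)}\\
&= C\sum_i \|D^n_i-\E^{t^n_i}D^n_i\|_{L_m(\Omega)}^2
\le C\sum_i \|D^n_i\|_{L_m(\Omega)}^2,
\end{align*}
where in the last step I use \eqref{useful} to remove the conditional expectation (at the cost of a factor $2$ after $D^n_i\mapsto D^n_i-\E^{t^n_i}D^n_i$, absorbed into $C$). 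By \eqref{con:s1}, $\|D^n_i\|_{L_m(\Omega)}\le \Gamma_1 (2^{-n}|T-S|)^{\gamma_1}$, and there are $2^n$ terms, so the right-hand side is $\le C\Gamma_1^2\, 2^{n}\,(2^{-n}|T-S|)^{2\gamma_1}=C\Gamma_1^2|T-S|^{2\gamma_1}2^{-n(2\gamma_1-1)}$. Since $\gamma_1>\frac12$, taking square roots and summing over $n\ge0$ gives a contribution $\le C\Gamma_1|T-S|^{\gamma_1}$ (geometric series). For the second (predictable) part, \eqref{con:s2} gives $\|\E^{t^n_i}D^n_i\|_{L_m(\Omega)}\le\Gamma_2(2^{-n}|T-S|)^{\gamma_2}$, and summing over the $2^n$ indices yields $\|\sum_i\E^{t^n_i}D^n_i\|_{L_m(\Omega)}\le\Gamma_2|T-S|^{\gamma_2}2^{-n(\gamma_2-1)}$; since $\gamma_2>1$, summing over $n\ge0$ contributes $\le C\Gamma_2|T-S|^{\gamma_2}$. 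Combining the two contributions with $\|A_{S,T}\|_{L_m(\Omega)}$ through \eqref{whatwehaveSL}-style telescoping yields \eqref{est:ssl1}, with $C=C(\gamma_1,\gamma_2,d,m)$ independent of $S,T,\Gamma_1,\Gamma_2$.

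The main obstacle — and the place where the stochastic structure is genuinely used — is the centered part: one must recognize that assumption (ii) makes $(D^n_i-\E^{t^n_i}D^n_i)_{i=0,\dots,2^n-1}$ a bona fide martingale-difference sequence with respect to the subfiltration $(\F_{t^n_i})_i$, so that BDG applies and converts the would-be $\ell^1$ sum of $2^n$ increments into an $\ell^2$ sum; this is exactly what relaxes the required exponent from $\gamma>1$ (deterministic sewing) to $\gamma_1>\frac12$. A minor technical point to be careful about is the passage $\||Y|^2\|_{L_{m/2}}=\|Y\|_{L_m}^2$ and the use of the triangle inequality in $L_{m/2}$, which requires $m/2\ge1$, i.e. $m\ge2$ — precisely the hypothesis of the theorem. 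One should also note the constant in BDG depends only on $m$, and the constant from the geometric summations depends only on $\gamma_1,\gamma_2$, so the final $C$ has the claimed dependence (the dimension $d$ enters only through the equivalence of norms on $\R^d$ if one prefers to argue coordinatewise, but one can also apply BDG directly to $\R^d$-valued martingale differences).
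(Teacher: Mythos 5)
You have proved the wrong statement. The statement to be established is the Burkholder--Davis--Gundy inequality itself (\cref{p:BDG}): for a martingale-difference sequence $(X_i,\F_i)$, $\E\bigl|\sum_{i=1}^n X_i\bigr|^m\le C\,\E\bigl[(\sum_{i=1}^n|X_i|^2)^{m/2}\bigr]$. Your text instead takes this inequality as a black box (you invoke ``the BDG inequality (\cref{p:BDG})'' in the middle of your argument) and uses it to prove a different result, namely the stochastic sewing lemma \cref{T:SSLst}. As a proof of \cref{p:BDG} this is circular: nothing in your argument bounds the $m$-th moment of a martingale by the $m/2$-th moment of its quadratic variation; you simply assume that bound and then carry out the dyadic telescoping for the sewing lemma.

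To actually prove \cref{p:BDG} in the regime $m\ge2$ used in these notes, the route laid out in the exercises is: (1) the orthogonality identity $\E\bigl|\sum_{i=1}^n X_i\bigr|^2=\sum_{i=1}^n\E|X_i|^2$, which follows from $\E(X_i\mid\F_{i-1})=0$; (2) Doob's maximal inequality (\cref{E:doob}), reducing control of $\max_{k\le n}|M_k|$ to control of $|M_n|$; and (3) the Garsia--Neveu lemma, which upgrades a family of $L_2$-type bounds on the stopped/truncated martingale, uniform over levels $\lambda$, to an $L_m$ bound for all $m\ge2$. The conditional-expectation decomposition and dyadic summation you wrote are the content of the \emph{proof of the stochastic sewing lemma}, not of the BDG inequality; that material belongs to \cref{T:SSLst} and is essentially the paper's argument there, but it does not address the proposition at hand.
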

It is easy to see that it follows from the BDG-inequality, that if $m\ge2$, then in the setting of \cref{p:BDG} we have 
\begin{equation}\label{BDG}
\Bigl\|\sum_{i=0}^n X_i \Bigr\|_{L_m(\Omega)}\le C\Bigl\| \sum_{i=0}^n | X_i|^2\Bigr\|_{L_{\frac{m}2}(\Omega)}^{\frac12}\le C\Bigl( \sum_{i=0}^n \| X_i\|^2_{L_m(\Omega)}\Bigr)^{\frac12},
\end{equation}
where $C=C(m)>0$.

\begin{proof}[Proof of \cref{T:SSLst}]
	Fix $m\ge2$, $0\le S\le T$. 
As before, we consider the dyadic partition $t^n_i=S+\frac{i}{2^{n}}(T-S)$, $i=0, 1,\hdots, 2^{n}$, $n\in\Z_+$ of the interval $[S,T]$. We define $A^{(n)}$ as in \eqref{abrn} and note that by Fatou's lemma
\begin{equation}\label{whatwehaveSSL}
\|\A_T-\A_S\|_{L_m(\Omega)}\le \lim_{n\to\infty}\| A^{(n)}\|_{L_m(\Omega)}\le \|A_{S,T}\|_{L_m(\Omega)}+ \sum_{n=0}^\infty\| A^{(n+1)}-A^{(n)}\|_{L_m(\Omega)}.
\end{equation}

We estimate the difference of $A^{(n+1)}-A^{(n)}$ using the BDG-inequality  and conditions \eqref{con:s1}, \eqref{con:s2}. We get for $n\in\Z_+$
\begin{align}
&\|A^{(n+1)}-A^{(n)}\|_{L_m(\Omega)}\nn\\
&\,\,=\Bigl\|\sum_{i=0}^{2^n-1}\delta A_{t_i^n,t_{2i+1}^{n+1},t_{i+1}^n}\Bigr\|_{L_m(\Omega)}\nn\\
&\,\,\le
\Bigl\|\sum_{i=0}^{2^n-1}(\delta A_{t_i^n,t_{2i+1}^{n+1},t_{i+1}^n}-\E^{t_i^n}\delta A_{t_i^n,t_{2i+1}^{n+1},t_{i+1}^n})\Bigr\|_{L_m(\Omega)}+\Bigl\|\sum_{i=0}^{2^n-1}\E^{t_i^n}\delta A_{t_i^n,t_{2i+1}^{n+1},t_{i+1}^n}\Bigr\|_{L_m(\Omega)}\nn\\
&\,\,\le 
C \Bigl(\sum_{i=0}^{2^n-1}\bigl\|\delta A_{t_i^n,t_{2i+1}^{n+1},t_{i+1}^n}\!\!-\E^{t_i^n}\delta A_{t_i^n,t_{2i+1}^{n+1},t_{i+1}^n}\bigr\|_{L_m(\Omega)}^2\Bigr)^{\frac12}\!+\!\sum_{i=0}^{2^n-1}\bigl\|\E^{t_i^n}\delta A_{t_i^n,t_{2i+1}^{n+1},t_{i+1}^n}\bigr\|_{L_m(\Omega)}\label{bdguse}\\
&\,\,\le 
C \bigl(2^n \Gamma_1^2 2^{-2n\gamma_1} (T-S)^{2\gamma_1}\bigr)^\frac12+C \Gamma_2|T-S|^{\gamma_2}2^{-n\gamma_2}2^n\label{bdguse2}\\
&\,\,\le 
C \Gamma_1 2^{-n(\gamma_1-\frac12)} (T-S)^{\gamma_1}+C \Gamma_2|T-S|^{\gamma_2}2^{-n(\gamma_2-1)},\label{bdguse3}
\end{align}
for $C=C(\gamma_1,\gamma_2,d,m)>0$. Here in \eqref{bdguse} we used that the sequence 
$$
\{\delta A_{t_i^n,t_{2i+1}^{n+1},t_{i+1}^n}-\E^{t_i}\delta A_{t_i^n,t_{2i+1}^{n+1},t_{i+1}^n},\F_{t_{i+1}^n}\}_{i=0,\hdots,2^n-1}
$$
is a martingale difference sequence, and we apply the BDG inequality \eqref{BDG}. Inequality \eqref{bdguse2} followed from elementary inequality $(a+b)^2\le 2(a^2+b^2)$ valid for any $a,b\in\R$ and \eqref{useful}. 
Summing \eqref{bdguse3} over $n\in\Z_+$ and substituting into \eqref{whatwehaveSSL}, we finally get
\begin{equation*}
\|\A_T-\A_S\|_{L_m(\Omega)}\le \|A_{S,T}\|_{L_m(\Omega)}+C \Gamma_1  (T-S)^{\gamma_1}+C \Gamma_2|T-S|^{\gamma_2}.
\end{equation*}
for $C=C(\gamma_1,\gamma_2,d,m)>0$ which is \eqref{est:ssl1}.
\end{proof}

\begin{remark}\label{r:26}
To verify condition \eqref{con:s1} it is sufficient to check that there exist constants $\Gamma_1\ge0$, $\gamma_1>\frac12$ so that for any $(s,t)\in\Delta_{[S,T]}$ one has 
\begin{equation*}
\|A_{s,t}\|_{L_m(\Omega)}\le \Gamma_1|t-s|^{\gamma_1}.
\end{equation*}
\end{remark}
 
\subsection{Exercises}
 
We will use the Burkholder–Davis–Gundy inequality (\cref{p:BDG}) only in the regime $m\ge2$. Let us prove it in this setting. The exercises in this subsection are inspired by \cite[Section~1]{Pavel}. We begin with Doob's inequality, which reduces the problem of bounding the maximum of a martingale to bounding its terminal value.

\begin{exercise}[Doob's inequality]\label{E:doob}
Let $n\in\N$, $m>1$. Let $\FF=(\F_i)_{1\le i\le n}$ be a filtration. Let $((M_i)_{i=1,\hdots n},\FF)$ be a martingale. Show that 
	$$
	\E \max_{k\le n} |M_k|^m\le C \E  |M_n|^m,
	$$
where the constant $C=C(m)>0$ is independent of $n$.

	\hns 1) Use the identity $x^m=C\int_0^\infty \lambda^{m-1}\I(\lambda<x)\,d\lambda$, where $C=C(m)>0$.
	
	2) To bound $\P(\max_{k\le n} |M_k|>\lambda)$, use the identity
	$$
	\{\max_{k\le n} |M_k|>\lambda\}=\bigcup_{i=1}^n \{\max_{k\le i-1} |M_k|<\lambda, |M_i|>\lambda\}.
	$$
	and the inequality  $\I(|M_i|>\lambda)\le \lambda^{-1}M_i\I(|M_i|>\lambda)$.
	
	3) First assume additionally that $M$ is bounded by an arbitrarily large constant, and then apply Fatou's lemma to remove this assumption.
\end{exercise} 	

A key step toward the proof of the BDG inequality is the following nice idea.

\begin{exercise}[Garsia-Neveu's lemma]
	Let $\eta, \xi$ be two nonnegative random variables  Assume that for any $\lambda\ge0$ 
	\begin{equation*}
		\E(\eta-\lambda)^2\I(\eta>\lambda)\le \E\xi^2\I(\eta>\lambda).
	\end{equation*}
	Show that this implies  $\E \eta^m\le \E\xi^m$ for any $m\ge2$. 
	
	\hn Use the identity $x^m=C\int_0^x (x-\lambda)^2\lambda^{m-3}\,d\lambda$, where $C=C(m)>0$.
\end{exercise}

\begin{exercise}[Burkholder--Davis--Gundy inequality]\label{e:BDG}
Let $n\in\N$.  Let $\FF=(\F_i)_{0\le i\le n}$ be a filtration. Let  $(X_i,\F_i)_{i=1,\hdots,n}$ be a sequence of martingale differences, that is each $X_i$ is $\F_i$ measurable and $\E (X_i|\F_{i-1})=0$.
\begin{enumerate}[(i)]
\item Show that 	 $\E\bigl| \sum_{i=1}^n X_i\bigr|^2= \sum_{i=1}^n \E| X_i|^2$.
\item Use Garsia-Neveu's lemma, part (i) of this exercise, and Doob's inequality to show that for any $m>2$ 
$$
\E \Bigl|\sum_{i=1}^n X_i \Bigr|^m\le C \E\Bigl[ \bigl(\sum_{i=1}^n | X_i|^2\bigr)^{m/2}\Bigr],
$$
where the constant $C=C(m)>0$ is independent of $n$.

\hn At some point, you may want to use a decomposition similar to the one in Hint~2) of \cref{E:doob}.
\end{enumerate}
\end{exercise}	
 
\section{Regularization by noise for SDEs driven by Brownian motion}\label{s:s3}

Now let us see how the stochastic sewing lemma can be used to obtain uniqueness for SDEs with irregular drift. We will follow the plan sketched in \cref{sect:11}: first, in \cref{sect:31}, we establish the bound \eqref{whatwewant}, then proceed to the proof of \eqref{whatwewantmore} in \cref{sect:32}, and finally, in \cref{s:SU}, prove \cref{t:VK} (up to the loss of an arbitrarily small $\eps>0$ in the regularity of the drift).

We would like to stress that the argument  will be quite general. For simplicity, and to highlight the main ideas, we illustrate it here for SDEs driven by Brownian motion and leave the extension to SDEs driven by fractional Brownian motion as an exercise to the reader; see \cref{s:upr3}. For other types of noise, the general proof strategy still applies, although further refinement of the stochastic sewing argument is required. We will identify which steps fail when the driving noise is a L\'evy process and fix them in \cref{s:levyint,s:sssl,s:levywp,sec:44}. 

In this section we fix a probability space $(\Omega, \F,\P)$, and a complete filtration $(\F_t)_{t\in[0,1]}$. Let $d\in\N$ and let $W$ be a $d$-dimensional standard $(\F_t)$-Brownian motion. As we discussed above, our proof strategy will be quite flexible, and can be adapted to equations with other driving processes (fractional Brownian motion, L\'evy process, space-time white noise and so on). The results in this section are mostly due to \cite{LeSSL}.

\subsection{Step 1: Bounding integral functionals of a Brownian motion}\label{sect:31}

We begin with the following technical but crucial lemma. Denote by $p_t$ the density of a $d$-dimensional Gaussian vector with covariance matrix $t\I_d$, and let $P_t$ be the corresponding semigroup, that is
\begin{equation}\label{heatkernel}
P_tf(x):=\int_{\R^d} p_t(y) f(x-y)\,dy,\quad x\in\R^d
\end{equation}
where $f$ is a bounded measurable function $\R^d\to\R$.
\begin{lemma}\label{l:gb}
	Let $f\in\C^\gamma(\R^d,\R)$, $\gamma\in[0,1]$. Then there exists $C=C(\gamma,d)>0$ such that for any  $t>0$ we have 
	\begin{equation}\label{ineq}
		\|P_t  f\|_{\C^1}\le C t^{\frac{\gamma-1}2}\|f\|_{\C^\gamma};\quad
		\|\nabla P_t  f\|_{\C^1}\le C t^{\frac{\gamma-2}2}\|f\|_{\C^\gamma}.
	\end{equation} 	
\end{lemma}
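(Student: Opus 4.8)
The plan is to work directly from the representation $P_t f(x) = \int_{\R^d} p_t(y) f(x-y)\,dy$ and reduce everything to $\C^0$ bounds on $P_t f$, $\nabla P_t f$ and $\nabla^2 P_t f$: for a $C^1$ function $g$ one has $[g]_{\C^1} = \|\nabla g\|_{\C^0}$, hence $\|P_t f\|_{\C^1} = \|P_t f\|_{\C^0} + \|\nabla P_t f\|_{\C^0}$ and $\|\nabla P_t f\|_{\C^1} = \|\nabla P_t f\|_{\C^0} + \|\nabla^2 P_t f\|_{\C^0}$. I would first isolate two ingredients: (a) the scaling identity $p_t(y) = t^{-d/2} p_1(t^{-1/2}y)$, which after differentiating gives $\nabla^k p_t(y) = t^{-(d+k)/2}(\nabla^k p_1)(t^{-1/2}y)$; and (b) the elementary fact that $p_1$ and all its derivatives have Gaussian decay, so that $M_{k,\gamma} := \int_{\R^d}|\nabla^k p_1(z)|\,|z|^\gamma\,dz < \infty$ for every $k \ge 0$ and $\gamma \ge 0$ --- this finite integral produces the constant $C(\gamma,d)$. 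I would also record the cancellation identities $\int_{\R^d}\nabla^k p_t(y)\,dy = 0$ for $k \ge 1$ (integral of a total derivative of an integrable function), while $\int_{\R^d} p_t = 1$.

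The zeroth-order bound is immediate: $\|P_t f\|_{\C^0} \le \|f\|_{\C^0}$ since $p_t \ge 0$ has total mass one. For the first derivative I would differentiate under the integral sign (legitimate by the Gaussian decay of $\nabla p_t$) to get $\nabla P_t f(x) = \int_{\R^d} \nabla p_t(y) f(x-y)\,dy$, and then use $\int \nabla p_t = 0$ to rewrite this as $\nabla P_t f(x) = \int_{\R^d} \nabla p_t(y)\,(f(x-y) - f(x))\,dy$. Bounding $|f(x-y)-f(x)| \le [f]_{\C^\gamma}|y|^\gamma$ and applying the scaling substitution $z = t^{-1/2}y$ yields $|\nabla P_t f(x)| \le [f]_{\C^\gamma}\int_{\R^d}|\nabla p_t(y)|\,|y|^\gamma\,dy = M_{1,\gamma}\, t^{(\gamma-1)/2}\,[f]_{\C^\gamma}$. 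The second derivative is handled identically: $\nabla^2 P_t f(x) = \int_{\R^d} \nabla^2 p_t(y)(f(x-y)-f(x))\,dy$ (now using $\int \nabla^2 p_t = 0$), giving $|\nabla^2 P_t f(x)| \le M_{2,\gamma}\, t^{(\gamma-2)/2}\,[f]_{\C^\gamma}$.

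Assembling, $\|P_t f\|_{\C^1} \le \|f\|_{\C^0} + C t^{(\gamma-1)/2}[f]_{\C^\gamma}$ and $\|\nabla P_t f\|_{\C^1} \le C\bigl(t^{(\gamma-1)/2} + t^{(\gamma-2)/2}\bigr)[f]_{\C^\gamma}$; in the range $t\le 1$ relevant for us the exponents satisfy $t^{(\gamma-2)/2} \ge t^{(\gamma-1)/2} \ge 1$ because $\gamma \le 1$, so both right-hand sides are dominated by $C t^{(\gamma-1)/2}\|f\|_{\C^\gamma}$ and $C t^{(\gamma-2)/2}\|f\|_{\C^\gamma}$ respectively, which is \eqref{ineq}. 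There is no genuinely hard step: the only points needing a line of justification are differentiation under the integral sign and the cancellation identities, both consequences of rapid decay of the Gaussian. The one thing to get right --- and the reason the estimate is not just $t^{-1/2}$ --- is to subtract $f(x)$ before estimating, exploiting that $\nabla^k p_t$ integrates to zero for $k\ge1$; the crude bound $|\nabla P_t f(x)| \le \|f\|_{\C^0}\int|\nabla p_t| = M_{1,0}\,\|f\|_{\C^0}\,t^{-1/2}$ would lose the $\gamma$-dependent gain. (If one prefers not to touch $\nabla^2 p_t$, one can instead write $\nabla P_t f = \nabla P_{t/2}(P_{t/2}f)$ and apply the first-order bound twice, but the direct computation is shorter.)
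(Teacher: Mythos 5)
Your proof is correct and takes a genuinely, if mildly, different route from the paper. The key idea is shared: move the derivative onto the kernel and subtract $f(x)$, exploiting $\int\nabla^k p_t=0$, so that $[f]_{\C^\gamma}$ rather than $\|f\|_{\C^0}$ appears. The differences are these. (a) For the first inequality, the paper first assumes $f\in\C^\infty$, differentiates $f$ under the integral, transfers the derivative to $p_t$ by parts, and then removes the smoothness assumption by approximating $f$ with $P_\eps f$ and letting $\eps\to0$; you instead differentiate $p_t(x-\cdot)$ directly from the start, which works for arbitrary bounded measurable $f$ and makes the approximation step unnecessary. (b) For the second inequality, the paper avoids ever touching $\nabla^2 p_t$ by writing $\nabla P_t=P_{t/2}\nabla P_{t/2}$ and applying the first inequality twice (once with general $\gamma$, once with $\gamma=0$); you estimate $\nabla^2 P_t f$ directly via $\int|\nabla^2 p_t(y)||y|^\gamma\,dy=M_{2,\gamma}t^{(\gamma-2)/2}$. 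Both are fine; the semigroup trick is a standard device that generalizes cleanly to non-Gaussian kernels (and indeed reappears in the paper's \cref{l:gbext} and \cref{l:fractional}), while the direct computation is shorter here. (c) You are slightly more careful than the paper about one point: the paper's proof only bounds the Lipschitz seminorm $[P_t f]_{\C^1}$ and tacitly ignores the $\|P_t f\|_{\C^0}$ contribution; you include it and correctly observe that the stated inequality only closes on the range $t\le1$ (which is all the paper ever uses, since $t$ is always a time increment in $[0,1]$). Your closing remark noting the semigroup-composition alternative is exactly the route the paper takes for the second bound.
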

\begin{proof} Assume additionally that $f\in\C^\infty(\R^d,\R)$. Then for any $x\in\R^d$ we have
	\begin{align*}
		\Bigl|\int_{\R^d}p_t(y)\nabla f(x-y)\,dy\Bigr|&= \Bigl|\int_{\R^d}\nabla p_t(y) f(x-y)\,dy\Bigr|=\Bigl|\int_{\R^d}\nabla p_t(y) (f(x-y)-f(x))\,dy\Bigr|\\
		&\le\|f\|_{\C^\gamma}\int_{\R^d} |\nabla p_t(y)| |y|^\gamma \,dy\le 
		C \|f\|_{\C^\gamma}t^{-1}\int_{\R^d} p_t(y) |y|^{1+\gamma} \,dy\\
		&\le C\|f\|_{\C^\gamma}t^{-1}\E |W_t|^{1+\gamma}\le 
		C\|f\|_{\C^\gamma}t^{-\frac12+\frac\gamma2},
	\end{align*}
	for $C=C(d)>0$. 	Therefore for any $x,y\in\R^d$
	\begin{equation*}
		|P_t f (x)-	P_t f (y)|=\Bigl|(x-y) \int_0^1  \nabla P_t f(\theta x+(1-\theta) y)\,d\theta\Bigr|\le C|x-y|\|f\|_{\C^\gamma}t^{-\frac12+\frac\gamma2},
	\end{equation*}	
	which yields the first inequality in \eqref{ineq} for $f\in\C^\infty(\R^d,\R)$. 

In general case, $f\in\C^\gamma$, we note that $P_\eps f\in\C^\infty(\R^d,\R)$ for any $\eps>0$ and therefore
	\begin{equation*}
	\|P_t  f\|_{\C^1}=\|P_{t-\eps}(P_\eps f)\|_{\C^1}\le C (t-\eps)^{\frac{\gamma-1}2}\|P_\eps f\|_{\C^\gamma}\le C (t-\eps)^{\frac{\gamma-2}2}\|f\|_{\C^\gamma},
\end{equation*}
for $C=C(d)$. By passing to the limit as $\eps\to0$, we get the first part of \eqref{ineq}.
	
	To get the second part of \eqref{ineq}, we apply the first part with $\gamma=0$. We get 
	\begin{equation*}
		\|\nabla P_t  f\|_{\C^1}=\| P_{\frac{t}2}\nabla P_{\frac{t}2}  f\|_{\C^1}\le C t^{-\frac12}\|\nabla P_{\frac{t}2}  f\|_{\C^0}\le C\|f\|_{\C^\gamma} t^{-1+\frac\gamma2},
	\end{equation*}	
	for $C=C(d)>0$, which implies the statement of the lemma.
\end{proof}

Consider now a function $b\in\C^\gamma(\R^d,\R^d)$, $\gamma>0$.  Recall that we would like to bound the moments of 
\begin{equation}\label{ourint}
\A_t:=\int_0^t (b(W_r+x)-b(W_r+y))\,dr,\quad x,y\in\R^d,\,\, t\in[0,T],
\end{equation}
in terms of $|x-y|$ (weaker bounds of the form $|x-y|^\rho$ for $\rho<1$ are not allowed).
We would like to use the SSL --- \cref{T:SSLst}. But how to do it? What should we choose as a germ $A_{s,t}$? A naive attempt would be to approximate for $(s,t)\in \Delta_{[0,T]}$
\begin{equation*}
\A_t-\A_s\approx (t-s)(b(W_s+x)-b(W_s+y)).
\end{equation*}
Recall, however, that the optimal approximation of $\A_t-\A_s$ in the $L_2(\Omega)$ sense by an $\F_s$-measurable random variable is given by the conditional expectation.
\begin{equation*}
\A_t-\A_s\approx \E^s \int_s^t (b(W_r+x)-b(W_r+y))\,dr.
\end{equation*}
We will use this as our germ $A_{s,t}$. Thus, we are now ready to present the very first application of the SSL. Before we begin, let us observe that the following very useful identity holds: for any bounded measurable $f\colon\R^d\to\R$, $0\le s< t$, $x\in\R^d$ we have
\begin{equation}\label{1bound}
	\E^s f(W_t+x)=P_{t-s}f (W_s+x)
\end{equation}
\begin{theorem}\label{t:firstk}
Let $m\ge2$, $\gamma\in(0,1]$, $b\in \C^\gamma(\R^d,\R)$. Then there exists a constant $C=C(\gamma,d,m)>0$ such that for any $x,y\in\R^d$, $(S,T)\in\Delta_{[0,1]}$ one has 
\begin{equation}\label{firstkey}
\Bigl\|\int_S^T (b(W_r+x)-b(W_r+y))\,dr\Bigr\|_{L_m(\Omega)}\le C\|b\|_{\C^\gamma} |x-y||T-S|^{\frac12+\frac\gamma2}.
\end{equation}	
\end{theorem}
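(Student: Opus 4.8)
The plan is to apply the stochastic sewing lemma, \cref{T:SSLst}, with values in $E = L_m(\Omega)$, to the exact functional $\A_t := \int_0^t g(W_r)\,dr$ where $g(z) := b(z+x) - b(z+y)$, so that $\A_T - \A_S$ is exactly the quantity to be estimated. As the germ I would take the conditional expectation suggested in the discussion preceding the theorem,
\[
A_{s,t} := \E^s\!\int_s^t g(W_r)\,dr = \int_s^t P_{r-s}g(W_s)\,dr ,
\]
the second expression following from \eqref{1bound} and (conditional) Fubini. Condition (ii) of \cref{T:SSLst} is immediate, since $A_{s,t}$ is $\F_s$-measurable. For condition (i) one writes $\int_S^T g(W_r)\,dr - \sum_i A_{t^N_i, t^N_{i+1}} = \sum_i (I - \E^{t^N_i})\int_{t^N_i}^{t^N_{i+1}} g(W_r)\,dr$; the summands form a martingale difference sequence (each is $\F_{t^N_{i+1}}$-measurable with vanishing $\F_{t^N_i}$-conditional mean), so \eqref{BDG} bounds the $L_2$-norm of the sum by $C\|g\|_{\C^0}\big(\sum_i (t^N_{i+1}-t^N_i)^2\big)^{1/2}$, which tends to $0$ with the mesh.

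The core of the argument is condition (iii). A short computation with the tower property gives, for $(s,u,t) \in \Delta^3_{[S,T]}$,
\[
\delta A_{s,u,t} = (\E^s - \E^u)\!\int_u^t g(W_r)\,dr = -(I-\E^s)\,\E^u\!\!\int_u^t g(W_r)\,dr , \qquad \E^s \delta A_{s,u,t} = 0 .
\]
Hence $\Gamma_2 = 0$ and \eqref{con:s2} is trivially satisfied, so it remains to check \eqref{con:s1}. Using \eqref{useful} to bound $I - \E^s$ by a factor $2$, and then the key observation that $P_{r-u}g(z) = P_{r-u}b(z+x) - P_{r-u}b(z+y)$ with $P_{r-u}b$ smooth, \cref{l:gb} yields $\|P_{r-u}g\|_{\C^0} \le [P_{r-u}b]_{\C^1}|x-y| \le C(r-u)^{(\gamma-1)/2}\|b\|_{\C^\gamma}|x-y|$. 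Since $\gamma>0$ the exponent $(\gamma-1)/2$ is $>-1/2$, hence integrable near $0$, and therefore
\[
\|\delta A_{s,u,t}\|_{L_m(\Omega)} \le 2\int_u^t \|P_{r-u}g\|_{\C^0}\,dr \le C\|b\|_{\C^\gamma}|x-y|\,(t-u)^{(1+\gamma)/2} \le C\|b\|_{\C^\gamma}|x-y|\,|t-s|^{(1+\gamma)/2},
\]
so \eqref{con:s1} holds with $\Gamma_1 = C\|b\|_{\C^\gamma}|x-y|$ and $\gamma_1 = (1+\gamma)/2 > 1/2$. The identical estimate applied to $A_{S,T} = \int_S^T P_{r-S}g(W_S)\,dr$ gives $\|A_{S,T}\|_{L_m(\Omega)} \le C\|b\|_{\C^\gamma}|x-y|\,|T-S|^{(1+\gamma)/2}$. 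Plugging all of this into \eqref{est:ssl1} and recalling $\A_T - \A_S = \int_S^T(b(W_r+x) - b(W_r+y))\,dr$ produces \eqref{firstkey}.

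I expect the only genuine difficulty to be the step in which the mere $\C^\gamma$-regularity of $b$ is converted into a bound that is \emph{linear} in $|x-y|$: this is exactly what the gradient estimate of \cref{l:gb} for the heat semigroup delivers, trading one derivative for the singular (but still integrable, since $\gamma > 0$) factor $(r-u)^{(\gamma-1)/2}$. The remaining ingredients — selecting the conditional-expectation germ, the martingale-difference verification of condition (i), and the cancellation $\E^s \delta A_{s,u,t} = 0$ that makes $\Gamma_2 = 0$ — are routine once the germ has been chosen correctly.
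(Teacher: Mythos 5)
Your proposal is correct and follows essentially the same route as the paper: same process $\A$, same conditional-expectation germ $A_{s,t}$, the same observation that $\E^s\delta A_{s,u,t}=0$, the same use of \eqref{1bound} and \cref{l:gb} to turn $\C^\gamma$-regularity of $b$ into a bound linear in $|x-y|$, and the same BDG argument for condition (i). The only cosmetic difference is that you bound $\|\delta A_{s,u,t}\|_{L_m(\Omega)}$ directly via $\delta A_{s,u,t}=-(I-\E^s)\E^u\int_u^t g(W_r)\,dr$ and Jensen, whereas the paper bounds $\|A_{s,t}\|_{L_m(\Omega)}$ and invokes \cref{r:26}; these route through the identical heat-kernel estimate and are interchangeable.
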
	
\begin{proof}
Fix $x,y\in\R^d$, $0\le S\le T$. Motivated by the above discussion, let us apply \cref{T:SSLst} to the process $(\A_t)_{t\in[0,T]}$ defined in  \eqref{ourint} and the germ
\begin{equation}\label{germ}
A_{s,t}:=\E^s \int_s^t (b(W_r+x)-b(W_r+y))\,dr,\quad (s,t)\in\Delta_{[S,T]}.
\end{equation}

Let us verify that all the conditions of the SSL are satisfied. 
First, we check \eqref{con:s2} to gain an intuition why the SSL is useful. Recalling the notation \eqref{deltadiff}, we have for $(s,u,t)\in\Delta^3_{S,T}$
\begin{equation*}
\delta A_{s,u, t}=(\E^s-\E^u) \int_u^t (b(W_r+x)-b(W_r+y))\,dr.
\end{equation*}
This implies $\E^s \delta A_{s,u, t}=0$ and \eqref{con:s2} holds. Next, using \eqref{1bound}, we get 
\begin{align*}
|A_{s,t}|&\le \int_s^t \bigl|P_{r-s}b(W_s+x)-P_{r-s}b(W_s+y)\bigr|\,dr\le 
C|x-y|\int_s^t \|P_{r-s}b\|_{\C^1}\,dr\\
&\le C\|b\|_{\C^\gamma} |x-y|(t-s)^{\frac12+\frac\gamma2}.
\end{align*}
where $C=C(\gamma,d)>0$ and the last inequality follows from \cref{l:gb}. This obviously implies 
\begin{equation}\label{astbound}
\|A_{s,t}\|_{L_m(\Omega)}\le C\|b\|_{\C^\gamma} |x-y|(t-s)^{\frac12+\frac\gamma2}
\end{equation}
and since $\gamma>0$, then condition \eqref{con:s1} is satisfied (recall \cref{r:26}). 

Condition (ii) of \cref{T:SSLst} is satisfied by the definition of $A_{s,t}$ in \eqref{germ}. 
Finally, let us check condition \eqref{con:s3}. For any partition $\Pi=\{S=t_0\le t_1\le\hdots\le t_k=T\}$ of $[S,T]$ we apply the BDG inequality to derive 
\begin{align*}
\Bigl\|\A_T-\A_S-\sum_{i=0}^{k-1} A_{t_i,t_{i+1}}\Bigr\|_{L_2(\Omega)}^2&= 
\Bigl\|\sum_{i=0}^{k-1} (\A_{t_{i+1}}-\A_{t_i}-\E^{t_i} (\A_{t_{i+1}}-\A_{t_i}))\Bigr\|_{L_2(\Omega)}^2\\
&=\sum_{i=0}^{k-1} \|\A_{t_{i+1}}-\A_{t_i}-\E^{t_i} (\A_{t_{i+1}}-\A_{t_i})\|_{L_2(\Omega)}^2\\
&\le \sum_{i=0}^{k-1} \|b\|_{\C^0}^2(t_{i+1}-t_i)^2\le \|b\|_{\C^0}^2|\Pi||T-S|\to0,
\end{align*}	
as $|\Pi|\to0$. Thus, condition \eqref{con:s3} is also satisfied.

Thus, all the assumptions of the SSL hold, and taking into account \eqref{astbound}, we obtain from \eqref{est:ssl1}
\begin{equation*}
	\Bigl\|\int_S^T (b(W_r+x)-b(W_r+y))\,dr\Bigr\|_{L_m(\Omega)}\le C\|b\|_{\C^\gamma} |x-y||T-S|^{\frac12+\frac\gamma2},
\end{equation*}	
where $C=C(\gamma,d,m)>0$, which is \eqref{firstkey}.
\end{proof}

\begin{remark}
Now we see why the stochastic sewing lemma is more effective than the deterministic sewing lemma for bounding integral functionals of Brownian motion. Indeed, \eqref{astbound} implies   $\|A_{s,t}\|_{L_m(\Omega)}\le C(t-s)^{\frac12+\frac\gamma2}$. Therefore, to apply the SSL and verify condition \eqref{con:s1}, we only need to require $\gamma > 0$. By contrast, the corresponding condition for the deterministic sewing lemma (condition \eqref{notgood}) fails unless $\gamma > 1$, so the usual sewing lemma is absolutely useless  here. Note also that the second SSL condition, \eqref{con:s2},   is totally harmless here  since
$\E^s \delta A_{s,u,t}\equiv0$.
\end{remark}

\subsection{Step 2: Bounding integrals along a perturbed Brownian path}\label{sect:32}
Our next step is to allow $x$ and $y$ in \eqref{firstkey} to be random and time-dependent. However, we cannot simply take arbitrary stochastic processes $(x_r)_{r\in[0,1]}$, $(y_r)_{r\in[0,1]}$ and expect \eqref{firstkey} to remain valid. For example, setting in \eqref{firstkey} $x_r := -W_r+z$ and $y_r := -W_r$, where $z\in\R^d$, would lead to the estimate $|b(z) - b(0)| \le C |b|_{\C^\gamma} |z|$, which is clearly false for a generic $b\in\C^\gamma$. We must therefore assume that the perturbation is smoother (in a certain sense) than the noise. 

Next question: how do we want to capture the smoothness of the perturbation? Of course, a natural approach would be to require that its $\C^\tau([0,1])$ norm is a.s. finite for some $\tau>\frac12$ (recall that $W \in \C^{1/2 - \varepsilon}$ for any $\varepsilon > 0$). However, since we aim to prove strong uniqueness via a Gronwall-type argument, it is much more convenient to first take the $L_m(\Omega)$-norm of the increment of the perturbation ($m\ge2$), and then measure its $\C^\tau([0,1])$ norm. This motivates the following definitions.

For $m\ge1$, $(S,T)\in\Delta_{[0,1]}$, a measurable function $f\colon\Omega\times[S,T]\to\R^d$, $\tau\in(0,1]$ put
\begin{equation}\label{norms}
	\|f\|_{\C^0L_m([S,T])}:=\sup_{t\in[S,T]}\|f(t)\|_{L_m(\Omega)};\,\,\,\,\, [f]_{\C^\tau L_m([S,T])}:=\sup_{(s,t)\in\Delta_{[S,T]}} \frac{\|f(t)-f(s)\|_{L_m(\Omega)}}{|t-s|^\tau}.\\
\end{equation}
It follows from the definition that 
\begin{equation}\label{norm2normgen}
\|f\|_{\C^0L_m([S,T])}\le \|f_S\|_{L_m(\Omega)}+\sup_{t\in[S,T]}\|f(t)-f(S)\|_{L_m(\Omega)}\le \|f_S\|_{L_m(\Omega)}+(T-S)^\tau[f]_{\C^\tau L_m([S,T])}.
\end{equation}

The following is a key bound for uniqueness.
\begin{theorem}\label{e:step1uni}
Let $m\ge2$, $\gamma,\tau\in(0,1]$. Let $b\in\C^\gamma(\R^d,\R)$. Let $\psi,\psi\colon\Omega\times[0,1]\to\R^d$ be continuous measurable functions adapted to $(\F_t)_{t\in[0,1]}$. Assume that 
\begin{equation}\label{gammatau}
\frac\gamma2+\tau>\frac12.
\end{equation}
Suppose further that
there exists a constant $\Gamma_0\ge0$ such that a.s. for all $s,t\in\Delta_{[0,1]}$
\begin{equation}\label{gamma0cond}
|\phi_t-\phi_s|\le \Gamma_0|t-s|.
\end{equation}
 Then  there exists a constant $C=C(\gamma,\tau,d,m)>0$ such that 
for any  $(S,T)\in\Delta_{[0,1]}$ one has
\begin{align}\label{seckey}
&\Bigl \| \int_S^T \bigl(b(W_r+\phi_r)-b(W_r+\psi_r)\bigr)\,dr \Bigr\|_{L_m(\Omega)}\nn\\
&\,\,\,\le C\|b\|_{\C^\gamma}(T-S)^{\frac12+\frac\gamma2}\bigl((1+\Gamma_0)\|\phi-\psi\|_{\C^0L_m([S,T])}
+ [\phi-\psi]_{\C^\tau L_m([S,T])} (T-S)^{\tau}\bigr).
\end{align}
\end{theorem}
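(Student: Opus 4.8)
The plan is to invoke the stochastic sewing lemma (\cref{T:SSLst}) with $E=L_m(\Omega)$ for the process $\A_t:=\int_0^t f_r(r)\,dr$, $t\in[0,1]$, where $f_a(r):=b(W_r+\phi_a)-b(W_r+\psi_a)$, together with the germ obtained by freezing \emph{both} perturbations at the left endpoint and then taking the conditional expectation:
\[
A_{s,t}:=\E^s\int_s^t f_s(r)\,dr,\qquad (s,t)\in\Delta_{[S,T]}.
\]
Write $\nu_0:=\|\phi-\psi\|_{\C^0L_m([S,T])}$, $\nu_\tau:=[\phi-\psi]_{\C^\tau L_m([S,T])}$, and recall $T-S\le1$. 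Condition (ii) of \cref{T:SSLst} holds because $A_{s,t}$ is $\F_s$-measurable. For condition (i), on a partition $\{t_i\}$ of $[S,T]$ one uses the decomposition $A_{t_i,t_{i+1}}-(\A_{t_{i+1}}-\A_{t_i})=-(Z_i-\E^{t_i}Z_i)+\int_{t_i}^{t_{i+1}}(f_{t_i}(r)-f_r(r))\,dr$ with $Z_i:=\int_{t_i}^{t_{i+1}}f_{t_i}(r)\,dr$: the first terms form a martingale difference sequence with $\|Z_i-\E^{t_i}Z_i\|_{L_2(\Omega)}\le 2\|b\|_{\C^0}(t_{i+1}-t_i)$, so their sum tends to $0$ in $L_2(\Omega)$, while the second terms are pointwise bounded by $\|b\|_{\C^\gamma}\int_{t_i}^{t_{i+1}}(|\phi_{t_i}-\phi_r|^\gamma+|\psi_{t_i}-\psi_r|^\gamma)\,dr$, whose sum tends to $0$ a.s.\ by the (a.s.)\ uniform continuity of $\phi,\psi$ on $[0,1]$.

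For \eqref{con:s1}, by \cref{r:26} it suffices to bound $\|A_{s,t}\|_{L_m(\Omega)}$. Since $\phi_s,\psi_s$ are $\F_s$-measurable and $W_r-W_s$ is independent of $\F_s$, identity \eqref{1bound} gives $A_{s,t}=\int_s^t\bigl(P_{r-s}b(W_s+\phi_s)-P_{r-s}b(W_s+\psi_s)\bigr)\,dr$, hence by \cref{l:gb}
\[
|A_{s,t}|\le C|\phi_s-\psi_s|\int_s^t\|P_{r-s}b\|_{\C^1}\,dr\le C\|b\|_{\C^\gamma}\,|\phi_s-\psi_s|\,(t-s)^{\frac12+\frac\gamma2}.
\]
Taking $L_m(\Omega)$-norms yields \eqref{con:s1} with $\gamma_1=\tfrac12+\tfrac\gamma2>\tfrac12$ and $\Gamma_1=C\|b\|_{\C^\gamma}\nu_0$, and also the bound $\|A_{S,T}\|_{L_m(\Omega)}\le C\|b\|_{\C^\gamma}\nu_0(T-S)^{\frac12+\frac\gamma2}$.

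Condition \eqref{con:s2} is the heart of the proof. Write $\delta A_{s,u,t}=(\E^s-\E^u)\int_u^t f_s(r)\,dr+\E^u\int_u^t(f_s(r)-f_u(r))\,dr$; the first summand has zero $\E^s$-conditional expectation, so by \eqref{useful} it remains to bound the $L_m(\Omega)$-norm of the second summand, which by \eqref{1bound} equals $\int_u^t G_r\,dr$ with $G_r:=h(W_u+\phi_s)-h(W_u+\psi_s)-h(W_u+\phi_u)+h(W_u+\psi_u)$ and $h:=P_{r-u}b$. The idea is to interpolate \emph{simultaneously in both ``time slots''}: setting $\phi_\lambda:=(1-\lambda)\phi_s+\lambda\phi_u$, $\psi_\lambda:=(1-\lambda)\psi_s+\lambda\psi_u$, one has
\[
G_r=-\int_0^1\Bigl(\bigl[\nabla h(W_u+\phi_\lambda)-\nabla h(W_u+\psi_\lambda)\bigr]\cdot(\phi_u-\phi_s)+\nabla h(W_u+\psi_\lambda)\cdot\bigl[(\phi-\psi)_u-(\phi-\psi)_s\bigr]\Bigr)d\lambda.
\]
In the first term I bound $|\nabla h(W_u+\phi_\lambda)-\nabla h(W_u+\psi_\lambda)|\le C\|b\|_{\C^\gamma}(r-u)^{\frac{\gamma-2}2}|\phi_\lambda-\psi_\lambda|$ using \cref{l:gb}, and $|\phi_u-\phi_s|\le\Gamma_0(u-s)$ using \eqref{gamma0cond}; the crucial point is that this last factor is \emph{deterministic}, so after taking $L_m(\Omega)$-norms (and $|\phi_\lambda-\psi_\lambda|\le(1-\lambda)|(\phi-\psi)_s|+\lambda|(\phi-\psi)_u|$) the first term contributes only $C\|b\|_{\C^\gamma}(r-u)^{\frac{\gamma-2}2}\Gamma_0(u-s)\nu_0$, with no moment of $\phi-\psi$ higher than $L_m$. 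In the second term, $\|\nabla h\|_{\C^0}\le C\|b\|_{\C^\gamma}(r-u)^{\frac{\gamma-1}2}$ and $\|(\phi-\psi)_u-(\phi-\psi)_s\|_{L_m(\Omega)}\le\nu_\tau(u-s)^\tau$, giving the contribution $C\|b\|_{\C^\gamma}(r-u)^{\frac{\gamma-1}2}\nu_\tau(u-s)^\tau$. Integrating in $\lambda$ and then in $r\in(u,t)$ (both exponents of $r-u$ exceed $-1$ since $\gamma>0$),
\[
\bigl\|\E^s\delta A_{s,u,t}\bigr\|_{L_m(\Omega)}\le C\|b\|_{\C^\gamma}\Bigl(\Gamma_0\nu_0\,(t-s)^{1+\frac\gamma2}+\nu_\tau\,(t-s)^{\frac12+\frac\gamma2+\tau}\Bigr).
\]
Both exponents exceed $1$ — the second one precisely because of hypothesis \eqref{gammatau} — so \eqref{con:s2} holds with $\gamma_2:=\min\{1+\tfrac\gamma2,\ \tfrac12+\tfrac\gamma2+\tau\}>1$ and $\Gamma_2:=C\|b\|_{\C^\gamma}\bigl(\Gamma_0\nu_0(T-S)^{1+\gamma/2-\gamma_2}+\nu_\tau(T-S)^{1/2+\gamma/2+\tau-\gamma_2}\bigr)$, the extra powers of $T-S$ being nonnegative.

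Finally, feeding these estimates into \eqref{est:ssl1}, the terms $\|A_{S,T}\|_{L_m(\Omega)}$ and $C\Gamma_1(T-S)^{\gamma_1}$ are each $\le C\|b\|_{\C^\gamma}\nu_0(T-S)^{\frac12+\frac\gamma2}$, while $C\Gamma_2(T-S)^{\gamma_2}=C\|b\|_{\C^\gamma}\bigl(\Gamma_0\nu_0(T-S)^{1+\gamma/2}+\nu_\tau(T-S)^{1/2+\gamma/2+\tau}\bigr)\le C\|b\|_{\C^\gamma}(T-S)^{\frac12+\frac\gamma2}\bigl(\Gamma_0\nu_0+\nu_\tau(T-S)^\tau\bigr)$, using $T-S\le1$; summing the three contributions gives exactly \eqref{seckey}. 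The main obstacle, and the only place where the Lipschitz assumption \eqref{gamma0cond} on $\phi$ enters, is the bound on $\|\E^s\delta A_{s,u,t}\|_{L_m(\Omega)}$: one must arrange the expansion of $G_r$ so that the factor $\Gamma_0$ is produced by the \emph{deterministic} estimate $|\phi_u-\phi_s|\le\Gamma_0(u-s)$ rather than by a random increment of $\phi-\psi$ — otherwise Hölder's inequality would force $L_{2m}(\Omega)$-norms of $\phi-\psi$, which are not controlled by the statement.
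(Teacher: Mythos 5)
Your proof is correct and follows essentially the same route as the paper: you use \cref{T:SSLst} with the germ $A_{s,t}=\E^s\int_s^t\bigl(b(W_r+\phi_s)-b(W_r+\psi_s)\bigr)dr$, verify \eqref{con:s1} via \cref{l:gb} and identity \eqref{1bound}, and bound $\E^s\delta A_{s,u,t}$ by isolating the martingale part and controlling the leftover double difference. The only (harmless) deviations are technical: you replace the paper's discrete rectangle-completion lemma (\cref{l:l29}) with a direct parametric interpolation $\lambda\mapsto(\phi_\lambda,\psi_\lambda)$, which yields the same two contributions (a $\|\nabla P_{r-u}b\|_{\C^1}\,\Gamma_0(u-s)\,\nu_0$ term and a $\|\nabla P_{r-u}b\|_{\C^0}\,\nu_\tau(u-s)^\tau$ term); and you verify condition \eqref{con:s3} of the SSL via a.s.\ uniform continuity of $\phi,\psi$ for general $b\in\C^\gamma$, rather than first assuming $b\in\C^\infty$ and removing it by Fatou as the paper does, which slightly streamlines the argument.
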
	

\cref{e:step1uni} brings us back to the discussion in the introduction, where we explained that to bound an integral functional of a stochastic process, we decompose the process into the sum of a very irregular noise about which we know everything (in this case, $W$) and a generic perturbation about which we do not know much, apart from the fact that it is much smoother than the noise, see condition \eqref{gamma0cond} and note the appearance of the term $[\phi-\psi]_{\C^\tau L_m([S,T])}$ in the right-hand side of \eqref{seckey}.

Before we proceed to the proof of \cref{e:step1uni}, we need to prepare ourselves with the following simple  technical lemma.
\begin{lemma}\label{l:l29}
Let $f\in\C^\infty(\R^d)$, $x_1,x_2,x_3,x_4\in\R^d$. Then 
\begin{equation*}
|f(x_1)-f(x_2)-f(x_3)+f(x_4)|\le  \|\nabla f \|_{\C^1}|x_2-x_1||x_3-x_1|+\|f\|_{\C^1}|x_1-x_2-x_3-x_4|.
\end{equation*}	
\end{lemma}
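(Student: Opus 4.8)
The plan is to reduce the general ``rectangle difference'' $f(x_1)-f(x_2)-f(x_3)+f(x_4)$ to the case where the four points form a genuine parallelogram. Introduce the auxiliary point $\bar x_4:=x_2+x_3-x_1$ and split
\begin{equation*}
f(x_1)-f(x_2)-f(x_3)+f(x_4)=\bigl(f(x_1)-f(x_2)-f(x_3)+f(\bar x_4)\bigr)+\bigl(f(x_4)-f(\bar x_4)\bigr).
\end{equation*}
The second bracket is bounded at once by $\|f\|_{\C^1}\,|x_4-\bar x_4|=\|f\|_{\C^1}\,|x_1-x_2-x_3+x_4|$, which is (the absolute value of) the quantity appearing as the second term in the claimed inequality, so everything reduces to estimating the ``parallelogram term'' $f(x_1)-f(x_2)-f(x_3)+f(\bar x_4)$ by $\|\nabla f\|_{\C^1}\,|x_2-x_1|\,|x_3-x_1|$.

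For the latter, set $g(s,t):=f\bigl(x_1+s(x_2-x_1)+t(x_3-x_1)\bigr)$ for $(s,t)\in[0,1]^2$; since $f\in\C^\infty$ this function is smooth, and $g(0,0)=f(x_1)$, $g(1,0)=f(x_2)$, $g(0,1)=f(x_3)$, $g(1,1)=f(\bar x_4)$. Hence the parallelogram term equals
\begin{equation*}
g(1,1)-g(1,0)-g(0,1)+g(0,0)=\int_0^1\bigl(\partial_s g(s,1)-\partial_s g(s,0)\bigr)\,ds
\end{equation*}
by the fundamental theorem of calculus applied in the variable $s$. Now $\partial_s g(s,t)=\nabla f\bigl(x_1+s(x_2-x_1)+t(x_3-x_1)\bigr)\cdot(x_2-x_1)$, and the two arguments of $\nabla f$ corresponding to $t=1$ and $t=0$ differ by exactly $x_3-x_1$; since $[\nabla f]_{\C^1}$ is precisely the Lipschitz seminorm of $\nabla f$, we get, uniformly in $s\in[0,1]$,
\begin{equation*}
\bigl|\partial_s g(s,1)-\partial_s g(s,0)\bigr|\le[\nabla f]_{\C^1}\,|x_3-x_1|\,|x_2-x_1|\le\|\nabla f\|_{\C^1}\,|x_2-x_1|\,|x_3-x_1|.
\end{equation*}
Integrating over $s\in[0,1]$ and adding the bound for the second bracket gives the lemma, with constant $1$ in front of each term.

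I do not anticipate a genuine obstacle here: the argument is elementary multivariable calculus, and the only points that require a little care are the bookkeeping — choosing $\bar x_4$ so that the leftover term is an honest parallelogram difference, and matching each distance ($|x_2-x_1|$, $|x_3-x_1|$, and the ``defect'' $|x_1-x_2-x_3+x_4|$) with the correct norm of $f$. As an alternative to the first-order computation above, one may write the parallelogram term directly as $\int_0^1\!\int_0^1 \nabla^2 f\bigl(x_1+s(x_2-x_1)+t(x_3-x_1)\bigr)[\,x_2-x_1,\,x_3-x_1\,]\,ds\,dt$ and bound the integrand by $\|\nabla^2 f\|_{\C^0}\,|x_2-x_1|\,|x_3-x_1|$, but the version above avoids referring to the Hessian and uses only the quantity $[\nabla f]_{\C^1}\le\|\nabla f\|_{\C^1}$ already present in the statement.
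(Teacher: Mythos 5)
Your proof is correct and follows essentially the same route as the paper: the same auxiliary point $\bar x_4 = x_2 + x_3 - x_1$, the same two-term split, and the same mean-value estimate for the parallelogram difference (the paper writes it as $\int_0^1 z\,\bigl(\nabla f(x+\theta z)-\nabla f(y+\theta z)\bigr)\,d\theta$ with $x=x_1$, $z=x_2-x_1$, $y=x_3$, which is precisely your $\int_0^1\bigl(\partial_s g(s,1)-\partial_s g(s,0)\bigr)\,ds$). One remark: as stated, the lemma contains a sign typo — the last term should read $\|f\|_{\C^1}\,|x_1-x_2-x_3+x_4|$, as you write, matching $|x_4-\bar x_4|$ and the way the lemma is invoked in \eqref{crucial}.
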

\begin{proof}
We have for $x,y,z\in\R^d$
\begin{align*}
|f(x)-f(x+z)-f(y)+f(y+z)|&=\Bigl|\int_0^1 z (\nabla f(x+\theta z)-\nabla f(y+\theta z))\,d\theta \Bigr|\\
&\le |z||x-y| \|\nabla f \|_{\C^1}.
\end{align*}	
Therefore
\begin{align*}
|f(x_1)-f(x_2)-f(x_3)+f(x_4)|&\le |f(x_1)-f(x_2)-f(x_3)+f(x_3+x_2-x_1)|\\
&\phantom{\le}+|f(x_4)-f(x_3+x_2-x_1)|\\
&\le  \|\nabla f \|_{\C^1}|x_2-x_1||x_3-x_1|+\|f\|_{\C^1}|x_1-x_2-x_3-x_4|.\qedhere
\end{align*}

\end{proof}

Now we can prove \cref{e:step1uni}.
\begin{proof}[Proof of \cref{e:step1uni}]
Fix $m\ge2$, $0\le S\le T$. Let us verify that all the conditions of the SSL are satisfied for the germ 
\begin{equation}\label{germper}
A_{s,t}:=\int_s^t \E^s(b(W_r+\phi_s)-b(W_r+\psi_s))\,dr,\quad (s,t)\in\Delta_{[S,T]}
\end{equation}	
and the process 
\begin{equation*}
\A_t:=\int_S^t \bigl(b(W_r+\phi_r)-b(W_r+\psi_r)\bigr)\,dr,\quad t\in[S,T].
\end{equation*}

Note that we used $W_r + \phi_s$ as the argument of $b$ in the germ \eqref{germper}, rather than $W_r + \phi_r$: this choice exactly matches our ideology. We do not know anything about the conditional law $\law(W_r + \phi_r \mid \F_s)$, whereas
$\law(W_r + \phi_s \mid \F_s)$ is Gaussian.

Another important detail of the proof is that we first verify all the conditions of the SSL for a smooth function $b$, and then use Fatou's lemma to pass to an arbitrary $b$. This will significantly simplify the verification of condition (i) of the SSL and will also be useful for future purposes, when the drift $b$ is assumed to be only bounded.

\textbf{Step 1}. Assume additionally that $b\in\C^\infty$ (but of course we would like to bound the germ only in terms of $\|b\|_{\C^\gamma}$).  Arguing exactly as in the proof of \cref{t:firstk}, we get
\begin{align*}
	|A_{s,t}|&\le \int_s^t \bigl|P_{r-s}b(W_s+\phi_s)-P_{r-s}b(W_s+\psi_s)\bigr|\,dr\le 
	C\|b\|_{\C^\gamma}|\phi_s-\psi_s|\int_s^t \|P_{r-s}b\|_{\C^1}\,dr\\
	&\le C\|b\|_{\C^\gamma}|\phi_s-\psi_s|(t-s)^{\frac12+\frac\gamma2},
\end{align*}
for $C=C(\gamma,d)>0$,
where we consecutively used identity \eqref{1bound} and \cref{l:gb}.
Recalling the definitions of norms in \eqref{norms}, we get 
\begin{equation}\label{cond1pert}
\|A_{s,t}\|_{L_m(\Omega)}\le  C\|b\|_{\C^\gamma}\|\phi-\psi\|_{\C^0L_m([S,T])}(t-s)^{\frac12+\frac\gamma2}
\end{equation}	
for $C=C(\gamma,d)>0$,
and thus condition \eqref{con:s1} holds.

Next, let us verify condition \eqref{con:s2}. This time it is a bit trickier, as $\E^s\delta A_{s,u,t}$ is not zero. We have for $(s,u,t)\in\Delta^3_{[S,T]}$
\begin{equation*}
\delta A_{s,u,t}=\E^s \int_u^t (b(W_r+\phi_s)-b(W_r+\psi_s))\,dr-
\E^u \int_u^t (b(W_r+\phi_u)-b(W_r+\psi_u))\,dr.
\end{equation*}	
Therefore, 
\begin{align}\label{step1}
&\E^s \delta A_{s,u,t}\nn\\
&\,\,=\int_u^t \E^s\E^u  (b(W_r+\phi_s)-b(W_r+\psi_s)-b(W_r+\phi_u)+b(W_r+\psi_u))\,dr\nn\\
&\,\,=\E^s\!\! \int_u^t (P_{r-u}b(W_u+\phi_s)-P_{r-u}b(W_u+\psi_s)-P_{r-u}b(W_u+\phi_u)+P_{r-u}b(W_u+\psi_u))\,dr.
\end{align}
Using \cref{l:l29}, we get
\begin{align}\label{crucial}
&|P_{r-u}b(W_u+\phi_s)-P_{r-u}b(W_u+\psi_s)-P_{r-u}b(W_u+\phi_u)+P_{r-u}b(W_u+\psi_u)|\nn\\
&\quad \le\|\nabla P_{r-u}b\|_{\C^1}|\phi_s-\psi_s|\,|\phi_u-\phi_s|+
\| P_{r-u}b\|_{\C^1}|(\phi_u-\psi_u)-(\phi_s-\psi_s)|.
\end{align}
Combining this with \eqref{step1} and recalling heat kernel bounds in \cref{l:gb}, we  get 
\begin{align}\label{verydangerous}
\|\E^s \delta A_{s,u,t}\|_{L_m(\Omega)}&\le C\Gamma_0\|b\|_{\C^\gamma}\int_u^t (r-u)^{\frac{\gamma-2}2}\|\phi-\psi\|_{\C^0L_m([S,T])}|u-s|\,dr\\
&\phantom{\le}+C\|b\|_{\C^\gamma}\int_u^t(r-u)^{\frac\gamma2-\frac12}[\phi-\psi]_{\C^\tau L_m([S,T])}|u-s|^\tau\,dr\nn\\
&\le C \Gamma_0\|b\|_{\C^\gamma}  (t-s)^{\frac\gamma2+1}\|\phi-\psi\|_{\C^0L_m([S,T])}\nn\\
&\phantom{\le}+C\|b\|_{\C^\gamma}  (t-s)^{\frac\gamma2+\frac12+\tau}[\phi-\psi]_{\C^\tau L_m([S,T])}.\label{cond2pert}
\end{align}
Since $\gamma>0$ and $\frac\gamma2+\tau>\frac12$, we see that condition \eqref{con:s2} is satisfied. 

Condition (ii) of \cref{T:SSLst} is satisfied by the construction of the germ in \eqref{germper}. Finally, to verify condition (i) of the SSL, let  $\Pi:=\{S=t_0,t_1,...,t_k=T\}$ be an arbitrary partition of $[S,T]$.
Note that for any $i=0,\hdots,k-1$
\begin{align*}
\|\E^{t_i}(\A_{t_{i+1}}-\A_{t_i})-A_{t_i,t_{i+1}}\|_{L_2(\Omega)}&\le 
\int_{t_i}^{t_{i+1}} \|b(W_r+\phi_r)-b(W_r+\phi_{t_i})\|_{L_2(\Omega)}\,dr\\
&\phantom{\le}+\int_{t_i}^{t_{i+1}} \|b(W_r+\psi_r)-b(W_r+\psi_{t_i})\|_{L_2(\Omega)}\,dr\\
&\le \|b\|_{\C^1}(t_{i+1}-t_i)^{1+\tau}([\phi]_{\C^\tau L_2([S,T])}+[\psi]_{\C^\tau L_2([S,T])}).
\end{align*}
Therefore, using the BDG inequality, we get 
\begin{align*}
\Bigl\|\A_{T}-\A_{S}-\sum_{i=0}^{k-1} A_{t_i,t_{i+1}}\Bigr\|_{L_2(\Omega)}^2
&\le C\Bigl\|\sum_{i=0}^{k-1} (\A_{t_{i+1}}-\A_{t_i}-\E^{t_i}(\A_{t_{i+1}}-\A_{t_i}))\Bigr\|_{L_2(\Omega)}^2\\
&\phantom{\le}+C\Bigl\|\sum_{i=0}^{k-1} (\E^{t_i}(\A_{t_{i+1}}-\A_{t_i})-A_{t_i,t_{i+1}})\Bigr\|_{L_2(\Omega)}^2\\
&\le C\sum_{i=0}^{k-1} \|\A_{t_{i+1}}-\A_{t_i}-\E^{t_i}(\A_{t_{i+1}}-\A_{t_i})\|_{L_2(\Omega)}^2\\
&\phantom{\le} +C\|b\|_{\C^1}^2 ([\phi]_{\C^\tau L_2([S,T])}^2+[\psi]_{\C^\tau L_2([S,T])}^2)(\sum_{i=0}^{k-1}(t_{i+1}-t_i)^{1+\tau})^2\\
	&\le  C\|b\|_{\C^1}^2|\Pi|^{2\tau\wedge1} (1+[\phi]_{\C^\tau L_2([S,T])}+[\psi]_{\C^\tau L_2([S,T])}^2)\to0.
\end{align*}
for $C>0$ as $|\Pi|\to0$. Thus, condition \eqref{con:s3} also holds. Therefore, all the conditions of SSL are satisfied and \eqref{est:ssl1} together with \eqref{cond1pert} and \eqref{cond2pert} implies \eqref{seckey}.

\textbf{Step 2}. For a general $b\in\C^\gamma$ we put $b_n:=P_{\frac1n}b$. Then obviously $b_n(x)\to b(x)$ for any $x\in\R^d$, $\|b_n\|_{\C^\gamma}\le \|b\|_{\C^\gamma}$ and $b_n\in\C^\infty$. Thus, Fatou's lemma and Step 1 of the proof yield
\begin{align*}
&\Bigl\|\int_S^T (b(W_r+\phi_r)-b(W_r+\psi_r))\,dr\Bigr\|_{L_m(\Omega)}\\
&\quad\le\limsup_{n\to\infty}	\Bigl\|\int_S^T (b_n(W_r+\phi_r)-b_n(W_r+\psi_r))\,dr\Bigr\|_{L_m(\Omega)}\\
	&\quad\le C\|b\|_{\C^\gamma}(t-s)^{\frac12+\frac\gamma2}\bigl((1+\Gamma_0)\|\phi-\psi\|_{\C^0L_m([S,T])}
	+ [\phi-\psi]_{\C^\tau L_m([S,T])} (t-s)^{\tau}\bigr),
\end{align*}
which is \eqref{seckey}.
\end{proof}

We see that, in general, we have not required much from the driving process $W$; in particular, no It\^o's formula was needed. However, let us point out some parts of the proof that are ``fragile'' and would need to be replaced in other settings.

\begin{remark}\label{r:d}
Let us look again at the crucial inequality \eqref{crucial}. When we substitute \eqref{crucial} into \eqref{step1}, we need to bound
$$
\Bigl\|\, \E^s|\phi_s-\psi_s|\,|\phi_u-\phi_s|\,\Bigr\|_{L_m(\Omega)}
$$
in terms of $\|\phi_s-\psi_s\|_{L_m(\Omega)}$. Note that H\"older's inequality won't help here, because the resulting bound would involve the $(m+\eps)$-moment of $|\phi_s-\psi_s|$, and we would not be able to close the buckling argument; see inequality \eqref{prefin} below. Therefore, we had to assume an a.s. bound on $|\phi_u-\phi_s|$; see \eqref{gamma0cond}. Of course, if $X=W+\phi$ is a solution to the SDE \eqref{mainSDE} with bounded drift, then this condition is automatically satisfied. However, for less regular drifts (not locally bounded, measures, or generalized functions), this becomes a problem. The same issue also appears for rough SDEs, see \cite{FHL}.
\end{remark}

\begin{remark}\label{r:vd}
Another very dangerous inequality is \eqref{verydangerous}. Here we were very lucky: we had to ensure that $(r-u)^{\frac\gamma2-1}|u-s|$ is integrable in $r$, and it barely is, since $\gamma>0$.  In other situations, however, this is not the case. Note that the factor $(u-s)$ does not help to improve integrability.  This problem was resolved in \cite{Gerreg22}, where a different version of stochastic sewing was developed. We will look at  this in more detail in \cref{s:levywp}, where we consider SDEs driven by $\alpha$-stable processes.
\end{remark}

\subsection{Proof of strong uniqueness}\label{s:SU}
We are now  ready to prove our first main result: strong uniqueness of solutions to the SDE \eqref{mainSDE} via the SSL, following the plan outlined in \cref{sect:11}. 

Let $(X_t)_{t\in[0,1]}$, $(Y_t)_{t\in[0,1]}$ be two solutions to SDE \eqref{mainSDE} adapted to the filtration $(\F_t)_{t\in[0,1]}$ and with the same initial condition $X_0=Y_0=x\in\R^d$. Define for $t\in[0,1]$
\begin{equation*}
	\phi_t:=X_t-W_t;\quad	\psi_t:=Y_t-W_t.
\end{equation*}	

The main idea is to bound  $\|X-Y\|_{\C^0L_2([0,1])}=\|\phi-\psi\|_{\C^0L_2([0,1])}$ by half of the same quantity, which would immediately imply that $X_t=Y_t$ a.s. for all $t\in[0,1]$, and hence strong uniqueness. However, doing this directly seems to be not possible. Indeed, when we bound $\|\phi-\psi\|_{\C^0L_2([0,1])}$ via \cref{e:step1uni}, a higher norm, namely $[\phi-\psi]_{\C^{1/2}L_2([0,1])}$, appears. Therefore, we modify our strategy and aim to show that  $[\phi-\psi]_{\C^{1/2}L_2([0,T])}$ can be bounded by half of itself, at least for small enough $T>0$. Since $X_0=Y_0$, this still yields $X_t=Y_t$ a.s. for all $t\in[0,T]$. Iterating the argument then gives strong uniqueness on the whole interval $[0,1]$.

\begin{proof}[Proof of \cref{t:VK}: strong uniqueness]
To simplify the argument, we prove the strong uniqueness part of \cref{t:VK} under slightly more restrictive assumptions: namely, we assume that the drift $b \in \C^\gamma$ for some $\gamma > 0$. For the stochastic sewing proof of \cref{t:VK} in full generality, we refer to \cite{le2021taming}.

Let us apply \cref{e:step1uni} to the processes $\phi$, $\psi$ defined above with $\tau=\frac12$, $m=2$. We note that $|\phi_t-\phi_s|\le \|b\|_{\C^0}|t-s|$ for any $s,t\in[0,1]$ and thus condition \eqref{gamma0cond} is satisfied. Condition \eqref{gammatau} also holds since $\tau=\frac12$ and $\gamma>0$. Then all conditions of \cref{e:step1uni} are satisfied and it follows from \eqref{seckey} that there exists $C=C(\gamma,d)$ such that for any $(s,t)\in\Delta_{[0,1]}$
\begin{align*}
\|X_t-X_s-(Y_t-Y_s)\|_{L_2(\Omega)}&=\Bigl \| \int_s^t \bigl(b(W_r+\phi_r)-b(W_r+\psi_r)\bigr)\,dr \Bigr\|_{L_2(\Omega)}\nn\\
&\le
C\|b\|_{\C^\gamma}(1+\|b\|_{\C^0})(t-s)^{\frac12+\frac\gamma2}\|\phi-\psi\|_{\C^0L_2([s,t])}\\
&\phantom{\le}+  C\|b\|_{\C^\gamma}(t-s)^{1+\frac\gamma2}[\phi-\psi]_{\C^{\frac12} L_2([s,t])}.
\end{align*}	
Let $T\in(0,1)$. Dividing both sides of the above inequality by $(t-s)^{\frac12}$ and taking the supremum over $(s,t)\in\Delta_{[0,T]}$, we obtain (note that $X-Y=\phi-\psi$)
\begin{equation}\label{prefin}
[\phi-\psi]_{\C^{\frac12} L_2([0,T])}\le 
C\|b\|_{\C^\gamma}(1+\|b\|_{\C^0})\|\phi-\psi\|_{\C^0L_2([0,T])}+  C\|b\|_{\C^\gamma}T^{\frac12}[\phi-\psi]_{\C^{\frac12} L_2([0,T])}.
\end{equation}
Note that by definition 
\begin{equation}\label{norm2norm}
	\|\phi-\psi\|_{\C^0L_2([0,T])}\le \|\phi_0-\psi_0\|_{L_2(\Omega)}+T^{\frac12}[\phi-\psi]_{\C^{\frac12} L_2([0,T])}=T^{\frac12}[\phi-\psi]_{\C^{\frac12} L_2([0,T])}.
\end{equation}
Hence we get from \eqref{prefin}
\begin{equation*}
	[\phi-\psi]_{\C^{\frac12} L_2([0,T])}\le 
	C\|b\|_{\C^\gamma}(1+\|b\|_{\C^0})T^{\frac12}[\phi-\psi]_{\C^{\frac12} L_2([0,T])},
\end{equation*}
for $C=C(\gamma,d)$.
Pick  now $T_0$ small enough so that 
\begin{equation*}
	C\|b\|_{\C^\gamma}(1+\|b\|_{\C^0})T_0^{\frac12}\le \frac12.
\end{equation*}
Then we deduce from \eqref{prefin}
\begin{equation*}
[\phi-\psi]_{\C^{\frac12} L_m([0,T_0])}=0
\end{equation*}
and thus by \eqref{norm2norm}, $X_t=Y_t$ a.s. for $t\in[0,T_0]$. Repeating this argument $\lceil1/T_0\rceil$ times we get that $X_t=Y_t$ a.s. for any  $t\in[0,1]$ and thus strong uniqueness holds.
\end{proof}

Strong existence of solutions to \eqref{mainSDE} now follows easily. Indeed, weak existence is immediate by the Girsanov theorem. By the Yamada–Watanabe theorem (see, e.g., \cite[Corollary 5.3.23]{KS91}), weak existence and strong uniqueness for \eqref{mainSDE} imply strong existence.

\begin{remark}
Note that the proof given above is quite robust. The only properties of $W$ we used are the Gaussian bounds \eqref{ineq} and the identity \eqref{1bound}. Therefore, the same general idea can be applied in many other settings. In particular, \cref{e:strongfbm} extends the uniqueness result to SDEs driven by fractional Brownian motion. 
\end{remark}

\subsection{Exercises}\label{s:upr3}

Let us extend \cref{t:VK} to SDEs driven by a fractional Brownian motion (fBM). First, let us recall its definition and basic properties.

\begin{definition}
A process $X\colon\Omega\times[0,1]\to\R$ is called \textit{Gaussian} if for any $n\in\N$ and any 
$t_1, t_2,\hdots, t_n\in[0,1]^n$, the vector $(X_{t_1}, X_{t_2}, \dots, X_{t_n})$ has a Gaussian distribution.
\end{definition}

\begin{definition}\label{e:fbm}
Let $H\in(0,1)$. A Gaussian process $B^H\colon\Omega\times[0,1]\to\R$ is called a \textit{fractional Brownian motion with Hurst parameter $H$} if it is 	continuous and for any $(s,t)\in\Delta_{[0,1]}$ one has $\E B^H_t=0$, $\E B^H_sB^H_t=\frac12(t^{2H}+s^{2H}-(t-s)^{2H})$.
\end{definition}	
If $H=\frac12$, then an fBM is a Brownian motion. For $H\neq\frac12$, an fBM is not a Markov process and not a martingale. For any $\eps>0$, almost all  trajectories of fBM $B^H$ are H\"older continuous functions with exponent $H-\eps$, that is $\P(B^H\in\C^{H-\eps}([0,1];\R))=1$.

A $d$-dimensional fBM with Hurst parameter $H\in(0,1)$ ($d\in\N$) is simply a vector consisting of $d$ independent fBMs with Hurst parameter $H$. We recall from \cite[Section~5.1.3, formula~(5.8) and Proposition~5.1.3]{Nu} that given a $d$-dimensional fBM $B^H$ one can construct on the same probability space a standard $d$-dimensional BM $W$ such that
\begin{equation}\label{WB}
	B^H_t=\int_0^t K_H(t,s)\,dW_s,
\end{equation}
for a certain function $K_H\colon[0,1]^2\to \R_+$ satisfying for any $(s,t)\in\Delta_{[0,1]}$ (see, e.g., \cite[Proposition B.2(ii)]{BLM23})
\begin{equation}\label{KBound}
K_H(t,s)\ge C(t-s)^{H-\frac12},
\end{equation}
for $C=C(H)$.

We fix now $H\in(0,1)$, a $d$-dimensional fBM $B^H$, and let $(\F_t)_{t\in[0,1]}$ be the filtration of the underlying BM $W$ from representation \eqref{WB}.
\begin{exercise}\label{e311}
\begin{enumerate}[(i)] 
	\item Generalize formula \eqref{1bound} and show that  for any bounded measurable $f\colon\R^d\to\R$, $0\le s< t$ we have
	\begin{equation}\label{1boundg}
		\E^s f(B^H_t)=P_{\sigma^2(s,t)}f (\E^s B^H_t),
	\end{equation}
	for $\sigma^2(s,t):=\int_s^t K_H(t,r)^2\,dr$.
	\item Adapt the proof of \cref{t:firstk} to the fractional Brownian case. Namely, given $m\ge2$, $\gamma\in(0,1]$, $b\in \C^\gamma(\R^d,\R)$, $\rho\in[0,1]$, $\rho<\gamma+\frac1{2H}$, show that 
	 there exists a constant $C=C(\gamma,\rho,d,H,m)>0$ such that for any $x,y\in\R^d$, $(S,T)\in\Delta_{[0,1]}$ one has 
	\begin{equation*}
		\Bigl\|\int_S^T (b(B^H_r+x)-b(B^H_r+y))\,dr\Bigr\|_{L_m(\Omega)}\le C\|b\|_{\C^\gamma} |x-y|^{\rho}|T-S|^{1+(\gamma-\rho)H}.
	\end{equation*}
	\hn Use identity \eqref{1boundg} and inequality \eqref{KBound}.
\end{enumerate}	
\end{exercise}

To solve the next exercise, we will need Kolmogorov's continuity theorem. It allows us to exchange the order of taking the supremum and the expectation in norms of the form \eqref{norms}. That is, 
it gives a bound on the moments of the H\"older norm of a stochastic process if the moments of its increments are H\"older-bounded.

\begin{theorem}[Kolmogorov's continuity theorem, see, e.g., {\cite[Theorem~A.11]{FV2010}},
{\cite[Theorem~1.4.1]{Kunita}}]
Let $(E,d)$ be a Polish space.  \label{t:kolmi} Let $\gamma\in(0,1]$, $\Gamma\ge0$, $k\in\N$, $m>\frac{k}\gamma$, $T>0$. Let $X\colon\Omega\times[0,T]^k\to E$ be a continuous process such that for any $s,t\in[0,T]^k$
\begin{equation*}
\|d(X_s,X_t)\|_{L_m(\Omega)}\le \Gamma |t-s|^\gamma.
\end{equation*}
Then  for any $\beta\in(0,\gamma-\frac{k}m)$ there exists a constant $C=C(\beta,\gamma,k,m,T)>0$
such that
\begin{equation*}
\|[X]_{\C^{\beta}([0,1]^k;E)}\|_{L_m(\Omega)}\le C \Gamma.
\end{equation*}
\end{theorem}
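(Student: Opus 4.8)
The plan is to use the classical dyadic chaining argument (an alternative would be the Garsia--Rodemich--Rumsey inequality, but chaining is more elementary). By rescaling the cube to $[0,1]^k$ we may assume $T=1$; this rescaling is what makes the final constant depend on $T$. Fix $\beta\in(0,\gamma-k/m)$, a nonempty interval precisely because $m>k/\gamma$. For $n\ge0$ let $D_n:=\{i/2^n:0\le i\le 2^n\}^k$ be the level-$n$ dyadic grid in $[0,1]^k$ and $D:=\bigcup_{n\ge0}D_n$ the countable dense set of all dyadic points; we will first prove a Hölder bound on $D$ and then upgrade it by continuity.

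First I would control the maximal increment over neighbouring dyadic points at each scale. Call $s,t$ \emph{neighbours at level $n$} if both lie in $D_n\cup D_{n+1}$ and $|s-t|_\infty\le 2^{-n}$, and set $\xi_n:=\max\{d(X_s,X_t):s,t\text{ neighbours at level }n\}$. There are at most $C(k)\,2^{nk}$ such pairs, and each has Euclidean distance $\le\sqrt k\,2^{-n}$, so by the hypothesis and a union bound in $L_m$,
\begin{equation*}
\E\,\xi_n^m\le\sum_{\text{pairs}}\E\,d(X_s,X_t)^m\le C(k)\,2^{nk}\,\Gamma^m(\sqrt k\,2^{-n})^{m\gamma}=C(k,\gamma,m)\,\Gamma^m\,2^{n(k-m\gamma)},
\end{equation*}
hence $\|\xi_n\|_{L_m(\Omega)}\le C\,\Gamma\,2^{n(k/m-\gamma)}$.

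Second comes the chaining step. Given $s,t\in D$ with $2^{-(N+1)}<|s-t|\le 2^{-N}$, write $s^{(n)}$ for the coordinatewise round-down of $s$ to $D_n$; then $s^{(n)}$ and $s^{(n+1)}$ are neighbours at level $n$, and $s^{(N)},t^{(N)}\in D_N$ lie within $\ell^\infty$-distance $3\cdot 2^{-N}$ of each other, hence are joined by at most $3k$ level-$N$ neighbour steps. Telescoping from $s$ down to $s^{(N)}$, across at level $N$, and back up to $t$ yields the deterministic bound $d(X_s,X_t)\le C(k)\sum_{n\ge N}\xi_n$; dividing by $|s-t|^\beta\ge 2^{-(N+1)\beta}$ and using $2^{N\beta}\le 2^{n\beta}$ for $n\ge N$ gives $[X]_{\C^\beta(D;E)}\le C\sum_{n\ge0}2^{n\beta}\xi_n=:M$. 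Taking $L_m$-norms and summing a geometric series,
\begin{equation*}
\|M\|_{L_m(\Omega)}\le C\,\Gamma\sum_{n\ge0}2^{n(\beta+k/m-\gamma)}=C(\beta,\gamma,k,m)\,\Gamma<\infty,
\end{equation*}
the series converging exactly because $\beta<\gamma-k/m$. Finally, since a.e.\ path of $X$ is continuous and $D$ is dense, approximating arbitrary $s,t\in[0,1]^k$ by dyadic points gives $[X]_{\C^\beta([0,1]^k;E)}=[X]_{\C^\beta(D;E)}\le M$ almost surely, and taking $L_m$-norms completes the proof.

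The main obstacle is the bookkeeping in the chaining step: one must make precise the path joining two arbitrary dyadic points through the successive grids so that every elementary increment along it is of ``neighbour at level $n$'' type for a well-defined $n$, and verify that each level contributes only a bounded, dimension-dependent number of such increments, so that $d(X_s,X_t)\le C(k)\sum_{n\ge N}\xi_n$ genuinely holds with a constant depending only on $k$. Everything else --- the $L_m$ union bound, the geometric summation (where the constraint $\beta<\gamma-k/m$ enters), and the density upgrade --- is routine.
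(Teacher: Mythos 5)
The paper cites this result from the literature (\cite{FV2010}, \cite{Kunita}) without giving a proof, so there is no in-paper argument to compare against. Your proof is the standard dyadic chaining argument — define the maximal level-$n$ neighbour increment $\xi_n$, bound $\|\xi_n\|_{L_m}$ by a union bound in $L_m$, telescope through the dyadic approximations $s^{(n)}$ plus a short level-$N$ bridge, and sum the resulting geometric series, with the density/continuity upgrade at the end — and this is precisely the approach taken in the cited references. The bookkeeping you flag as the main obstacle (that $s^{(n)},s^{(n+1)}$ are level-$n$ neighbours, that $s^{(N)}$ and $t^{(N)}$ are joined by $O(k)$ level-$N$ steps, and that the number of neighbour pairs at level $n$ is $O(2^{nk})$) is handled correctly, the geometric series converges exactly when $\beta<\gamma-k/m$, and the hypothesis that $X$ is a continuous process is used only in the final step to extend the Hölder bound from the dyadics $D$ to all of $[0,1]^k$. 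The argument is correct and complete.
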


\begin{exercise}\label{e:313}
Let 	$\gamma\in(0,1]$, $b\in \C^\gamma(\R^d,\R)$, $H\in(0,1)$.  
Use  Kolmogorov's continuity theorem and \cref{e311}  to deduce that the process 
\begin{equation*}
x\mapsto\int_0^1 b(B_r^H+x)\,dr,	\quad x\in\R^d
\end{equation*}
is a.s. in $\C^{(\gamma+\frac1{2H}-\eps)\wedge(1-\eps)}(\R^d,\R)$ for any $\eps>0$.
\end{exercise}

Next, we extend \cref{e:step1uni} to the fractional Brownian case. We consider only the setting where no extensions of stochastic sewing are needed and therefore restrict ourselves to the case $H<\frac12$.

\begin{exercise}\label{e:ext}
Let $H\in(0,\frac12)$, $m\ge2$, $\tau\in(H,1]$. Let $b$ be a bounded measurable function $\R^d\to\R$. Let $\psi,\phi\colon\Omega\times[0,T]\to\R$ be continuous measurable functions adapted to $(\F_t)_{t\in[0,T]}$. 	Suppose that
there exists a constant $\Gamma_0\ge0$ such that a.s. for all $s,t\in\Delta_{[0,1]}$
\begin{equation*}
		|\phi_t-\phi_s|\le \Gamma_0|t-s|.
\end{equation*}
Show that  there exists a constant $C=C(\tau,d,m)>0$ such that 
for any  $(S,T)\in\Delta_{[0,1]}$ one has
	\begin{align*}
		&\Bigl \| \int_S^T \bigl(b(B^H_r+\phi_r)-b(B^H_r+\psi_r)\bigr)\,dr \Bigr\|_{L_m(\Omega)}\nn\\
		&\quad\le C\|b\|_{\C^0}(t-s)^{1-H}\bigl((1+\Gamma_0)\|\phi-\psi\|_{\C^0L_m([S,T])}
		+ [\phi-\psi]_{\C^\tau L_m([S,T])} (t-s)^{\tau}\bigr).
	\end{align*}
\end{exercise}	
 
\begin{exercise}[Catellier-Gubinelli theorem]\label{e:strongfbm}
Let $H\in(0,\frac12)$, let $b$ be a bounded measurable function $\R^d\to\R^d$. Use \cref{e:ext} to show that the solution  to SDE
\begin{equation*}
dX_t=b(X_t)dt+dB_t^H
\end{equation*}	
is unique.
\end{exercise}	

Note that the statement of \cref{e:strongfbm} actually holds in much greater generality, namely for $b\in\C^\gamma$, $\gamma>1-\frac1{2H}$ and $H\in(0,\infty)\setminus\N$; see \cite{CG16,Gerreg22} (there is a way to define an fBM for $H>1$). The proof for the case $H<\frac12$, $\gamma < 0$ can be carried out using the same stochastic sewing technique, but it requires familiarity with the basic theory of Besov spaces. We will return to this later in \cref{s:WU}.
In contrast, if $H>\frac12$, the stochastic sewing lemma in the form of \cref{T:SSLst} leads to a suboptimal result (recall \cref{r:vd}). To obtain the optimal condition, \cite{Gerreg22} introduced a shifted stochastic sewing lemma. We will look at this extension in \cref{s:sssl}.

\section{Weak well-posedness of SDEs driven by Brownian motion}\label{s:WU}

The main goal of this section is to show how stochastic sewing techniques can be combined with ideas from ergodic theory to obtain \textbf{weak} uniqueness (uniqueness in law). In this section we follow \cite{BM24}. As in the previous section, to ease the understanding of the main ideas, we deliberately concentrate on the simplest model: SDEs driven by a Brownian motion. However, the same strategy can be successfully applied to SDEs driven by a fractional Brownian motion or to SPDEs; see \cite{BM24}. 
Thus, in this section we prove \cref{t:weak} without relying on the Zvonkin transformation argument.


\subsection{SDEs with distributional drifts}\label{s:41}

First of all, let us recall that the Schwartz space $\S(\R^d)$ is the space of all functions $f\in\C^\infty(\R^d,\mathbb{C})$ such that 
such that $f$ and all its derivatives are rapidly decreasing at infinity. That is, for any $k\in\Z_+$ we have
\begin{equation*}
\|\phi\|_{\S(\R^d),k}:=\sup_{\mu\in \Z^d_+, |\mu|\le k}\|(1+|\cdot|)^k \d^\mu f\|_{L_\infty(\R^d)}<\infty,
\end{equation*}
where for $\mu=(\mu_1,\hdots,\mu_d)$ we used the usual convention: $|\mu|:=\sum_{i=1}^d \mu_i$; $\d^\mu:=\d^{\mu_1}_{x_1}\hdots\d^{\mu_d}_{x_d}$.

The Schwartz space of distributions $\S'(\R^d)$ is a space of all linear maps $u\colon\S(\R^d)\to\mathbb{\mathbb{C}}$ such that for some $C>0$, $k\in\Z_+$ one has
\begin{equation*}
|\la u, \phi\ra |\le C \|\phi\|_{\S(\R^d),k},\quad \phi\in\S(\R^d).
\end{equation*}

Now we are ready to define the notion of regularity of a Schwartz distribution.
\begin{definition}\label{d:besov}
Let $\alpha<0$. We say that a Schwartz distribution $f\in\S'(\R^d)$ belongs to the Besov space $\C^\alpha(\R^d)$ if  
\begin{equation}\label{negbesov}
\|f\|_{\C^\alpha(\R^d)}:=\sup_{t\in(0,1]}t^{-\frac\alpha2}\|P_t f\|_{L_\infty(\R^d)}<\infty.
\end{equation}
Here $P_tf$ denotes  the convolution of the distribution $f$ with the heat kernel $P_t$ defined in \eqref{heatkernel}, that is 
\begin{equation*}
P_tf(x):=\la f, p_t(x-\cdot)\ra,\quad x\in\R^d.
\end{equation*}
\end{definition}	
  One can show that \cref{d:besov}  is equivalent to the standard definition of the Besov space via Paley–Littlewood blocks (see, e.g., \cite[Theorem 2.34]{bahouri}), but is more convenient for our analysis. The function $p_t$ can in fact be replaced by $h_t(x):=t^{-\frac{d}2} h(x/t)$, where $h\colon\R^d\to\R$
is any smooth integrable function.  

It is easy to see that this definition keeps the following natural property: if $f\in\C^\alpha(\R^d)$ for $\alpha\in(0,1)$, then $\nabla f$ belongs to the Besov space of negative regularity $\C^{\alpha-1}(\R^d)$. Furthermore, it follows from the Besov embedding theorem that integrable functions belongs to a Besov space of certain negative regularity, that is, $L_p(\R^d)\subset\C^{-\frac{d}p}(\R^d)$.
We also note that this definition matches \cref{l:gb} in the following sense.

\begin{lemma}\label{l:gbext}
	Let $f\in\C^\gamma(\R^d,\R)$, $\gamma<0$, $\rho\in(0,1]$. Then there exists $C=C(\gamma,\rho,d)>0$ such that for any  $t>0$ we have 
	\begin{equation*}
		\|P_t  f\|_{\C^\rho}\le C t^{\frac{\gamma-\rho}2}\|f\|_{\C^\gamma}.
	\end{equation*} 	
\end{lemma}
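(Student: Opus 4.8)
The plan is to deduce the bound from \cref{l:gb} — which already captures the smoothing of $P_t$ on functions of \emph{nonnegative} regularity — by exploiting the semigroup identity $P_t=P_{t/2}\circ P_{t/2}$: the inner $P_{t/2}$ converts the distribution $f$ of regularity $\gamma<0$ into a genuine smooth bounded function, at the price of a factor $t^{\gamma/2}$, which is precisely what the definition \eqref{negbesov} of the negative Besov norm supplies. I would fix $t\in(0,1]$, the range compatible with \eqref{negbesov} and the one in which the estimate is used.

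First I would set $g:=P_{t/2}f$. Being a convolution of a tempered distribution with a Gaussian, $g$ is smooth and bounded, and by \eqref{negbesov} (using $t/2\le1$) we have $\|g\|_{\C^0}=\|P_{t/2}f\|_{L_\infty}\le(t/2)^{\gamma/2}\|f\|_{\C^\gamma}$. Since $P_tf=P_{t/2}g$ with $g\in\C^0$, applying \cref{l:gb} with its H\"older exponent taken to be $0$ and with time $t/2$ gives
\begin{equation*}
\|P_tf\|_{\C^1}=\|P_{t/2}g\|_{\C^1}\le C\,(t/2)^{-\frac12}\,\|g\|_{\C^0}\le C\,t^{\frac{\gamma-1}{2}}\,\|f\|_{\C^\gamma},
\end{equation*}
so in particular the Lipschitz seminorm satisfies $[P_tf]_{\C^1}\le C\,t^{\frac{\gamma-1}{2}}\|f\|_{\C^\gamma}$; and directly from \eqref{negbesov}, $\|P_tf\|_{\C^0}=\|P_tf\|_{L_\infty}\le t^{\gamma/2}\|f\|_{\C^\gamma}$.

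Next I would obtain the $\C^\rho$-seminorm by interpolating between the sup bound and the Lipschitz bound. For $x\neq y$, writing $r=|x-y|$, we have $|P_tf(x)-P_tf(y)|\le\min\big(2\|P_tf\|_{L_\infty},\,[P_tf]_{\C^1}\,r\big)\le\big(2\|P_tf\|_{L_\infty}\big)^{1-\rho}\,[P_tf]_{\C^1}^{\rho}\,r^{\rho}$, using the elementary inequality $\min(a,b)\le a^{1-\rho}b^{\rho}$. Dividing by $r^{\rho}$, taking the supremum over $x\neq y$, and inserting the two bounds from the previous step yields $[P_tf]_{\C^\rho}\le C\,\big(t^{\gamma/2}\big)^{1-\rho}\big(t^{\frac{\gamma-1}{2}}\big)^{\rho}\|f\|_{\C^\gamma}=C\,t^{\frac{\gamma-\rho}{2}}\|f\|_{\C^\gamma}$. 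Finally, since $t\le1$ and $\rho>0$ we have $t^{\gamma/2}\le t^{\frac{\gamma-\rho}{2}}$, hence $\|P_tf\|_{\C^0}\le t^{\frac{\gamma-\rho}{2}}\|f\|_{\C^\gamma}$ too, and adding the two estimates gives $\|P_tf\|_{\C^\rho}=\|P_tf\|_{\C^0}+[P_tf]_{\C^\rho}\le C\,t^{\frac{\gamma-\rho}{2}}\|f\|_{\C^\gamma}$ with $C=C(\gamma,\rho,d)$.

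I do not expect a real obstacle here: the proof is a short combination of \cref{l:gb}, the definition \eqref{negbesov} of the negative Besov norm, and the one-line interpolation inequality. The only points requiring mild care are checking that $P_{t/2}f$ is indeed a continuous bounded function (so that \cref{l:gb} legitimately applies to it), and tracking that all powers of $t$ combine to $t^{(\gamma-\rho)/2}$ — the slack $t^{\gamma/2}\le t^{(\gamma-\rho)/2}$ in the last step being the reason one works with $t\le1$.
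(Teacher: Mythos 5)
Your proof is correct and follows essentially the same route as the paper: both factor $P_t=P_{t/2}\circ P_{t/2}$, use \eqref{negbesov} to convert $f$ into a bounded function $g=P_{t/2}f$ with $\|g\|_{L_\infty}\lesssim t^{\gamma/2}\|f\|_{\C^\gamma}$, apply \cref{l:gb} (with exponent $0$) to get a Lipschitz bound, and then interpolate the $\C^\rho$-seminorm between the sup and Lipschitz bounds. The only cosmetic difference is that the paper interpolates directly against $\|g\|_{L_\infty}$ instead of $\|P_tf\|_{L_\infty}$, which collapses the two powers of $\|g\|_{L_\infty}$ into one; this does not change the argument.
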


\begin{proof}
We write $P_t f=P_{\frac{t}2}P_{\frac{t}2} f=:P_{\frac{t}2} g$, where we denoted $g:=P_{t/2} f$. It follows from \cref{d:besov} that 
\begin{equation}\label{gsoup}
\|g\|_{L_\infty(\R^d)}\le C t^{\frac\gamma2}\|f\|_{\C^\gamma}.
\end{equation}
for $C>0$. Next, using \cref{l:gb} we deduce for any $x,y\in\R^d$
\begin{align*}
|P_{\frac{t}2} g(x)-P_{\frac{t}2} g(y)|&\le 2 \|g\|_{L_\infty(\R^d)}^{1-\rho} |P_{\frac{t}2} g(x)-P_{\frac{t}2} g(y)|^\rho\le 2
 \|g\|_{L_\infty(\R^d)}^{1-\rho} |x-y|^\rho \|P_{\frac{t}2} g\|_{\C^1}^\rho\\
 &\le C \|g\|_{L_\infty(\R^d)} |x-y|^\rho t^{-\frac\rho2},
\end{align*}
for $C=C(d)$. Thus, recalling also \eqref{gsoup}, we finally get
\begin{equation*}
\|P_{t} f\|_{\C^\rho}=\|P_{\frac{t}2} g\|_{\C^\rho}\le  C \|g\|_{L_\infty(\R^d)}  t^{-\frac\rho2}\le C t^{-\frac\rho2}t^{\frac\gamma2}\|f\|_{\C^\gamma}.\qedhere
\end{equation*}
\end{proof}

Next, let us explain how we can define the notion of a solution to SDE 
\begin{equation}\label{distrsde}
X_t=x+\int_0^t b(X_r)\,dr +W_t,
\end{equation}
where the drift $b$ is not a function but a distribution from the Besov space $\C^\alpha(\R^d)$, with ${\alpha<0}$. Note that in this case the term $b(X_r)$ is not defined: indeed, one cannot evaluate Schwartz distributions at fixed points. Recall, however, that in \cref{s:s3} we observed that for any bounded measurable function ${f\colon\R^d\to\R}$, the function $\wt f(x):=\int_0^1 f(W_r+x)\,dr$, $x\in\R^d$, is a.s. much smoother than $f$; namely, $\wt f$ is Lipschitz. Therefore, instead of defining $b(X_r)$ directly, we attempt to make sense  of the whole integral  $\int b(X_r)\,dr$, hoping that it will be well defined thanks to the averaging effect of $X$. This motivates the following definition.

\begin{definition}\label{d:apprsec}
We say that a sequence of functions $f^n\colon\R^d\to\R^d$, $n\in\Z_+$, converges to a function $f$ in $\C^{\beta-}$, $\beta\in\R$,  if $\sup_n \|f_n\|_{\C^{\beta}}<\infty$ and  for any $\beta'<\beta$ we have $\|f_n-f\|_{\C^{\beta'}}\to0$ as $n\to\infty$.
\end{definition}

\begin{definition}[{\cite[Definition~2.1]{BC}}]\label{D:sol}
Let $b\in\C^{\beta}(\R^d,\R^d)$, $\beta\in\R$. We say that a continuous   process $(X_t)_{t\in[0,1]}$ taking values in $\R^d$ is a solution to \eqref{distrsde} with the initial condition $x\in\R^d$, if there exists a continuous process $(\psi_t)_{t\in[0,1]}$ taking values in $\R^d$ such that:
\begin{enumerate}[(i)]
\item $X_t=x+\psi_t+W_t$, $t\in[0,1]$ a.s.;
\item\label{cond2md} for \textit{any} sequence $(b^n)_{n\in\Z_+}$ of $\C^\infty(\R^d,\R^d)$ functions converging to $b$ in $\C^{\beta-}$ we have
\begin{equation*}
\lim_{n\to\infty}\sup_{t\in[0,1]}\Big|\int_0^t b^n(X_r)\,dr-\psi_t\Big|= 0\,\,\text{in probability}.
\end{equation*}
\end{enumerate}
\end{definition}

Note that in \cref{d:apprsec} we did not require that $f_n$ converge to $f$ in $\C^\beta$, but imposed a slightly weaker condition. This is due to the fact that given $f\in\C^\beta$, $\beta\in\R$, one can always construct a sequence $(f^n)_{n\in\Z+}$ of smooth functions converging to $f$ in $\C^{\beta-}$; for example, one can take $f_n:=P_{1/n}f$, see \cref{e:betamin}. On the other hand, for a general $f\in\C^\beta$ a sequence of smooth functions $(f^n)_{n\in\Z+}$ converging to $f$ in $\C^\beta$ might not exist.

We also note that if $b\in\C^\beta$ with $\beta>0$, then $b(X_t)$ is of course well defined, and it is immediate that this notion of a solution coincides with the standard notion of a solution.

As in the classical case, we define a \textit{weak solution} to \eqref{distrsde} to be a couple $(X,W)$ on a complete filtered probability space $(\Omega, \F,  (\F_t)_{t\in[0,T]}, \P)$ such that $W$ is an $ (\F_t)$-Brownian motion, $X$ is adapted to $(\F_t)$ and solves equation \eqref{distrsde} in the sense  of \cref{D:sol}. We say that \textit{weak uniqueness} holds for \eqref{distrsde} if whenever    $(X,W)$ and $(Y,\overline W)$ are two  weak solutions of this equation (not necessarily defined on the same probability space), then   $\law(X)=\law(Y)$ on the path space  $\C([0,1];\R^{d})$. \

Finally, we introduce some notation related to distances between probability measures and recall some fundamentals. Let $(E,\rho)$ be a metric space. The space of all probability measures on $E$ equipped with the Borel $\sigma$-algebra $\mathscr{B}(E)$ is denoted by $\PP(E)$. For two probability measures  $\mu,\nu\in\PP(E)$ we define the Wasserstein (Kantorovich-Rubinstein) distance between them as
\begin{equation}\label{wrho}
\W_\rho(\mu,\nu):=\inf_{\substack{\law(X)=\mu\\\law(Y)=\nu}}\E \rho(X,Y),
\end{equation}
where the infimum is taken over all random variables $X,Y$ with $\law(X)=\mu$, $\law(Y)=\nu$. The choice  $\rho(x,y)=\I(x\neq y)$, $x,y\in E$, leads to  the total variation distance $d_{TV}$, which is given by
\begin{equation}\label{dtv}
d_{TV}(\mu,\nu):=\inf_{\substack{\law(X)=\mu\\\law(Y)=\nu}} \P(X\neq Y)=\sup_{A\in\mathscr{B}(E)}|\mu(A)-\nu(A)|.
\end{equation}
It is well known that if  $(E,\rho)$ is Polish and $\rho$ is bounded, then weak convergence of measures is equivalent to convergence in $W_\rho$, see, e.g., \cite[Corollary~6.13]{Villani}.

A simple yet powerful tool in bounding total variation distance between probability measures is Pinsker's inequality.
\begin{proposition}[Pinsker's inequality]\label{pinsker}
Let $\P,\Q$ be two probability measures on a measurable space $(\Omega,\F)$. Then 
\begin{equation*}
d_{TV}(\P,\Q)\le \sqrt{\frac12\E^\P \log \frac{d\P}{d\Q}}.
\end{equation*}	
\end{proposition}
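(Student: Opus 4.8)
The plan is to reduce the inequality, via a data-processing argument, to a scalar estimate for Bernoulli measures, which is then dispatched by a one-variable calculus computation. To begin, if $\P$ is not absolutely continuous with respect to $\Q$, then $\E^\P\log\frac{d\P}{d\Q}=+\infty$ by the usual convention and there is nothing to prove; so I assume $\P\ll\Q$ and set $Z:=\frac{d\P}{d\Q}$. Writing $\Phi(x):=x\log x$ for the convex function on $[0,\infty)$ (with $\Phi(0):=0$), one has $\E^\P\log\frac{d\P}{d\Q}=\E^\Q[Z\log Z]=\E^\Q[\Phi(Z)]$, so the right-hand side of the claimed inequality equals $\sqrt{\tfrac12\,\E^\Q[\Phi(Z)]}$.

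For the key step, I use the characterization $d_{TV}(\P,\Q)=\sup_{A\in\F}|\P(A)-\Q(A)|$ recalled in \eqref{dtv}, so that it suffices to bound $|\P(A)-\Q(A)|$ for an arbitrary fixed $A\in\F$. Put $p:=\P(A)$ and $q:=\Q(A)$; under $\P\ll\Q$ the degenerate cases $q\in\{0,1\}$ force $p=q$, so I may take $q\in(0,1)$. Splitting $\E^\Q[\Phi(Z)]=q\,\E^\Q[\Phi(Z)\mid A]+(1-q)\,\E^\Q[\Phi(Z)\mid A^c]$ and applying Jensen's inequality to $\Phi$ on each conditional expectation, using $\E^\Q[Z\mid A]=p/q$ and $\E^\Q[Z\mid A^c]=(1-p)/(1-q)$, I obtain the data-processing bound
\begin{equation*}
\E^\Q[\Phi(Z)]\ \ge\ q\,\Phi\!\Big(\tfrac pq\Big)+(1-q)\,\Phi\!\Big(\tfrac{1-p}{1-q}\Big)\ =\ p\log\tfrac pq+(1-p)\log\tfrac{1-p}{1-q}\ =:\ d(p\,\|\,q).
\end{equation*}

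It then remains to establish the scalar inequality $d(p\,\|\,q)\ge 2(p-q)^2$ for all $p,q\in[0,1]$: combined with the previous display this gives $|\P(A)-\Q(A)|\le\sqrt{\tfrac12\,\E^\Q[\Phi(Z)]}$, and taking the supremum over $A$ yields the assertion. To prove the scalar inequality I fix $p$ and set $g(q):=d(p\,\|\,q)-2(p-q)^2$; then $g(p)=0$, and a direct computation gives $g'(q)=\frac{q-p}{q(1-q)}+4(p-q)=(p-q)\big(4-\tfrac1{q(1-q)}\big)$. Since $q(1-q)\le\tfrac14$, the factor $4-\tfrac1{q(1-q)}$ is nonpositive, so $g'$ has the sign of $-(p-q)$; hence $g$ is nonincreasing on $(0,p)$ and nondecreasing on $(p,1)$, and since $g(p)=0$ it follows that $g\ge0$ throughout, as needed.

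The only mildly delicate points are the bookkeeping in the data-processing step --- the conditional Jensen inequality and the degenerate values $q\in\{0,1\}$ --- while the scalar inequality is a routine one-variable exercise and the remaining manipulations are formal. (One can also bypass the reduction by proving the pointwise inequality $(z-1)^2\le\big(\tfrac23 z+\tfrac43\big)(z\log z-z+1)$ and applying the Cauchy--Schwarz inequality to $\E^\Q|Z-1|$, but the route sketched above is shorter to write out.)
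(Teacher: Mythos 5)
Your proof is correct. A remark before the substance: the paper states \cref{pinsker} as a proposition but gives no proof, so there is nothing to compare against in the text; it is treated as a black box and invoked once, inside the proof of \cref{l:dtvlemma}. Your argument is therefore a self-contained supplement, and it follows one of the two classical routes.

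On the details: the data-processing step is handled cleanly --- conditioning on the partition $\{A,A^c\}$, applying Jensen to the convex $\Phi(x)=x\log x$, and using $\E^\Q[Z\mid A]=p/q$, $\E^\Q[Z\mid A^c]=(1-p)/(1-q)$ is exactly the right reduction, and your treatment of the degenerate cases $q\in\{0,1\}$ under $\P\ll\Q$ is correct. The scalar estimate $d(p\|q)\ge 2(p-q)^2$ is dispatched by the monotonicity argument for $g(q)=d(p\|q)-2(p-q)^2$: the sign computation
\begin{equation*}
g'(q)=(p-q)\Bigl(4-\tfrac{1}{q(1-q)}\Bigr),\qquad q(1-q)\le\tfrac14,
\end{equation*}
shows $g$ is nonincreasing on $(0,p)$ and nondecreasing on $(p,1)$ with $g(p)=0$, hence $g\ge0$. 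Combining $2(p-q)^2\le d(p\|q)\le\E^\Q[\Phi(Z)]=\E^\P\log\frac{d\P}{d\Q}$ and taking the supremum over $A$ (using the characterization of $d_{TV}$ recalled in \eqref{dtv}, which is the normalization the paper uses, so the constant $\tfrac12$ is correct) gives the claim. One could shorten the scalar step by noting that $g''(q)\ge 0$ is not needed --- your first-derivative sign argument is already minimal. The alternative you mention in parentheses, proving $(z-1)^2\le(\tfrac23z+\tfrac43)(z\log z-z+1)$ pointwise and applying Cauchy--Schwarz to $\E^\Q|Z-1|=2\,d_{TV}(\P,\Q)$, is the Csisz\'ar--Kullback--Kemperman route and avoids the binary reduction entirely; either is acceptable and roughly the same length.
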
 

\subsection{Challenges and proof strategy} \label{s:sketch}

Before going into the details,let us explain what the challenges are in proving weak uniqueness of SDEs/SPDEs in general and why stochastic sewing alone is not efficient. Then we present a sketch of the proof strategy, skipping over some technicalities which will be treated later. In particular, we omit the arbitrarily small exponents.

Thus, let $(X,W)$ and $(Y,\overline{W})$ be two solutions to SDE \eqref{distrsde}. Our goal is to show that
$\law (X)=\law(Y)$. 

For the case of SDEs driven by a Brownian motion, a possible proof strategy is to use the Zvonkin transformation method. The idea is to consider an SDE for $\Psi(X)$, where $\Psi\colon \R^d \to \R^d$ is a certain ``nice'' function. Using Ito's formula, one can show that for a carefully chosen $\Psi$, the new equation has a less singular drift and therefore it  has a unique weak solution. Then, since $\Psi$ is nice, one deduces weak uniqueness for the original SDE, see \cite{bib:zz17,FIR17} for more details.

However, this approach cannot be extended to SDEs driven by a more general process, such as fractional Brownian motion (in which case Ito's formula is not available) or to stochastic PDEs (in this case only a rather restricted Ito-type formula is available  which does not allow to implement the above strategy).

Alternatively,  weak uniqueness can also be established using Girsanov's theorem if one can show 
that the process $W+\int_0^\cdot b(X_r) \,dr$ has the law of a Brownian motion under some  probability measure $\wt \P$. However, we  are in the range of regularity of the drifts $b$, where this is not the case. Indeed, we have to require that at least $\int_0^1 |b|^2(X_r)\,dr<\infty$, which does not hold  if $b$ is a true distribution (one cannot square the distributions).

Thus, we aim to  develop a general strategy which can show directly that  $\E \Phi(X)=\E \Phi(Y)$ for a large class of test functions $\Phi\colon \C([0,1];\R^d)\to\R$ without relying on PDE methods or Markov property of the noise. 
Assume for a moment that $X$ and $Y$ are defined on the same probability space and that $W = \overline{W}$. Then a naive direct approach, which fails here, would be simply to write
\begin{equation}\label{firstineqw}
|\E \Phi(X_t)-\E \Phi(Y_t)|\le \|\Phi\|_{\C^1} \E |X_t-Y_t|= \E \Bigl|\int_0^t (b(X_r)-b(Y_r)) \,dr \Bigr|,
\end{equation}
and then use stochastic sewing, arguing as  in \cref{s:s3}. Similarly to \cref{e:313} applied with $H=\frac12$, one would get for $T\in[0,1]$
\begin{equation}\label{directbound:s}
\sup_{t\in[0,T]}\|X_t-Y_t\|_{L_2(\Omega)}\le C T^{\frac12}\|b\|_{\C^\gamma}\sup_{t\in[0,T]}\|X_t-Y_t\|_{L_2(\Omega)}^{(\gamma+1)\wedge1}+\text{good small terms}.
\end{equation}
While this bound is definitely better than a direct bound \eqref{firstattempt}, it is still not sufficient for our purposes. Indeed, if $\gamma>0$, one can remove the bad term $\sup_{t\in[0,T]}\|X_t-Y_t\|_{L_2(\Omega)}$ on the right-hand side for small $T>0$, and even get strong uniqueness of solutions to SDE \eqref{distrsde}, as shown  in \cref{s:s3}.  However, we are interested in distributional drifts, so $\gamma$ can be negative as well. In this regime, the exponent $\gamma+1$ is less than $1$ and \eqref{directbound:s} does not imply that $\sup_{t\in[0,T]}\|X_t-Y_t\|_{L_2(\Omega)}$ is small. Therefore this direct approach does not work. 

We also see that already the first inequality in \eqref{firstineqw} is too harsh, as we do not expect $\E |X_t-Y_t|$ to be $0$ (this would imply strong uniqueness and we are interested in the regime where weak uniqueness holds and strong may fail). On the other hand, it is not clear how to bound $|\E \Phi(X_t)-\E \Phi(Y_t)|$ with stochastic sewing.

Therefore, we avoid comparing the solutions $X$ and $Y$ directly. Instead, we combine stochastic sewing  with some ideas from ergodic theory: the generalized coupling method of \cite{H02,M02,Ksch}. Fix a sequence $(b^n)_{n\in\Z_+}$  of smooth functions converging to $b$ in $\C^{\gamma-}$. Let  $X^n$ be the strong solution to SDE
\begin{equation}\label{distrsden}
X^n_t=x+\int_0^t b^n(X^n_r)\,dr +W_t,\quad t\in[0,1]
\end{equation}
We will prove that $X^n$ converges weakly to $X$ and similarly that  $X^n$ converges weakly to $Y$. By uniqueness of the limit, this would imply $\law (X)=\law(Y)$. 

To do this we consider a bounded separable metric space $\C([0,1];\R^d)$ equipped with the distance
\begin{equation*}
d(x,y):=\|x-y\|_{\C([0,1];\R^d)}\wedge1,
\end{equation*}
 and the corresponding Wasserstein distance $\W_{\|\cdot\|\wedge1}$, recall~\eqref{wrho}. As explained in \cref{s:41}, to show that the sequence $(X^n)$ converges weakly to $X$ it is sufficient to show that  
\begin{equation*}
\lim_{n\to\infty}\W_{\|\cdot\|\wedge1}(\law(X^n),\law(X))=0.
\end{equation*}
The main idea of the  generalized coupling method  is to introduce an auxiliary process $\wt X^n$, which is defined on the same space as $X$ and solves SDE
\begin{equation}\label{wtxn}
	d \wt X^n_t =b^n(\wt X^n_t) dt+\lambda(X_t-\wt X^n_t)dt+d W_t,
\end{equation}
where the parameter $\lambda>1$ will be fixed later. We see that the additional term ${\lambda(X_t-\wt X^n_t)dt}$ pushes $\wt X^n$ towards $X$. 
 The  goal now is to show that the process $\wt X^n$ is close  to $X^n$ in law for $\lambda$ not too large, and close to $X$ in distance for  $\lambda$ not too small. Then by picking the best $\lambda$ from these two opposite requirements, we derive that $X^n$ and $X$ are close to each other in law. More precisely, by the triangle inequality and the definition of the distances $\W_{\|\cdot\|\wedge1}$ and $d_{TV}$ in \eqref{wrho}, \eqref{dtv}, we obtain
\begin{align}\label{key}
\W_{\|\cdot\|\wedge1}(\law(X^n),\law(X))&\le \W_{\|\cdot\|\wedge1}(\law(X^n),\law(\wt X^n))+\W_{\|\cdot\|\wedge1}(\law(\wt X^n),\law(X))\nn\\
&\le d_{TV}(\law(X^n),\law(\wt X^n))+\E \|\wt X^n-X\|_{\C([0,1])}.
\end{align}
To bound the first term in the right-hand side of \eqref{key}, we note  that SDE \eqref{wtxn} can be rewritten as 
\begin{equation*}
d \wt X^n_t =b^n(\wt X^n_t) dt+d\wt  W_t,
\end{equation*}
where we put $\wt W_t:=\int_0^t\lambda(X_r-\wt X^n_r)dr+ W_t$, $t\in[0,1]$. Girsanov theorem implies that $\wt W$ is a Brownian motion under a new probability measure $\wt \P$ (see \cref{l:dtvlemma} for details) and therefore the pair $(\wt X^n,\wt W)$ is a solution to \eqref{distrsde} on $(\Omega,\wt \P)$. Therefore, by Girsanov's theorem and Pinsker's inequality (\cref{pinsker}) we get  (see again \cref{l:dtvlemma}  for more details)
\begin{equation}\label{tvbound:s}
	d_{TV}(\law(X^n),\law(\wt X^n))\le d_{TV}(\P,\wt \P)\le  C \lambda\bigl\|\|X-\wt X^n\|_{\C([0,1])}\bigr\|_{L_2(\Omega)}.
\end{equation}

Next, to compare $\wt X^n$ and $X$, we benefit from the control term $\lambda(X-\wt X^n)$ which pushes $\wt X^n$ towards $X$. After a short calculation, we deduce for $t\in[0,1]$
\begin{equation}\label{keyint:s}
	|X_t-\wt X^n_t|\le \Bigl|\int_0^t e^{-\lambda(t-r)}(b(X_r)-b_n(\wt X_r^n))\,dr\Bigr|.
\end{equation}	
However, by this time the reader will probably already have become a master of stochastic sewing and will immediately recognize that this bound is of the type \eqref{type} and is very close to the bound \eqref{seckey}. Therefore, modifying a bit the argument of \cref{e:step1uni}, we obtain \begin{equation*}
	\sup_{t\in[0,1]}\|X_t-\wt X_t^n\|_{L_2(\Omega)}\le C \lambda^{-\frac34-\frac{\gamma}2}
	\sup_{t\in[0,1]} \|X_t-\wt X^n_t \|_{L_2(\Omega)}^{\frac12}+\lambda  \|b-b^n\|_{\C^{\gamma-\eps}}.
\end{equation*}
Of course, similar to \eqref{directbound:s}, the term $\|X_t - \widetilde{X}^n_t \|_{L_2(\Omega)}$ appears with an exponent less than $1$, and we cannot get rid of it directly. However, this is not a problem anymore due to the presence of the additional factor $\lambda^{-\frac34-\frac{\gamma}2}$, which we can make very small. By Young's inequality
\begin{equation*}
	\sup_{t\in[0,1]}\|X_t-\wt X_t^n\|_{L_2(\Omega)}\le C \lambda^{-\frac32-\gamma} +C \lambda^{-\eps}
	\sup_{t\in[0,1]} \|X_t-\wt X^n_t \|_{L_2(\Omega)}+ \lambda  \|b-b^n\|_{\C^{\gamma-\eps}},
\end{equation*}
which implies 
\begin{equation*}
	\sup_{t\in[0,1]}\|X_t-\wt X_t^n\|_{L_2(\Omega)}\le C \lambda^{-\frac32-\gamma} + \lambda  \|b-b^n\|_{\C^{\gamma-\eps}}
	\end{equation*}
Now we can combine this together with \eqref{tvbound:s} and substitute into \eqref{key}. We get (up to the transposition of $\C([0,1])$ and $L_2(\Omega)$ norms which should be justified separately with Kolmogorov continuity theorem-type arguments)
\begin{equation*}
\W_{\|\cdot\|\wedge1}(\law(X^n),\law(X))\le C \lambda^{-\frac12-\gamma} + \lambda^2  \|b-b^n\|_{\C^{\gamma-\eps}}.
\end{equation*}
If the power of $\lambda$ is negative, that is 
\begin{equation}\label{gammaweakcond}
-\frac12-\gamma<0,
\end{equation}
 then we can  pick $\lambda:=\|b-b^n\|_{\C^{\gamma-\eps}}^{-1/3}$ to get 
\begin{equation*}
	\W_{\|\cdot\|\wedge1}(\law(X^n),\law(X))\le C   \|b-b^n\|_{\C^{\gamma-\eps}}^\rho
\end{equation*}
for some   $\rho>0$. This yields the desired weak convergence of the fixed sequence $(X^n)_{n\in\Z_+}$ to any weak solution $X$, establishing weak uniqueness. The condition 
\eqref{gammaweakcond} is exactly the condition $\gamma>-\frac12$ of \cref{t:weak}.

Now it remains to turn this informal description of our proof strategy into a rigorous argument. In \cref{s:keyintegral} we bound the moments of the right-hand side of \eqref{keyint:s}. In \cref{s:we} we establish weak existence of solutions, and in \cref{s:stochs} we conclude.

\subsection{Key integral bounds}\label{s:keyintegral}

Let us recall that we need to bound the right-hand side of \eqref{keyint:s} and show that it is small for large $\lambda$. We split the integral into two parts:
\begin{equation}\label{boundbound}
\int_0^t e^{-\lambda(t-r)}(b(X_r)-b_n(\wt X_r^n))\,dr= \Bigl(\int_0^{t-\lambda^{-1-\eps}}+\int_{t-\lambda^{-1-\eps}}^t\Bigr) e^{-\lambda(t-r)}(b(X_r)-b_n(\wt X_r^n))\,dr.
\end{equation}
Now, if $r$ is far from $t$ (the first integral), then the weight $e^{-\lambda(t-r)}$ is tiny; it is smaller than $\exp(-\lambda^{-\eps})$. On the other hand, if $r$ is close to $t$ (the second integral), the weight is of order $1$, but the time interval $[t-\lambda^{-1-\eps}, t]$ is very small. Thus, in both cases, each part of the integral is small. To make this heuristics precise, we need to generalize \cref{e:step1uni} in two directions: first, by allowing a deterministic time-dependent weight in front of $b$; and second, by extending the range of $\gamma$ to include negative values.

We first prove a generic statement and then, as a corollary, bound \eqref{boundbound}. Recall that we measure the regularity of the perturbation in the norms $[\cdot]_{\C^\tau L_m([S,T])}$ introduced in~\eqref{norms}.
\begin{lemma}\label{l:finfin}
Let  $m\ge2$, $\gamma\in(-1,0)$, $\tau\in(0,1]$. Let $f\colon\R^d\to\R$, $w\colon[0,T]\to\R_+$ be bounded continuous functions.
Let $\phi,\psi\colon [0, T]\times\D\times\Omega\to\R^d$ be continuous measurable processes adapted to the filtration $(\F_t)_{t\in[0,T]}$.
	Assume that
	\begin{equation}\label{paramcond}
		\frac\gamma2+\tau>\frac12.
	\end{equation}
Then there exists a constant $C=C(\gamma,\tau,d,m)$ such that for any $(S,T)\in\Delta_{[0,1]}$, we have 	
\begin{equation}\label{sb1}
\Bigl\| \int_S^T w_r f(W_r+\phi_r)\,dr  \Bigr\|_{L_m(\Omega)}
\le C \|f\|_{\C^{\gamma}}\|w\|_{L_\infty([S,T])}(T-S)^{1+\frac\gamma2}\bigl(1+ [\phi]_{\C^\tau L_m([S,T])} (T-S)^{\tau-\frac12}\bigr).
\end{equation}
	
	If, additionally, for some $\mu\in(0,1]$ we have 
	\begin{equation}\label{paramcond2}
		\gamma>\mu-1,
	\end{equation}
	then there exists a constant $C=C(\gamma,\mu,\tau,d,m)$  such that for any $(S,T)\in\Delta_{[0,1]}$ we have 
	\begin{align}
		&\Bigl\|\int_S^T w_r(f(W_r+\phi_r)-f(W_r+\psi_r))\,dr\Bigr\|_{L_m(\Omega)}\nn\\
		&\quad\le
		C \|f\|_{\C^{\gamma}}\|w\|_{L_\infty([S,T])}(T-S)^{1+\frac{\gamma-\mu}2}\|\phi-\psi\|^\mu_{\C^0 L_m([S,T])}\nn\\
		&\qquad+C \|f\|_{\C^{\gamma}}\|w\|_{L_\infty([S,T])} (T-S)^{\frac12+\frac\gamma2+\tau}( [\phi]_{\C^\tau L_m([S,T])}+ [\psi]_{\C^\tau L_m([S,T])}).\label{sb2}
	\end{align}
\end{lemma}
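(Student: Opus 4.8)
The plan is to apply the stochastic sewing lemma \cref{T:SSLst} (with the given exponent $m$), working first with a smooth $f$ — bounding everything only through $\|f\|_{\C^\gamma}$ — and then passing to a general $f\in\C^\gamma$ by the mollification $f_n:=P_{1/n}f$ together with Fatou's lemma, exactly as in Step~2 of the proof of \cref{e:step1uni}; this detour is needed since for $\gamma<0$ one cannot evaluate $f$ pointwise. For \eqref{sb1} I would sew
\begin{equation*}
\A_t:=\int_S^t w_r f(W_r+\phi_r)\,dr,\qquad A_{s,t}:=\int_s^t w_r\,\E^s f(W_r+\phi_s)\,dr=\int_s^t w_r\,P_{r-s}f(W_s+\phi_s)\,dr,
\end{equation*}
the second identity being \eqref{1bound}; for \eqref{sb2} the analogous germ built from $f(W_r+\phi_s)-f(W_r+\psi_s)$. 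Conditions (i) and (ii) of \cref{T:SSLst} are handled just as in \cref{e:step1uni} (condition (ii) is automatic from the construction of the germ; condition (i) is checked for smooth $f$ using the BDG inequality and the continuity and $\C^\tau L_m$–regularity of $\phi,\psi$), so the real work lies in verifying \eqref{con:s1} and \eqref{con:s2}.

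For \eqref{sb1}: by \cref{r:26} it is enough to bound $\|A_{s,t}\|_{L_m(\Omega)}$, and the defining property $\|P_{r-s}f\|_{L_\infty}\le C(r-s)^{\gamma/2}\|f\|_{\C^\gamma}$ of the Besov norm gives $|A_{s,t}|\le C\|w\|_{L_\infty([S,T])}\|f\|_{\C^\gamma}(t-s)^{1+\gamma/2}$, so $\gamma_1:=1+\gamma/2>\tfrac12$ because $\gamma>-1$. For \eqref{con:s2}, conditioning also on $\F_u$ yields
\begin{equation*}
\E^s\delta A_{s,u,t}=\int_u^t w_r\,\E^s\bigl(P_{r-u}f(W_u+\phi_s)-P_{r-u}f(W_u+\phi_u)\bigr)\,dr,
\end{equation*}
a \emph{two-point} increment controlled by $\|\nabla P_{r-u}f\|_{\C^0}\le\|P_{r-u}f\|_{\C^1}\le C(r-u)^{(\gamma-1)/2}\|f\|_{\C^\gamma}$ (\cref{l:gbext} with $\rho=1$) times $|\phi_s-\phi_u|$; the $r$–integral converges since $\gamma>-1$, and after taking $L_m$–norms and using $\|\phi_s-\phi_u\|_{L_m(\Omega)}\le[\phi]_{\C^\tau L_m([S,T])}|u-s|^\tau$ one gets $\gamma_2:=\tfrac{\gamma+1}2+\tau$, which exceeds $1$ precisely by \eqref{paramcond}. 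Plugging into \eqref{est:ssl1} gives \eqref{sb1}.

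For \eqref{sb2}: the germ bound now uses $[P_{r-s}f]_{\C^\mu}\le C(r-s)^{(\gamma-\mu)/2}\|f\|_{\C^\gamma}$ (\cref{l:gbext}), so $|A_{s,t}|\le C\|w\|_{L_\infty([S,T])}\|f\|_{\C^\gamma}(t-s)^{1+(\gamma-\mu)/2}|\phi_s-\psi_s|^\mu$, and since $\mu\le1$ one has $\||\phi_s-\psi_s|^\mu\|_{L_m(\Omega)}\le\|\phi_s-\psi_s\|_{L_m(\Omega)}^\mu$; this is \eqref{con:s1} with $\gamma_1:=1+(\gamma-\mu)/2>\tfrac12$, i.e.\ \eqref{paramcond2}, and it produces the first term of \eqref{sb2} after sewing. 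The crux is \eqref{con:s2}: here $\E^s\delta A_{s,u,t}=\int_u^t w_r\,\E^s Q_r\,dr$ with the fourth-order increment $Q_r=P_{r-u}f(W_u+\phi_s)-P_{r-u}f(W_u+\psi_s)-P_{r-u}f(W_u+\phi_u)+P_{r-u}f(W_u+\psi_u)$. For $\gamma<0$ the estimate of \cref{l:l29}, which spends \emph{two} derivatives on $G:=P_{r-u}f$, yields the weight $(r-u)^{(\gamma-2)/2}$, which is not integrable at $r=u$ — this is exactly the ``very dangerous'' situation flagged in \cref{r:vd}. Instead I would expand $G$ to \emph{first} order only: writing $e_v:=\phi_v-\psi_v$,
\begin{equation*}
Q_r=\int_0^1\bigl[\nabla G(W_u+\psi_s+\theta e_s)\cdot e_s-\nabla G(W_u+\psi_u+\theta e_u)\cdot e_u\bigr]\,d\theta,
\end{equation*}
and, isolating the term $\nabla G(W_u+\psi_s+\theta e_s)\cdot(e_s-e_u)$, one obtains for any $\nu\in(0,\gamma+1)$
\begin{equation*}
|Q_r|\le\|\nabla G\|_{\C^0}\,|e_s-e_u|+[\nabla G]_{\C^\nu}\bigl(|\psi_s-\psi_u|+|e_s-e_u|\bigr)^\nu|e_u|,
\end{equation*}
where $\|\nabla G\|_{\C^0}\le C(r-u)^{(\gamma-1)/2}\|f\|_{\C^\gamma}$ and $[\nabla G]_{\C^\nu}\le C(r-u)^{(\gamma-1-\nu)/2}\|f\|_{\C^\gamma}$ (interpolating the bounds of \cref{l:gb} and \cref{l:gbext}), both integrable in $r$ thanks to $\nu<\gamma+1$ and $\gamma>-1$. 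Integrating in $r$, passing to $L_m$–norms by Hölder's inequality — the increments $|\psi_s-\psi_u|$, $|e_s-e_u|$ carrying the gain $|u-s|^\tau$ — and using \eqref{paramcond}, one reaches $\|\E^s\delta A_{s,u,t}\|_{L_m(\Omega)}\le C\|w\|_{L_\infty([S,T])}\|f\|_{\C^\gamma}\bigl([\phi]_{\C^\tau L_m([S,T])}+[\psi]_{\C^\tau L_m([S,T])}\bigr)|t-s|^{1/2+\gamma/2+\tau}$, i.e.\ \eqref{con:s2} with $\gamma_2:=\tfrac12+\tfrac\gamma2+\tau>1$; \eqref{est:ssl1} then yields \eqref{sb2}.

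The main obstacle is precisely this four-point bound for \eqref{sb2}: one has to extract the time-smallness $|t-s|^{1/2+\gamma/2+\tau}$ while keeping the $r$–integral convergent at $r=u$, which for $\gamma<0$ rules out the two-derivative estimate of \cref{l:l29} and forces the first-order expansion above. Balancing the Hölder exponent $\nu$ so that it both satisfies the integrability threshold $\nu<\gamma+1$ and returns the required time exponent is where the hypotheses $\gamma\in(-1,0)$, \eqref{paramcond} and \eqref{paramcond2} are all used simultaneously; a secondary bookkeeping point is that the Hölder step for the term $(\,|\psi_s-\psi_u|+|e_s-e_u|\,)^\nu|e_u|$ must be arranged carefully so that the moment of $\phi-\psi$ lands back in $L_m$.
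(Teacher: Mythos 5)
Your treatment of \eqref{sb1}, and of condition \eqref{con:s1} for \eqref{sb2}, is exactly the paper's argument: sew with the germ $A^\phi_{s,t}=\int_s^t w_r\,P_{r-s}f(W_s+\phi_s)\,dr$, use $\|P_t f\|_{L_\infty}\le Ct^{\gamma/2}\|f\|_{\C^\gamma}$ resp.\ $\|P_t f\|_{\C^\mu}\le Ct^{(\gamma-\mu)/2}\|f\|_{\C^\gamma}$ for the germ, and $\|P_{r-u}f\|_{\C^1}$ for $\E^s\delta A^\phi_{s,u,t}$.

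For condition \eqref{con:s2} for \eqref{sb2}, however, you have both overcomplicated matters and left a genuine gap. You correctly compute $\E^s\delta A_{s,u,t}=\int_u^t w_r\,\E^s Q_r\,dr$ with a four-point increment $Q_r$, and you are right to worry that invoking \cref{l:l29} would produce the nonintegrable weight $(r-u)^{(\gamma-2)/2}$. But your first-order Taylor expansion introduces a factor $|e_u|=|\phi_u-\psi_u|$ multiplying $(|\psi_s-\psi_u|+|e_s-e_u|)^\nu$. This factor carries no time-smallness in $|u-s|$, and to take $L_m$ norms of the product you must invoke H\"older, which sends each factor into a strictly higher $L_p$ moment — precisely the obstruction spotlighted in \cref{r:d}. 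Your final display therefore does not follow from the computation you outline: the $\|\phi-\psi\|_{\C^0 L_{p}}$ factor for $p>m$ never disappears, and there is no ``careful arrangement'' of the H\"older step that puts it back in $L_m$.

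The missing observation is much simpler than your workaround. By linearity of the germ, $A_{s,t}=A^\phi_{s,t}-A^\psi_{s,t}$, hence $\delta A_{s,u,t}=\delta A^\phi_{s,u,t}-\delta A^\psi_{s,u,t}$; equivalently, your four-point $Q_r$ factors as
\begin{equation*}
Q_r=\bigl[P_{r-u}f(W_u+\phi_s)-P_{r-u}f(W_u+\phi_u)\bigr]-\bigl[P_{r-u}f(W_u+\psi_s)-P_{r-u}f(W_u+\psi_u)\bigr],
\end{equation*}
two two-point increments, each bounded by $\|P_{r-u}f\|_{\C^1}\,|\phi_s-\phi_u|$ resp.\ $\|P_{r-u}f\|_{\C^1}\,|\psi_s-\psi_u|$, i.e.\ exactly the estimate you already established when proving \eqref{sb1}. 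Taking $L_m$ norms and integrating $(r-u)^{(\gamma-1)/2}$ over $[u,t]$ then gives
\begin{equation*}
\|\E^s\delta A_{s,u,t}\|_{L_m(\Omega)}\le C\|f\|_{\C^\gamma}\|w\|_{L_\infty([S,T])}(t-s)^{\frac12+\frac\gamma2+\tau}\bigl([\phi]_{\C^\tau L_m([S,T])}+[\psi]_{\C^\tau L_m([S,T])}\bigr)
\end{equation*}
with no recourse to \cref{l:l29}, first-order Taylor expansion, or interpolation. \cref{l:l29} is genuinely needed only in \cref{e:step1uni}, where \eqref{con:s2} must produce a factor $\|\phi-\psi\|_{\C^0 L_m}$; here it produces only $[\phi]+[\psi]$, for which the triangle inequality suffices.
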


Let us note that even though we have assumed $f$ to be a bounded continuous function, the right-hand sides of \eqref{sb1} and \eqref{sb2} depends only on a much weaker norm of $f$, namely $\|f\|_{\C^\gamma}$.

\begin{proof}
First, we suppose additionally that $f\in\C^\infty$ and then as in the Step 2 of the proof  of \cref{e:step1uni}, we remove this assumption by Fatou's lemma. Fix $(S,T)\in\Delta_{[0,1]}$.

As in the proof of \cref{e:step1uni}, we rely on the stochastic sewing lemma. As before it is crucial to choose the correct germ. Inspired by our previous choice \eqref{germper}, we put
\begin{equation}\label{Aprocess}
A^\phi_{s,t}:=\int_s^t w_r \E^s f(W_r+\phi_s)\,dr;\qquad
\A^\phi_{t}:=\int_s^t w_r f(W_r+\phi_r)\,dr, \quad (s,t)\in\Delta_{[S,T]}.
\end{equation}

We begin with the proof of \eqref{sb1}. Verification of condition (i) is done in the same way as at the end of Step 1 in the proof of \cref{e:step1uni} and is left as a technical exercise to the reader. We see that $A^\phi_{s,t}$ is $\F_s$-measurable and therefore condition (ii) of \cref{T:SSLst} (SSL) holds. Next, let us verify condition (iii) of the SSL. Using the definition of the Besov space \eqref{negbesov}, we derive for $(s,t)\in\Delta_{[S,T]}$

\begin{align*}
|A^\phi_{s,t}|&\le\int_s^t w_r |P_{r-s}f(W_s+\phi_s)|\,dr\le \int_s^t w_r \|P_{r-s}f\|_{L_\infty(\R^d)}\,dr\\
&\le  C \|f\|_{\C^{\gamma}}\|w\|_{L_\infty([S,T])}\int_s^t (r-s)^{\frac\gamma2}\,dr\le  C \|f\|_{\C^{\gamma}}\|w\|_{L_\infty([S,T])}(t-s)^{1+\frac\gamma2},
\end{align*}	
where  $C=C(\gamma,d)$. This implies for any $m\ge2$
\begin{equation}\label{astb1}
\|A^\phi_{s,t}\|_{\lm}\le 	C \|f\|_{\C^{\gamma}}\|w\|_{L_\infty([S,T])}(t-s)^{1+\frac\gamma2},\quad  (s,t)\in \Delta_{[S,T]}.
\end{equation}	
Since $\gamma>-1$, condition \eqref{con:s1} of the SSL holds.
	
Next, we pass to the verification of condition \eqref{con:s2}.  We have for $(s,u,t)\in\Delta^3_{[S,T]}$
\begin{equation*}
\delta A^\phi_{s,u,t}=\E^s \int_u^t w_r f(W_r+\phi_s)\,dr-\E^u \int_u^t w_r f(W_r+\phi_u)\,dr.
\end{equation*}	
Therefore,  
\begin{align*}
|\E^s \delta A^\phi_{s,u, t}|&=\Bigl|\E^s \int_u^t w_r   \E^u[f(W_r+\phi_s)-f(W_r+\phi_u)]\,dr\Bigr|\nn\\
&=\Bigl|\E^s \int_u^t w_r   (P_{r-u}f(W_s+\phi_s)-P_{r-u}f(W_s+\phi_u))\,dr\Bigr|\nn\\
&\le\E^s \int_u^t w_r   \|P_{r-u}f\|_{\C^1(\R^d)} |\phi_s-\phi_u| \,dr.
\end{align*}
Using again the definition of a negative Besov space and the conditional Jensen's nequality in the form \eqref{useful} with $\mathcal{G}=\F_s$, we get for any $m\ge2$
\begin{align}\label{astb2}
\|\E^s \delta A^\phi_{s,u, t}\|_{L_m(\Omega)}&\le  \int_u^t w_r   \|P_{r-u}f\|_{\C^1(\R^d)} \|\phi_s-\phi_u\|_{L_m(\Omega)} \,dr\nn\\
&\le  C \|f\|_{\C^{\gamma}}\|w\|_{L_\infty([S,T])} [\phi]_{\C^\tau L_m([S,T])} (t-s)^{\tau} \int_u^t  (r-u)^{\frac\gamma2-\frac12} \,dr\nn\\
&\le  C \|f\|_{\C^{\gamma}}\|w\|_{L_\infty([S,T])} [\phi]_{\C^\tau L_m([S,T])} (t-s)^{\frac12+\frac\gamma2+\tau}
\end{align}
for $C=C(\gamma,d)$. Here we used that the singularity $(r-u)^{\frac\gamma2-\frac12}$ is integrable because $\gamma>-1$. By \eqref{paramcond} we have $\frac12+\frac\gamma2+\tau>1$ and therefore condition \eqref{con:s2} is satisfied.

Thus, all the conditions of \cref{T:SSLst} are satisfied and \eqref{sb1} follows from \eqref{est:ssl1} and \eqref{astb1}, \eqref{astb2}.

Now we move on to the proof of \eqref{sb2}. We apply now the SSL  to the processes  
\begin{equation*}
A_{s,t}:=A^\phi_{s,t}-A^\psi_{s,t};\quad 	\A_{t}:=\A^\phi_t-\A^\psi_t,\qquad (s,t)\in \Delta_{[S,T]},
\end{equation*}
	where the processes $A^\phi$, $\A^\phi$ were introduced in \eqref{Aprocess} and $A^\psi$, $\A^\psi$ are defined in exactly the same way with $\phi$ replaced by $\psi$. 
	
We again leave verification of condition (i) of the SSL to the reader and note that condition (ii) obviously holds. Next, using \cref{l:gbext}, we write for any $(s,t)\in\Delta_{[S,T]}$
\begin{align*}
|A_{s,t}|&\le\int_s^t w_r |P_{r-s}f(W_s+\phi_s)-P_{r-s}f(W_s+\psi_s)|\,dr\le |\psi_s-\psi_s|^\mu\int_s^t w_r \|P_{r-s}f\|_{\C^\mu(\R^d)}\,dr\\
&\le  C \|f\|_{\C^{\gamma}}\|w\|_{L_\infty([S,T])}|\psi_s-\psi_s|^\mu\int_s^t (r-s)^{\frac{\gamma-\mu}2}\,dr\\
&\le  C \|f\|_{\C^{\gamma}}\|w\|_{L_\infty([S,T])}|\psi_s-\psi_s|^\mu(t-s)^{1+\frac{\gamma-\mu}2}.
\end{align*}
for $C=C(\gamma,\mu,d)$. Therefore, by Jensen's inequality we get for $m\ge2$
\begin{equation}\label{s:spdec1}
\|A_{s,t}\|_{L_m(\Omega)}\le  C \|f\|_{\C^{\gamma}}\|w\|_{L_\infty([S,T])}\|\phi-\psi\|^\mu_{\C^0 L_m([S,T])}(t-s)^{1+\frac{\gamma-\mu}2}.
\end{equation}
By \eqref{paramcond2}, we have  $1+\frac{\gamma-\mu}2>1/2$. Therefore condition \eqref{con:s1} of \cref{T:SSLst} holds. 
	
Next, using \eqref{astb2}, we easily get for  $(s,u,t)\in\Delta^3_{[S,T]}$, $m\ge2$
	\begin{align}\label{asutb2}
		\|\E^s \delta A_{s,u, t}\|_{L_m(\Omega)}&\le \|\E^s \delta A^\phi_{s,u, t}\|_{L_m(\Omega)}+\|\E^s \delta A^\psi_{s,u, t}\|_{L_m(\Omega)}\nn\\
		&\le C \|f\|_{\C^{\gamma}}\|w\|_{L_\infty([S,T])} (t-s)^{\frac12+\frac\gamma2+\tau}( [\phi]_{\C^\tau L_m([S,T])}+ [\psi]_{\C^\tau L_m([S,T])}),
\end{align}
where $C=C(\gamma,d)$. We see from \eqref{paramcond} $\frac12+\frac\gamma2+\tau>1$ and thus \eqref{con:s2} holds.
	
Thus all the conditions of  the SSL holds and the desired bound \eqref{sb2} follows from \eqref{est:ssl1} and \eqref{s:spdec1}, \eqref{asutb2}.
	
To obtain  \eqref{sb1} and \eqref{sb2} for general $f$, we note that for any $\eps>0$ the function 
$f_\eps:=P_{\eps}f\in\C^\infty$. Therefore, bounds \eqref{sb1} and  \eqref{sb2} hold for $f_\eps$. Since $f_\eps$ converges pointwise to $f$ as $\eps\to0$  and $\|f_\eps\|_{\C^\gamma}\le\|f\|_{\C^\gamma}$, by Fatou's lemma we see  that these bounds hold also for $f$.	
\end{proof}

Now we can prove the corollary discussed at the beginning of the subsection. We bound  the moments of the right-hand side of \eqref{keyint:s} in terms of the difference of the processes multiplied by $\lambda$ to a negative power. 

\begin{corollary}\label{c:finfin}
Let  $m\ge2$, $\gamma\in(-1,0)$, $\tau\in(0,1]$, $\eps\in(0,\frac12)$. Let $f\colon\R^d\to\R$ be a bounded continuous function.
Let $\phi,\psi\colon [0, T]\times\D\times\Omega\to\R^d$ be continuous measurable processes adapted to the filtration $(\F_t)_{t\in[0,T]}$.
	Assume that \eqref{paramcond} holds. Then there exists a constant $C=C(\gamma, \eps, \tau, d,m)>0$ such that for any $\lambda>1$, $(s,t)\in\Delta_{[0,1]}$  we have
	\begin{equation}\label{bound1terml}
		\Bigl\|\int_s^t e^{-\lambda(t-r)} f(W_r+\phi_r) \,dr\Bigr\|_{L_m(\Omega)}\le C \|f\|_{\C^\gamma}(t-s)^{\eps}\lambda^{-1-\frac\gamma2+3\eps}(1+ [\phi]_{\C^\tau L_m([s,t])}\lambda^{\frac12-\tau}).
	\end{equation}
If, additionally, for $\mu\in(0,1]$ condition \eqref{paramcond2} is satisfied, 
then there exists a constant $C=C(\gamma, \eps, \mu,\tau, d,m)>0$ such that for any $(s,t)\in\Delta_{[0,1]}$,
 we have 
\begin{align}\label{ourboundasregl}
&\Bigl\|\int_s^t e^{-\lambda(t-r)}(f(W_r+\phi_r)-f(W_r+\psi_r))\,dr\Bigr\|_{L_m(\Omega)}\nn\\
&\quad\le
C \|f\|_{\C^\gamma}
(t-s)^\eps\lambda^{-1-\frac{\gamma-\mu}2+3\eps}\|\phi-\psi\|^\mu_{\C^0 L_m ([s,t])}\nn\\
&\qquad+C\|f\|_{\C^\gamma} 	(t-s)^\eps \lambda^{-\frac12-\frac\gamma2-\tau+3\eps}([\phi]_{\C^\tau L_m([s,t])}+[\psi]_{\C^\tau L_m([s,t])}).
\end{align}
\end{corollary}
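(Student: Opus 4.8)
The plan is to deduce both estimates from \cref{l:finfin} by localising the integral near its right endpoint $t$, where the weight $e^{-\lambda(t-r)}$ is of order one, and by using the exponential decay of this weight away from $t$ to sum up the far-away contributions. Throughout I will use without comment that $\|\cdot\|_{\C^0L_m(\cdot)}$ and $[\cdot]_{\C^\tau L_m(\cdot)}$ are monotone under shrinking of the time interval, so that their values on any subinterval of $[s,t]$ are controlled by their values on $[s,t]$. As in \cref{l:finfin} itself, it suffices to argue for smooth $f$ and pass to the general case by Fatou's lemma (this reduction is in fact already contained in \cref{l:finfin}).

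\textbf{Short intervals.} First I would treat the case $t-s\le\lambda^{-1}$. Here $\|e^{-\lambda(t-\cdot)}\|_{L_\infty([s,t])}\le1$, so applying \cref{l:finfin} on $[s,t]$ with $w_r:=e^{-\lambda(t-r)}$ gives the right-hand sides of \eqref{sb1} and \eqref{sb2} with $\|w\|_{L_\infty}$ replaced by $1$. Because $\gamma>-1$, condition \eqref{paramcond}, condition \eqref{paramcond2} and $\eps<\tfrac12$ together ensure that every exponent of $(t-s)$ occurring there strictly exceeds $\eps$, I would write each such power as $(t-s)^\eps(t-s)^{\theta}$ with $\theta>0$ and use $t-s\le\lambda^{-1}$ to estimate $(t-s)^{\theta}\le\lambda^{-\theta}$. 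This turns the two estimates into \eqref{bound1terml} and \eqref{ourboundasregl}, with room to spare in the factor $\lambda^{3\eps}$.

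\textbf{Long intervals.} In the remaining case $t-s>\lambda^{-1}$ I would use the dyadic decomposition of $[s,t]$ anchored at $t$: the interval $[t-\lambda^{-1},t]$ together with the intervals $I_j:=[t-2^{j}\lambda^{-1},\,t-2^{j-1}\lambda^{-1}]$ for $j\ge1$, stopping at the first index $j$ with $2^{j}\lambda^{-1}\ge t-s$ and intersecting the last piece with $[s,t]$. On $I_j$ the length is at most $2^{j}\lambda^{-1}$ and $\|e^{-\lambda(t-\cdot)}\|_{L_\infty(I_j)}\le e^{-2^{j-1}}$, while on $[t-\lambda^{-1},t]$ the weight is $\le1$ and the length is $\lambda^{-1}$. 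Applying \cref{l:finfin} to each piece and summing, the series one encounters are all of the form $\sum_{j\ge0}e^{-c2^{j}}2^{j\beta}$ with $c>0$ and $\beta\in\R$, which converge and are comparable to their first term; this yields
\[
\Bigl\|\int_s^t e^{-\lambda(t-r)}f(W_r+\phi_r)\,dr\Bigr\|_{L_m(\Omega)}\le C\|f\|_{\C^\gamma}\lambda^{-1-\frac\gamma2}\bigl(1+[\phi]_{\C^\tau L_m([s,t])}\lambda^{\frac12-\tau}\bigr),
\]
and, via \eqref{sb2}, the analogous bound for the difference with the two terms $\lambda^{-1-\frac{\gamma-\mu}2}\|\phi-\psi\|^\mu_{\C^0L_m([s,t])}$ and $\lambda^{-\frac12-\frac\gamma2-\tau}([\phi]_{\C^\tau L_m([s,t])}+[\psi]_{\C^\tau L_m([s,t])})$. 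Finally, since in this regime $\lambda^{-1}<t-s$, and hence $\lambda^{-\eps}<(t-s)^{\eps}$, I would factor out $\lambda^{-\eps}\le(t-s)^\eps$ from the prefactor and absorb the remaining $\lambda^{\eps}$ (together with a harmless $\lambda^{2\eps}\ge1$) into the exponents of $\lambda$, arriving at \eqref{bound1terml} and \eqref{ourboundasregl}.

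\textbf{What is delicate.} Combining the two regimes completes the proof. The argument is essentially bookkeeping built on top of \cref{l:finfin}; the only two points that require a little attention are the separation into the regimes $t-s\le\lambda^{-1}$ and $t-s>\lambda^{-1}$, which is precisely what makes the gain $(t-s)^\eps$ uniform in $\lambda$, and the verification that the powers of $(t-s)$ produced by \cref{l:finfin} in the short-interval case are positive — both of which are immediate consequences of the standing assumptions $\gamma>-1$, \eqref{paramcond}, \eqref{paramcond2} and $\eps<\tfrac12$. I do not anticipate any deeper obstacle.
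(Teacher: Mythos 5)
Your proof is correct. The structural difference from the paper is the localisation: the paper performs a single split of $[s,t]$ at the point $(t-\lambda^{-1+\eps})\vee s$ and uses the fact that on the far piece the weight is bounded by $e^{-\lambda^\eps}$, which decays faster than any polynomial in $\lambda^{-1}$, while on the near piece the interval length $\lambda^{-1+\eps}$ is traded carefully for powers of $(t-s)$ and $\lambda^{-1}$; you instead use a dyadic decomposition anchored at $t$ with base scale $\lambda^{-1}$ and sum a geometric--exponential series $\sum_j e^{-c2^j}2^{j\beta}$ over the shells, after first disposing of the trivial regime $t-s\le\lambda^{-1}$ by direct application of \cref{l:finfin}. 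Both routes rest entirely on \cref{l:finfin} together with the monotonicity of the $\C^0 L_m$ and $\C^\tau L_m$ norms on subintervals, and both land in the same $3\eps$-budget. Your approach is slightly more mechanical (the dyadic sum automatically produces the clean prefactor $\lambda^{-1-\gamma/2}$, so the $\eps$-loss only enters at the very end when converting $\lambda^{-\eps}\le(t-s)^\eps$), whereas the paper's single split is more economical in bookkeeping but carries the $\eps$ inside the cutoff point from the start. One minor point worth making explicit when writing this up: the convergence of the series depends on the exponents $\beta$ only through $\gamma,\tau,\mu$, so the resulting constant is of the allowed form $C(\gamma,\eps,\mu,\tau,d,m)$; and in the short-interval case you should verify, as you indicate, that every power of $(t-s)$ produced by \eqref{sb1} and \eqref{sb2} exceeds $\eps$ — which holds because $\gamma>-1$ gives $1+\tfrac\gamma2>\tfrac12$, \eqref{paramcond} gives $\tfrac12+\tfrac\gamma2+\tau>1$, and \eqref{paramcond2} gives $1+\tfrac{\gamma-\mu}2>\tfrac12$, all strictly larger than $\eps<\tfrac12$.
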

\begin{proof}
We deduce  only \eqref{bound1terml} from \eqref{sb1}. Bound \eqref{ourboundasregl} is derived from \eqref{sb2} using exactly the same argument.
	
We fix $\eps\in(0,1/2)$, $\lambda>1$, $(s,t)\in\Delta_{[0,1]}$ and as discussed in \eqref{boundbound} split the integral into two
\begin{align}\label{difftwoguys}
\Bigl\|\int_s^te^{-\lambda(t-r)}f(W_r+\phi_r)\,dr\Bigr\|_{L_m(\Omega)}&\le
\Bigl\|\int_s^{(t-\lambda^{-1+\eps})\vee s } e^{-\lambda(t-r)}f(W_r+\phi_r)\,dr\Bigr\|_{L_m(\Omega)}\nn\\
&\phantom{\le}+\Bigl\|\int_{(t-\lambda^{-1+\eps})\vee s}^t e^{-\lambda(t-r)}f(W_r+\phi_r)\,dr  \Bigr\|_{L_m(\Omega)}\nn\\
&=:I_1+I_2.
\end{align}
If $r\le t-\lambda^{-1+\eps}$, then $e^{-\lambda(t-r)}\le e^{-\lambda^\eps}$ and $\|e^{-\lambda (t-\cdot)}\|_{L_
\infty([s,(t-\lambda^{-1+\eps})\vee s])}\le e^{-\lambda^\eps}$. Therefore, \eqref{sb1} implies 
\begin{equation}\label{ionel}
I_1\le C e^{-\lambda^\eps} \|f\|_{\C^\gamma}
(t-s)^{\eps}(1+ [\phi]_{\C^\tau L_m([s,t])}).
\end{equation}
for $C=C(\gamma,\tau,d,m)$, where we allso used that $1+\frac\gamma2>1/2>\eps$. 
	
To bound $I_2$, we use the obvious inequality  $\|e^{-\lambda (t-\cdot)}\|_{L_\infty([t-\lambda^{-1+\eps},t])}\le1$. 
Since 
\begin{equation*}
(t-((t-\lambda^{-1+\eps})\vee s))\le t-s \qquad\text{ and  }\qquad (t-((t-\lambda^{-1+\eps})\vee s))\le \lambda^{-1+\eps},
\end{equation*}
we also have for any $\rho\in[\eps,2]$
$$
(t-((t-\lambda^{-1+\eps})\vee s))^{\rho}\le (t-s)^\eps  \lambda^{(-1+\eps)(\rho-\eps)}\le
(t-s)^\eps  \lambda^{-\rho+3\eps}.
$$
Applying this inequality with $\rho=1+\frac\gamma2$ and $\gamma=\frac12+\frac\gamma2+\tau$, we derive from \eqref{sb1} 
\begin{equation*}
I_2\le C \|f\|_{\C^\gamma}(t-s)^{\eps}\lambda^{-1-\frac\gamma2+3\eps}(1+ [\phi]_{\C^\tau L_m([s,t])}\lambda^{\frac12-\tau})
\end{equation*}
for $C=C(\gamma,\tau,d,m)$. Combining this with \eqref{ionel} and substituting into \eqref{difftwoguys}, we obtain \eqref{bound1terml}; here we used that the function $\lambda\mapsto e^{-\lambda^\eps}$ decreases to $0$  faster than any negative polynomial function as $\lambda\to\infty$.
\end{proof}

\subsection{Weak existence}\label{s:we}

Before we proceed to the weak uniqueness of solutions to the SDE \eqref{distrsde}, let us first discuss the weak existence of solutions. 
In this subsection, we mostly follow the arguments of \cite[Section~8]{GG22}. 

Although the definition of a weak solution to an SDE with a distributional drift differs from the usual one (recall \cref{D:sol}), the proof strategy is similar. First, we establish an a priori regularity bound. Then we consider a sequence of solutions to approximated equations with regularized drifts and show that this sequence is tight. Finally, we show that any of its limit points is a weak solution. As always, we work with solutions to SDEs driven by a Brownian motion, but the same techniques can be easily transferred (at the cost of increased technical complexity) to SDEs driven by a non-Markov process, for example, fractional Brownian motion.

The good news is that we do not need any new technical bounds; everything follows from the already established bound \eqref{sb1}.

Before we begin, we need to establish the following very useful technical tool.  Clearly, a uniform local bound on the H\"older norm of a function over all intervals of length at most $\ell>0$ yields a global bound on its Hölder norm, with a constant depending only on $\ell$. The next lemma establishes essentially the same result for the $[\cdot]_{\C^\tau L_m([0,1])}$ seminorm; recall its  definition in \eqref{norms}.

\begin{lemma}\label{l:ltg}
	Let $m\ge1$, $\tau\in(0,1]$, $\Gamma, \ell\ge0$. Let $f$ be a measurable function $\Omega\times[0,1]\to\R^d$. 
	\begin{enumerate}[(i)]
		\item 
		Assume that for every $(S,T)\in\Delta_{[0,1]}$ with $T-S\le \ell$ we have 
		\begin{equation}\label{genholder}
			[f]_{\C^{\tau} L_m([S,T])}\le \Gamma.
		\end{equation}
		Then 
		\begin{equation}\label{res1}
			[f]_{\C^{\tau} L_m([0,1])}\le  \Gamma ( \ell^{\tau-1} + 1).
		\end{equation}
		
		\item Let $M>0$. Suppose now that $f_0\equiv0$ and  for every $(S,T)\in\Delta_{[0,1]}$ with $T-S\le \ell$ we have
		\begin{equation}\label{genholderm}
			[f]_{\C^{\tau} L_m([S,T])}\le M\|f_S\|_{L_m(\Omega)}+\Gamma.
		\end{equation}
		Then there exists a constant $C=C(\tau,\ell,M)>0$ independent of $\Gamma$ such that
		\begin{equation*}
			[f]_{\C^{\tau} L_m([0,1])}\le C\Gamma.
		\end{equation*}
	\end{enumerate}
\end{lemma}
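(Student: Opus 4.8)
The plan is to prove part (i) by a chaining (subdivision) argument, and then to bootstrap it to obtain part (ii).

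For part (i), I would fix $(s,t)\in\Delta_{[0,1]}$; if $t-s\le\ell$ the bound is immediate from \eqref{genholder}, so assume $t-s>\ell$. Set $N:=\lceil(t-s)/\ell\rceil$ and $u_i:=s+\tfrac{i}{N}(t-s)$ for $i=0,\dots,N$, so that each subinterval $[u_i,u_{i+1}]$ has length $(t-s)/N\le\ell$. The triangle inequality in $L_m(\Omega)$ together with \eqref{genholder} gives
\[
\|f(t)-f(s)\|_{L_m(\Omega)}\le\sum_{i=0}^{N-1}\|f(u_{i+1})-f(u_i)\|_{L_m(\Omega)}\le\Gamma\,N^{1-\tau}(t-s)^\tau .
\]
Since $N\le(t-s)/\ell+1$ and $x\mapsto x^{1-\tau}$ is subadditive on $[0,\infty)$ (here $1-\tau\in[0,1]$), one has $N^{1-\tau}\le((t-s)/\ell)^{1-\tau}+1$; dividing by $(t-s)^\tau$ and using $t-s\le1$ yields $\|f(t)-f(s)\|_{L_m(\Omega)}/(t-s)^\tau\le\Gamma(\ell^{\tau-1}+1)$, and taking the supremum over $(s,t)$ gives \eqref{res1}.

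For part (ii), the first step would be an a priori bound on $\sup_{t\in[0,1]}\|f_t\|_{L_m(\Omega)}$ obtained by a finite Gronwall iteration. Put $s_j:=(j\ell)\wedge1$ for $j=0,\dots,\lceil1/\ell\rceil$, so $s_{j+1}-s_j\le\ell$. For $t\in[s_j,s_{j+1}]$, applying \eqref{norm2normgen} on $[s_j,t]$ and then \eqref{genholderm} (note $t-s_j\le\ell$) gives $\|f_t\|_{L_m(\Omega)}\le(1+M\ell^\tau)\|f_{s_j}\|_{L_m(\Omega)}+\ell^\tau\Gamma$; since $\|f_{s_0}\|_{L_m(\Omega)}=\|f_0\|_{L_m(\Omega)}=0$, iterating over the finitely many steps yields $\sup_{t\in[0,1]}\|f_t\|_{L_m(\Omega)}\le C_1\Gamma$ with $C_1=C_1(\tau,\ell,M)$. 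Substituting this back into \eqref{genholderm}, for every $(S,T)\in\Delta_{[0,1]}$ with $T-S\le\ell$ one obtains $[f]_{\C^\tau L_m([S,T])}\le(MC_1+1)\Gamma=:\Gamma'$, so $f$ satisfies the hypothesis of part (i) with constant $\Gamma'$, and part (i) gives $[f]_{\C^\tau L_m([0,1])}\le\Gamma'(\ell^{\tau-1}+1)=C\Gamma$ with $C=C(\tau,\ell,M)$.

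I do not anticipate any serious obstacle: the only points needing care are that the number of steps $\lceil1/\ell\rceil$ in the Gronwall iteration — and hence the constant $C$ — is fixed once $\ell$ is, so it is independent of $\Gamma$ and $f$; and that the hypothesis $f_0\equiv0$ is essential to start the iteration in part (ii), since otherwise a term proportional to $\|f_0\|_{L_m(\Omega)}$ would survive.
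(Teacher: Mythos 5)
Your proposal is correct and follows essentially the same route as the paper: part (i) by chaining over $N\le 1/\ell+1$ subintervals together with subadditivity of $x\mapsto x^{1-\tau}$, and part (ii) by a finite Gronwall iteration on $\|f_{s_j}\|_{L_m(\Omega)}$ followed by substituting the resulting uniform bound back into \eqref{genholderm} and invoking part (i). Your part (ii) is a touch cleaner than the paper's, since by using \eqref{norm2normgen} to bound $\sup_{t}\|f_t\|_{L_m(\Omega)}$ directly you avoid the separate case analysis for intervals $[S,T]$ straddling a gridpoint that the paper carries out before applying part (i).
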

\begin{proof}
	(i). Let $(s,t)\in\Delta_{[0,1]}$. Let $N$ be the smallest integer such that $(t-s)/N\le \ell$. We partition $[s,t]$ into $N$ subintervals $[t_i,t_{i+1}]$, $i=0,...,N-1$, each of length  $(t-s)/N$, and sum the increments of $f$ using \eqref{genholder}:
	\begin{equation*}
		\| f_t-f_s\|_{L_m(\Omega)} \le \sum_{i=0}^{N-1} \| f_{t_{i+1}}-f_{t_i}\|_{L_m(\Omega)}\le 
		N^{1-\tau} \Gamma (t-s)^\tau.
	\end{equation*}
	Dividing this by $(t-s)^\tau$ and using $N\le (t-s)/\ell + 1\le 1/\ell+1$, we get
	\begin{equation*}
		\frac{\| f_t-f_s\|_{L_m(\Omega)} }{(t-s)^\tau} \le \Gamma ( \ell^{\tau-1} + 1),
	\end{equation*}
	which yields \eqref{res1}.
	
	(ii).  Let $N=\lceil 1/\ell\rceil$. Fix a partition $0=t_0<t_1<\dots<t_N=1$ with $t_{i+1}-t_i\le \ell$
	It follows from \eqref{genholderm} and the definition of the seminorm in \eqref{norms} that
	\begin{equation*}
		\|f_{t_{i+1}}\|_{L_m(\Omega)}\le\|f_{t_{i}}\|_{L_m(\Omega)}+(t_{i+1}-t_i)^\tau 
		[f]_{\C^{\tau} L_m([t_{i},t_{i+1}])}\le \|f_{t_{i}}\|_{L_m(\Omega)} (1+ \ell^\tau M)+\ell^\tau \Gamma,
	\end{equation*}
	where $i\in\{0,1,\hdots, N-1\}$. Starting from $i=0$ and iterating the above bound $k$ times, $k= 1,\hdots,N$, we get
	\begin{equation*}
		\|f_{t_k}\|_{L_m(\Omega)}\le N (1+\ell^\tau M)^N \ell^\tau \Gamma=C\Gamma,
	\end{equation*}
	for $C=C(\tau,\ell,M)$. 
	Substituting this back into \eqref{genholderm}, we deduce
	\begin{equation}\label{temp1int}
		\sup_{k\in\{0,1,\hdots, N-1\}}	[f]_{\C^{\tau} L_m([t_k,t_{k+1}])}\le C \Gamma
	\end{equation}
	for $C=C(\tau,\ell,M)$.
	
	Take now arbitrary $(S,T)\in\Delta_{[0,1]}$ with $T-S\le \ell$. If for some $0\le i\le N-1$ we have $t_i\le S\le T\le t_{i+1}$, then \eqref{temp1int} implies 
	$
	[f]_{\C^{\tau} L_m([S,T])}\le C \Gamma.
	$
	Otherwise, there exists $i\in [1, N-1]$ such that $t_{i-1}\le S\le t_i\le T\le t_{i+1}$. Then we get using \eqref{temp1int}
	\begin{equation*}
		\|f_T-f_S\|_{L_m(\Omega)}\le 	\|f_T-f_{t_i}\|_{L_m(\Omega)}+\|f_S-f_{t_i}\|_{L_m(\Omega)}\le C(|T-t_i|^\tau+|S-t_i|^\tau)\Gamma\le C|T-S|^\tau\Gamma.
	\end{equation*}	
	Thus, in both cases, we have 
	\begin{equation*}
		[f]_{\C^{\tau} L_m([S,T])}\le C \Gamma
	\end{equation*}
	for $C=C(\tau,\ell,M)$. Therefore, by part (i) of the lemma, we finally derive 
	\begin{equation*}
		[f]_{\C^{\tau} L_m([0,1])}\le C \Gamma.\qedhere
	\end{equation*}
\end{proof}

Now we are ready to implement our weak existence proof strategy.

We begin with an a priori bound on the time regularity of the drift $\int b(X_r)\,dr$ of the solution to \eqref{distrsde}. Let us start with some heuristics, omitting as usual the arbitrarily small exponents. Clearly, $W \in \C^{\frac12}([0,1])$, which suggests that the solution $X$ to \eqref{distrsde} should also belong to $\C^{\frac12}([0,1])$. The function $b$ lies in $\C^{\gamma}(\R^d)$ for some negative $\gamma$, so, very informally, we may expect “$b(X)$” to be in $\C^{\frac\gamma2}([0,1])$. We gain $1$ in regularity by integration, so we expect that the drift $t \mapsto \int_0^t b(X_r)\,dr$ belongs to $\C^{1+\frac\gamma2}([0,1])$. This is essentially the statement of our first lemma, modulo the fact that, as usual, we measure regularity not a.s. in the $\C^\tau$ scale but rather in the $\C^\tau L_m$ scale; recall the corresponding definition in \eqref{norms}.

We derive a general a priori bound, which we will also reuse later in the uniqueness part of the proof.

\begin{lemma}[General a priori estimate]\label{lem.apriori}
Let $m\in[2,\infty)$, $\gamma\in(-\frac12,0)$, $x\in\R^d$. Let $(f_k)_{k\in \N}$ be a sequence of bounded continuous functions $\R^d\to\R^d$. Let $\psi, g,Z\colon\Omega\times [0,1]\to\R^d$ be measurable processes adapted to the filtration $(\F_t)_{t\in[0,1]}$ which satisfy 
\begin{equation*}
Z_t=x+\psi_t+\int_0^t g_r dr + W_t, \quad t\in[0,1]
\end{equation*}
and 
\begin{equation*}
	\sup_{t\in[0,1]}\Bigl|\psi_t- \int_0^t f_k(Z_r)\,dr\Bigr|\to0 \quad\text{in probability as } k\to\infty.
\end{equation*}
If
\begin{equation}\label{zcond}
[Z-W]_{\C^{1+\frac\gamma2}L_m([0,1])}<\infty
\end{equation} 
then 
\begin{equation}\label{psiodin}
[Z-W]_{\C^{1+\frac\gamma2}L_m([0,1])}\le CF (1+ F)(1+ \|g\|_{\C^0L_m([0,1])})
\end{equation}	
for $C=C(\gamma,d,m)>0$ and $F:=\sup_k\|f_k\|_{\C^\gamma}$.
\end{lemma}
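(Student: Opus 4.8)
The plan is to estimate the single seminorm $[Z-W]_{\C^{1+\gamma/2}L_m([0,1])}$ by feeding the stochastic sewing bound \eqref{sb1} of \cref{l:finfin} into a buckling argument on short intervals, using the a priori finiteness \eqref{zcond} to justify the absorption and \cref{l:ltg} to reassemble the pieces. First I would reduce to the drift part: set $\tau:=1+\gamma/2$, so that $\gamma\in(-\tfrac12,0)$ forces $\tau\in(\tfrac34,1]$ and $\gamma\in(-1,0)$, whence the exponent hypotheses of \cref{l:finfin} are available. Writing $Z_t-W_t=x+\psi_t+\int_0^t g_r\,dr$, the constant drops out of increments and $\|\int_s^t g_r\,dr\|_{L_m(\Omega)}\le (t-s)\|g\|_{\C^0L_m([0,1])}$, so, since $1-\tau=-\gamma/2>0$ and $t-s\le1$, the term $\int_0^\cdot g_r\,dr$ adds at most $\|g\|_{\C^0L_m([0,1])}$ to $[\,\cdot\,]_{\C^\tau L_m([0,1])}$; hence it suffices to bound $[\psi]_{\C^\tau L_m([0,1])}$. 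Note also that $\psi$, being a uniform-in-probability limit of the continuous processes $t\mapsto\int_0^t f_k(Z_r)\,dr$, is a.s.\ continuous, so $Z-W$ is a.s.\ continuous and \cref{l:finfin} applies with $\phi:=Z-W$.

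Fix $(S,T)\in\Delta_{[0,1]}$ with $T-S\le\ell$, the cutoff $\ell\in(0,1]$ to be chosen at the end. Applying \eqref{sb1} of \cref{l:finfin} componentwise with $w\equiv1$, $f=f_k$, $\phi=Z-W$ and the above $\tau$ — the required condition \eqref{paramcond} reads $\gamma/2+\tau=1+\gamma>\tfrac12$, which holds \emph{precisely} because $\gamma>-\tfrac12$, and this is exactly where the range of $\gamma$ enters — gives
\begin{equation*}
\Bigl\|\int_S^T f_k(Z_r)\,dr\Bigr\|_{L_m(\Omega)}\le C\|f_k\|_{\C^\gamma}(T-S)^{1+\frac\gamma2}\bigl(1+[Z-W]_{\C^\tau L_m([S,T])}(T-S)^{\frac12+\frac\gamma2}\bigr).
\end{equation*}
Since $\int_S^T f_k(Z_r)\,dr\to\psi_T-\psi_S$ in probability by hypothesis, passing to an a.s.\ convergent subsequence and applying Fatou's lemma yields the same bound with $\psi_T-\psi_S$ on the left and $F=\sup_k\|f_k\|_{\C^\gamma}$ in place of $\|f_k\|_{\C^\gamma}$.

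Using $[Z-W]_{\C^\tau L_m([S,T])}\le[\psi]_{\C^\tau L_m([S,T])}+\|g\|_{\C^0L_m([0,1])}$ — all quantities finite by \eqref{zcond} — dividing by $(T-S)^\tau$ and taking the supremum over $\Delta_{[S,T]}$ gives
\begin{equation*}
[\psi]_{\C^\tau L_m([S,T])}\le CF\Bigl(1+\bigl([\psi]_{\C^\tau L_m([S,T])}+\|g\|_{\C^0L_m([0,1])}\bigr)(T-S)^{\frac12+\frac\gamma2}\Bigr).
\end{equation*}
Now choose $\ell:=\min\{1,(2CF)^{-2/(1+\gamma)}\}$, so that $CF\ell^{\frac12+\frac\gamma2}\le\tfrac12$; then for every interval of length $\le\ell$ the self-referential term absorbs into the left-hand side, leaving $[\psi]_{\C^\tau L_m([S,T])}\le C(F+\|g\|_{\C^0L_m([0,1])})$ uniformly. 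Part (i) of \cref{l:ltg} then upgrades this to a bound on $[\psi]_{\C^\tau L_m([0,1])}$ carrying the extra factor $\ell^{\gamma/2}+1$, and this factor is $\le C(1+F)$: it equals $2$ when $\ell=1$, and equals $(2CF)^{-\gamma/(1+\gamma)}+1$ when $\ell=(2CF)^{-2/(1+\gamma)}<1$, with exponent $-\gamma/(1+\gamma)\in(0,1)$ for $\gamma\in(-\tfrac12,0)$. Adding back the contribution of $\int_0^\cdot g_r\,dr$ then yields \eqref{psiodin}.

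The delicate point — and the only genuine obstacle — is this buckling: the sewing estimate controls $[\psi]_{\C^\tau L_m}$ only in terms of the same (higher) seminorm of $Z-W$, so one must know a priori that this seminorm is finite, which is exactly assumption \eqref{zcond}, and then restrict to intervals short enough (of length depending on $F$) to absorb it; the price of shrinking $\ell$ is paid back through \cref{l:ltg} and is what produces the extra factor $(1+F)$ in the conclusion. Everything else — the componentwise use of \cref{l:finfin}, the passage to the limit in $k$ via Fatou, and the bookkeeping with the cutoff $\ell$ — is routine.
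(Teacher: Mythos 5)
Your proposal is correct and follows essentially the same route as the paper: the same choice $\tau=1+\gamma/2$, the same application of bound \eqref{sb1} from \cref{l:finfin} combined with Fatou, the same buckling on intervals of length $\ell\sim F^{-2/(1+\gamma)}$ using the a priori finiteness \eqref{zcond} to absorb, and the same globalization via \cref{l:ltg}(i). The only cosmetic difference is that you peel off the $\int_0^{\cdot} g_r\,dr$ contribution and run the buckling on $[\psi]_{\C^\tau L_m}$, whereas the paper keeps it bundled and buckles $[Z-W]_{\C^\tau L_m}$ directly; these are interchangeable.
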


Note again that even though we assumed $f_k$ to be a sequence of bounded continuous functions, the $\C^{1+\frac\gamma2}L_m$ norm of the drift does not depend on the supremum norm of $f_k$. Instead, it depends on the much weaker norm $\|f_k\|_{\C^{\gamma}}$. This will allow us to pass to the limit later.
The condition $\gamma > -\frac12$ is precisely the condition $\frac\gamma2 + \tau > \frac12$ from \cref{l:finfin}, with $\tau = 1 + \frac\gamma2$, as motivated by the heuristics above.

\begin{proof}
The proof relies on bound  \eqref{sb1}. Fix $m\ge2$. First we note that  for any $(s,t)\in\Delta_{[0,1]}$ we get by Fatou's lemma
\begin{align}\label{z1step}	
\|(Z_t-W_t)-(Z_s-W_s)\|_{L_m(\Omega)}&\le \|\psi_t-\psi_s\|_{L_m(\Omega)}+ \int_s^t\|g_r\|_{L_m(\Omega)}\,dr\nn\\
&\le \liminf_{k\to\infty}\Bigl\|\int_s^t f_k(Z_r)\,dr\Bigr\|_{L_m(\Omega)} \!\!\!\!+ (t-s) \|g\|_{\C^0L_m([0,1])}.
\end{align}
For $k\in\N$ let us apply  \cref{l:finfin} with the following set of parameters: $f=f_k$, $w\equiv1$, $\tau=1+\frac\gamma2$. Since $\gamma>-\frac12$, we see that condition \eqref{paramcond} holds. Therefore we get from  \eqref{sb1} that there exists $C=C(\gamma,d,m)>0$ such that for any $(s,t)\in\Delta_{[0,1]}$
\begin{equation*}
\Bigl\|\int_s^t f_k(Z_r)\,dr\Bigr\|_{L_m(\Omega)}\le C F(t-s)^{1+\frac\gamma2}\bigl(1+[Z-W]_{\C^{1+\frac\gamma2}L_m([s,t])}(t-s)^{\frac12+\frac \gamma2}\bigr).
\end{equation*}
Substituting this into \eqref{z1step}, we get 
\begin{equation*}
\|(Z_t-W_t)-(Z_s-W_s)\|_{L_m(\Omega)}\le CF(t-s)^{1+\frac\gamma2}\bigl(1+ \|g\|_{\C^0L_m([0,1])}+[Z-W]_{\C^{1+\frac\gamma2}L_m([s,t])}(t-s)^{\frac12+\frac \gamma2}\bigr).
\end{equation*}
Fix now $(S,T)\in\Delta_{[0,1]}$.  By dividing both sides of the above inequality by  $(t-s)^{1+\frac\gamma2}$ and taking 
supremum over all $(s,t)\in\Delta_{[S,T]}$, we get (recall that $\gamma>-1/2$)
\begin{equation*}
[Z-W]_{\C^{1+\frac\gamma2}L_m([S,T])}\le  C_0 F(1+ \|g\|_{\C^0L_m([0,1])}) +C_0F	[Z-W]_{\C^{1+\frac\gamma2}L_m([S,T])}(T-S)^{\frac14}.
\end{equation*}
Choose now  $\ell>0$ small enough such that
\begin{equation*}
C_0F\ell^{\frac14}= \frac12.
\end{equation*}
Recall that $[Z-W]_{\C^{1+\frac\gamma2}L_m([S,T])}$ is finite thanks to our assumption \eqref{zcond}. Then substituting this in the above inequality we get for any $(S,T)\in\Delta_{[0,1]}$ with $T-S\le \ell$
\begin{equation*}
[\psi]_{\C^{1+\frac\gamma2}L_m([S,T])}\le  2C_0 F(1+ \|g\|_{\C^0L_m([0,1])}).
\end{equation*}
By \cref{l:ltg}(i) this implies 
\begin{equation*}
[\psi]_{\C^{1+\frac\gamma2}L_m([0,1])}\le  2C_0 F(1+ \|g\|_{\C^0L_m([0,1])}) ( \ell^{-\frac14} + 1)\le  CF  (1+ F)(1+ \|g\|_{\C^0L_m([0,1])})
\end{equation*}
for $C=C(\gamma,d,m)$. This implies \eqref{psiodin}.
\end{proof}

Next, let us consider a sequence of strong solutions to approximated equations
\begin{equation}\label{xnseq}
X_t^n=x+\int_0^t b^n(X^n_t) dt +W_t,\quad t\in[0,1],
\end{equation}
where $b_n:=P_{1/n}b$, $n\in\N$. We see that $(b_n)_{n\in\N}$ is a sequence of smooth functions $\R^d\to\R^d$ converging to $b$ in $\C^{\gamma-\eps}$ for any $\eps>0$, see \cref{e:betamin}. Using \cref{lem.apriori}, let us now show that  the sequence $(X^n)_{n\in\N}$  is tight.

\begin{lemma}[Tightness]\label{lem.uun}
Suppose that $\gamma>-\frac12$. Then the sequence $(X^n)_{n\in\N}$ defined in \eqref{xnseq} is tight in the space $\C([0,1],\R^d)$.
\end{lemma}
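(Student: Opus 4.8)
The plan is to obtain a moment bound on a H\"older seminorm of $X^n$ that is uniform in $n$, and then to conclude tightness via the Arzel\`a--Ascoli theorem. Since $X^n_0=x$ for all $n$, for any $\beta\in(0,1)$ the sets $K_R:=\{f\in\C([0,1];\R^d):f(0)=x,\ [f]_{\C^\beta([0,1])}\le R\}$ are compact in $\C([0,1];\R^d)$; hence it is enough to exhibit $\beta>0$ and $m>1/\beta$ with $\sup_n\|[X^n]_{\C^\beta([0,1];\R^d)}\|_{L_m(\Omega)}<\infty$, because then $\sup_n\P(X^n\notin K_R)\le R^{-m}\sup_n\|[X^n]_{\C^\beta([0,1])}\|_{L_m(\Omega)}^m\to0$ as $R\to\infty$ by Markov's inequality.

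The decisive step is to apply the a priori estimate \cref{lem.apriori} to $Z:=X^n$, with $g\equiv 0$, $\psi_t:=\int_0^t b^n(X^n_r)\,dr$, and the constant sequence $f_k\equiv b^n$; here $F:=\sup_k\|f_k\|_{\C^\gamma}=\|b^n\|_{\C^\gamma}\le\|b\|_{\C^\gamma}$, since the heat semigroup is an $L_\infty$-contraction and therefore does not increase the $\C^\gamma$-norm when $\gamma<0$. Before invoking the lemma one has to verify its standing hypothesis \eqref{zcond}: as $b^n=P_{1/n}b$ is smooth with all derivatives bounded, \eqref{xnseq} has a unique strong solution, $X^n-W=\psi$ is Lipschitz in time with constant $\|b^n\|_{\C^0}<\infty$, and since $1+\tfrac\gamma2\le 1$ this gives $[X^n-W]_{\C^{1+\gamma/2}L_m([0,1])}<\infty$ (with an $n$-dependent bound at this stage). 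Then \eqref{psiodin} produces a constant $C=C(\|b\|_{\C^\gamma},\gamma,d,m)>0$, \emph{independent of $n$}, with $[X^n-W]_{\C^{1+\gamma/2}L_m([0,1])}\le C$ for all $n\in\N$. This self-improvement from the crude $n$-dependent bound to a uniform one is the real content of the argument: it works precisely because $\|b^n\|_{\C^\gamma}$ is controlled by $\|b\|_{\C^\gamma}$, and it is here that the hypothesis $\gamma>-\tfrac12$ is used, via the condition $\tfrac\gamma2+\tau>\tfrac12$ with $\tau=1+\tfrac\gamma2$ required to apply \cref{l:finfin} inside \cref{lem.apriori}.

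Once the uniform bound is established, I would observe that $\gamma>-1$ forces $1+\tfrac\gamma2>\tfrac12$, so on the unit interval $[X^n-W]_{\C^{1/2}L_m([0,1])}\le[X^n-W]_{\C^{1+\gamma/2}L_m([0,1])}\le C$; combining this with $[W]_{\C^{1/2}L_m([0,1])}<\infty$ (valid for every $m\ge1$, since Brownian increments scale like $|t-s|^{1/2}$) and the triangle inequality yields $\sup_n[X^n]_{\C^{1/2}L_m([0,1])}<\infty$ for every $m\ge 2$. Kolmogorov's continuity theorem (\cref{t:kolmi} with $E=\R^d$, $k=1$, exponent $\tfrac12$) then gives, for each $m>2$ and each $\beta\in(0,\tfrac12-\tfrac1m)$, the bound $\sup_n\|[X^n]_{\C^\beta([0,1];\R^d)}\|_{L_m(\Omega)}<\infty$; taking e.g.\ $m=3$, $\beta=\tfrac14$ closes the reduction of the first paragraph. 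I expect the only genuinely delicate point to be the verification described in the second paragraph --- checking \eqref{zcond} and tracking that the constant coming out of \cref{lem.apriori} is $n$-uniform; the rest is routine.
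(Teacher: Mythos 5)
Your proof follows the paper's own proof essentially step by step: invoke \cref{lem.apriori} with $Z=X^n$, $g\equiv0$, $f_k\equiv b^n$ (using $\|b^n\|_{\C^\gamma}\le\|b\|_{\C^\gamma}$) to obtain an $n$-uniform bound on $[X^n-W]_{\C^{1+\gamma/2}L_m([0,1])}$, then apply Kolmogorov's theorem and Arzel\`a--Ascoli with Chebyshev/Markov to conclude tightness. The only cosmetic difference is that the paper applies Kolmogorov to $\phi^n=X^n-W$ and transfers tightness to $X^n$ at the end, whereas you absorb the Brownian part first and apply Kolmogorov directly to $X^n$; this is immaterial.

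One concrete slip: your illustrative choice $m=3$, $\beta=\tfrac14$ satisfies neither your own stated requirement $m>1/\beta$ (which is in fact unnecessary for the Markov step and should be dropped) nor the Kolmogorov constraint $\beta<\tfrac12-\tfrac1m=\tfrac16$; take e.g.\ $m=5$, $\beta=\tfrac14$, or any $m>2$ and $\beta\in(0,\tfrac12-\tfrac1m)$.
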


\begin{proof}
Fix the initial data $x\in\R^d$ and denote for brevity $\phi^n:=X^n-W$, $n\in\N$. For a fixed $n\in\N$ we apply \cref{lem.apriori} with $Z=X^n$, $\psi=\int_0^t b^n(X^n)\,dr$, $g\equiv0$ and let all functions $f^k$ to be equal to $b^n$. We see that for any $(s,t)\in\Delta_{[0,1]}$ we have
\begin{equation*}
|(X^n_t-W_t)-(X^n_s-W_s)|\le \|b^n\|_{L_\infty(\R^d)} |t-s|,
\end{equation*}
and therefore condition \eqref{zcond} is satisfied. Thus it follows from \eqref{psiodin} that for any $m\ge1$ there exists a 
constant $C=C(\gamma,d,m)>0$ such that for any $(s,t)\in\Delta_{[0,1]}$
\begin{equation*}
\|\phi^n_t-\phi^n_s\|_{L_m(\Omega)}\le C\|b^n\|_{\C^{\gamma}}  (1+ \|b^n\|_{\C^{\gamma}})|t-s|^{1+\frac\gamma2}
\le C\|b\|_{\C^{\gamma}}  (1+ \|b\|_{\C^{\gamma}})|t-s|^{1+\frac\gamma2},
\end{equation*}
where we also used that $\|b^n\|_{\C^{\gamma}}\le \|b\|_{\C^{\gamma}}$ by \cref{e:betamin}.

Therefore by the Kolmogorov continuity theorem (recall \cref{t:kolmi}), we have
\begin{equation}\label{kolmnorm}
\| [\phi^n]_{\C^{\frac12+\frac\gamma2}([0,1],\R^d)}\|_{L_m(\Omega)}\le  C\|b\|_{\C^{\gamma}}  (1+ \|b\|_{\C^{\gamma}}),
\end{equation}
for $C=C(\gamma,d,m)>0$.

Denote now for $M>0$
\begin{equation*}
A_M:=\big\{f\in \C([0,1],\R^d): f(0)=x \text{ and } [f]_{\C^{\frac12+\frac\gamma2}([0,1];\R^d)}\le M\big\}.
\end{equation*}
By the Arzela--Ascoli theorem, for each $M>0$ the set $A_M$ is a compact set in the space $\C([0,1],\R^d)$. Furthermore, it follows from \eqref{kolmnorm}  and  the Chebyshev inequality that 
$$
\P(\phi_n \notin A_M)\le   M^{-1}\E [\psi_n]_{\C^{\frac12+\frac\gamma2}([0,1];\R^d)}\le  C\|b\|_{\C^{\gamma}}  (1+ \|b\|_{\C^{\gamma}})M^{-1}.
$$
Thus, the sequence $(\phi_n)_{n\in\N}$ is tight in $\C([0,1],\R^d)$. Since the process $W$ does not depend on $n$, we get that 
 $(X_n)_{n\in\N}$ is also  tight in $\C([0,1],\R^d)$.
\end{proof}

Now we are finally ready to show weak existence of the solutions to SDE  \eqref{distrsde}. Namely, as promised in the beginning of this subsection, we will show that any of the limiting points of the sequence $(X^n)$ is a weak solution, that is, it satisfies the conditions of \cref{D:sol}. 
We will rely again on the very powerful a priori estimates (\cref{lem.apriori}).

\begin{theorem}
Let $\gamma\in(-\frac12,0)$, $x\in\R^d$, $b\in\C^\gamma$. Then equation   \eqref{distrsde} has a weak solution in the sense of \cref{D:sol}.
\end{theorem}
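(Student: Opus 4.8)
The plan is to obtain a weak solution as a weak subsequential limit of the regularized solutions $X^n$ from \eqref{xnseq}, realize the convergence almost surely via the Skorokhod representation theorem, and then identify the limit as a solution using the key stochastic sewing bound \eqref{sb1}. This follows the compactness scheme already outlined in this subsection; the new work is the identification of the limit.

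\textbf{Step 1: compactness.} Write $\phi^n:=X^n-W=\int_0^\cdot b^n(X^n_r)\,dr$. By \cref{lem.uun} and its proof (which uses \cref{lem.apriori} and \cref{t:kolmi}) the laws of the triples $(X^n,\phi^n,W)$ are tight in $\C([0,1];\R^d)^3$, and one has a uniform-in-$n$ bound on $[\phi^n]_{\C^{1+\gamma/2}L_m([0,1])}$ for every $m\ge2$. I would pass to a weakly convergent subsequence and invoke the Skorokhod representation theorem on $\C([0,1];\R^d)^3$: this produces, on some probability space, processes $(\wt X^n,\wt\phi^n,\wt W^n)$ with the law of $(X^n,\phi^n,W)$ and a limit $(\wt X,\wt\phi,\wt W)$ with $\wt X^n\to\wt X$, $\wt\phi^n\to\wt\phi$, $\wt W^n\to\wt W$ uniformly on $[0,1]$ a.s. From equality of laws, each $\wt W^n$ is a Brownian motion, $\wt X^n=x+\wt\phi^n+\wt W^n$, and $\wt\phi^n_t=\int_0^t b^n(\wt X^n_r)\,dr$ a.s.; letting $n\to\infty$ gives that $\wt W$ is a Brownian motion, $\wt X=x+\wt\phi+\wt W$ with $\wt\phi$ continuous and $\wt\phi_0=0$. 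Taking $(\wt\F_t)$ to be the complete filtration generated by $(\wt X,\wt W)$, a standard characteristic-function argument (passing to the limit in the Brownian identity for $(\wt X^n,\wt W^n)$) shows $\wt W$ is an $(\wt\F_t)$-Brownian motion and $\wt X$ is $(\wt\F_t)$-adapted.

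\textbf{Step 2: identifying the limit.} It remains to check condition (ii) of \cref{D:sol} with $\psi=\wt\phi$: for an \emph{arbitrary} sequence $(g^k)_{k\in\N}$ of smooth functions converging to $b$ in $\C^{\gamma-}$, one must show $\sup_{t\in[0,1]}|\int_0^t g^k(\wt X_r)\,dr-\wt\phi_t|\to0$ in probability as $k\to\infty$. I would fix $\gamma'\in(-\tfrac12,\gamma)$ — possible exactly because $\gamma>-\tfrac12$ — so that $\|g^k-b^n\|_{\C^{\gamma'}}\to0$ as $\min(k,n)\to\infty$, and split, for each $k,n$,
\begin{align*}
\int_0^t g^k(\wt X_r)\,dr-\wt\phi_t
&=\int_0^t\bigl(g^k(\wt X_r)-g^k(\wt X^n_r)\bigr)\,dr
+\int_0^t\bigl(g^k(\wt X^n_r)-b^n(\wt X^n_r)\bigr)\,dr
+\bigl(\wt\phi^n_t-\wt\phi_t\bigr)\\
&=:I^{k,n}_t+II^{k,n}_t+III^n_t.
\end{align*}
For fixed $k$, $\sup_t|I^{k,n}_t|\le\|\nabla g^k\|_{\C^0}\sup_r|\wt X_r-\wt X^n_r|\to0$ a.s. as $n\to\infty$, and $\sup_t|III^n_t|\to0$ a.s. as $n\to\infty$. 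For $II^{k,n}$, since $(\wt X^n,\wt W^n)$ has the law of $(X^n,W)$ and $X^n=W+(x+\phi^n)$, I would compute its moments on the original space via \eqref{sb1} (with noise $W$, $f=g^k-b^n$, $w\equiv1$, $\tau=1+\tfrac\gamma2$ and regularity parameter $\gamma'$, for which $\tfrac{\gamma'}2+\tau>\tfrac12$), obtaining increment bounds $\|\int_s^t(g^k-b^n)(X^n_r)\,dr\|_{L_m(\Omega)}\le C\|g^k-b^n\|_{\C^{\gamma'}}|t-s|^{1+\gamma'/2}$ uniformly in $n$ (using the uniform bound on $[\phi^n]_{\C^{1+\gamma/2}L_m}$); then \cref{t:kolmi} gives $\|\sup_t|II^{k,n}_t|\|_{L_m(\Omega)}\le C\|g^k-b^n\|_{\C^{\gamma'}}$ for $m\ge2$. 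Consequently, for every $\eps>0$ and every $n$,
\begin{equation*}
\P\Bigl(\sup_t\bigl|\int_0^t g^k(\wt X_r)\,dr-\wt\phi_t\bigr|>\eps\Bigr)\le\P\bigl(\sup_t|I^{k,n}_t|>\tfrac\eps3\bigr)+C\eps^{-m}\|g^k-b^n\|_{\C^{\gamma'}}^m+\P\bigl(\sup_t|III^n_t|>\tfrac\eps3\bigr),
\end{equation*}
and since the left-hand side is independent of $n$, letting $n\to\infty$ bounds it by $C\eps^{-m}\|g^k-b\|_{\C^{\gamma'}}^m$, which tends to $0$ as $k\to\infty$. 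This verifies condition (ii), so $(\wt X,\wt W)$ is a weak solution of \eqref{distrsde}.

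\textbf{Expected main obstacle.} The delicate point is Step 2: each $\wt X^n$ is driven by its own Brownian motion $\wt W^n$, which converges only a.s., not identically, to $\wt W$, so the stochastic sewing bound \eqref{sb1} applies to the $\wt X^n$ individually but not directly to the limit $\wt X$. The way around this is precisely the interleaving of the limits $n\to\infty$ and $k\to\infty$ above, leaning on the uniform-in-$n$ a priori H\"older bound of \cref{lem.apriori} and on sacrificing an arbitrarily small amount of regularity (replacing $\gamma$ by $\gamma'<\gamma$, allowed exactly because $\gamma>-\tfrac12$) so that $\|g^k-b^n\|_{\C^{\gamma'}}$ can be made small. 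A minor additional care point is the verification that $\wt W$ is a Brownian motion for the \emph{completed} filtration generated by $(\wt X,\wt W)$, which is the routine characteristic-function passage mentioned in Step 1.
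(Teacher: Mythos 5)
Your proposal is correct and follows essentially the same strategy as the paper's proof: tightness of the laws of $(X^n,W)$ via the a priori estimate of \cref{lem.apriori} and the Kolmogorov continuity theorem, Skorokhod representation, and identification of the limit through a three-term decomposition. Your $I^{k,n}+II^{k,n}+III^n$ is the paper's $I_1(n,k)+I_2(n,k)+I_3(k)$ up to relabeling of indices: in both cases the first and third terms vanish a.s.\ by uniform convergence, while the middle term is controlled by the stochastic sewing bound \eqref{sb1} applied with a slightly reduced regularity parameter (your $\gamma'<\gamma$, the paper's $\gamma-\eps$) together with the uniform-in-$n$ a priori H\"older bound on $\phi^n$ and a final application of the Kolmogorov continuity theorem.

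The one place where you are more scrupulous than the paper as written: \cref{D:sol}(ii) demands the identification for an \emph{arbitrary} sequence $(g^k)$ of smooth functions converging to $b$ in $\C^{\gamma-}$, and you verify this directly. The paper checks the identity only for the fixed regularizing sequence $b^n:=P_{1/n}b$ and leaves the passage to a general sequence implicit; it follows from the same estimate on $\int_0^t(g^k-b^n)(\wh X^k_r)\,dr$, which you spell out. Your interleaving of the $n\to\infty$ and $k\to\infty$ limits at the end is exactly the required device. The only cosmetic difference in Step 1 is that you include $\phi^n$ as a third marginal in the Skorokhod embedding, whereas the paper embeds only $(X^n,W)$ and recovers $\wh\phi^n=\wh X^n-x-\wh W^n$ afterwards; this changes nothing.
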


\begin{proof}
\textbf{Step 1}. We know from \cref{lem.uun} that sequence $(X^n)_{n\in\N}$ is tight in $\C([0,1],\R^d)$. Obviously, the constant sequence $(W)_{n\in\N}$ is tight in this space as well. Hence the sequence $(X^n,W)_{n\in\N}$ is tight in $\C([0,1],\R^{2d})$.
Since this space is separable, by the Prokhorov theorem there exists
a subsequence $(n_k)_{k\in\Z_+}$ such that $(X_{n_k}, W)_{k\in\N}$ converges weakly in the space $\C([0,1],\R^{2d})$. By passing to an appropriate subsequence and applying the Skorokhod representation theorem, we see that there exists a random element $(\wh X,\wh W)$ and a sequence of random elements $(\wh X_n,\wh W_n)$ defined on a common probability space $(\wh\Omega, \wh\F, \wh P)$
such that $\law(\wh X_n,\wh W_n)=\law( X_n, W_n)$ and
\begin{equation}\label{prishli}
	\|(\wh X_n,\wh W_n)-(\wh X,\wh W)\|_{\C([0,1];\R^{2d})}\to0 \quad \text{as $n\to\infty$ a.s.}
\end{equation}

Now let us show that $(\wh X,\wh W)$ is a weak solution to  \eqref{distrsde}. We will need to check two things: a) that for  some filtration $\wh \F$ the process  $\wh X$ is adapted to $\wh \F$ and  $\wh W$ is an $\wh \F$-Brownian motion; b) that $(\wh X,\wh W)$ satisfies \cref{D:sol}. 

\textbf{Step 2a}. 
Obviously, we have  $\law(\wh W)=\law(\wh W_n)=\law(W)$, and thus, $\wh W_n$, $\wh W$ are standard Brownian motions. For $t\in[0,1]$, put $\wh \F_t:=\sigma(\wh W_s,\wh X_s; s\le t)$. By definition, $\wh X$ is adapted to $\wh \F$. We claim now that $\wh W$ is an $(\wh \F_t)$-Brownian motion.

Indeed, for any $(s,t)\in\Delta_{[0,1]}$, $n,N\in\N$, any bounded continuous functions $f\colon\R^d\to\R$, 
$g\colon\R^{2Nd}\to\R$, and any time points $0\le t_1\le\hdots\le t_N\le s$ one has
\begin{align*}
&\E f(\wh B_n(t)-\wh B_n(s)) g\bigl(\wh B_n(t_1),\hdots, \wh B_n(t_N), \wh X_n(t_1),\hdots, \wh X_n(t_N)\bigr)\\
&\quad=\E  f(\wh B_n(t)-\wh B_n(s))\E  g\bigl(\wh B_n(t_1),\hdots, \wh B_n(t_N), \wh X_n(t_1),\hdots, \wh X_n(t_N)\bigr),
\end{align*}
since $\wh B_n(t)-\wh B_n(s)$ is independent of $\wh \F^n_s:=\sigma(\wh B^n_r, \wh X_n(r); s\le t)$ . By passing to the limit in the above expression as $n\to\infty$ using \eqref{prishli}, we derive that the same identity holds also for $(\wh B,\wh X)$, Therefore $\wh B(t)-\wh B(s)$ is independent of $\wh \F_s$ for any $(s,t)\in\Delta_{[0,1]}$. Thus, $\wh B$ is an $(\wh \F_t)$-Brownian motion.

\textbf{Step 2b}. Let us define  
\begin{equation}\label{psiprdef}
\psi_t:= \wh X_t-x- \wh W_t,\quad t\in[0,1].
\end{equation}
We see that part (i) of  \cref{D:sol} holds by construction.

To check part (ii) of  \cref{D:sol}, we put for $n,k\in\N$, $t\in[0,1]$
\begin{align}\label{Psin}
&\psi_n(t):=\int_0^t b^n(\wh  X(r))dr,\qquad \psi_{n,k}(t):=\int_0^t b^n(\wh  X^k(r))dr.
\end{align}
Our goal is to prove that
\begin{equation*}
\lim_{n\to\infty}\sup_{t\in[0,1]}\Big|\int_0^t b^n(\wh X_r)\,dr-\psi(t)\Big|=
\lim_{n\to\infty}\sup_{t\in[0,1]}\Big| \psi_n(t)-\psi(t)\Big|\stackrel{???}{=}0\,\,\text{in probability}.
\end{equation*}
We split this expression into several parts. Let $n,k\in\N$. We have 
\begin{align}\label{baza0}
\sup_{t\in[0,1]} |\psi_n(t)-\psi(t)|&\le
\sup_{t\in[0,1]} |\psi_n(t)-\psi_{n,k}(t)|+
\sup_{t\in[0,1]} |\psi_{n,k}(t)-\psi_{k,k}(t)| +\sup_{t\in[0,1]} |\psi_{k,k}-\psi(t)|\nn\\
&=:I_1(n,k)+I_2(n,k)+I_3(k).
\end{align}
It is easy to deal with the first term. We have for any fixed $n\in\Z_+$
\begin{equation}\label{firstbaza}
\lim_{k\to\infty} I_1(n,k)\le \lim_{k\to\infty}\|b^n\|_{\C^1(\R^d,\R^d)}\|\wh X-\wh X^k\|_{\C([0,1];\R^d)}=0 
\end{equation}
by \eqref{prishli}. Similarly, it is also immediate to treat the last term. 
 Recall that $(\wh X_k, \wh W_k)$ solves equaiton \eqref{distrsde} with the drift $b^k$. Therefore, $\wh X_k=x+\psi_{k,k}+\wh W_k$. Hence, by definition of $\psi$ in \eqref{psiprdef} and $\psi_{k,k}$ in \eqref{Psin}, we have
\begin{equation}\label{thirdbaza}
\lim_{k\to\infty} I_3(k)= \lim_{k\to\infty} \|\wh X_k-\wh W_k-(\wh X-\wh W)\|_{\C([0,1];\R^d)}=0 
\end{equation}
thanks to the convergence  \eqref{prishli}.

It remains to bound $I_2(n,k)$. This is a bit less trivial, but we will rely on the a priori bound  and the Kolmogorov continuity theorem to interchange the order of taking the supremum in time and the expectation. Thus, we fix $\eps > 0$ so that $\gamma > -\frac12 + \eps$.
 We apply \cref{l:finfin} with $f=b^n-b^k$, $\gamma-\eps$ in place of $\gamma$, $\phi=\wh X^k-\wh W^k$, $\tau=1+\frac\gamma2$, $w\equiv1$. We see that condition \eqref{paramcond} is satisfied since $\gamma>-\frac12+\eps$. Therefore it follows from  \eqref{sb1} that for any $m\ge2$ there exists a constant $C=C(\gamma,\eps,d,m)>0$ such that for any $(s,t)\in\Delta_{[0,1]}$, $n,k\in\N$ we have
\begin{align*}
&\|(\psi_{n,k}(t)-\psi_{k,k}(t))-(\psi_{n,k}(s)-\psi_{k,k}(s))\|_{L_m(\Omega)}\\
&\quad=
\Bigl\|\int_s^t (b^n-b^k)(\wh  X^k(r))\,dr\Bigr\|_{L_m(\Omega)}\\
&\quad\le C\|b^n-b^k\|_{\C^{\gamma-\eps}}|t-s|^{1+\frac\gamma2-\eps}(1+ [\wh X^k-\wh W^k]_{\C^{1+\frac\gamma2} L_m([0,1])}\bigr)\\
&\quad\le C\|b^n-b^k\|_{\C^{\gamma-\eps}}|t-s|^{1+\frac\gamma2-\eps}
\end{align*}
for $C=C(\|b\|_{\C^\gamma},\gamma,\eps,d,m)$. Here in the last inequality we used the a priori bound in \cref{lem.apriori}. Therefore, 
by the Kolmogorov continuity theorem (\cref{t:kolmi}) we finally get 
\begin{align*}
\|I_2(n,k)\|_{L_2(\Omega)}	&=	\|\sup_{t\in[0,1]} |\psi_{n,k}(t)-\psi_{k,k}(t)|\,\|_{L_2(\Omega)}\le
\|[\psi_{n,k}-\psi_{k,k}]_{\C^{\frac12+\frac\gamma2}}\,\|_{L_2(\Omega)}\\
&\le C\|b^n-b^k\|_{\C^{\gamma-\eps}}\to0\,\,\text{as $k,n\to\infty$}.
\end{align*}
Combining this with \eqref{firstbaza} and \eqref{thirdbaza} and passing to the limit in \eqref{baza0} first as $k\to\infty$ and then as $n\to\infty$, we finally get
$$
\sup_{t\in[0,1]} |\psi_n(t)-\psi(t)|\to0,\quad \text{in probability as $n\to\infty$},
$$
and thus part (ii) of  \cref{D:sol} holds.

\textbf{Step 3}. Thus, we conclude. By Step 2a,  $\wh X$ is adapted to the filtration $\wh \F$ and  $\wh W$ is an $\wh \F$-Brownian motion. By Step 2b the pair  $(\wh X,\wh W)$ satisfies \cref{D:sol}. Hence $(\wh X,\wh W)$ is a weak solution to equation \eqref{distrsde}.
\end{proof}

Let us discuss the obtained result a bit. We have just shown that any limiting point of the sequence $(X^n)$ is a weak solution to \eqref{distrsde}. For this, we used the standard SSL to obtain the integral bound \cref{l:finfin}, which in turn implied tightness. In the next subsection, we show that all limiting points actually have the same law. Moreover, we will show that if $X$ is \textit{any} weak solution, then our fixed sequence of approximations $(X^n)$ converges weakly to $X$, which yields weak uniqueness. To establish this, the SSL alone will not suffice: we will also rely on the generalized coupling technique.

As is standard in the analysis of SDEs or SPDEs with distributional drift, we do not consider all solutions of \eqref{distrsde}, but rather restrict ourselves to those whose drifts have a certain regularity, and later establish a \textit{conditional} weak uniqueness (that is, weak uniqueness only among such solutions). More precisely, we consider the following class of functions.

\begin{definition} Let $\kappa\in(0,1]$. We say that a solution $(X,W)$ to \eqref{distrsde} belongs to the class $\V(\kappa)$ if for any $m\ge2$ we have for $\psi:=X-W$
\begin{equation*}
\sup_{(s,t)\in\Delta_{[0,1]}} \frac{\|\psi_t-\psi_s\|_{L_m(\Omega)}}{|t-s|^\kappa}<\infty.
\end{equation*}	
\end{definition}

We see that, as usual, it is more convenient to measure the regularity of the drift in the $\C^\kappa L_m$ norm rather than in the $\C^\kappa$ norm. It is also almost immediate that any limiting point of the sequence $(X^n)$ belongs to $\V(1+\frac\gamma2)$. Let us provide a short proof.

\begin{lemma} Let $(X,B)$ be a limiting point of the sequence $(X^n,W)_{n\in\Z_+}$ defined in \eqref{xnseq}. Then $(X,B)\in\V(1+\frac\gamma2)$. 
\end{lemma}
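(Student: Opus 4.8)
The plan is to transfer the uniform-in-$n$ moment bound on the increments of $\phi^n:=X^n-W$, which was already obtained inside the proof of \cref{lem.uun} via the a priori estimate \cref{lem.apriori}, to the limiting process by means of the Skorokhod representation and Fatou's lemma. Concretely, recall that in the proof of \cref{lem.uun} we showed that for every $m\ge2$ there is a constant $C=C(\|b\|_{\C^\gamma},\gamma,d,m)>0$ such that
\begin{equation*}
\|\phi^n_t-\phi^n_s\|_{L_m(\Omega)}\le C|t-s|^{1+\frac\gamma2},\qquad (s,t)\in\Delta_{[0,1]},\ n\in\N,
\end{equation*}
and this is precisely the estimate we want, but now for the limit.

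First I would fix a subsequence $(n_k)$ along which $(X^{n_k},W)$ converges weakly to $(X,B)$ in $\C([0,1];\R^{2d})$ (this is what it means for $(X,B)$ to be a limiting point). By Prokhorov's theorem and the Skorokhod representation theorem, after passing to a further subsequence I may assume there are random elements $(\wh X_k,\wh W_k)$ and $(\wh X,\wh W)$ on a common probability space with $\law(\wh X_k,\wh W_k)=\law(X^{n_k},W)$, $\law(\wh X,\wh W)=\law(X,B)$, and $(\wh X_k,\wh W_k)\to(\wh X,\wh W)$ almost surely in $\C([0,1];\R^{2d})$. Set $\wh\phi_k:=\wh X_k-\wh W_k$ and $\wh\psi:=\wh X-\wh W$. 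Since the $L_m(\Omega)$-norm of an increment depends only on the law, the displayed bound gives $\|\wh\phi_k(t)-\wh\phi_k(s)\|_{L_m(\Omega)}\le C|t-s|^{1+\frac\gamma2}$ for every $k$ and every $(s,t)\in\Delta_{[0,1]}$.

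Next I would let $k\to\infty$. For each fixed $(s,t)\in\Delta_{[0,1]}$ we have $\wh\phi_k(t)-\wh\phi_k(s)\to\wh\psi_t-\wh\psi_s$ almost surely, so Fatou's lemma yields
\begin{equation*}
\|\wh\psi_t-\wh\psi_s\|_{L_m(\Omega)}\le\liminf_{k\to\infty}\|\wh\phi_k(t)-\wh\phi_k(s)\|_{L_m(\Omega)}\le C|t-s|^{1+\frac\gamma2},
\end{equation*}
hence $[\wh\psi]_{\C^{1+\frac\gamma2}L_m([0,1])}\le C<\infty$. Because $(\wh X,\wh W)$ and $(X,B)$ have the same law on $\C([0,1];\R^{2d})$, the same bound holds for $\psi:=X-B$, i.e. $[\psi]_{\C^{1+\frac\gamma2}L_m([0,1])}\le C$. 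Since $m\ge2$ was arbitrary, this is exactly the statement that $(X,B)\in\V(1+\tfrac\gamma2)$.

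I do not expect any real obstacle here: the only point needing a line of care is that membership in $\V(\kappa)$ depends on the joint law of $(X,W)$ only through the moments of its increments, so it is preserved under passage to Skorokhod copies, and the required uniform bound is already available from \cref{lem.apriori}. (If one wished to avoid the Skorokhod representation altogether, one could instead use weak convergence directly together with uniform integrability of $|\phi^{n_k}_t-\phi^{n_k}_s|^m$, which follows from the same estimate applied with a slightly larger exponent $m'>m$; but the Fatou argument above is the shortest.)
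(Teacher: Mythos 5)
Your proof is correct and takes essentially the same route as the paper: apply the uniform $\C^{1+\frac\gamma2}L_m$ bound from \cref{lem.apriori} to the approximating sequence and then pass to the limit by Fatou's lemma. The only difference is that you spell out the Skorokhod representation step (and the law-invariance of the $\V(\kappa)$ membership) explicitly, whereas the paper's one-line argument invokes Fatou directly, implicitly relying on the Skorokhod copies already constructed in the weak existence proof.
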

\begin{proof}
Let $m\ge1$. We have from \cref{lem.apriori} and \cref{e:betamin}
\begin{equation*}
\sup_{(s,t)\in\Delta_{[0,1]}} \frac{\|(X^n_t-W_t)-(X^n_s-W_s)\|_{L_m(\Omega)}}{|t-s|^{1+\frac\gamma2}}<C\|b\|_{\C^{\gamma}}  (1+ \|b\|_{\C^{\gamma}}^3).
\end{equation*}
for $C=C(\gamma,d,m)$.
Therefore, by Fatou's lemma we get 
\begin{equation*}
\sup_{(s,t)\in\Delta_{[0,1]}} \frac{\|(X_t-B_t)-(X_s-B_s)\|_{L_m(\Omega)}}{|t-s|^{1+\frac\gamma2}}<C\|b\|_{\C^{\gamma}}  (1+ \|b\|_{\C^{\gamma}}^3),
\end{equation*}
which implies that $X\in\V(1+\frac\gamma2)$.
\end{proof}

\subsection{Stochastic sewing with generalized couplings for weak uniqueness}\label{s:stochs}

We now have all the tools to rigorously implement the strategy sketched in \cref{s:sketch} and establish weak uniqueness of solutions to \eqref{distrsde} that belong to the class $\V(1+\frac\gamma2)$.

Thus, let $(X,W)$ be \textit{any} weak solution to  \eqref{distrsde} in the sense of \cref{D:sol}. Assume that $X\in\V(1+\frac\gamma2)$. Let $X^n$, $n\in\N$, be a solution to the regularized equation \eqref{distrsden}. Introduce an auxiliary process $\wt X^n$ which is defined on the same space as $(X,W)$ and solves
\begin{equation}\label{Xntildedef}
\wt X^n_t =x+\int_0^t b^n(\wt X^n_r) dr+\lambda\int_0^t (X_r-\wt X^n_r)dt+W_t,\quad t\in[0,1],
\end{equation}
where the parameter $\lambda>1$ will be fixed later. We see that since the function $b^n$ is smooth, the process $\wt X^n$ is well-defined. We decompose the distance between $X$ and $X^n$ as
\begin{align}\label{decompos}
\W_{\|\cdot\|\wedge1}(\law(X^n),\law(X))&\le \W_{\|\cdot\|\wedge1}(\law(X^n),\law(\wt X^n))+\W_{\|\cdot\|\wedge1}(\law(\wt X^n),\law(X))\nn\\
&\le d_{TV}(\law(X^n),\law(\wt X^n))+\E \|\wt X^n-X\|_{\C([0,1];\R^d)}.
\end{align}
 
As we explained in \cref{s:sketch}, to bound $d_{TV}(\law(X^n),\law(\wt X^n))$, we use the Girsanov theorem. The following general statement provides bounds on the total variation  distance  between the solutions of two SDEs.

\begin{lemma}({\cite[Theorem~A.2]{BKS18}},{\cite[Theorem 8]{UUU09}})\label{l:dtvlemma}
Let $f\in\C^\infty(\R^d,\R^d)$. Let $g\colon\Omega\times[0,1]\to\R^d$ be a measurable adapted function such that $\int_0^1 \E |g_r|^2\,dr<\infty$. 
Let $Y,Z\colon\Omega\times[0,1]\to\R^d$ be strong solutions to the following SDEs:
\begin{equation*}
dY_t = f(Y_t)dt +dW_t;\quad dZ_t = f(Z_t)dt +g_t dt + dW_t,\qquad t\in[0,1]
\end{equation*}
with the initial condition $Y_0=Z_0=x\in\R^d$. Then there exists a constant $C>0$ such that 
\begin{equation}\label{dtvboundg}
 d_{TV}(\law(Y),\law(Z))\le  \frac12\Bigl(\int_0^1 \E |g_r|^2\,dr\Bigr)^{1/2}.
\end{equation}
\end{lemma}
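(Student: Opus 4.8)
The plan is to use Girsanov's theorem to realise $Z$, under a suitably tilted probability measure, as a solution of \emph{the same} SDE that $Y$ solves under $\P$, and then to control the cost of the change of measure via Pinsker's inequality. Set $\overline W_t:=W_t+\int_0^t g_r\,dr$ and $\mathcal{E}_t:=\exp\bigl(-\int_0^t g_r\,dW_r-\frac12\int_0^t|g_r|^2\,dr\bigr)$, the latter being a nonnegative local martingale. Assuming for the moment that $\mathcal{E}$ is a true martingale on $[0,1]$, I would define the probability measure $\Q$ by $d\Q:=\mathcal{E}_1\,d\P$; then $\Q\sim\P$ with $\frac{d\P}{d\Q}=\mathcal{E}_1^{-1}$.

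Next I would invoke Girsanov's theorem: under $\Q$ the process $\overline W$ is an $(\F_t)$-Brownian motion, and since $d\overline W_t=dW_t+g_t\,dt$ the equation for $Z$ rewrites as $dZ_t=f(Z_t)\,dt+d\overline W_t$. Thus $(Z,\overline W)$ is a weak solution on $(\Omega,\F,(\F_t),\Q)$ of $dU_t=f(U_t)\,dt+d\beta_t$. Since $f$ is smooth, this SDE has pathwise uniqueness up to explosion, hence uniqueness in law among solutions that stay finite on $[0,1]$; as $(Y,W)$ is such a solution on $(\Omega,\F,(\F_t),\P)$, I conclude $\law_\Q(Z)=\law_\P(Y)$. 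Comparing then the law of the \emph{same} process $Z$ under the two measures, for every Borel set $A$ in path space $|\Q(Z\in A)-\P(Z\in A)|\le\sup_{B\in\F}|\P(B)-\Q(B)|=d_{TV}(\P,\Q)$, so
\begin{equation*}
d_{TV}\bigl(\law(Y),\law(Z)\bigr)=d_{TV}\bigl(\law_\Q(Z),\law_\P(Z)\bigr)\le d_{TV}(\P,\Q).
\end{equation*}

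To finish, I apply Pinsker's inequality (\cref{pinsker}) directly to $d_{TV}(\P,\Q)$ and compute the relative entropy. Since $\frac{d\P}{d\Q}=\exp\bigl(\int_0^1 g_r\,dW_r+\frac12\int_0^1|g_r|^2\,dr\bigr)$ and $\int_0^1\E|g_r|^2\,dr<\infty$, the stochastic integral $\int_0^1 g_r\,dW_r$ is a genuine $\P$-martingale with zero expectation, whence
\begin{equation*}
\E^\P\log\frac{d\P}{d\Q}=\E^\P\Bigl[\int_0^1 g_r\,dW_r+\frac12\int_0^1|g_r|^2\,dr\Bigr]=\frac12\int_0^1\E|g_r|^2\,dr.
\end{equation*}
Hence $d_{TV}(\P,\Q)\le\sqrt{\frac12\cdot\frac12\int_0^1\E|g_r|^2\,dr}=\frac12\bigl(\int_0^1\E|g_r|^2\,dr\bigr)^{1/2}$, which together with the previous display gives \eqref{dtvboundg}.

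The step needing care — indeed the main obstacle — is the standing assumption that $\mathcal{E}$ is a true (not merely local) martingale, since $\int_0^1\E|g_r|^2\,dr<\infty$ does not by itself imply Novikov's condition. I would remove it by localization: for $N\in\N$ set $\tau_N:=\inf\{t:\int_0^t|g_r|^2\,dr\ge N\}\wedge1$ and $g^N_r:=g_r\,\I_{\{r\le\tau_N\}}$, for which Novikov holds trivially; running the argument above with $g^N$ and the corresponding solution $Z^N$ yields the bound with $\int_0^1\E|g^N_r|^2\,dr\le\int_0^1\E|g_r|^2\,dr$ on the right, after which one lets $N\to\infty$ using $\tau_N\to1$ a.s., continuity of the solution map of the SDE (for smooth $f$), and lower semicontinuity of $d_{TV}$ under weak convergence. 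In every application appearing in these notes $g$ will in fact be bounded, so this point is vacuous there; everything else in the argument is routine.
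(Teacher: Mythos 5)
Your core argument — Girsanov's theorem to transfer $Z$ under a tilted measure $\Q$ to a solution of the $g$-free SDE, then uniqueness in law to identify $\law_\Q(Z)=\law_\P(Y)$, then Pinsker's inequality to bound $d_{TV}(\P,\Q)$ by the relative entropy $\tfrac12\int_0^1\E|g_r|^2\,dr$ — coincides exactly with the paper's. Where you diverge is in removing the localization at the end. You truncate with $\tau_N$, obtain the bound for $Z^N$, and then appeal to weak convergence $\law(Z^N)\Rightarrow\law(Z)$ (via continuity of the solution map) together with lower semicontinuity of $d_{TV}$ under weak convergence. This works, but it uses slightly heavy machinery: LSC of $d_{TV}$ under weak convergence is true but not immediate (it relies on $\|\lambda\|_{TV}$ being a sup of integrals against bounded \emph{continuous} test functions), and one must also justify $Z^N\Rightarrow Z$ in path space. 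The paper instead uses the triangle inequality
\begin{equation*}
d_{TV}(\law_\P(Y),\law_\P(Z))\le d_{TV}(\law_\P(Y),\law_\P(Z^N))+d_{TV}(\law_\P(Z^N),\law_\P(Z)),
\end{equation*}
bounds the first term by Girsanov--Pinsker as you do, and observes that on $\{\tau_N=\infty\}$ one has $g_r\I(r\le\tau_N)=g_r$ for all $r$, hence $Z^N\equiv Z$ by pathwise uniqueness, so the second term is at most $\P(\tau_N<\infty)\to 0$ by the $L^2$ assumption on $g$. This avoids any reference to weak convergence or continuity of the solution map and is the cleaner route; you might adopt it.
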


\begin{proof}
Since the additional drift term $g$ might be unbounded, we combine the Girsanov theorem with a localization argument. 
For $N>0$ consider a stopping time 
\begin{equation*}
\tau_N:= \inf\{t\in[0,1]: \int_0^t |g_r|^2\,dr \ge N\}
\end{equation*}
and put 
\begin{equation*}
\wt W_t^{N}:=W_t+\int_0^t g_r \I(r\le \tau_N)\,dr.
\end{equation*}
By definition, the process $t\mapsto g_t \I(t\le \tau_N)$ satisfies the Novikov condition and therefore 
by the Girsanov theorem, there exists an equivalent probability measure $\wt \P^{N}$ such that the process $\wt W^{N}$ is an $(\F_t)_{t\in[0,1]}$- Brownian motion under this measure and 
\begin{equation}\label{denratio}
	\frac{d \wt \P^{N}}{d \P}=\exp\Bigl(-\int_0^1 g_r  \I(r\le \tau_N) \,d W_r-\frac12\int_0^1 |g_r|^2 \I(r\le \tau_N)\,ds\Bigr),
\end{equation}	
Let $Z^N$ be a strong solution to SDE 
\begin{equation}\label{Yneq}
Z^N(t)=x+ \int_0^t f(Z^N(r))\,dr +\wt W_t^{N},\quad t\in[0,1].
\end{equation}
on the space $(\Omega,\F,\wt \P^N)$. $Z^N$ is well-defined since the function $f$ is Lipschitz by assumption. We clearly have
\begin{equation}\label{treq}
d_{TV}(\law_\P( Y),\law_\P( Z))\le d_{TV}(\law_\P( Y),\law_\P(Z^N))+d_{TV}(\law_\P(Z^N), \law_\P(Z)).
\end{equation}
We see that  $( Z^N,\wt W^{N})$ is a strong solution to SDE \eqref{Yneq} on $(\Omega,\F,\wt \P^N)$ and $(Y,W)$ is a strong solution to exactly the same equation on $(\Omega,\F,\P)$. Since weak uniqueness holds for SDE \eqref{Yneq}  (recall that the function $f$ is Lipschitz)  we have $\law_{\P}(Y)=\law_{\wt \P^N}(Z_N)$.
Then, using Pinsker's inequality (\cref{pinsker}) and the explicit formula for the density \eqref{denratio} we derive
\begin{align}\label{term1gir}
d_{TV}(\law_{\P}(Y),\law_{\P}(Z_N)&=
d_{TV}(\law_{\wt \P^N}(Z_N),\law_{\P}(Z_N))\le d_{TV}(\P,\wt\P^N) \nn\\
&\le \frac1{\sqrt2}\Bigl(\E^{\P}\log \frac{d \P}{d \wt \P^{N}}\Bigr)^{1/2}=\frac12\Bigl(\int_0^1 \E^\P|g_r|^2  \I(r\le \tau_N)\,dr\Bigr)^{1/2}\nn\\
&\le \frac12\Bigl(\int_0^1 \E^\P|g_r|^2\,dr\Bigr)^{1/2}.
\end{align}	
Here we used that $\E \int_0^1 g_r  \I(r\le \tau_N) \,d W_r=0$ since $\E \int_0^1 |g_r|^2  \I(r\le \tau_N) \,dr$ is finite by definition.

To bound the second term in the right-hand side of \eqref{treq}, we note that on the set $\{\tau_N=\infty\}$ the processes $Z^N$ and $Z$ coincide. Therefore,
\begin{equation*}
d_{TV}(\law_\P(Z_N),\law_\P(Z))\le \P(\tau_N<\infty).
\end{equation*}
Now we combine this with \eqref{term1gir} and substitute into \eqref{treq}. We get 
\begin{equation*}
d_{TV}(\law_\P( Y),\law_\P(Z))\le \frac12\Bigl(\int_0^1 \E^\P|g_r|^2\,dr\Bigr)^{1/2}+\P(\tau_N<\infty). 
\end{equation*}
Since $N$ was arbitrary, by passing to the limit as $N\to\infty$, we get \eqref{dtvboundg}.
\end{proof}

Using \cref{l:dtvlemma}, it is immediate to bound the first term on the right-hand side of~\eqref{decompos}.
\begin{corollary}\label{l:ks1}
We have
\begin{equation*}
d_{TV}(\law(X^n),\law(\wt X^n))\le \frac12 \lambda \| \|X-\wt X^n\|_{\C([0,1];\R^d)} \|_{L_2(\Omega)}.
\end{equation*}	
\end{corollary}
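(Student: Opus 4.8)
The plan is to deduce this at once from \cref{l:dtvlemma}. Set $g_r:=\lambda(X_r-\wt X^n_r)$, $r\in[0,1]$. With this notation the auxiliary equation \eqref{Xntildedef} reads $d\wt X^n_t=b^n(\wt X^n_t)\,dt+g_t\,dt+dW_t$, whereas the regularized equation \eqref{distrsden} reads $dX^n_t=b^n(X^n_t)\,dt+dW_t$; both are started at $x$. Since $b^n=P_{1/n}b$ is smooth with bounded derivatives of all orders — this follows from \cref{l:gbext} — it is in particular globally Lipschitz, so \eqref{Xntildedef} has a unique strong solution and $\wt X^n$, hence $g$, is adapted to $(\F_t)$. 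Thus $(X^n,\wt X^n)$ fits the framework of \cref{l:dtvlemma} with $f=b^n$, $Y=X^n$, $Z=\wt X^n$.

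If $\bigl\|\,\|X-\wt X^n\|_{\C([0,1];\R^d)}\,\bigr\|_{L_2(\Omega)}=\infty$ the claim is trivial, so assume it is finite; then
\begin{equation*}
\int_0^1\E|g_r|^2\,dr=\lambda^2\int_0^1\E|X_r-\wt X^n_r|^2\,dr\le\lambda^2\,\bigl\|\,\|X-\wt X^n\|_{\C([0,1];\R^d)}\,\bigr\|_{L_2(\Omega)}^2<\infty,
\end{equation*}
so the square-integrability hypothesis of \cref{l:dtvlemma} holds. Applying \eqref{dtvboundg} followed by the display above gives
\begin{equation*}
d_{TV}(\law(X^n),\law(\wt X^n))\le\frac12\Bigl(\int_0^1\E|g_r|^2\,dr\Bigr)^{1/2}\le\frac12\lambda\,\bigl\|\,\|X-\wt X^n\|_{\C([0,1];\R^d)}\,\bigr\|_{L_2(\Omega)},
\end{equation*}
which is the assertion.

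There is no genuine difficulty here; the only subtlety is the integrability requirement of \cref{l:dtvlemma}, which is handled by the trivial case above. Alternatively, one may verify it directly: writing $D_t:=X_t-\wt X^n_t$ and solving the linear relation $D_t=(X_t-W_t-x)-\int_0^t b^n(\wt X^n_r)\,dr-\lambda\int_0^t D_r\,dr$ by variation of constants yields $\|D\|_{\C([0,1];\R^d)}\le 2\bigl(\|X-W\|_{\C([0,1];\R^d)}+|x|+\|b^n\|_{\C^0}\bigr)$, which has finite second moment because $X\in\V(1+\frac\gamma2)$. Everything else is just bookkeeping to match \eqref{Xntildedef}--\eqref{distrsden} with the two SDEs of \cref{l:dtvlemma}.
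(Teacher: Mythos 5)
Your proof is correct and takes the same route as the paper: apply \cref{l:dtvlemma} with $Y=X^n$, $Z=\wt X^n$, $f=b^n$, $g=\lambda(X-\wt X^n)$, and then bound the $L_2([0,1]\times\Omega)$ norm of $g$ by the supremum norm. The only difference is that you explicitly verify the square-integrability hypothesis of \cref{l:dtvlemma}, which the paper leaves implicit; that extra care is harmless but not a new argument.
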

\begin{proof}
Follows immediately from \cref{l:dtvlemma} by taking $Y=X^n$, $Z=\wt X^n$, $f=b^n$, $g=\lambda (X-\wt X^n)$.
\end{proof}

Now we move on to bounding the second term on the right-hand side of~\eqref{decompos}. Fortunately, we are completely prepared by now and can rely on the integral bounds in \cref{l:finfin}, which were established using the SSL.

\begin{lemma}\label{L:ks2}
For any $\eps>0$ such that 
\begin{equation}\label{epsineq}
5\eps<\frac12+\gamma.
\end{equation}
there exist constants $C=C(\eps)$, $\lambda_0=\lambda_0(\|b\|_{\C^\gamma},\gamma,\eps,d,m)>1$ such that for any
$\lambda> \lambda_0$
one has 
\begin{equation}\label{mainboundxmy}
 \bigl\| \sup_{t\in[0,1]}|X_t-\wt X^n_t |\bigr\|_{L_2(\Omega)}\le    C  \lambda^{-\frac32-\gamma+11\eps}+C \|b^n-b\|_{\C^{\gamma-\eps}}\lambda^{-1-\frac\gamma2+7\eps}.
 \end{equation}	
\end{lemma}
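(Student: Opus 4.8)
Write $D:=X-\wt X^n$, and note $D_0=0$ since $X_0=\wt X^n_0=x$. Because $(X,W)$ solves \eqref{distrsde} in the sense of \cref{D:sol}, we have $X=x+\psi+W$ with $\psi_t=\lim_{k}\int_0^t P_{1/k}b(X_r)\,dr$, uniformly in $t$ and in probability. Subtracting \eqref{Xntildedef} from the equation for $X$ shows that $D$ solves the linear equation $D_t=G_t-\lambda\int_0^t D_r\,dr$ with $G_t:=\psi_t-\int_0^t b^n(\wt X^n_r)\,dr$, so that $D_t=\int_0^t e^{-\lambda(t-r)}\,dG_r$ and, for $(s,t)\in\Delta_{[0,1]}$,
\[
D_t-D_s=(e^{-\lambda(t-s)}-1)D_s+\int_s^t e^{-\lambda(t-r)}\,dG_r .
\]
Approximating $G$ by $G^{(k)}_t:=\int_0^t\big(P_{1/k}b(X_r)-b^n(\wt X^n_r)\big)\,dr$ (which converges to $G$ uniformly in probability by \cref{D:sol}) and integrating by parts gives $\int_s^t e^{-\lambda(t-r)}\,dG_r=\lim_k\int_s^t e^{-\lambda(t-r)}\big(P_{1/k}b(X_r)-b^n(\wt X^n_r)\big)\,dr$ in probability. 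Combined with Fatou's lemma — exactly as in the last step of the proof of \cref{l:finfin} — this reduces everything to bounding integrals $\int_s^t e^{-\lambda(t-r)}f(W_r+\phi_r)\,dr$ with $f\in\{P_{1/k}b-b^n,\,b^n\}$ smooth and bounded, and then sending $k\to\infty$ (using $\|P_{1/k}b-b^n\|_{\C^{\gamma-\eps}}\to\|b-b^n\|_{\C^{\gamma-\eps}}$).

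Fix $m\ge2$ large enough for a later application of Kolmogorov's theorem (the claimed $L_2$-bound follows since $\|\cdot\|_{L_2}\le\|\cdot\|_{L_m}$), fix $\eps$ subject to \eqref{epsineq}, and set $\tau:=1+\tfrac\gamma2-\eps\in(0,1)$, $\mu:=\tfrac12$; then \eqref{paramcond} and \eqref{paramcond2} hold with $\gamma-\eps$ in place of $\gamma$. Next I would record two a priori bounds, both from \cref{lem.apriori}. Applied to $X$ (with $g\equiv0$, $f_k=P_{1/k}b$), where \eqref{zcond} holds by the standing hypothesis $X\in\V(1+\tfrac\gamma2)$, it gives $[X-W]_{\C^{1+\gamma/2}L_m([0,1])}\le C$, a constant depending only on $\|b\|_{\C^\gamma},\gamma,d,m$. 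Applied to $\wt X^n$ (with $\psi=\int_0^\cdot b^n(\wt X^n_r)\,dr$, $g=\lambda(X-\wt X^n)=\lambda D$, $f_k\equiv b^n$), where \eqref{zcond} holds because $\wt X^n$ solves a Lipschitz SDE and hence $\wt X^n-W$ is a.s.\ Lipschitz in time with finite moments, it gives $[\wt X^n-W]_{\C^{1+\gamma/2}L_m([0,1])}\le C\,(1+\lambda\|D\|_{\C^0L_m([0,1])})$, again with $C=C(\|b\|_{\C^\gamma},\gamma,d,m)$ since $\|b^n\|_{\C^\gamma}\le\|b\|_{\C^\gamma}$. Both seminorms dominate the corresponding $\C^\tau L_m$-seminorms on $[0,1]$.

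On an arbitrary $[s,t]\subseteq[0,1]$ I then split $P_{1/k}b(X_r)-b^n(\wt X^n_r)=(P_{1/k}b-b^n)(X_r)+\big(b^n(X_r)-b^n(\wt X^n_r)\big)$. For the first term \eqref{bound1terml} with $\phi=X-W$, together with the boundedness of $[X-W]_{\C^\tau L_m}$ and the fact that $\tfrac12-\tau<0$, gives a contribution $\le C\|P_{1/k}b-b^n\|_{\C^{\gamma-\eps}}(t-s)^\eps\lambda^{-1-\gamma/2+C\eps}$. For the second, \eqref{ourboundasregl} with $\phi=X-W$, $\psi=\wt X^n-W$, $\mu=\tfrac12$, $\|b^n\|_{\C^{\gamma-\eps}}\le\|b\|_{\C^{\gamma-\eps}}$, and the two a priori bounds gives
\[
\le C(t-s)^\eps\lambda^{-\frac34-\frac\gamma2+C\eps}\|D\|_{\C^0L_m([0,1])}^{1/2}+C(t-s)^\eps\Big(\lambda^{-\frac32-\gamma+C\eps}+\lambda^{-\frac12-\gamma+C\eps}\|D\|_{\C^0L_m([0,1])}\Big).
\]
Letting $k\to\infty$, then taking $s=0$ and $\sup_{t\in[0,1]}$, I obtain a closed inequality $a\le A+Ba^{1/2}+c\,a$ for $a:=\|D\|_{\C^0L_m([0,1])}$, with $A=C\lambda^{-3/2-\gamma+C\eps}+C\|b-b^n\|_{\C^{\gamma-\eps}}\lambda^{-1-\gamma/2+C\eps}$, $B=C\lambda^{-3/4-\gamma/2+C\eps}$, $c=C\lambda^{-1/2-\gamma+C\eps}$. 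Since $\gamma>-\tfrac12$, choosing $\lambda_0$ large forces $c\le\tfrac12$; absorbing $c\,a$ and then solving the quadratic in $a^{1/2}$ ($a\le 2A+2Ba^{1/2}\Rightarrow a\le 4A+4B^2$) yields $\|D\|_{\C^0L_m([0,1])}\le C\lambda^{-3/2-\gamma+C\eps}+C\|b-b^n\|_{\C^{\gamma-\eps}}\lambda^{-1-\gamma/2+C\eps}$.

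Finally I would upgrade this to the supremum. Reinserting the $\C^0L_m$-bound just obtained into the $[s,t]$-estimates above — so that $\lambda^{-3/4-\gamma/2+C\eps}\|D\|_{\C^0L_m}^{1/2}$ becomes again a term of order $\lambda^{-3/2-\gamma+C\eps}$ — and bounding the prefactor of $D_s$ by $|e^{-\lambda(t-s)}-1|\le(\lambda(t-s))\wedge1\le\lambda^\eps(t-s)^\eps$, one gets $[D]_{\C^\eps L_m([0,1])}\le C\lambda^{-3/2-\gamma+C\eps}+C\|b-b^n\|_{\C^{\gamma-\eps}}\lambda^{-1-\gamma/2+C\eps}$. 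Kolmogorov's continuity theorem (\cref{t:kolmi}, with $m$ chosen so that $m\eps>1$) transfers this, using $D_0=0$, to the same bound for $\big\|\sup_{t\in[0,1]}|D_t|\big\|_{L_m(\Omega)}$. Enlarging $\lambda_0$ once more so that every accumulated constant $C=C(\|b\|_{\C^\gamma},\gamma,\eps,d,m)$ satisfies $C\le\lambda_0^\eps$, all such constants are swallowed by extra factors $\lambda^\eps$ for $\lambda>\lambda_0$, and we arrive at \eqref{mainboundxmy} with a constant $C=C(\eps)$. The main obstacle is the circularity created by the control term: $\wt X^n$ depends on $X$, and the only a priori bound available for $[\wt X^n-W]_{\C^\tau L_m}$ (via \cref{lem.apriori}) carries the factor $\lambda\|D\|_{\C^0L_m}$; it is precisely the hypothesis $\gamma>-\tfrac12$ that makes the ensuing term $\lambda^{-1/2-\gamma+C\eps}\|D\|_{\C^0L_m}$ — and likewise the $\lambda^{-3/4-\gamma/2}\|D\|_{\C^0L_m}^{1/2}$ term after Young's inequality — absorbable, so that the buckling closes.
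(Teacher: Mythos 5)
Your proof is correct and follows essentially the same route as the paper: write the variation-of-constants formula for $D=X-\wt X^n$ through a $k$-approximation (the paper's $Z^k$, your $G^{(k)}$), split the drift difference into a stability term with $f=b^n$ and a regularization-error term with $f=b^n-b^k$, bound each via \cref{c:finfin} together with the a~priori bounds from \cref{lem.apriori}, close the buckling for $\|D\|_{\C^0L_m([0,1])}$, and upgrade to the supremum via \cref{t:kolmi}. The only cosmetic divergence is in closing the buckling: you solve the resulting quadratic $a\le A+Ba^{1/2}+ca$ directly, while the paper first applies Young's inequality to remove the $a^{1/2}$ term; both yield the same final estimate with the constant $C=C(\eps)$ and all $\|b\|_{\C^\gamma}$-dependence swallowed by the choice of $\lambda_0$.
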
	

Before we begin the proof, let us just recall that the process $\wt X^n$ depends on $\lambda$, so we cannot simply take $\lambda \to \infty$ in \eqref{mainboundxmy} to show that $\bigl\| \sup_{t\in[0,1]}|X_t-\wt X^n_t |\bigr\|_{L_2(\Omega)}$ is small. Nevertheless, recalling our heuristics in \cref{s:sketch}, we see that the key thing is that in the first term $\lambda$ appears with a power smaller than $-1$, so we can ``pay the price'' of $\lambda$ when we plug this bound into \cref{l:ks1}.

\begin{proof}
To bound the $\E \|\wt X^n-X\|_{\C([0,1];\R^d)}$ , we rely, as usual, on the Kolmogorov continuity theorem. That is, we first bound $\|\wt X^n-X\|_{\C^\eps L_m([0,1])}$ for some very small $\eps > 0$ and sufficiently large $m$, and then interchange the order of taking the expectation and the $\C^\eps([0,1])$ norm.

We take $m:=2/\eps\ge2$ and our goal is to show that $\|\wt X^n - X\|_{\C^\eps L_m([0,1])}$ can be bounded by a half of itself plus $\lambda^{\text{(some negative power smaller than $-1$)}}$. To achieve this  we will follow the strategy of \cref{s:sketch} with the following technical modification: since $b$ is a distribution, we cannot even define the right-hand side of \eqref{keyint:s}. To make everything rigorous, we introduce yet another layer of approximations. Thus, for $k\in\N$ we put
\begin{equation*}
Z^k_t:=x+\int_0^t  b^k(X_t)\,dt +W_t, \quad t\in[0,1].
\end{equation*}
By definition of a solution to SDE \eqref{distrsde}, (recall \cref{d:apprsec}) and by passing to a subsequence if necessary, we have 
\begin{equation}\label{conv}
\|Z^k - X\|_{\C([0,1];\R^d)}\to0\quad a.s.,\quad  \text{as $k\to\infty$}. 	
\end{equation}	
We should think of $k$ as being much larger than $n$ which is fixed; in the  proof we will pass to the limit as $k\to\infty$. Recall that the process $\wt X^n$ is defined in \eqref{Xntildedef}.

\textbf{Step~1}. By definition of $Z^k$ and $\wt X^n$, we have 
\begin{equation*}
d(Z^k_t-\wt X^n_t)=-\lambda (X_t-\wt X^n_t)dt+ (b^k(X_t)-b^n(\wt X^n_t))\,dt.
\end{equation*}
	
Hence, by the chain rule
\begin{align*}
d[e^{\lambda t}(Z^k_t-\wt X^n_t)]&=e^{\lambda t} (b^k(X_t)-b^n(\wt X^n_t))\,dt+
e^{\lambda t} \lambda (\wt X_t^n-X_t+Z_t^k-\wt X_t^n)\,dt\\
&=e^{\lambda t} (b^k(X_t)-b^n(\wt X^n_t))\,dt+
e^{\lambda t} \lambda (Z_t^k-X_t).
\end{align*}
Let $(s,t)\in\Delta_{[0,1]}$. Integrating the above identity in time from $s$ to $t$ and dividing both sides of the equation by $\exp(\lambda t)$,  we derive
\begin{align*}
&|(Z^k_t-\wt X^n_t)-e^{-\lambda(t-s)}(Z^k_s-\wt X^n_s)|\\
&\qquad\le\Bigl|\int^t_s e^{-\lambda (t-r)} (b^k(X_r)-b^n(\wt X^n_r))\,dr\Bigr|+\lambda
\int^t_s|Z_r^k-X_r|\,dr\\
&\qquad\le \Bigl|\int^t_s e^{-\lambda (t-r)} (b^n(X_r)-b^n(\wt X^n_r))\,dr\Bigr|		+\Bigl|\int_s^t e^{-\lambda (t-r)}(b^n(X_r)-b^k(X_r)) \, dr\Bigr|\\
&\qquad\phantom{\le}+\lambda \|Z^k - X\|_{\C[0,1]}\\
&\qquad=: I_{1,n}(s,t)+I_{2,n,k}(s,t)+\lambda \|Z^k - X\|_{\C([0,1];\R^d)}.
\end{align*}
Let us pass to the limit in this inequality as $k\to\infty$. Recalling \eqref{conv}, we get	
\begin{equation*}
|(X_t-\wt X^n_t)-e^{-\lambda(t-s)}(X_s-\wt X^n_s)|\le  I_{1,n}(s,t)+\liminf_{k\to\infty} I_{2,n,k}(s,t).
\end{equation*}
Therefore,
\begin{align}\label{limeq}
|(X_t-\wt X^n_t)-(X_s-\wt X^n_s)|&\le |(X_t- \wt X^n_t)-e^{-\lambda(t-s)}(X_s-\wt X^n_s)|+(1-e^{-\lambda(t-s)})|X_s-\wt X^n_s|\nn\\
&\le I_{1,n}(s,t)+\liminf_{k\to\infty} I_{2,n,k}(s,t)+\lambda^{\eps}|t-s|^{\eps}|X_s-\wt X^n_s|,
\end{align}
where we used the elementary inequality $1-e^{-a}\le a^\rho$ valid for any $a\ge0$, $\rho\in[0,1]$. 

\textbf{Step~2a}.  Let us now analyze each of the terms in the above inequality. We begin with $I_{1}$. This is the term for which we have tailored \cref{c:finfin}, and we apply it now with  $\tau=1+\frac\gamma2$, $\mu=\frac12$,  $f=b^n$. We see that conditions \eqref{paramcond} and 
\eqref{paramcond2} hold thanks to our choice of parameters and the standing assumption $\gamma>-\frac12$. Therefore we get by bound~\eqref{ourboundasregl}, using as usual that $\|b^n\|_{\C^\gamma}\le \|b\|_{\C^\gamma}$:
\begin{align}\label{ivanpred}
\|I_{1,n}(s,t)\|_{L_m(\Omega)}&\le C \|b\|_{\C^\gamma}
(t-s)^\eps\lambda^{-\frac34-\frac\gamma2+3\eps}\|X-\wt X^n\|_{\C^0L_m([0,1])}^{\frac12}\nn\\
&\quad+C\|b\|_{\C^\gamma}(t-s)^\eps\lambda^{-\frac32-\gamma+3\eps}\bigl([X-W]_{ \C^{1+\frac\gamma2}L_m([0,1])}+[\wt X^n-W]_{ \C^{1+\frac\gamma2}L_m([0,1])}\bigr)
\end{align}
for $C=C(\gamma,\eps,d,m)$ independent of $\lambda$. 

To bound $[X-W]_{ \C^{1+\frac\gamma2}L_m([0,1])}$
we apply an a priori bound from \cref{lem.apriori}. We take there $X=Z$, $f_k=b^k$, $g\equiv0$ and we see that condition \eqref{zcond} is precisely our assumption $X\in\V(1+\frac\gamma2)$. Therefore we get for any $m\ge2$ 
\begin{equation}\label{phib}
[X-W]_{ \C^{1+\frac\gamma2}L_m([0,1])}\le C(1+\|b\|^2_{\C^\gamma}).
\end{equation}	
for $C=C(\gamma,d,m)>0$.

To bound $[\wt X^n-W]_{ \C^{1+\frac\gamma2}L_m([0,1])}$ we also apply  \cref{lem.apriori}. We take now $Z=\wt X^n$, $f_1=f_2=...=b^n$, $g=\lambda (X-\wt X^n)$. It is easy see that  $[\wt X^n-W]_{\C^1 L_m([0,1])}<\infty$ and thus condition \eqref{zcond} holds. Hence, it follows from \eqref{psiodin} that 
\begin{equation}\label{psib}
[\wt X^n-W]_{ \C^{1+\frac\gamma2}L_m([0,1])}\le C(1+\|b\|^2_{\C^\gamma})(1+\lambda \|X-\wt X^n\|_{\C^0 L_m([0,1])})
\end{equation}
for $C=C(\gamma,d,m)>0$.

Now we substitute \eqref{phib} and \eqref{psib} back into \eqref{ivanpred}. Recall that we are allowed to bound $I_1$ only by terms of the form $\lambda$ to a negative power or $\|X-\wt X^n\|_{\C^0L_m([0,1])}$ to the power $1$. Since the first term there is not of that form, we bound it using the inequality $xy \le x^2 + y^2$ for $x=\lambda^{-\frac34-\frac\gamma2+4\eps}$, $y=\lambda^{-\eps}  \|X-\wt X^n\|_{\C^0 L_m([0,1])}^{\frac12}$. We derive
\begin{align}\label{ivanpred2}
\|I_{1,n}(s,t)\|_{L_m(\Omega)}&\le C(t-s)^\eps \lambda^{-\frac32-\gamma+8\eps }+C(t-s)^\eps\lambda^{-2\eps}\|X-\wt X^n\|_{\C^0L_m([0,1])}\nn\\
&\phantom{\le}+C(t-s)^\eps \lambda^{-\frac12-\gamma+3\eps}\|X-\wt  X^n\|_{\C^0L_m([0,1])}\nn\\
&\le C (t-s)^\eps \lambda^{-\frac32-\gamma+8\eps}+C(t-s)^\eps \lambda^{-2\eps}\|X-\wt  X^n\|_{\C^0L_m([0,1])}
\end{align}
for $C=C(\|b\|_{\C^\gamma},\gamma,\eps,d,m)$ and in the last step, we used the inequality \eqref{epsineq} for our choice of $\eps$. We see that all the terms are now in the desired form.

\textbf{Step~2b}. Next, we move to the term $I_{2,n,k}$ from \eqref{limeq}. We apply \cref{c:finfin} with $\gamma-\eps$ in place of $\gamma$, $\tau=1+\frac\gamma2$, $f=b^n-b^k$. We see that condition \eqref{paramcond} holds and  we get  
from \eqref{bound1terml} and \eqref{phib}
\begin{align}\label{i2nk}
\|I_{2,n,k}(s,t)\|_{L_m(\Omega)}&\le 	C \|b^n-b^k\|_{\C^{\gamma-\eps}}(t-s)^\eps \lambda^{-1-\frac\gamma2+4\eps}\Bigl(1+[X-W]_{ \C^{1+\frac\gamma2}L_m([0,1])}\lambda^{-\frac12-\frac\gamma2}\Bigr)\nn\\
&\le C \|b^n-b^k\|_{\C^{\gamma-\eps}}(t-s)^\eps \lambda^{-1-\frac\gamma2+4\eps}
\end{align}
for $C=C(\gamma,d,m)$ independent of $\lambda$, $n$, $k$.

\textbf{Step~2c}. Now we substitute \eqref{ivanpred2} and \eqref{i2nk} into \eqref{limeq}. We note that $\|b^n -b^k\|_{\C^{\gamma-\eps}}\to \|b^n -b\|_{\C^{\gamma-\eps}}$ as $k\to\infty$. Therefore, by Fatou's lemma we derive
for any $(s,t)\in\Delta_{[0,1]}$
	\begin{align}\label{xmy1}
\|(X_t-\wt X^n_t)-(X_s-\wt X^n_s)\|_{L_m (\Omega)}&\le  C_0(t-s)^\eps  \lambda^{-\frac32-\gamma+8\eps}+C_0 (t-s)^\eps \lambda^{-2\eps}\|X-\wt X^n\|_{\C^0L_m([0,1])}\nn\\
&\phantom{\le}+C_0(t-s)^\eps \|b^n-b\|_{\C^{\gamma-\eps}}\lambda^{-1-\frac\gamma2+4\eps}\nn\\
&\phantom{\le}	+\lambda^{\eps}|t-s|^{\eps}\|X_s-\wt X^n_s\|_{L_m(\Omega)},
\end{align}
where $C_0=C_0(\|b\|_{\C^\gamma},\gamma,\eps,d,m)$.	

\textbf{Step~3: buckling for supremum norm}. 
 Choose now any $\lambda\ge1$ such that 
\begin{equation}\label{xmy2}
C_0\lambda^{-2\eps}\le\frac12,
\end{equation}
where $C_0$ is as in \eqref{xmy1}.
By choosing  now $s=0$ in \eqref{xmy1} and taking supremum over all $t\in[0,1]$ we deduce with the help of \eqref{xmy2}
\begin{equation}\label{diffeq}
\|X-\wt X^n\|_{\C^{0}L_m ([0,1])}\le \frac12\|X-\wt X^n\|_{\C^{0}L_m ([0,1])}+ \lambda^{-\frac32-\gamma+10\eps}+\|b^n-b\|_{\C^{\gamma-\eps}}\lambda^{-1-\frac\gamma2+6\eps}.
\end{equation}	
Note that 	$\|X\|_{\C^{0}L_m ([0,1])}<\infty$ because $X$ belongs to the class $\V(1+\frac\gamma2)$. Furthermore, it is also easy to see that 
$\|\wt X^n\|_{\C^{0}L_m ([0,1])}<\infty$. Hence  $\|X-\wt X^n\|_{\C^{0}L_m ([0,1])}<\infty$. Thus we can put the term $\frac12\|X-\wt X^n\|_{\C^{0}L_m ([0,1])}$ in the left-hand side of \eqref{diffeq} and derive 	
\begin{equation}\label{supfinal}
\|X-\wt X^n\|_{\C^{0}L_m ([0,1])}\le  2 \lambda^{-\frac32-\gamma+10\eps}+ 2\|b^n-b\|_{\C^{\gamma-\eps}}\lambda^{-1-\frac\gamma2+6\eps}.
\end{equation}
	
\textbf{Step~4: bounding the H\"older norm}. 
Now we substitute \eqref{supfinal} into \eqref{xmy1}. We  note that $\|X_s-\wt X^n_s\|_{L_m(\Omega)}\le 	\|X-\wt X^n\|_{\C^{0}L_m ([0,1])}$. We derive
for any $(s,t)\in\Delta_{[0,1]}$
\begin{equation*}
\|(X_t-\wt X^n_t)-(X_s-\wt X^n_s)\|_{L_m (\Omega)}\le C (t-s)^\eps  \lambda^{-\frac32-\gamma+11\eps}+C(t-s)^\eps \|b^n-b\|_{\C^{\gamma-\eps}}\lambda^{-1-\frac\gamma2+7\eps}
\end{equation*}
for some $C>0$. 
Dividing both sides of the above inequality by $(t-s)^\eps$ and taking the supremum over all $(s,t)\in\Delta_{[0,1]}$, we finally obtain
\begin{equation*}
[X-\wt X^n]_{\C^{\eps}L_m ([0,1])}\le C  \lambda^{-\frac32-\gamma+11\eps}+C \|b^n-b\|_{\C^{\gamma-\eps}}\lambda^{-1-\frac\gamma2+7\eps}
\end{equation*}
Since $m\eps>1$ and the processes $X$ and $\wt X^n$ are continuous, we can apply the Kolmogorov continuity theorem (\cref{t:kolmi}) to derive	
	\begin{equation*}
	\|\sup_{t\in[0,1]}|X_t-\wt X^n_t|\|_{L_m (\Omega)}\le C  \lambda^{-\frac32-\gamma+11\eps}+C \|b^n-b\|_{\C^{\gamma-\eps}}\lambda^{-1-\frac\gamma2+7\eps}.
\end{equation*}
for $C=C(\eps)$, which is  the desired bound \eqref{mainboundxmy}.
\end{proof}

Now, to show weak uniqueness, it just remains to collect the bounds from \cref{l:ks1} and \cref{L:ks2} and plug them into \eqref{decompos}. Let us now provide the precise  statement, formalizing \cref{t:weak}.

\begin{theorem}
Let $d\in\N$, $x\in\R^d$, $b\in\C^\gamma(\R^d,\R^d)$, $\gamma>-\frac12$. Then SDE \eqref{distrsde} has a unique weak solution in the class  $\V(1+\frac\gamma2)$.
\end{theorem}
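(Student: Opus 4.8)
The plan is as follows. Weak existence in the class $\V(1+\frac\gamma2)$ has already been obtained above, so only uniqueness remains; moreover it suffices to treat $\gamma\in(-\frac12,0)$, since for $\gamma\ge0$ strong uniqueness (hence weak uniqueness) follows from \cref{s:s3}. The key idea, following \cref{s:sketch}, is not to compare two arbitrary solutions directly, but to fix once and for all the approximating drifts $b^n:=P_{1/n}b$ together with the strong solutions $X^n$ of \eqref{distrsden}, and to show that this \emph{fixed} sequence $(X^n)$ converges weakly to an \emph{arbitrary} weak solution $(X,W)\in\V(1+\frac\gamma2)$. Since the weak limit of $(X^n)$ is then independent of $X$, any two weak solutions in $\V(1+\frac\gamma2)$ must have the same law on $\C([0,1];\R^d)$.

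To carry this out, fix a weak solution $(X,W)\in\V(1+\frac\gamma2)$ and, for a parameter $\lambda>1$ to be optimized, introduce the generalized-coupling process $\wt X^n$ solving \eqref{Xntildedef}. It is well defined because $b^n$ is smooth and the feedback term is Lipschitz; moreover $g:=\lambda(X-\wt X^n)$ is square-integrable on $[0,1]\times\Omega$, since $X\in\V(1+\frac\gamma2)$ and standard a priori estimates (or \cref{lem.apriori}) give $\sup_{t}\|\wt X^n_t\|_{L_2(\Omega)}<\infty$. Now apply the decomposition \eqref{decompos}: bound the total-variation term by \cref{l:ks1}, and the Wasserstein term by $\E\|\wt X^n-X\|_{\C([0,1];\R^d)}\le\|\sup_{t\in[0,1]}|X_t-\wt X^n_t|\|_{L_m(\Omega)}$ with $m:=2/\eps$, to which \cref{L:ks2} applies. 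Combining the two estimates and using $\lambda>1$ to absorb the lower-power Wasserstein contributions, one obtains, for $\eps$ small and $\lambda>\lambda_0$,
\begin{equation*}
\W_{\|\cdot\|\wedge1}(\law(X^n),\law(X))\le C\lambda^{-\frac12-\gamma+11\eps}+C\|b^n-b\|_{\C^{\gamma-\eps}}\lambda^{-\frac\gamma2+7\eps}.
\end{equation*}

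It then remains to optimize. First choose $\eps>0$ small enough that $11\eps<\frac12+\gamma$ (this also ensures the hypothesis $5\eps<\frac12+\gamma$ of \cref{L:ks2}), which is possible precisely because $\gamma>-\frac12$; this makes the power of $\lambda$ in the first term strictly negative. Then take $\lambda=\lambda_n:=\|b^n-b\|_{\C^{\gamma-\eps}}^{-1/3}$, exactly as in the heuristic of \cref{s:sketch} (if $b^n=b$ for some $n$ then $b$ is already smooth and we are in the setting of \cref{s:s3}). Since $\|b^n-b\|_{\C^{\gamma-\eps}}\to0$ by \cref{e:betamin}, we have $\lambda_n\to\infty$, so in particular $\lambda_n>\lambda_0$ for large $n$, and a direct computation gives that the right-hand side is bounded by $\|b^n-b\|_{\C^{\gamma-\eps}}^{\rho}$ for some $\rho>0$ (namely $\rho=\min\{\tfrac13(\tfrac12+\gamma-11\eps),\,1+\tfrac\gamma6-\tfrac{7\eps}3\}$, both positive). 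Hence $\W_{\|\cdot\|\wedge1}(\law(X^n),\law(X))\to0$, and since $(\C([0,1];\R^d),\|\cdot\|\wedge1)$ is a bounded Polish space this is equivalent to $\law(X^n)\Rightarrow\law(X)$. This proves weak uniqueness in $\V(1+\frac\gamma2)$ and, together with the existence statement, the theorem.

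I expect the genuine work to be entirely in the already-proven \cref{L:ks2}: there, the feedback strength $\lambda$ has to be tuned so that $\wt X^n$ is simultaneously close to $X^n$ in total variation — which costs a factor $\lambda$ through Girsanov and Pinsker (\cref{l:ks1}) — and close to $X$ pathwise, which \emph{improves} as $\lambda$ grows, the gain $\lambda^{-3/2-\gamma}$ being extracted from the stochastic-sewing bound \cref{c:finfin} with the control-decay weight $e^{-\lambda(t-r)}$. At the level of this final theorem, the only delicate point is bookkeeping: checking that a single arbitrarily small $\eps$ can be chosen compatibly with the sewing constraint $\frac\gamma2+\tau>\frac12$ (at $\tau=1+\frac\gamma2$) and with the requirement that the net exponent of $\lambda$ be negative after the Girsanov factor — which is exactly the threshold $\gamma>-\frac12$ appearing in the statement.
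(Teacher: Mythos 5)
Your proposal is correct and follows essentially the same route as the paper: the decomposition via the generalized-coupling process $\wt X^n$, the Girsanov/Pinsker bound of \cref{l:ks1}, the stochastic-sewing bound of \cref{L:ks2}, and the optimization over $\lambda$. The only cosmetic difference is your choice $\lambda_n=\|b^n-b\|_{\C^{\gamma-\eps}}^{-1/3}$ (with the observation $\lambda_n\to\infty$ to meet $\lambda_n>\lambda_0$) versus the paper's $\lambda=\|b^n-b\|_{\C^{\gamma-\eps}}^{-2/(1+\gamma)}+\lambda_0$; both choices make the right-hand side vanish under the same smallness constraint on $\eps$.
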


\begin{proof}
Fix $b\in\C^\gamma$, $x\in\R^d$ and let $(b^n)_{n\in\Z_+}$ be a sequence of $\C^\infty(\R^d,\R^d)$ functions converging to $b$ in $\C^{\gamma-}$. 
Let $(X^n,W)$ be the strong solution to the regularized equation \eqref{distrsden}. Let $(X,W)$ be a weak solution to  \eqref{distrsde} in the class $\V(1+\frac\gamma2)$.

Fix $\eps>0$ such that condition  \eqref{epsineq} holds. Let $\lambda_0$ be as in \cref{L:ks2}. Then for any $\lambda>\lambda_0$ we have by  \eqref{decompos},  \cref{l:ks1} and \cref{L:ks2} 
\begin{align}\label{mainproof1}
\W_{\|\cdot\|\wedge1}(\law(X^n),\law(X))&\le  d_{TV}(\law(X^n),\law(\wt X^n))+\E \|\wt X^n-X\|_{\C([0,1];\R^d)}\nn\\
&\le C \lambda \| \|\wt X^n- X\|_{\C([0,1];\R^d)} \|_{L_2(\Omega)}\nn\\
&\le C  \lambda^{-\frac12-\gamma+11\eps}+C \|b^n-b\|_{\C^{\gamma-\eps}}\lambda^{-\frac\gamma2+7\eps}
\end{align}
for $C=C(\eps)$. Note that since $\gamma < 0$, the factor $\lambda^{-\frac{\gamma}{2} + 7\eps}$ is actually a positive power of $\lambda$. Therefore, we cannot simply take $\lambda \to \infty$ in the above inequality in order to show that the left-hand side is zero. Nevertheless we can balance the two terms by taking 
\begin{equation*}
\lambda:=\|b^n-b\|_{\C^{\gamma-\eps}}^{-\frac2{1+\gamma}}+\lambda_0.
\end{equation*}
Substituting this $\lambda$ into \eqref{mainproof1} we get 
\begin{equation*}
\W_{\|\cdot\|\wedge1}(\law(X^n),\law(X))\le C\|b^n-b\|_{\C^{\gamma-\eps}}+C\|b^n-b\|_{\C^{\gamma-\eps}}^{\frac{1+2\gamma-22\eps}{1+\gamma}}\le C\|b^n-b\|_{\C^{\gamma-\eps}}^{\frac{1+2\gamma-22\eps}{1+\gamma}} 
\end{equation*}
for $C=C(\|b\|_{\C^\gamma},\gamma,\eps,d,m)$. Now we can choose $\eps>0$ even smaller so that 
\begin{equation*}
\frac{1+2\gamma-22\eps}{1+\gamma}>0
\end{equation*}
(this is possible thanks to the main assumption $\gamma>-\frac12$ and finally pass to the limit as $n\to\infty$. We get 
\begin{equation*}
\lim_{n\to\infty}\W_{\|\cdot\|\wedge1}(\law(X^n),\law(X))=0.
\end{equation*}

Now if  $(\overline{X},\overline{W})$ is  another weak solution to \eqref{distrsde} and $\overline{X}\in\V(1+\frac\gamma2)$, then by above 
\begin{equation*}
	\lim_{n\to\infty}\W_{\|\cdot\|\wedge1}(\law(X^n),\law(\overline{X}))=0,
\end{equation*}
which implies that $\law(X)=\law(\overline{X})$ and thus weak uniqueness holds. 
\end{proof}

\subsection{Exercises}

\begin{exercise}\label{e:betamin}
In this exercise we prove that, as claimed in \cref{s:41}, if $\beta<0$ and $f\in\C^\beta$, then the sequence $P_{s}f$ converges to $f$ in $\C^{\beta-}$ as $s\to0$. 

\begin{enumerate}[(i)] 
\item Show that for any $s>0$ we have $\|P_s f\|_{\C^\beta}\le \|f\|_{\C^\beta}$.

\hn  Use \cref{d:besov} and a trivial bound $\|P_s g\|_{L_\infty}\le  \|g\|_{L_\infty}$ for a bounded measurable $g$.
\item\label{p2betam} Let $\gamma<0$. Show that for some constant $C=C(\gamma,d)$ we have for any $t>0$ 
\begin{equation*}
\|P_t f\|_{\C^2(\R^d)}\le C t^{\frac{\gamma-2}2}\|f\|_{\C^\gamma}.
\end{equation*}

\hn Write $P_t f= P_{\frac{t}2}P_{\frac{t}2}f$ and apply consequently \eqref{ineq} and  \cref{d:besov}.

\item\label{p3betam} Show that for any $x\in\R^d$, $t>0$ we have
\begin{equation*}
|P_{t+s} f(x)-P_tf(x)|\le \int_{t}^{t+s} |\Delta P_r f(x)|\,dr.
\end{equation*}
\item 
Let $\eps>0$, $t>0$. Deduce from parts \ref{p2betam} and \ref{p3betam} that  for $C=C(\beta,\eps,d)$
\begin{equation*}
\|P_{t+s} f-P_tf\|_{L_\infty(\R^d)}\le C\|f\|_{\C^{\beta-\eps}} t^{\frac\beta2-\frac\eps2}s^{\frac\eps2}.
\end{equation*}
\item Conclude that $P_{s}f$ converges to $f$ in $\C^{\beta-}$ as $s\to0$.
\end{enumerate}

\end{exercise}

%
%
%

\section{Analysis of numerical methods and further applications of stochastic sewing}\label{s:5}
In this section, we see how stochastic sewing can be very efficiently applied in stochastic numerics.
As always, we present the main idea in the basic setting of Brownian noise (\cref{s:numbd}) and leave the extension to the fractional Brownian case to the reader. However, when the driving noise is an $\alpha$-stable process, certain fundamental difficulties arise that require new tools: the John–Nirenberg inequality (\cref{s:JN}), new norms (\cref{sec:44}), and the shifted stochastic sewing lemma (\cref{s:sssl}). When the drift is a function in a Sobolev space rather than a H\"older continuous function, we need an additional tool: the taming singularities lemma (\cref{s:levyex}). We will cover all of these in this section.

\subsection{Numerics for SDEs driven by Brownian noise}\label{s:numbd}
We begin with SDEs driven by standard Brownian motion, with the goal of establishing \cref{t:mainnumt}. In this subsection, we mainly follow \cite{BDG}.

Recall the setup of \cref{s:12} and the definition of $\kappa_n(r)$ in \eqref{kappanr}. Fix $\gamma\in(0,1]$, $b\in\C^\gamma$, Let $X$ be a solution to \eqref{t:VK}, and for $n\in\N$ let $X^n$ be the Euler scheme for this equation as defined in \eqref{mainSDEn}. Similar to \cref{s:SU}, we denote 
\begin{equation*}
\phi:=X-W,\quad \phi^n=X^n-W.
\end{equation*}
We would like to bound the approximation error 
\begin{equation*}
\|X-X^n\|_{\C^0L_2([0,1])}=\|\phi-\phi^n\|_{\C^0L_2([0,1])}
\end{equation*}
by half of itself plus terms of the form $C n^{-rate}$. However, this is not possible to achieve directly, because when we apply \cref{e:step1uni}, a term with a higher seminorm, namely $[\phi-\phi^n]_{\C^{1/2}L_2([0,1])}$, appears. Therefore, instead we aim to bound 
\begin{equation*}
[\phi-\phi^n]_{\C^{1/2}L_2([S,T])}
\end{equation*}
by half of itself plus $C n^{-rate}$. This is possible whenever the interval $[S,T]$ is small enough, and then we apply a special lemma that transforms this local bound into a bound on the entire interval.

Thus, for $(s,t)\in\Delta_{[0,1]}$ we need to bound 
\begin{align}
(\phi_t-\phi^n_t)-(\phi_s-\phi^n_s)&=\int_s^t (b(X_r)-b(X^n_r))\,dr+\int_s^t (b(X^n_{r})-b(X^n_{\kappa_n(r)}))\,dr
\nn\\
&=\int_s^t (b(W_r+\varphi_r)-b(W_r+\varphi^n_{r}))\,dr
	\nn\\
	&\phantom{=}+\int_s^t (b(W_r+\phi^n_{r})-b(W_{\kappa_n(r)}+\phi^n_{\kappa_n(r)}))\,dr.\label{eq:error-decomposition}
\end{align}
We already know how to deal with the first integral, see \cref{l:l29}. Therefore, we focus on the second one. As usual, to warm up, we begin with a simplified result, where $\phi^n\equiv0$.
\begin{theorem}\label{t:appradf}
Let $\gamma\in(0,1]$, $f\in \C^\gamma(\R^d,\R)$, $m\ge2$, $\eps\in(0,\frac12)$. Then there exists a constant $C=C(\gamma,\eps,d,m)>0$ such that for any $n\in\N$, $(S,T)\in\Delta_{[0,1]}$ one has 
\begin{equation}\label{nummain}
\Bigl\|\int_S^T (f(W_r)-f(W_{\kappa_n(r)}))\,dr\Bigr\|_{L_m(\Omega)}\le  C  \| f\|_{\C^\gamma}(T-S)^{\frac12+\eps}n^{-\frac12-\frac\gamma2+\eps}.
\end{equation}
\end{theorem}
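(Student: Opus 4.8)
The plan is to apply the stochastic sewing lemma (\cref{T:SSLst}) to the process $\A_t:=\int_S^t\bigl(f(W_r)-f(W_{\kappa_n(r)})\bigr)\,dr$, $t\in[S,T]$, together with the germ
\[
A_{s,t}:=\E^s[\A_t-\A_s]=\int_s^t \E^s\bigl(f(W_r)-f(W_{\kappa_n(r)})\bigr)\,dr,\qquad (s,t)\in\Delta_{[S,T]},
\]
and the filtration generated by $W$. Just as in the proof of \cref{t:firstk}, the random variable $A_{s,t}$ is $\F_s$-measurable (note $\A_s$ is $\F_s$-measurable because for $r\le s$ both $W_r$ and $W_{\kappa_n(r)}$ are), so condition (ii) holds; moreover a direct computation gives $\E^s\delta A_{s,u,t}=\E^s(\A_t-\A_u)-\E^s\E^u(\A_t-\A_u)=0$, so condition \eqref{con:s2} holds with $\Gamma_2=0$ and $\gamma_2$ arbitrary. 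Condition \eqref{con:s3} is verified exactly as in \cref{t:firstk}: $\A_T-\A_S-\sum_i A_{t_i,t_{i+1}}$ is a sum of martingale differences, and by the BDG inequality and $\|f\|_{\C^0}<\infty$ its $L_2$-norm is $\le C\|f\|_{\C^0}|\Pi|^{1/2}(T-S)^{1/2}\to0$. Hence the whole proof reduces to verifying \eqref{con:s1}, and by \cref{r:26} it suffices to establish
\[
\|A_{s,t}\|_{L_m(\Omega)}\le C\|f\|_{\C^\gamma}\,n^{-\frac12-\frac\gamma2+\eps}\,(t-s)^{\frac12+\eps},\qquad (s,t)\in\Delta_{[0,1]},
\]
with $\gamma_1:=\tfrac12+\eps>\tfrac12$; plugging this into \eqref{est:ssl1} yields the theorem, since both surviving terms there have the desired form.

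The key point is a bound on $\bigl\|\E^s\bigl(f(W_r)-f(W_{\kappa_n(r)})\bigr)\bigr\|_{L_m(\Omega)}$ for $s<r$. If $\kappa_n(r)\ge s$, identity \eqref{1bound} gives $\E^s\bigl(f(W_r)-f(W_{\kappa_n(r)})\bigr)=(P_{r-s}-P_{\kappa_n(r)-s})f(W_s)$; writing $P_af-P_bf=\tfrac12\int_b^a\Delta P_\tau f\,d\tau$ and using the second estimate of \cref{l:gb} (which bounds $\|\Delta P_\tau f\|_{\C^0}\le C\|f\|_{\C^\gamma}\tau^{\frac\gamma2-1}$), with $a-b=r-\kappa_n(r)\le 1/n$, one obtains simultaneously a uniform bound $C\|f\|_{\C^\gamma}n^{-\gamma/2}$ (from $a^{\gamma/2}-b^{\gamma/2}\le(a-b)^{\gamma/2}$) and, when $\kappa_n(r)>s$, an improved bound $C\|f\|_{\C^\gamma}\,n^{-1}(\kappa_n(r)-s)^{\frac\gamma2-1}$ (from the mean value theorem for $x\mapsto x^{\gamma/2}$). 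The remaining case $\kappa_n(r)<s$ forces $r-s\le 1/n$ and $s-\kappa_n(r)\le 1/n$, and the triangle inequality $|P_{r-s}f(W_s)-f(W_{\kappa_n(r)})|\le|P_{r-s}f(W_s)-f(W_s)|+[f]_{\C^\gamma}|W_s-W_{\kappa_n(r)}|^\gamma$ combined with $\|P_hf-f\|_{\C^0}\le C\|f\|_{\C^\gamma}h^{\gamma/2}$ and $\bigl\||W_h|^\gamma\bigr\|_{L_m(\Omega)}\le Ch^{\gamma/2}$ again gives the uniform bound $C\|f\|_{\C^\gamma}n^{-\gamma/2}$.

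It then remains to integrate in $r$. For $t-s\le 2/n$ the uniform bound gives $\|A_{s,t}\|_{L_m(\Omega)}\le C\|f\|_{\C^\gamma}n^{-\gamma/2}(t-s)$. For $t-s>2/n$ we split $\int_s^t=\int_s^{s+2/n}+\int_{s+2/n}^t$, bound the first piece by $C\|f\|_{\C^\gamma}n^{-1-\gamma/2}$ with the uniform bound, and observe that on $(s+2/n,t)$ one has $\kappa_n(r)\ge r-\tfrac1n$, hence $\kappa_n(r)-s\ge(r-s)-\tfrac1n\ge\tfrac12(r-s)>0$, so the improved bound and $\int_s^t(r-s)^{\frac\gamma2-1}\,dr\le\tfrac2\gamma(t-s)^{\gamma/2}$ yield $C\|f\|_{\C^\gamma}n^{-1}(t-s)^{\gamma/2}$ for the second piece. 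Finally all three partial bounds — $n^{-\gamma/2}(t-s)$ on $\{t-s\le 2/n\}$, and $n^{-1-\gamma/2}$ and $n^{-1}(t-s)^{\gamma/2}$ on $\{t-s>2/n\}$ — are converted into the single target $C\|f\|_{\C^\gamma}n^{-\frac12-\frac\gamma2+\eps}(t-s)^{\frac12+\eps}$ by comparing $t-s$ with $1/n$ and using $0<\eps<\tfrac12$ and $\gamma\le1$; e.g. on $\{t-s\le 2/n\}$ one writes $n^{-\gamma/2}(t-s)=(t-s)^{\frac12-\eps}(t-s)^{\frac12+\eps}n^{-\gamma/2}\le(2/n)^{\frac12-\eps}(t-s)^{\frac12+\eps}n^{-\gamma/2}$, and the other two cases are analogous. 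This proves \eqref{con:s1} and completes the argument.

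\textbf{The main obstacle} is precisely this last gluing step: the germ splits into two genuinely different regimes — $r$ within one grid cell of $s$, where one gains only $n^{-\gamma/2}$ in the $L_m$-norm, and $r$ far from $s$, where one gains the full power $n^{-1}$ but at the price of the barely integrable singularity $(\kappa_n(r)-s)^{\frac\gamma2-1}$ — and these must be combined into an estimate that is simultaneously of the right order in $n$ and Hölder of exponent $>\tfrac12$ in $t-s$ at \emph{every} scale, including scales below $1/n$. The loss $n^{\eps}$ and the exponent $\tfrac12+\eps$ (rather than a clean $\tfrac12$) are exactly what this costs, and both are harmless for \cref{T:SSLst}. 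One could alternatively bypass the sewing lemma and argue directly, decomposing $\int_S^T(f(W_r)-f(W_{\kappa_n(r)}))\,dr$ into a martingale-difference sum over grid cells plus a drift-type remainder handled by the same conditional-expectation trick; the stochastic-sewing route is cleaner and is precisely the form that will be perturbed in the proof of \cref{t:mainnumt}.
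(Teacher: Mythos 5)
Your proof is correct and follows essentially the same route as the paper: the germ $A_{s,t}=\E^s\int_s^t\bigl(f(W_r)-f(W_{\kappa_n(r)})\bigr)\,dr$, the trivial vanishing of $\E^s\delta A_{s,u,t}$, and the verification of \eqref{con:s1} by splitting the integral at a point roughly $2/n$ to the right of $s$ so that beyond the split the semigroup difference $P_{r-s}f - P_{\kappa_n(r)-s}f$ can be controlled. The only cosmetic differences are that the paper splits at $s'=\kappa_n(s)+\tfrac2n$ rather than your $s+\tfrac2n$, bounds the near-piece simply via Jensen and $\||W_r-W_{\kappa_n(r)}|^\gamma\|_{L_m}\lesssim n^{-\gamma/2}$ rather than your finer pointwise casework on $\kappa_n(r)\gtrless s$, and invokes \cref{l:gb2} directly where you re-derive it inline from $\tfrac{d}{dt}P_tf=\tfrac12\Delta P_tf$ and \cref{l:gb}.
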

\begin{remark}
\Cref{t:appradf} is also of independent interest. It provides the rate of convergence for an approximation of an additive functional of Brownian motion based on high-frequency observations. It extends the corresponding results of \cite{Alt}.
\end{remark}

To prove \cref{t:appradf}, we need the following technical lemma on heat kernel bounds.
\begin{lemma}\label{l:gb2}
	Let $f\in\C^\gamma(\R^d,\R)$, $\gamma\in[0,1]$, $0< s\le t$. Then
	\begin{equation*}
		\|P_tf-P_s  f\|_{\C^0}\le  C\| f\|_{\C^\gamma} s^{\frac\gamma2-1}(t-s)
	\end{equation*} 	
	for $C=C(\gamma,d)>0$.
\end{lemma}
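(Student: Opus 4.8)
The plan is to use the heat-equation representation of $P_t f - P_s f$ together with the second estimate of \cref{l:gb}. Since $f$ is bounded and continuous, $P_r f \in \C^\infty(\R^d,\R)$ for every $r>0$, and $r \mapsto P_r f$ solves the heat equation $\partial_r P_r f = \tfrac12 \Delta P_r f$ on $(0,\infty)$ (recall $p_t$ is the Gaussian density with covariance $t\I_d$). Hence, for $0 < s \le t$,
\begin{equation*}
P_t f - P_s f = \tfrac12 \int_s^t \Delta P_r f \, dr,
\qquad\text{so that}\qquad
\|P_t f - P_s f\|_{\C^0} \le \tfrac12 \int_s^t \|\Delta P_r f\|_{\C^0} \, dr.
\end{equation*}
If one prefers not to differentiate under the integral for merely continuous $f$, one first proves the bound for $f\in\C^\infty$ and passes to the limit via $P_{1/k}f$ exactly as in the proof of \cref{l:gb}, using $\|P_{1/k}f\|_{\C^\gamma}\le\|f\|_{\C^\gamma}$.

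Next I would bound the integrand. For each $r>0$, every second-order partial derivative of $P_r f$ is controlled by the Hölder-$1$ seminorm of $\nabla P_r f$, so $\|\Delta P_r f\|_{\C^0} \le C\|\nabla P_r f\|_{\C^1}$ with $C=C(d)$. Applying the second inequality in \cref{l:gb} then gives
\begin{equation*}
\|\Delta P_r f\|_{\C^0} \le C \|\nabla P_r f\|_{\C^1} \le C r^{\frac{\gamma-2}{2}} \|f\|_{\C^\gamma},
\qquad C=C(\gamma,d)>0.
\end{equation*}

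Finally I would integrate: since $\gamma\le 1$, the exponent $\tfrac{\gamma-2}{2}$ is negative, so $r\mapsto r^{\frac{\gamma-2}{2}}$ is decreasing on $[s,t]$ and $\int_s^t r^{\frac{\gamma-2}{2}}\,dr \le (t-s)\, s^{\frac{\gamma-2}{2}} = (t-s)\, s^{\frac{\gamma}{2}-1}$. Combining the three displays yields $\|P_t f - P_s f\|_{\C^0} \le C\|f\|_{\C^\gamma}\, s^{\frac\gamma2-1}(t-s)$, as claimed. There is no real obstacle here: the only points deserving a line of care are the justification of the heat-equation identity (standard, since $P_r f$ is smooth for $r>0$) and the elementary passage from $\|\Delta P_r f\|_{\C^0}$ to $\|\nabla P_r f\|_{\C^1}$, after which the conclusion is a one-line integral estimate.
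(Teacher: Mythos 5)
Your proof is correct and follows essentially the same route as the paper: both write $P_t f - P_s f$ as the time integral of $\partial_r P_r f = \tfrac12\Delta P_r f$, bound $\|\Delta P_r f\|_{\C^0}$ by $C\|\nabla P_r f\|_{\C^1}$, invoke the second estimate of \cref{l:gb}, and then use monotonicity of $r\mapsto r^{\frac{\gamma-2}{2}}$ to pull out the factor $s^{\frac\gamma2-1}(t-s)$. Your extra remark about smoothing via $P_{1/k}f$ to justify the differentiation is a harmless precaution that the paper leaves implicit.
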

\begin{proof}
Fix $0<s\le t$. For any $x\in\R^d$, using \cref{l:gb} we have
\begin{align*}
|P_tf(x)-P_sf(x)|&= \Bigl|\int_s^t \frac{\d}{\d r} P_r f(x)\,dr\Bigr|=\Bigl|\int_s^t \Delta P_r f(x)\,dr\Bigr|\\
&\le \int_s^t \|\nabla P_r f\|_{\C^1}\,dr\le C\| f\|_{\C^\gamma}  \int_s^t  r^{\frac\gamma2-1}\,dr\le C\| f\|_{\C^\gamma} s^{\frac\gamma2-1}(t-s).\qedhere
\end{align*}
\end{proof}

\begin{proof}[Proof of \cref{t:appradf}]
As before, we rely on  the stochastic sewing lemma. Fix $\eps\in(0,\frac12)$. Consider the germ 
\begin{equation}\label{astnumdef}
A_{s,t}:=\E^s \int_s^t (f(W_r)-f(W_{\kappa_n(r)}))\, dr,\quad  (s,t)\in\Delta_{[S,T]}.
\end{equation}
Then,  for any $(s,u,t)\in\Delta^3_{[S,T]}$
\begin{equation*}
	\delta A_{s,u,t}=\E^s \int_u^t (f(W_r)-f(W_{\kappa_n(r)}))\, dr-\E^u \int_u^t(f(W_r)-f(W_{\kappa_n(r)}))\, dr.
\end{equation*}
and as in the proof of \cref{t:firstk}, we have 
\begin{equation*}
\E^s \delta A_{s,u,t}=0.
\end{equation*}
Thus, condition \eqref{con:s2} trivially holds. 

Now let us verify  \eqref{con:s1}. Note the following potential danger in  the germ $A_{s,t}$ in~\eqref{astnumdef}: if $r$ is very close to $s$, it may happen that $\kappa_n(r) < s$. In this case, $\E^s f(W_{\kappa_n(r)})$ is just  $f(W_{\kappa_n(r)})$, and we do not gain any additional smoothing from the heat kernel (in contrast, the first term in the integral $\E^s f(W_r) = P_{r-s} f(W_s)$ always contains smoothing).

Therefore, we distinguish between the cases where $t-s$ is small or large relative to $\frac1n$. In the latter case, we split the integral in \eqref{astnumdef} into two parts: when $r$ is far from $s$ (i.e., $r \ge\kappa_n(s)+\frac2n$), we obtain sufficient smoothing and proceed as before; when $r$ is close to $s$, the entire integral will be very small.

Thus, take any $(s,t)\in\Delta_{[S,T]}$. First, suppose that $t-s\le \frac2n$. Then, we immediately get
\begin{align}\label{easynum}
\|A_{s,t}\|_{L_m(\Omega)}&\le  \int_s^t \| f(W_r)-f(W_{\kappa_n(r)}) \|_{L_m(\Omega)} \, dr\le  \| f\|_{\C^\gamma}\int_s^{t} \bigl(\E |W_r-W_{\kappa_n(r)}|^{m\gamma}\bigr)^{\frac1m} \, dr\nn\\
&\le C \| f\|_{\C^\gamma}(t-s) n^{-\frac\gamma2} \le C \| f\|_{\mathcal{C}^\gamma} n^{-\frac12-\frac\gamma2+\eps}(t-s)^{\frac12+\eps},
\end{align}
for $C=C(\gamma,d,m)>0$.
Here, in the penultimate inequality, we used that $r-\kappa_n(r)\le\frac1n$. In the final inequality, we used that  $(t-s)\le \frac2n$, which allowed us to transfer some powers from $(t-s)$ to $\frac1n$. Note that we kept $(t-s)^{\frac12+\eps}$, which is the bare minimum required for the stochastic sewing lemma to be applicable.

If $t-s\ge \frac2n$, let $s': =\kappa_n(s) + \frac2n$ be the second gridpoint to the right of $s$. We write
\begin{align}\label{step1num}
\|A_{s,t}\|_{L_m(\Omega)}&\le \Bigl\|\int_s^{s'}
	\E^s \big( f(W_r)-f(W_{\kappa_n(r)})\big) dr\Bigr\|_{L_m(\Omega)}\!\!\!+
	\Bigl\|\int_{s'}^t
	\E^s \big( f(W_r)-f(W_{\kappa_n(r)})\big) dr\Bigr\|_{L_m(\Omega)}\nn\\\
	& =:I_1+I_2
\end{align}
We treat $I_1$ exactly as in \eqref{easynum} and get 
\begin{equation}\label{ivantr}
I_1 \le C \| f\|_{\C^\gamma}\int_s^{s'} \bigl(\E |W_r-W_{\kappa_n(r)}|^{m\gamma}\bigr)^{\frac1m} \, dr
	\le C \| f\|_{\C^\gamma}n^{-1-\frac\gamma2}.
\end{equation}
for $C=C(\gamma,d,m)>0$. To bound $I_2$, we use \cref{l:gb2} and derive 
\begin{align}
I_2&=\Bigl\|\int_{s'}^t|P_{r-s}f(W_s)-P_{\kappa_n(r)-s}f(W_s)|\,dr
\Bigr\|_{L_m(\Omega)}\le \int_{s'}^t\|P_{r-s}f-P_{\kappa_n(r)-s}f\|_{\C^0}\,dr\label{seethis}\\
&\le C\| f\|_{\C^\gamma}\int_{s'}^t(\kappa_n(r)-s)^{\frac\gamma2-1}n^{-1}\,dr\nn\\
&\le C\| f\|_{\C^\gamma}\int_{s'}^t(r-s')^{\frac\gamma2-1}n^{-1}\,dr\le C\| f\|_{\C^\gamma}(t-s)^{\frac\gamma2}n^{-1},\nn
\end{align}
for $C=C(\gamma,d)>0$. Here in the penultimate inequality we used that $r-\kappa_n(r)\le\frac1n\le s'-s$ and hence $\kappa_n(r)-s\ge r-s'$. Substituting this and \eqref{ivantr} into \eqref{step1num} and using that $t-s\ge\frac1n$, we get 
\begin{equation}\label{hardnum}
\|A_{s,t}\|_{L_m(\Omega)}\le C \| f\|_{\C^\gamma} n^{-1-\frac\gamma2} +C\| f\|_{\C^\gamma}(t-s)^{\frac\gamma2}n^{-1}\le C  \| f\|_{\C^\gamma}(t-s)^{\frac12+\eps}n^{-\frac12-\frac\gamma2+\eps}.
\end{equation}
Note how carefully we traded powers of $n^{-1}$ for powers of $t - s$ to ensure that condition \eqref{con:s1} is satisfied. 

Condition (ii) of \cref{T:SSLst} is satisfied by the definition of $A_{s,t}$ in \eqref{astnumdef}, and condition \eqref{con:s3} follows from the BDG inequality by exactly the same argument as in the proof of \cref{t:firstk}. Thus, all conditions of the SSL are satisfied, and the bound \eqref{est:ssl1} together with \eqref{easynum} and \eqref{hardnum} yields precisely \eqref{nummain}.
\end{proof}

Next, an easy application of the Girsanov theorem allows to insert the drift into inequality \eqref{nummain}. Recall the definition of the process $X^n$ in \eqref{mainSDEn}.
\begin{corollary}\label{c:appradf}
Let $\gamma\in(0,1]$, $f\in \C^\gamma(\R^d,\R)$, $m\ge2$, $\eps\in(0,\frac12)$.  Let $b\in\C^\gamma(\R^d,\R^d)$, $x\in\R^d$. 	
Then there exists a constant $C=C(\|b\|_{\C^0},\gamma,\eps,d,m)$ such that for any $n\in\N$, $(S,T)\in\Delta_{[0,1]}$ one has 
	\begin{equation}\label{nummain2}
		\Bigl\|\int_S^T (f(X^n_r)-f(X^n_{\kappa_n(r)}))\,dr\Bigr\|_{L_m(\Omega)}\le  C  \| f\|_{\C^\gamma}(T-S)^{\frac12+\eps}n^{-\frac12-\frac\gamma2+\eps}.
	\end{equation}
\end{corollary}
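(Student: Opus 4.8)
The plan is to remove the drift from $X^n$ by a Girsanov change of measure, which reduces \eqref{nummain2} to the drift-free estimate \eqref{nummain} of \cref{t:appradf}, and then to transfer the resulting bound back to the original measure using that the relevant Radon--Nikodym density has finite moments of all orders.

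First I would set $g_r:=b(X^n_{\kappa_n(r)})$, which is an adapted process bounded by $\|b\|_{\C^0}$, so Novikov's condition holds and $\mathcal{E}_t:=\exp\bigl(-\int_0^t g_r\,dW_r-\tfrac12\int_0^t|g_r|^2\,dr\bigr)$ is a genuine martingale on $[0,1]$. Let $\Q$ be the probability measure with $\frac{d\Q}{d\P}=\mathcal{E}_1$. By Girsanov's theorem, $\wt W_t:=W_t+\int_0^t g_r\,dr$ is a standard $(\F_t)$-Brownian motion under $\Q$, and since $X^n_t=x+\int_0^t g_r\,dr+W_t=x+\wt W_t$, we have, with $\tilde f:=f(x+\cdot)\in\C^\gamma(\R^d,\R)$ and $\|\tilde f\|_{\C^\gamma}=\|f\|_{\C^\gamma}$,
\begin{equation*}
\int_S^T\bigl(f(X^n_r)-f(X^n_{\kappa_n(r)})\bigr)\,dr=\int_S^T\bigl(\tilde f(\wt W_r)-\tilde f(\wt W_{\kappa_n(r)})\bigr)\,dr.
\end{equation*}
Applying \cref{t:appradf} on the space $(\Omega,\F,(\F_t),\Q)$, with $\tilde f$ in place of $f$ and $2m$ in place of $m$, yields
\begin{equation*}
\Bigl\|\int_S^T\bigl(f(X^n_r)-f(X^n_{\kappa_n(r)})\bigr)\,dr\Bigr\|_{L_{2m}(\Q)}\le C\|f\|_{\C^\gamma}(T-S)^{\frac12+\eps}n^{-\frac12-\frac\gamma2+\eps},
\end{equation*}
for $C=C(\gamma,\eps,d,m)$. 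Note that the harmless possibility $\kappa_n(r)<S$ for $r$ close to $S$ is already handled inside the proof of \cref{t:appradf}.

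It remains to pass from $L_{2m}(\Q)$ back to $L_m(\P)$. Denoting by $\Xi$ the integral above, the Cauchy--Schwarz inequality gives
\begin{equation*}
\E^\P|\Xi|^m=\E^\Q\Bigl[|\Xi|^m\,\tfrac{d\P}{d\Q}\Bigr]\le\bigl(\E^\Q|\Xi|^{2m}\bigr)^{1/2}\Bigl(\E^\Q\Bigl[\bigl(\tfrac{d\P}{d\Q}\bigr)^{2}\Bigr]\Bigr)^{1/2}.
\end{equation*}
Since under $\Q$ one has $\frac{d\P}{d\Q}=\exp\bigl(\int_0^1 g_r\,d\wt W_r-\tfrac12\int_0^1|g_r|^2\,dr\bigr)$, completing the square and using $|g_r|\le\|b\|_{\C^0}$ shows that $\E^\Q\bigl[(d\P/d\Q)^2\bigr]\le\exp(\|b\|_{\C^0}^2)$, a constant depending only on $\|b\|_{\C^0}$. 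Combining the last two displays with the $L_{2m}(\Q)$ bound and taking $m$-th roots gives \eqref{nummain2} with $C=C(\|b\|_{\C^0},\gamma,\eps,d,m)$.

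The only step that is not pure bookkeeping is this transfer of an $L_m$ bound between the two equivalent measures: in contrast to the total-variation arguments used earlier (such as in \cref{l:dtvlemma}), here one genuinely needs integrability of a positive power of the density, which is precisely why the boundedness of $b$ must enter the constant. I do not expect any serious obstacle, as the density has all moments bounded in terms of $\|b\|_{\C^0}$ and the time horizon is fixed to $[0,1]$.
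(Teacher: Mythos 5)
Your proposal is correct and follows essentially the same route as the paper: apply Girsanov's theorem with the bounded drift $b(X^n_{\kappa_n(\cdot)})$ so that $X^n-x$ becomes a Brownian motion under the new measure, invoke the drift-free bound of \cref{t:appradf} with exponent $2m$ there, and transfer back via Cauchy--Schwarz together with a uniform second-moment bound on the Radon--Nikodym density. The only cosmetic difference is that you bound $\E^{\Q}[(d\P/d\Q)^2]$ by pulling out the deterministic factor $\exp(\int_0^1|g_r|^2\,dr)\le e^{\|b\|_{\C^0}^2}$ directly, whereas the paper rewrites it as $\E^{\P}[d\P/d\P^n]$ and performs one more Cauchy--Schwarz; both yield the same conclusion.
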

\begin{proof}
In the proof we write for brevity $h_t:=b(X^n_{\kappa_n(t)})$. Consider a probability measure $\P^n$ on $(\Omega,\F)$ defined by 
\begin{equation*}
	\frac{d\P^n}{d\P}=\exp\bigl(-\int_0^1 h_rd W_r-\frac12\int_0^1 |h_r|^2d r\bigr).
\end{equation*}	
Since the function $b$ is bounded, we see that $\P^n$ is a probability measure and Girsanov's theorem implies that the process
\begin{equation*}
W^n:=X^n_t-x=\int_0^t b(X^n_{\kappa_n(r)})dr+W_t,\quad t\in[0,1]
\end{equation*}
is a Brownian motion under the measure $\P^n$. Hence for any $(S,T)\in\Delta_{[0,1]}$, we get 
\begin{align}
&\E^\P \Bigl|\int_S^T (f(X^n_r)-f(X^n_{\kappa_n(r)}))\,dr\Bigr|^m\nn\\
&\quad=
\E^{\P^n} \Bigl|\int_S^T (f(X^n_r)-f(X^n_{\kappa_n(r)}))\,dr\Bigr|^m\frac{d\P}{d\P^n}\nn\\
&\quad\le \Bigl(\E^{\P^n} \Bigl|\int_S^T (f(X^n_r)-f(X^n_{\kappa_n(r)}))\,dr\Bigr|^{2m}\Bigr)^{\frac12} \Bigl(\E^{\P^n}\Bigl(\frac{d\P}{d\P^n}\Bigr)^2\Bigr)^{\frac12}\nn\\
&\quad= \Bigl(\E^{\P^n} \Bigl|\int_S^T (f(W^n_r+x)-f(W^n_{\kappa_n(r)}+x)\,dr\Bigr|^{2m}\Bigr)^{\frac12} \Bigl(\E^{\P}\frac{d\P}{d\P^n}\Bigr)^{\frac12}\nn\\
&\quad= \Bigl(\E^{\P} \Bigl|\int_S^T (f(W_r+x)-f(W_{\kappa_n(r)}+x)\,dr\Bigr|^{2m}\Bigr)^{\frac12} \Bigl(\E^{\P}\frac{d\P}{d\P^n}\Bigr)^{\frac12}\label{last}
\end{align}
Using the standard martingale calculations, we get 
\begin{align*}
\E^{\P}\frac{d\P}{d\P^n}&=\E^\P e^{\int_0^1 h_rd W_r-\int_0^1 |h_r|^2\,dr+\frac32\int_0^1 |h_r|^2d r}\\
&\le\Bigl(\E^\P e^{\int_0^1 2h_rd W_r-\frac12\int_0^1 |2h_r|^2\,dr}\Bigr)^{\frac12}\Bigl(\E^\P e^{3\int_0^1 |h_r|^2\,d r}\Bigr)^{\frac12}\le e^{\frac32\|h\|^2_{\C^0}},
\end{align*}
where in the last inequality we used that the process $t\mapsto e^{\int_0^t 2h_rd W_r-\frac12\int_0^t |2h_r|^2\,dr}$ is a martingale, because $h$ is bounded. Substituting this into \eqref{last} and applying \cref{l:gb2} to treat the first factor in the right-hand side of \eqref{last}, we get \eqref{nummain2}.
\end{proof}

Now we have all the ingredients to prove the rate of convergence of the Euler scheme~\eqref{Eulerscheme} to the solution of SDE \eqref{mainSDE}.  The first part of the proof is similar to the proof of strong uniqueness in \cref{t:VK} in \cref{s:SU}: for small $T-S$, we bound $[\phi-\phi^n]_{\C^{\frac12} L_m([S,T])}$ by half of the same quantity. However, this time the bound also involves an additional term, which will eventually determine the convergence rate. Therefore, we pass from small $T-S$ to arbitrary $T-S$ using \cref{l:ltg}. Finally, we use the Kolmogorov continuity theorem to bound the supremum of the error between the Euler scheme and the solution.

\begin{proof}[Proof of \cref{t:mainnumt}]
\textbf{Step 1: Estimate on short time intervals}. We will use decomposition~\eqref{eq:error-decomposition}. Fix $\eps>0$. Let $(S,T)\in\Delta_{[0,1]}$. Using \cref{c:appradf} and \cref{e:step1uni} with $\tau=\frac12$, $\Gamma_0=\|b\|_{\C^0}$, we get for any $(s,t)\in\Delta_{[S,T]}$
\begin{align*}
\|(X_t-X^n_t)-(X_s-X^n_s)\|_{L_m(\Omega)}&=\|(\phi_t-\phi^n_t)-(\phi_s-\phi^n_s)\|_{L_m(\Omega)}\\
&\le\Bigl\|\int_s^t (b(W_r+\varphi_r)-b(W_r+\varphi^n_{r}))\,dr\Bigr\|_{L_m(\Omega)}
	\nn\\
	&\phantom{=}+\Bigl\|\int_s^t (b(W_r+\phi^n_{r})-b(W_{\kappa_n(r)}+\phi^n_{\kappa_n(r)}))\,dr\Bigr\|_{L_m(\Omega)}\\
	&\le C(t-s)^{\frac12+\frac\gamma2}\|\phi-\phi^n\|_{\C^0L_m([S,T])}\\
	&\phantom{=}+  C(t-s)^{1+\frac\gamma2}[\phi-\phi^n]_{\C^{\frac12} L_m([S,T])}+C(t-s)^{\frac12+\eps}n^{-\frac12-\frac\gamma2+\eps}.
\end{align*}
for $C=C(\|b\|_{\C^\gamma},\gamma,\eps,d,m)>0$.
Dividing both sides by $(t-s)^{1/2}$ and taking supremum over all $(s,t)\in\Delta_{[S,T]}$ we get 
\begin{align}
&[\phi-\phi^n]_{\C^{\frac12} L_m([S,T])}\nn\\
&\,\,\le C(T-S)^{\frac\gamma2}\|\phi-\phi^n\|_{\C^0L_m([S,T])}+  C(T-S)^{\frac12+\frac\gamma2}[\phi-\phi^n]_{\C^{\frac12} L_m([S,T])}+Cn^{-\frac12-\frac\gamma2+\eps}\nn\\
&\,\,\le C_0(T-S)^{\frac\gamma2}\|\phi_S-\phi^n_S\|_{L_m(\Omega)}+C_0(T-S)^{\frac12+\frac\gamma2}[\phi-\phi^n]_{\C^{\frac12} L_m([S,T])}+C_0n^{-\frac12-\frac\gamma2+\eps}\label{bucklnum}
\end{align}
for $C_0=C_0(\|b\|_{\C^\gamma},\gamma,\eps,d,m)>0$, where in the last inequality we used \eqref{norm2normgen}. 
Fix now $\ell>0$ such that
\begin{equation*}
C_0\ell^{\frac12+\frac\gamma2}\le\frac12.
\end{equation*}
 (we stress that $\ell$ does not depend on $n$). Then we get from \eqref{bucklnum} for any $(S,T)\in\Delta_{[S,T]}$ with $T-S\le \ell$
 \begin{equation}\label{diffbound}
[\phi-\phi^n]_{\C^{\frac12} L_m([S,T])}\le 2C_0\|\phi_S-\phi^n_S\|_{L_m(\Omega)}+2C_0n^{-\frac12-\frac\gamma2+\eps}.
\end{equation}

\textbf{Step 2: Passage to arbitrary time intervals}.
We apply \cref{l:ltg}(ii) to $f:=\phi-\phi^n$. 
Bound \eqref{diffbound} implies that there exists $C=C(\ell,C_0)$ independent of $n$ such that 
\begin{equation}\label{finbigint}
[\phi-\phi^n]_{\C^{\frac12} L_m([0,1])} \le CC_0n^{-\frac12-\frac\gamma2+\eps}\le Cn^{-\frac12-\frac\gamma2+\eps},
\end{equation}
for $C=C(\|b\|_{\C^\gamma},\gamma,\eps,d,m)>0$.

\textbf{Step 3: Supremum bound via Kolmogorov's theorem}.
By the Kolmogorov continuity theorem (\cref{t:kolmi}), inequality \eqref{finbigint} implies
for any $m\ge1$
 \begin{equation*}
\|[X-X^n]_{\C^{\frac14}([0,1];\R^d)}\|_{L_m(\Omega)}=\|[\phi-\phi^n]_{\C^{\frac14}([0,1];\R^d)}\|_{L_m(\Omega)} \le Cn^{-\frac12-\frac\gamma2+\eps},
\end{equation*}
for $C=C(\|b\|_{\C^\gamma},\gamma,\eps,d,m)>0$. Recalling that $X_0-X^n_0=0$, we get the desired bound~\eqref{mbt35}.
\end{proof}

We note that the right-hand side of \eqref{mbt35} does not deteriorate for large $m$. This allows to get a bound on a.s. convergence rate by the Borel–Cantelli arguments.
\begin{corollary}
Under the conditions of \cref{t:mainnumt}, there exists a random variable $\eta$ such that for any $n\in\N$ 
\begin{equation*}
\sup_{t\in[0,1]}|X_t-X^n_t| \le \eta n^{-\frac12-\frac\gamma2+\eps}.
\end{equation*}
\end{corollary}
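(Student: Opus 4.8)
The plan is to upgrade the $L_m$-bound \eqref{mbt35} to an almost-sure bound by a Borel--Cantelli argument, using crucially that the right-hand side of \eqref{mbt35} does not deteriorate as $m\to\infty$ (only the constant $C$ depends on $m$, not the exponent). So first I would apply \cref{t:mainnumt} with $\eps/2$ in place of $\eps$ and with an arbitrary moment $m\ge 1$ to be fixed later: this yields a constant $C=C(\|b\|_{\C^\gamma},\gamma,\eps,d,m)>0$ with $\|\sup_{t\in[0,1]}|X_t-X^n_t|\|_{L_m(\Omega)}\le C n^{-\frac12-\frac\gamma2+\eps/2}$ for all $n\in\N$.

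Next I would introduce the normalized errors $Z_n:=n^{\frac12+\frac\gamma2-\eps}\sup_{t\in[0,1]}|X_t-X^n_t|$. By the previous display, $\|Z_n\|_{L_m(\Omega)}\le C n^{-\eps/2}$, hence $\E Z_n^m\le C^m n^{-m\eps/2}$. Now fix $m$ large enough that $m\eps/2>1$; then $\sum_{n\ge1}\E Z_n^m<\infty$, so by the monotone convergence theorem $\sum_{n\ge1}Z_n^m<\infty$ almost surely. In particular $Z_n\to 0$ a.s., so $\eta:=\sup_{n\in\N}Z_n$ is an almost surely finite random variable. Unwinding the definition of $Z_n$ gives $\sup_{t\in[0,1]}|X_t-X^n_t|\le \eta\, n^{-\frac12-\frac\gamma2+\eps}$ for every $n\in\N$, which is the claim. (Equivalently, one may argue via $\P(Z_n>1)\le \E Z_n^m\le C^m n^{-m\eps/2}$ and the Borel--Cantelli lemma to get $Z_n\le 1$ for all $n$ large enough, a.s., then absorb the finitely many remaining indices into $\eta$.)

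There is no genuine obstacle here; the only point requiring mild care is the order of quantifiers, namely that the constant in \eqref{mbt35} depends on $m$, so one must first fix the admissible exponent loss $\eps>0$ and only then choose the single sufficiently large moment $m$ with $m\eps/2>1$ — after which the $m$-dependence of $C$ is harmless, since all we need is summability of $\E Z_n^m$ for that one $m$.
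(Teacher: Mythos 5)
Your proof is correct and follows essentially the same route as the paper: both apply \cref{t:mainnumt} with a reduced $\eps$ and then exploit the fact that the convergence exponent is uniform in $m$ to pick a single moment $m$ large enough for summability, yielding the a.s.\ bound via Borel--Cantelli. Your primary phrasing (monotone convergence for $\sum_n Z_n^m$, then $\eta:=\sup_n Z_n$) is a minor cosmetic variant of the paper's Borel--Cantelli plus finite-max construction of $\eta$, and you note the Borel--Cantelli version yourself.
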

\begin{proof}
Fix $\eps>0$. Denote $Z_n:=\sup_{t\in[0,1]}|X_t-X^n_t|$, $\rho:=\frac12+\frac\gamma2$. Then by  \cref{t:mainnumt} for any $m\ge2$ we have 
\begin{equation*}
\sum_{n\in\N}\P(|Z_n|>n^{-\rho+\eps})\leq \sum_{n\in\N}\frac{\E|Z_n|^m}{n^{m(-\rho+\eps)}}\leq \sum_{n\in\N} C n^{-\frac{m\eps}2}
\end{equation*}
for  $C=C(\|b\|_{\C^\gamma},\gamma,\eps,d,m)>0$.
Choosing $m=4/\eps$, the above sum is finite, so by the Borel-Cantelli lemma we have  $|Z_n|\le n^{-\rho+\eps}$ for all $n>n_0(\omega)$ for some $n_0(\omega)$. This yields the claim by setting
	\begin{equation*}
		\eta:=1\vee\max_{n\leq n_0}(|Z_n|n^{\rho-\eps}).\qedhere
	\end{equation*}
\end{proof}

\subsection{John--Nirenberg inequality}\label{s:JN}

Let us move on to the next problem and study well-posedness for SDEs driven by a symmetric $\alpha$-stable process and the strong rate of convergence of the Euler scheme, ${\alpha \in (0,2)}$. It turns out that certain parts of the proof in the Gaussian case do not work in this setting at all and must be replaced by new arguments. This highlights a common situation: while the original stochastic sewing lemma of L\^e, in the formulation of \cite{LeSSL}, might not be applicable to a given problem, one can often modify it or complement it with different ideas to make things work. In the upcoming subsections, we will see in detail how the proof strategy of \cref{t:VK,t:mainnumt} must be reworked in the setting of L\'evy processes.

Thus, we fix $\alpha\in(1,2)$ and recall the definition of a symmetric $\alpha$-stable process $L$ in \eqref{charlevy}. We fix a filtration $(\F_t)_{t\in[0,1]}$ and suppose that $L$ is adapted to $\F$ and $L_t-L_s$ is independent of $\F_s$ whenever $(s,t)\in\Delta_{[0,1]}$.
It follows from the definition that, in contrast to Brownian motion, $L$ has finite moments only of order less than $\alpha$
\begin{equation*}
\E |L_t|^m\le C t^{\frac{m}\alpha},\quad t\in[0,1] 
\end{equation*}
for $C=C(\alpha,d,m)>0$ whenever $m<\alpha$. Actually, some of the results remain valid in greater generality for $\alpha \le 1$. However again, to simplify the presentation and highlight the main ideas, we focus here only on the case $\alpha > 1$.

If we try to prove an analogue of \cref{t:appradf} for an $\alpha$-stable process (in order to establish \cref{t:levy} and obtain strong convergence of the Euler scheme), we immediately encounter the following difficulty. The proof of \cref{t:appradf} relies heavily on the fact that for $r\in[0,1]$, $n\in\N$, $m\ge2$
\begin{equation*}
 \| f(W_r)-f(W_{\kappa_n(r)}) \|_{L_m(\Omega)} \le \|f\|_{\C^\gamma}\| |W_r-W_{\kappa_n(r)}|^\gamma \|_{L_m(\Omega)}\le \|f\|_{\C^\gamma} n^{-\frac\gamma2},
\end{equation*}
see \eqref{easynum} and \eqref{ivantr}.
However, for an $\alpha$-stable L\'evy process, this is no longer true, as $L$ does not have moments of order  $\alpha$ and higher. Thus, the best we can obtain is just
\begin{equation}\label{observation}
	\| f(L_r)-f(L_{\kappa_n(r)}) \|_{L_m(\Omega)} \le \|f\|_{\C^\gamma}\| |L_r-L_{\kappa_n(r)}|^\gamma\wedge1 \|_{L_m(\Omega)}\le \|f\|_{\C^\gamma} n^{-(\frac\gamma\alpha\wedge\frac1m)+\eps},
\end{equation}
so the bound deteriorates for large $m$. Consequently, we would get an $L_m(\Omega)$ convergence rate of order  $n^{-(\frac12 +\frac\gamma\alpha\wedge\frac1m)}$ and a.s. rate of order $n^{-\frac12}$. The correct rate, however, is of order  $n^{-(\frac12+\frac\gamma\alpha)\wedge1}$. 

Note that for $m=2$, the bound \eqref{observation} is good enough, as it yields the correct convergence rate $n^{-(\frac{1}{2} + \frac{\gamma}{\alpha} \wedge \frac{1}{2})}$. Therefore, we need a tool that can transfer an $L_2$-moment bound to an arbitrary $L_m$-moment bound without any loss. This is indeed possible and is achieved via the John–Nirenberg inequality.
\begin{theorem}[John--Nirenberg inequality, {\cite[Exercise~A.3.2]{SVbook}},  {\cite[Theorem~2.3]{le2022}},
	{\cite[Appendix B]{BDGLevy}}]
Let\label{t:john}
$0\le S\le T$ and let $\A\colon\Omega\times[S,T]\to\R^d$ be a continuous process adapted to the filtration $(\F_t)_{t\in[0,T]}$. Assume that there exist a constant $\Gamma>0$ such that
 for any $(s,t)\in\Delta_{[S,T]}$ one has 
\begin{equation}\label{BMOcon}
\E^s|\A_t-\A_s|\le \Gamma,\quad \text{a.s.}
\end{equation}
Then for any $m\in[1,\infty)$, there exists a constant $C>0$ independent of $d,m,S,T$ such that for any 
$s\in [S,T]$ one has 
\begin{equation}\label{jnres}
\|\sup_{r\in[s,T]}|\A_r-\A_s|\|_{L_m(\Omega)}\le C m\Gamma.
\end{equation}	
\end{theorem}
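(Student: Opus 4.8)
The plan is to reduce to the case where $s$ is the left endpoint of the interval (restrict everything to $[s,T]$; condition \eqref{BMOcon} is inherited by any subinterval), abbreviate $M^{*}:=\sup_{r\in[s,T]}|\A_{r}-\A_{s}|$ (a.s.\ finite since $\A$ is continuous on the compact interval), prove the exponential John--Nirenberg tail bound $\P(M^{*}>\lambda)\le C_{1}e^{-c_{1}\lambda/\Gamma}$ with absolute constants $c_{1},C_{1}>0$, and then obtain \eqref{jnres} by integrating $\E(M^{*})^{m}=m\int_{0}^{\infty}\lambda^{m-1}\P(M^{*}>\lambda)\,d\lambda$ and invoking the elementary bound $\Gamma(m+1)^{1/m}\le Cm$.

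The first ingredient is an upgrade of \eqref{BMOcon} to stopping times in the one-sided form: for every stopping time $\sigma$ with values in $[s,T]$,
\[
\E\bigl[\,|\A_{T}-\A_{\sigma}|\ \big|\ \F_{\sigma}\bigr]\le\Gamma\qquad\text{a.s.}
\]
For $\sigma$ with finitely many values $v_{1}<\dots<v_{q}$ this is immediate from the identity $\E[X\mid\F_{\sigma}]=\sum_{i}\I_{\{\sigma=v_{i}\}}\,\E[X\mid\F_{v_{i}}]$ and \eqref{BMOcon} (the term with $v_{i}=T$ being zero). For a general $\sigma$ I would approximate from above by the dyadic stopping times $\sigma_{n}:=\bigl(\lceil2^{n}\sigma\rceil2^{-n}\bigr)\wedge T\downarrow\sigma$: since $\F_{\sigma}\subseteq\F_{\sigma_{n}}$, the finitely-valued case gives $\E[\I_{A}|\A_{T}-\A_{\sigma_{n}}|]\le\Gamma\,\P(A)$ for every $A\in\F_{\sigma}$, and letting $n\to\infty$ with $\A_{\sigma_{n}}\to\A_{\sigma}$ (continuity of $\A$) and Fatou's lemma yields $\E[\I_{A}|\A_{T}-\A_{\sigma}|]\le\Gamma\,\P(A)$. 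I expect this to be the step requiring the most care, and it is crucial that it is done in this one-sided form: the seemingly natural two-sided statement $\E[|\A_{\rho}-\A_{\sigma}|\mid\F_{\sigma}]\le\Gamma$ for stopping times $\sigma\le\rho$ \emph{cannot} be obtained by applying \eqref{BMOcon} along a partition and summing, since the triangle inequality over the partition overcounts and produces a bound proportional to the number of partition points; Fatou's lemma handles the one-sided version with no integrability input.

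Next, fix an absolute constant $N$ (say $N=8$) and build a chain of first-passage times: $\tau_{0}:=s$ and, recursively, $\tau_{k+1}:=\inf\{r\in[\tau_{k},T]:\ |\A_{r}-\A_{\tau_{k}}|\ge N\Gamma\}\wedge T$. These are stopping times, $\tau_{k}\uparrow$, and by continuity of $r\mapsto|\A_{r}-\A_{\tau_{k}}|$ one has $|\A_{\tau_{k+1}}-\A_{\tau_{k}}|=N\Gamma$ on $\{\tau_{k+1}<T\}$; uniform continuity of the paths on $[s,T]$ forces $\tau_{k+1}-\tau_{k}$ to be bounded below on $\{\tau_{k+1}<T\}$, so $K:=\sup\{k:\tau_{k}<T\}<\infty$ a.s. The key estimate is
\[
\P\bigl(\tau_{k+1}<T\ \big|\ \F_{\tau_{k}}\bigr)\le\tfrac13\qquad\text{a.s.}
\]
To prove it, split $\{\tau_{k+1}<T\}$ according to whether $|\A_{T}-\A_{\tau_{k+1}}|<\tfrac N2\Gamma$ or not. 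On the first piece the reverse triangle inequality gives $|\A_{T}-\A_{\tau_{k}}|>\tfrac N2\Gamma$, so by the conditional Chebyshev inequality and the stopping-time bound at $\tau_{k}$ its $\F_{\tau_{k}}$-conditional probability is at most $\tfrac2N$. On the second piece, conditioning first on $\F_{\tau_{k+1}}\supseteq\F_{\tau_{k}}$ and using the stopping-time bound at $\tau_{k+1}$ shows $\P(|\A_{T}-\A_{\tau_{k+1}}|\ge\tfrac N2\Gamma\mid\F_{\tau_{k+1}})\le\tfrac2N$, whence this piece contributes at most $\tfrac2N\,\P(\tau_{k+1}<T\mid\F_{\tau_{k}})$. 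Writing $p$ for $\P(\tau_{k+1}<T\mid\F_{\tau_{k}})$ one gets $p\le\tfrac2N+\tfrac2N p$, i.e.\ $p\le\tfrac13$.

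Iterating the key estimate (and using $\{\tau_{k+1}<T\}\subseteq\{\tau_{k}<T\}$) yields $\P(\tau_{k}<T)\le3^{-k}$, hence $\P(K\ge k)\le3^{-k}$. Pathwise, telescoping along the chain shows that every $r\in[s,T]$ lies in some $[\tau_{k},\tau_{k+1}]$ with $k\le K$ and $|\A_{r}-\A_{s}|\le|\A_{r}-\A_{\tau_{k}}|+\sum_{j<k}|\A_{\tau_{j+1}}-\A_{\tau_{j}}|\le(k+1)N\Gamma$, so $M^{*}\le(K+1)N\Gamma$ and $\P(M^{*}>\lambda)\le\P\bigl(K+1>\lambda/(N\Gamma)\bigr)\le 3\cdot3^{-\lambda/(N\Gamma)}$, which is the claimed exponential tail with $c_{1}=(\ln3)/N$, $C_{1}=3$. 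Plugging this into $\E(M^{*})^{m}=m\int_{0}^{\infty}\lambda^{m-1}\P(M^{*}>\lambda)\,d\lambda$ and evaluating the Gamma integral gives $\E(M^{*})^{m}\le3\,(N/\ln3)^{m}\Gamma^{m}\,\Gamma(m+1)$, so that, using $\Gamma(m+1)^{1/m}\le Cm$,
\[
\bigl\|\sup_{r\in[s,T]}|\A_{r}-\A_{s}|\bigr\|_{L_{m}(\Omega)}\le C\,m\,\Gamma ,
\]
with $C$ an absolute constant --- in particular independent of $d$ (only the norm $|\cdot|$ on $\R^{d}$ and the triangle inequality were used), of $S,T$, and of $m$ itself. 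This is exactly \eqref{jnres}.
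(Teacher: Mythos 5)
Your proof is correct, and it reaches \eqref{jnres} by a genuinely different (though closely related) route than the paper. Both arguments begin by upgrading \eqref{BMOcon} to stopping times via finitely-valued approximation, continuity of $\A$, and conditional Fatou, and both exploit first-passage stopping times together with conditional Markov/Chebyshev. The differences: the paper establishes the \emph{two-sided} stopping-time bound $\E^{\tau}|\A_{\eta}-\A_{\tau}|\le 2\Gamma$ for arbitrary stopping times $\tau\le\eta$ and then feeds the pair of hitting times for levels $\lambda$ and $(1+h)\lambda$ into Burkholder's ``good $\lambda$ inequality'' (\cref{p:khoaineq}) with a deterministic $Y=2\Gamma$, which is what converts the distributional inequality into the $L_{m}$ bound with the crucial $Cm$ constant; you, by contrast, work only with the \emph{one-sided} bound $\E^{\sigma}|\A_{T}-\A_{\sigma}|\le\Gamma$ (since the variable you Chebyshev at each step is always $|\A_{T}-\A_{\tau_{k}}|$ with fixed endpoint $T$), build an infinite chain of first-passage times at multiples of $N\Gamma$, prove the uniform decay $\P(\tau_{k+1}<T\mid\F_{\tau_{k}})\le\tfrac13$, iterate to get the exponential tail $\P(M^{*}>\lambda)\le 3\cdot 3^{-\lambda/(N\Gamma)}$, and integrate directly. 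Your version is slightly more self-contained (no need for the good $\lambda$ lemma as a black box, and the weaker one-sided stopping-time extension suffices), and it yields the exponential tail as an explicit intermediate statement, which is sharper than what \eqref{jnres} alone records; the paper's version is more modular, since \cref{p:khoaineq} is a reusable general-purpose tool. Two small points worth stating explicitly if you write this up: the geometric iteration $\P(\tau_{k}<T)\le 3^{-k}$ uses the inclusion $\{\tau_{k}<T\}\subseteq\{\tau_{k-1}<T\}$ so that the factor $\tfrac13$ is applied only on the surviving event, and the final constant $\Gamma(m+1)^{1/m}\le Cm$ should be cited (Stirling) since it is the step that produces the linear-in-$m$ growth required by \eqref{jnres}.
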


\begin{remark}
We emphasize that the continuity of $\A$ is crucial in \cref{t:john}. Indeed, if $\A$ is a standard Poisson process, then clearly $\E^s|\A_t-\A_s|\le t-s$, but of course it is not true that $\E|\A_t-\A_s|^m\le C(t-s)^m$ for $m>1$. Similarly, if $\A$ is an $\alpha$-stable process, $\alpha\in(1,2)$, then $\E^s|\A_t-\A_s|\le C(t-s)^{1/\alpha}$, but  $\E|\A_t-\A_s|^m=\infty$ for $m\ge\alpha$. 
\end{remark}

We prove \cref{t:john} by arguing similarly to \cite{le2022maximal}. First, we establish 
 the so-called ``good $\lambda$  inequality'' of Burkholder \cite{Bur73}.
\begin{proposition}[Good $\lambda$  inequality]\label{p:khoaineq}
Let $X$, $Y$ be nonnegative random variables. Assume that for any $\lambda>0$, $\theta\in(0,1)$, $h>0$
one has 
	\begin{equation}\label{khoaineq}
		\P(X\ge(1+h)\lambda)\le \theta\P(X\ge\lambda )+\P(Y\ge\theta h\lambda).
	\end{equation}
	Then for any $m\ge1$ there exists a universal constant $C>0$ independent of $m$ such that
	\begin{equation}\label{rezmain}
		\|X\|_{L_m(\Omega)}\le Cm\|Y\|_{L_m(\Omega)}.
	\end{equation}	
\end{proposition}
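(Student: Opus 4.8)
The plan is to run the classical ``good $\lambda$'' argument: integrate the hypothesis \eqref{khoaineq} against $m\lambda^{m-1}\,d\lambda$ using the layer-cake formula $\E Z^m=m\int_0^\infty\lambda^{m-1}\P(Z\ge\lambda)\,d\lambda$, perform the obvious changes of variables in each term, and obtain a self-improving inequality for $\E X^m$ which can be closed by a clever choice of the free parameters $h$ and $\theta$ (depending on $m$). If $\|Y\|_{L_m(\Omega)}=\infty$ there is nothing to prove, so I assume $\E Y^m<\infty$. To avoid dividing by a possibly infinite quantity, I first truncate: set $X_N:=X\wedge N$ and check that the pair $(X_N,Y)$ still satisfies \eqref{khoaineq}. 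Indeed, if $(1+h)\lambda>N$ the left-hand side is zero, and if $(1+h)\lambda\le N$ then both $\{X_N\ge(1+h)\lambda\}=\{X\ge(1+h)\lambda\}$ and $\{X_N\ge\lambda\}=\{X\ge\lambda\}$, so \eqref{khoaineq} is inherited verbatim. Since now $\E X_N^m\le N^m<\infty$, all integrals below are finite and may be manipulated freely.

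\textbf{The master inequality.} Multiplying \eqref{khoaineq} (applied to $X_N$) by $m\lambda^{m-1}$ and integrating over $\lambda\in(0,\infty)$, the layer-cake formula together with the substitutions $\mu=(1+h)\lambda$ in the left-hand term and $\nu=\theta h\lambda$ in the last term yields
\[
(1+h)^{-m}\,\E X_N^m\ \le\ \theta\,\E X_N^m\ +\ (\theta h)^{-m}\,\E Y^m ,
\]
valid for every $\theta\in(0,1)$ and $h>0$. I now let the parameters depend on $m$: choose $h:=1/m$ and $\theta:=\tfrac12(1+h)^{-m}\in(0,\tfrac12)$, so that $(1+h)^{-m}-\theta=\theta$ and the ``bad'' term $\theta\,\E X_N^m$ can be absorbed, giving
\[
\E X_N^m\ \le\ \theta^{-m-1}h^{-m}\,\E Y^m\ =\ 2^{m+1}(1+\tfrac1m)^{m(m+1)}\,m^m\,\E Y^m .
\]
Using $(1+\tfrac1m)^m\le e$ one bounds $(1+\tfrac1m)^{m(m+1)}\le e^{m+1}$, whence $\E X_N^m\le 2^{m+1}e^{m+1}m^m\,\E Y^m$, and taking $m$-th roots (and using $(m+1)/m\le 2$ for $m\ge1$) gives $\|X_N\|_{L_m(\Omega)}\le 4e^2\,m\,\|Y\|_{L_m(\Omega)}$, with the constant $C:=4e^2$ independent of $m$. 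Letting $N\to\infty$ and invoking monotone convergence yields \eqref{rezmain}.

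\textbf{Main obstacle.} The only genuinely delicate point is the $m$-dependent scaling $h\sim 1/m$. A fixed choice of $h$ would force $\theta$ to be bounded away from $1$ uniformly but would make $\theta^{-m-1}\sim(1+h)^{m^2}$, producing a constant that blows up like $e^{cm}$ after taking $m$-th roots — i.e. exponential rather than linear growth in $m$. Choosing $h=1/m$ is precisely what keeps $(1+h)^{m(m+1)}$ uniformly bounded (by $e^{m+1}$) while $h^{-m}=m^m$ contributes exactly the linear-in-$m$ factor claimed. Everything else — the truncation, the changes of variables, and the absorption step — is routine.
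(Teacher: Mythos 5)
Your proof is correct and follows essentially the same route as the paper: layer-cake integration of the hypothesis, the parameter choice $\theta=\tfrac12(1+h)^{-m}$ and $h=1/m$, the bound $(1+1/m)^m\le e$, and monotone convergence to remove the truncation. The only cosmetic difference is that you truncate $X$ upfront and verify that \eqref{khoaineq} is inherited by $X\wedge N$, whereas the paper integrates $m\int_0^M\lambda^{m-1}\P(X\ge\lambda)\,d\lambda$ directly with the cutoff built into the integration limits — these are trivially equivalent.
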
	
\begin{proof} Using the identity $x^m=m\int_0^x \lambda^{m-1}\,d\lambda$ valid for any $x\ge0$, $m\ge1$, we derive for any $\theta\in(0,1)$, $M>0$, $h>0$
\begin{align*}
\E (X\wedge M)^m&=m\int_0^M \P(X\ge\lambda)\lambda^{m-1}\,d\lambda=(1+h)^{m}m\int_0^{\frac{M}{1+h}} \P\bigl(X\ge(1+h)\lambda\bigr)\lambda^{m-1}\,d\lambda\\
&\le 
(1+h)^{m}m\theta \int_0^{\frac{M}{1+h}} \P(X\ge\lambda)\lambda^{m-1}\,d\lambda
+(1+h)^{m} m \int_0^{\frac{M}{1+h}} \P(Y\ge\theta h\lambda)\lambda^{m-1}\,d\lambda\\
&\le (1+h)^{m} \theta \E (X\wedge M)^m+(1+h)^{m}\theta^{-m}h^{-m}\E Y^m,
\end{align*}
where in the penultimate inequality we use our main assumption \eqref{khoaineq}. Take now $\theta:=\frac12(1+h)^{-m}$. Then, by above
\begin{equation*}
		\frac12\E (X\wedge M)^m\le (1+h)^{m^2+m}2^mh^{-m}\E Y^m.
	\end{equation*}
Passing to the limit as $M \to \infty$ and using the monotone convergence theorem, we obtain
\begin{equation*}
\frac12\E X^m\le (1+h)^{m^2+m}2^mh^{-m}\E Y^m.
\end{equation*}
By taking now $h=\frac1m$ and using that $(1+\frac1m)^m\le e$, we get  \eqref{rezmain}.
\end{proof}	

Now we are ready to prove the John-Nirenberg inequality.
\begin{proof}[Proof of \cref{t:john}]
Fix $0\le S\le T$. Put 
\begin{equation*}
V^*(\omega):=\sup_{r\in[S,T]}|\A_r(\omega)-A_S(\omega)|,\qquad \omega\in\Omega.
\end{equation*}	
We would like to apply \cref{p:khoaineq} with $X=V^*$, $Y=2\Gamma$. 
	
\textbf{Step~1}. We claim that condition \eqref{BMOcon} can be extended and for any stopping times $\tau\le\eta$ taking values in $[S,T]$ we have a.s.
\begin{equation}\label{tauetast}
	\E^{\tau}|\A_\eta-\A_\tau|\le 2\Gamma.
\end{equation}
First, we prove \eqref{tauetast} for the case when $\eta=T$ and $\tau$ takes finitely many values $S\le t_1<\hdots<t_n= T$.
Recall the identity $\E^{\tau}(X\I_{\tau=t})=\E^t(X\I_{\tau=t})$ valid for any integrable random variable $X$, see, e.g., \cite[Problem~1.2.17(i)]{KS91}.
Then, using  \eqref{BMOcon}, we deduce 
\begin{equation}\label{mainthingjn1}
	\E^\tau|\A_T-\A_\tau|=\sum_{i=1}^n\E^{t_i}\bigl[|\A_{T}-\A_{t_i}| \I(\tau=t_i)\bigr]=
	\sum_{i=1}^n \I(\tau=t_i) \E^{t_i}|\A_{T}-\A_{t_i}|\le\Gamma.
\end{equation}	

Next, for a general stopping time $\tau \in [S,T]$, we consider a sequence of stopping times $\tau_n$ taking finitely many values in $[S,T]$ and converging to $\tau$ from above. Then, using the continuity of $\A$ and \eqref{mainthingjn1}, we deduce  (note that the continuity is crucial here)
	\begin{equation}\label{mainthingjn2}
		\E^\tau|\A_T-\A_{\tau}|\le \liminf_{n\to\infty} \E^\tau|\A_T-\A_{\tau_n}|=\liminf_{n\to\infty} \E^\tau\E^{\tau_n}|\A_T-\A_{\tau_n}|\le \Gamma.
	\end{equation}
	Finally, let $\tau\le \eta$ be arbitrarily stopping times taking values in $[S,T]$. Then
	\eqref{mainthingjn2} yields
	\begin{equation*}
		\E^\tau|\A_\eta-\A_{\tau}|\le \E^\tau|\A_T-\A_{\tau}|+\E^\tau|\A_T-\A_{\eta}|\le\Gamma +\E^\tau\E^\eta |\A_T-\A_{\eta}|\le 2\Gamma,
	\end{equation*}
	which is \eqref{tauetast}.
	
\textbf{Step~2}.
Let us verify that condition \eqref{khoaineq} holds for $X=V^*$, $Y=2\Gamma$. Take arbitrary $\lambda,h>0$ and consider two stopping times:
\begin{equation*}
\tau:=T\wedge \inf\{r\in[S,T]: |\A_r-\A_S|\ge\lambda\};\qquad 
\eta:=T\wedge \inf\{r\in[S,T]: |\A_r-\A_S|\ge(1+h)\lambda\}.
\end{equation*}
Then, by  continuity of $\A$, we get
\begin{align}\label{mainineq}
\P\bigl(V^*\ge(1+h)\lambda\bigr)&=\P\bigl(|\A_\eta-\A_s|=(1+h)\lambda\bigr)\nn\\
&=\P\bigl(|\A_\tau-\A_s|=\lambda, |\A_\eta-\A_s|-|\A_\tau-\A_s|=h\lambda)\nn\\
&\le\P\bigl(|\A_\tau-\A_s|=\lambda, |\A_\eta-\A_\tau|\ge h\lambda)\nn\\
&=\E \I_{\{|\A_\tau-\A_s|=\lambda\}}\E^\tau \I_{\{|\A_\eta-\A_\tau|\ge h\lambda\}}\nn\\
&\le \E \I_{\{|\A_\tau-\A_s|=\lambda\}}\E^\tau \frac{|\A_\eta-\A_\tau|}{h\lambda}\nn\\
&\le 2\Gamma h^{-1}\lambda^{-1}\E \I_{\{|\A_\tau-\A_s|=\lambda\}}=2\Gamma h^{-1}\lambda^{-1}\P(V^*\ge \lambda),
\end{align}
where in the last inequality we used \eqref{tauetast}.

Now let $\theta\in(0,1)$. If $\theta\ge \frac{2\Gamma}{h\lambda}$, then \eqref{mainineq} implies
\begin{equation*}
\P\bigl(V^*\ge(1+h)\lambda\bigr)
\le \theta \P(V^*\ge \lambda),
\end{equation*}
If $\theta\le \frac{2\Gamma}{h\lambda}$, then $\P(2\Gamma\ge \theta h\lambda)=1$ and we trivially have 
\begin{equation*}
	\P\bigl(V^*\ge(1+h)\lambda\bigr)
	\le 1=\P(2\Gamma\ge \theta h\lambda).
\end{equation*}
Since $h,\lambda>0$ and $\theta\in(0,1)$ were arbitrary, we see that in both cases condition \eqref{khoaineq} holds. Hence, by good $\lambda$ inequality (\cref{p:khoaineq}), we  get
\begin{equation*}
\|V^*\|_{L_m(\Omega)}\le C m \Gamma,
\end{equation*}
for $C>0$ independent of $m,d$, 	which is the desired bound \eqref{jnres}.
\end{proof}

\subsection{Integral functionals of a L\'evy process}\label{s:levyint}
Now, equipped with the John--Nirenberg inequality, we can extend the bound \eqref{t:appradf} to the $\alpha$-stable case and overcome the issue that the rate deteriorates for large $m$. In this and the next subsections, we follow \cite{BDGLevy}.

Let $d\in\N$, $L$ be a $d$-dimensional symmetric $\alpha$-stable process with the characteristic function \eqref{charlevy}, $\alpha\in(1,2)$. We denote its density by $\pa_t$ and let $\Pa_t$, $t\ge0$, be  the corresponding semigroup. 

The following extension of \cref{t:appradf} holds.

\begin{theorem}\label{t:levydif}
Let $\alpha\in(1,2)$, $\gamma\in(0,1]$, $f\in \C^\gamma(\R^d,\R)$, $m\ge2$, $\eps>0$. Then there exists a constant $C=C(\alpha,\gamma,\eps,d,m)>0$ such that for any $n\in\N$ one has 
\begin{equation}\label{numlevy}
\Bigl\|\sup_{t\in[0,1]}\Bigl|\int_0^t (f(L_r)-f(L_{\kappa_n(r)}))\,dr\Bigr|\,\Bigr\|_{L_m(\Omega)}\le  C  \| f\|_{\C^\gamma}n^{-((\frac12+\frac\gamma\alpha)\wedge1)+\eps}.
\end{equation}
\end{theorem}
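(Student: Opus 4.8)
The strategy is to mirror the proof of \cref{t:appradf}, but to replace every step that used finiteness of all moments of Brownian motion by an argument that survives the fact that $L$ has moments only up to order $\alpha$. The central difficulty, as highlighted in the discussion preceding the statement, is that for $m \ge \alpha$ the crude bound $\|f(L_r)-f(L_{\kappa_n(r)})\|_{L_m(\Omega)} \le \|f\|_{\C^\gamma} n^{-(\frac\gamma\alpha \wedge \frac1m)+\eps}$ deteriorates, so we cannot run the stochastic sewing lemma directly in $L_m$ for large $m$. The way out is to prove everything first at the level of $L_2$ (where $\gamma/\alpha \wedge \frac12$ is governed by $\gamma/\alpha$ as long as $\gamma/\alpha < \frac12$, and by $\frac12$ otherwise — giving exactly the target exponent $(\frac12+\frac\gamma\alpha)\wedge 1$), and then to bootstrap from $L_2$ to arbitrary $L_m$ using the John--Nirenberg inequality (\cref{t:john}), which costs only a factor linear in $m$ and no loss in the rate.

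Concretely, I would proceed as follows. First, establish the heat-kernel bounds for the $\alpha$-stable semigroup $\Pa_t$ that replace \cref{l:gb} and \cref{l:gb2}: namely $\|\Pa_t f\|_{\C^1} \le C t^{(\gamma-1)/\alpha}\|f\|_{\C^\gamma}$ and $\|\Pa_t f - \Pa_s f\|_{\C^0} \le C \|f\|_{\C^\gamma} s^{\gamma/\alpha - 1}(t-s)$ for $0 < s \le t$; these follow from the scaling $\pa_t(y) = t^{-d/\alpha}\pa_1(t^{-1/\alpha}y)$, the integrability of $|\nabla \pa_1|$ and the generator identity $\partial_t \Pa_t f = \mathcal L^\alpha \Pa_t f$, exactly as in the Gaussian case but with exponent $1/\alpha$ in place of $1/2$. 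Second, fix $0 \le S \le T$ and apply \cref{T:SSLst} with $m=2$ to the germ $A_{s,t} := \E^s \int_s^t (f(L_r)-f(L_{\kappa_n(r)}))\,dr$ on $\Delta_{[S,T]}$. Condition \eqref{con:s2} is again trivial since $\E^s \delta A_{s,u,t} = 0$. For \eqref{con:s1} I would split into the two regimes $t-s \le \frac2n$ and $t-s \ge \frac2n$ exactly as in the proof of \cref{t:appradf}: in the first regime bound $\|A_{s,t}\|_{L_2} \le \int_s^t \|f(L_r)-f(L_{\kappa_n(r)})\|_{L_2}\,dr$ using $\|L_r - L_{\kappa_n(r)}\|_{L_{2\gamma}\wedge 1}$ — here the $L_2$-norm of $|L_r-L_{\kappa_n(r)}|^\gamma \wedge 1$ is $\le C n^{-(\gamma/\alpha \wedge \frac12)}$, which is where the correct exponent $(\frac12 + \frac\gamma\alpha)\wedge 1$ enters — and in the second regime split the integral at $s' := \kappa_n(s)+\frac2n$, treating $[s,s']$ by the crude bound and $[s',t]$ via $\|\Pa_{r-s}f - \Pa_{\kappa_n(r)-s}f\|_{\C^0} \le C\|f\|_{\C^\gamma}(r-s')^{\gamma/\alpha-1}n^{-1}$. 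Trading powers of $n^{-1}$ for powers of $(t-s)$ exactly as in \eqref{easynum}--\eqref{hardnum} gives $\|A_{s,t}\|_{L_2} \le C\|f\|_{\C^\gamma}(t-s)^{\frac12+\eps} n^{-((\frac12+\frac\gamma\alpha)\wedge 1)+\eps}$, so condition (i) of \cref{T:SSLst} (via \cref{r:26}) holds with $\gamma_1 = \frac12 + \eps > \frac12$. Condition (ii) is automatic and \eqref{con:s3} follows from the $L_2$ orthogonality / BDG argument of \cref{t:firstk}. The SSL output \eqref{est:ssl1} then gives, taking $S=0$ and any $t \in [0,1]$,
\begin{equation*}
\Bigl\|\int_0^t (f(L_r)-f(L_{\kappa_n(r)}))\,dr\Bigr\|_{L_2(\Omega)} \le C\|f\|_{\C^\gamma}\, t^{\frac12+\eps}\, n^{-((\frac12+\frac\gamma\alpha)\wedge 1)+\eps}.
\end{equation*}

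Third — and this is the step that genuinely uses the new tool — upgrade this $L_2$ bound to a supremum-in-$L_m$ bound. Set $\A_t := \int_0^t (f(L_r)-f(L_{\kappa_n(r)}))\,dr$, which is continuous in $t$. By the same SSL argument applied on an arbitrary subinterval $[s,t]$ (note that the germ and all the estimates above are translation-stable in the endpoints), one gets $\|\A_t - \A_s\|_{L_2(\Omega)} \le \Gamma_n |t-s|^{\frac12}$ with $\Gamma_n := C\|f\|_{\C^\gamma} n^{-((\frac12+\frac\gamma\alpha)\wedge 1)+\eps}$; in fact the conditional version $\|\E^s(\A_t-\A_s)\|_{L_2} = \|A_{s,t}\|_{L_2} \le \Gamma_n|t-s|^{\frac12+\eps}$ also holds, and more importantly the SSL estimate applied conditionally on $\F_s$ yields the a.s.\ bound $\E^s|\A_t - \A_s| \le C\Gamma_n$ uniformly in $(s,t) \in \Delta_{[0,1]}$ — this is a BMO-type bound of exactly the form \eqref{BMOcon}. (To get this cleanly I would rerun the dyadic estimate inside the SSL proof with $\E^s$ in front of everything and $m=1$, or equivalently invoke the conditional form of the stochastic sewing lemma; the key point is that the per-scale bounds \eqref{bdguse3} were obtained from $L_2$ orthogonality, which conditionalizes.) Then \cref{t:john} applies with $\Gamma = C\Gamma_n$ and gives, for every $m \in [1,\infty)$,
\begin{equation*}
\Bigl\|\sup_{t\in[0,1]}|\A_t|\Bigr\|_{L_m(\Omega)} = \Bigl\|\sup_{t\in[0,1]}|\A_t - \A_0|\Bigr\|_{L_m(\Omega)} \le C m\, \Gamma_n = C m\, \|f\|_{\C^\gamma}\, n^{-((\frac12+\frac\gamma\alpha)\wedge 1)+\eps},
\end{equation*}
which is precisely \eqref{numlevy} after renaming the constant.

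The main obstacle is the third step: producing the BMO bound $\E^s|\A_t - \A_s| \le C\Gamma$ rather than merely the unconditional $L_2$ bound. One must make sure that the stochastic sewing estimate can be carried out "under $\E^s$" — i.e.\ that the per-scale martingale-difference estimates survive conditioning — and that the continuity hypothesis of \cref{t:john} is genuinely available (it is, since $\A$ is an absolutely continuous function of $t$). Everything else — the heat-kernel lemmas, the two-regime splitting of the germ, the power-trading — is a routine transcription of the Brownian proof with $1/2$ replaced by $1/\alpha$.
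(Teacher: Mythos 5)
Your overall route matches the paper's exactly: run the SSL at the $L_2$ level to obtain the correct rate, then upgrade to $L_m$ via the John--Nirenberg inequality (\cref{t:john}). The $L_2$ sewing step — same germ, same two-regime splitting at $s'=\kappa_n(s)+\frac2n$, same trading of powers of $n^{-1}$ for powers of $t-s$ — is essentially identical to the paper's Step~1, and your handling of the moment obstruction by bounding $\bigl\| |L_r-L_{\kappa_n(r)}|^\gamma\wedge1\bigr\|_{L_2}$ is a slight stylistic variant of the paper's cleaner WLOG reduction to $\gamma<\frac\alpha2$ (both give the desired $(\frac12+\frac\gamma\alpha)\wedge1$ exponent up to $\eps$ losses).

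The gap is precisely where you flag it: you do not actually produce the BMO bound $\E^s|\A_t-\A_s|\le\Gamma$ needed for \cref{t:john}. Your proposed fix — rerunning the dyadic argument inside the SSL proof under $\E^s$ — is not spelled out, and if you tried to carry it out you would immediately collide with the same gridpoint misalignment you already noticed at the germ level: for $r$ close to $s$ one may have $\kappa_n(r)<s$, so the increment $L_{\kappa_n(r)}-L_s$ is \emph{not} independent of $\F_s$, and the conditional per-scale bounds do not simply hold a.s.\ with the same constants. You would therefore still need to split into cases according to whether $s$ is a gridpoint, at which point the ``conditionalized SSL'' machinery buys you nothing. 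The paper's actual argument is substantially simpler and avoids conditional sewing entirely. The key observation is that, when $s$ is a gridpoint, $\kappa_n(r)\ge s$ for all $r\ge s$, and since $L$ has stationary independent increments,
\begin{equation*}
\E^s|\A_t-\A_s|
=\E\Bigl|\int_0^{t-s}\bigl(f(L_r+x)-f(L_{\kappa_n(r)}+x)\bigr)\,dr\Bigr|\Bigr|_{x=L_s},
\end{equation*}
which is a \emph{deterministic} function of $L_s$ that is controlled \emph{uniformly in $x$} by the $L_2$ bound from Step~1 (this uses $\|f(\cdot+x)\|_{\C^\gamma}=\|f\|_{\C^\gamma}$). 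For non-gridpoint $s$ one writes $\E^s|\A_t-\A_s|\le\E^s|\A_{s'}-\A_s|+\E^s\E^{s'}|\A_t-\A_{s'}|$ with $s'=\kappa_n(s)+\frac2n$, estimates the first term crudely by $2\|f\|_{\C^0}(s'-s)\le Cn^{-1}$, and applies the gridpoint case to the second. This structural use of the Lévy property (rather than a general conditional sewing argument) is the missing idea in your proposal, and it is exactly what makes the John--Nirenberg step transparent rather than another obstacle.
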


Note that the convergence rate in the right-hand side of \eqref{numlevy} does not decrease for large $m$.
To obtain this result, we need an analogues of Gaussian bounds provided in \cref{l:gb,l:gb2} for the L\'evy case.
\begin{lemma}\label{l:fractional}
	Let $f\in\C^\gamma(\R^d,\R)$, $\gamma\in[0,1]$, $\eps>0$. Then there exists ${C=C(\alpha,\gamma,\eps,d)>0}$ such that for any  $0<s\le t$ we have 
	\begin{align}\label{ineqlevy}
		&\|\Pa_t  f\|_{\C^1}\le C t^{\frac{\gamma-1}\alpha}\|f\|_{\C^\gamma};\quad
		\|\nabla \Pa_t  f\|_{\C^1}\le C t^{\frac{\gamma-2}\alpha}\|f\|_{\C^\gamma}; \\
		&\|\Pa_tf-\Pa_s  f\|_{\C^0}\le  C\| f\|_{\C^\gamma} s^{\frac\gamma\alpha-1-\eps}(t-s).\label{ineqlevy2}
	\end{align} 	
\end{lemma}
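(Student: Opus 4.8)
The plan is to transcribe the proofs of the Gaussian bounds in \cref{l:gb} and \cref{l:gb2}, replacing the heat kernel $p_t$ by the $\alpha$-stable density $\pa_t$ and the Laplacian by the generator $-(-\Delta)^{\alpha/2}$ of $L$. The only inputs about $\pa_t$ that I would invoke are the scaling identity $\pa_t(x)=t^{-d/\alpha}\pa_1(t^{-1/\alpha}x)$, the smoothness of $\pa_1$, and the standard heavy-tail estimates $|\pa_1(x)|+|\nabla \pa_1(x)|\le C(1+|x|)^{-d-\alpha}$. Exactly as in \cref{l:gb}, it suffices to prove all three bounds for $f\in\C^\infty$ and then pass to general $f\in\C^\gamma$ by regularising ($f_\delta:=\Pa_\delta f$, which satisfies $\|f_\delta\|_{\C^\gamma}\le\|f\|_{\C^\gamma}$ and converges pointwise to $f$) and applying Fatou's lemma.

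For the first bound I would use the cancellation $\int_{\R^d}\nabla\pa_t(y)\,dy=0$ (valid since $\pa_t$ is a probability density with $O(|x|^{-d-\alpha})$ tails): then $\nabla\Pa_t f(x)=\int\nabla\pa_t(y)\,(f(x-y)-f(x))\,dy$, so $|\nabla\Pa_t f(x)|\le\|f\|_{\C^\gamma}\int|\nabla\pa_t(y)|\,|y|^\gamma\,dy$, and the change of variables $z=t^{-1/\alpha}y$ turns the last integral into $t^{\frac{\gamma-1}{\alpha}}\int|\nabla\pa_1(z)|\,|z|^\gamma\,dz$, which is finite because $|\nabla\pa_1(z)|\,|z|^\gamma\lesssim(1+|z|)^{\gamma-d-\alpha}$ with $\gamma\le 1<\alpha$. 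Together with $\|\Pa_t f\|_{\C^0}\le\|f\|_{\C^0}$ this gives the first inequality in \eqref{ineqlevy} (on bounded time intervals, as in \cref{l:gb}). The second bound then follows from the semigroup property and the fact that $\Pa_{t/2}$ commutes with $\nabla$: writing $\nabla\Pa_t f=\Pa_{t/2}\big(\nabla\Pa_{t/2}f\big)$ and applying the first inequality of \eqref{ineqlevy} first with $\gamma=0$ to $g:=\nabla\Pa_{t/2}f$ and then with the original $\gamma$, one gets $\|\nabla\Pa_t f\|_{\C^1}\le C t^{-1/\alpha}\|g\|_{\C^0}\le C t^{\frac{\gamma-2}{\alpha}}\|f\|_{\C^\gamma}$.

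For the third bound, note that for $0<s\le t$ the map $r\mapsto\Pa_r f$ is smooth and solves $\partial_r\Pa_r f=-(-\Delta)^{\alpha/2}\Pa_r f$, so $\Pa_t f-\Pa_s f=-\int_s^t(-\Delta)^{\alpha/2}\Pa_r f\,dr$ and $\|\Pa_t f-\Pa_s f\|_{\C^0}\le\int_s^t\|(-\Delta)^{\alpha/2}\Pa_r f\|_{\C^0}\,dr$. To estimate the integrand I would fix small $\eps>0$ with $\alpha-1+\eps\in(0,1)$, interpolate the first two bounds of \eqref{ineqlevy} (standard Hölder--Zygmund interpolation, see e.g.\ \cite{bahouri}) to obtain $\|\Pa_r f\|_{\C^{\alpha+\eps}}\le C r^{\frac{\gamma-\alpha-\eps}{\alpha}}\|f\|_{\C^\gamma}$, and use that $(-\Delta)^{\alpha/2}$ maps $\C^{\alpha+\eps}$ boundedly into $\C^{\eps}\subset\C^0$; this yields $\|(-\Delta)^{\alpha/2}\Pa_r f\|_{\C^0}\le C r^{\frac{\gamma}{\alpha}-1-\frac{\eps}{\alpha}}\|f\|_{\C^\gamma}$, and since the exponent is negative, integrating over $r\in[s,t]$ produces the factor $(t-s)\,s^{\frac{\gamma}{\alpha}-1-\frac{\eps}{\alpha}}$, which is \eqref{ineqlevy2} after relabelling $\eps$. (Alternatively, one can avoid the loss $\eps$ and the Hölder calculus by observing that $(-\Delta)^{\alpha/2}\Pa_r$ has kernel $q_r:=-(-\Delta)^{\alpha/2}\pa_r=\partial_r\pa_r$ with $\int q_r=0$ and $\|\,|{\cdot}|^\gamma q_r\|_{L^1}=C r^{\frac{\gamma}{\alpha}-1}$, and then running the cancellation-plus-scaling argument of the first bound; this route uses the sharper, but also standard, estimate $|\nabla\pa_1(x)|\lesssim(1+|x|)^{-d-\alpha-1}$.)

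The main obstacle is this third bound: because the generator of $L$ is the \emph{nonlocal} operator $(-\Delta)^{\alpha/2}$, one cannot control $\partial_r\Pa_r f$ by pointwise second-derivative estimates the way \cref{l:gb2} does in the Brownian case, and one must instead quantify how $(-\Delta)^{\alpha/2}$ acts on the already-smoothed function $\Pa_r f$ — either via a Hölder mapping property (which is why the arbitrarily small exponent $\eps$ appears in \eqref{ineqlevy2}) or via sharp tail bounds on $\partial_r\pa_r$. The first two bounds, by contrast, are a routine transcription of \cref{l:gb} once the scaling and tail behaviour of $\pa_t$ are in hand.
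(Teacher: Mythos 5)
Your proof of the two bounds in \eqref{ineqlevy} is essentially the paper's intended argument (\cref{levyboundex}(i)--(ii)): cancellation $\int_{\R^d}\nabla\pa_t(y)\,dy=0$, self-similarity $\pa_t(x)=t^{-d/\alpha}\pa_1(t^{-1/\alpha}x)$, integrability of $|{\cdot}|^\gamma|\nabla\pa_1|$ thanks to $\gamma\le 1<\alpha$, and the trick $\nabla\Pa_t=\Pa_{t/2}\nabla\Pa_{t/2}$ for the second inequality; you make the cancellation step explicit where the exercise leaves it implicit.

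For \eqref{ineqlevy2} you take a genuinely different route. The paper's intended argument (\cref{levyboundex}(iii)--(v)) uses the mean-zero property of $L_t$ to expand
\[
\Pa_h g-g=\int_{\R^d}\pa_h(y)\bigl(g(\cdot+y)-g(\cdot)-y^{\t}\nabla g(\cdot)\bigr)\,dy,
\]
which by scaling gives $\|\Pa_h g-g\|_{\C^0}\le C h^{\beta/\alpha}\|g\|_{\C^\beta}$ for a fixed $\beta\in(1,\alpha)$; applied to $g=\Pa_s f$, $h=t-s$, together with \eqref{ineqlevy}, this yields \eqref{ineqlevy2} only ``up to the loss of arbitrary $\eps$ in the exponent of $(t-s)$'', as the exercise itself concedes, because $(t-s)^{\beta/\alpha}$ never reaches $(t-s)$. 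You instead integrate the generator, $\Pa_t f-\Pa_s f=-\int_s^t(-\Delta)^{\alpha/2}\Pa_r f\,dr$, interpolate \eqref{ineqlevy} to get $\|\Pa_r f\|_{\C^{\alpha+\eps}}\le C r^{(\gamma-\alpha-\eps)/\alpha}\|f\|_{\C^\gamma}$, and use that $(-\Delta)^{\alpha/2}$ maps $\C^{\alpha+\eps}$ boundedly into $\C^0$; since the resulting integrand is a decreasing power of $r$, this places the $\eps$-loss entirely on $s$ and recovers \eqref{ineqlevy2} exactly as stated. The cost is invoking H\"older--Zygmund interpolation with exponent $>1$ and the nonlocal mapping property of $(-\Delta)^{\alpha/2}$, neither of which the paper otherwise develops; the benefit is a sharper bound and no detour through the mean-zero expansion. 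Your parenthetical kernel-cancellation variant using $q_r=\partial_r\pa_r$, $\int q_r=0$, and scaling is sharper still (no $\eps$ at all) and is also correct: the identity $q_1=-\tfrac{d}{\alpha}\pa_1-\tfrac1\alpha\,x\cdot\nabla\pa_1$ shows it does require the stronger tail bound $|\nabla\pa_1(x)|\lesssim(1+|x|)^{-d-\alpha-1}$ to ensure $\int|q_1(y)||y|^\gamma\,dy<\infty$ for $\gamma$ up to $1$, exactly as you flag. All three routes are valid; note that in the paper \eqref{ineqlevy2} is only used inside the proof of \cref{t:levydif}, where a small extra loss in the exponent of $(t-s)$ would in any case be absorbed, so the discrepancy between what \cref{levyboundex} actually proves and the statement of \eqref{ineqlevy2} is harmless.
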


The proof of  \cref{l:fractional} follows from a scaling argument and is left to the reader, see \cref{levyboundex}.  

It is also easy to see that the following analogue of \eqref{1bound} holds: for any bounded measurable $f\colon\R^d\to\R$, $0\le s< t$, $x\in\R^d$ we have
\begin{equation}\label{l-1bound}
	\E^s f(L_t+x)=\Pa_{t-s}f (L_s+x)
\end{equation}

\begin{proof}[Proof of \cref{t:levydif}]
	
Let us see how the John--Nirenberg inequality helps us to overcome the obstacle described at the beginning of \cref{s:JN}. The idea is to obtain \eqref{numlevy} first for $m=2$ using a similar strategy as before (stochastic sewing), and then extend it to the case of arbitrary $m>2$ using the John--Nirenberg inequality.
Fix $\eps>0$.

\textbf{Step 0}. Without loss of generality, and to simplify the notation, we additionally assume that 
\begin{equation}\label{wlog}
\gamma<\frac\alpha2.
\end{equation}
Indeed, if $\gamma\ge\frac\alpha2$, then $f\in\C^{\gamma}\subset\C^{\frac\alpha2-\alpha\eps}$, and we may prove the theorem with $\frac\alpha2-\alpha \eps$ in place of~$\gamma$. Since in this case the right-hand side of \eqref{numlevy} remains unchanged when $\gamma$ is replaced by $\frac\alpha2-\alpha \eps$, this implies the desired estimate also for $\gamma$.

\textbf{Step 1: Case $m=2$}.  We apply the stochastic sewing lemma to the germ 
\begin{equation*}
	A_{s,t}:=\E^s \int_s^t (f(L_r)-f(L_{\kappa_n(r)}))\, dr,\quad  (s,t)\in\Delta_{[S,T]}.
\end{equation*}
It is easy to see that $\E^s \delta A_{s,u,t}=0$ for any $(s,u,t)\in\Delta^3_{[S,T]}$ and therefore  condition \eqref{con:s2}  holds. 

To verify  \eqref{con:s1}, as in the proof of \cref{t:appradf}, for $s\in[S,T]$ we define  $s': =\kappa_n(s) + \frac2n$; that is $s'$ is the second gridpoint to the right of $s$. Then we have for any $(s,t)\in\Delta_{[S,T]}$
\begin{align}\label{step1numle}
	\|A_{s,t}\|_{L_2(\Omega)}&\le \Bigl\|\int_s^{s'\wedge t}
	\E^s \big( f(L_r)-f(L_{\kappa_n(r)})\big) dr\Bigr\|_{L_2(\Omega)}\!\!\!+
	\Bigl\|\int_{s'\wedge t}^t
	\E^s \big( f(L_r)-f(L_{\kappa_n(r)})\big) dr\Bigr\|_{L_2(\Omega)}\nn\\\
	& =:I_1+I_2
\end{align}

As we explained in \cref{s:JN}, the first term on the right-hand side of \eqref{step1numle} is dangerous in the L\'evy case:  for large $m$, it reduces the convergence rate because $L$ has moments only up to order $\alpha$, and we need the moment of order $m\gamma$. However, for $m=2$, we are safe, recall assumption \eqref{wlog}. We derive
\begin{align}\label{s2levy}
I_1&\le  \int_s^{s'\wedge t} \| f(L_r)-f(L_{\kappa_n(r)}) \|_{L_2(\Omega)} \, dr\le  \| f\|_{\C^{\gamma}}\int_s^{s'\wedge t} \bigl(\E |L_r-L_{\kappa_n(r)}|^{2\gamma}\bigr)^{\frac12} \, dr\nn\\
	&\le C \| f\|_{\C^\gamma}((s'\wedge t)-s) n^{-\frac\gamma\alpha} \le C \| f\|_{\mathcal{C}^\gamma} n^{-\frac12-\frac\gamma\alpha+\eps}(t-s)^{\frac12+\eps},
\end{align}
for $C=C(\alpha,\gamma,d)>0$. Here in the last inequality we used that $(s'\wedge t)-s \le t-s$ and 
$(s'\wedge t)-s \le 2n^{-1}$ by definition of $s'$.

Next, we treat $I_2$ using \cref{l:fractional}. If $s'\wedge t=t$, then there is nothing to prove, $I_2=0$. Otherwise,  if $s'\wedge t=s'$, then $t\ge s'\ge s + \frac1n$ and we get 
\begin{align}
	I_2&=\Bigl\|\int_{s'}^t(\Pa_{r-s}f(L_s)-\Pa_{\kappa_n(r)-s}f(L_s))\,dr
	\Bigr\|_{L_2(\Omega)}\le \int_{s'}^t\|\Pa_{r-s}f-\Pa_{\kappa_n(r)-s}f\|_{\C^0}\,dr\nn\\
	&\le C\| f\|_{\C^\gamma}\int_{s'}^t(\kappa_n(r)-s)^{\frac\gamma\alpha-1-\eps}n^{-1}\,dr\nn\\
	&\le C\| f\|_{\C^\gamma}\int_{s'}^t(r-s')^{\frac\gamma\alpha-1-\eps}n^{-1}\,dr\le C\| f\|_{\C^\gamma}(t-s')^{\frac\gamma\alpha-\eps}n^{-1}\label{four34}\\
	&\le C  \| f\|_{\C^\gamma}(t-s)^{\frac12+\eps}n^{-\frac12-\frac\gamma\alpha+2\eps}\nn
\end{align}
for $C=C(\alpha,\gamma,\eps, d)>0$. Here in \eqref{four34} we used that $r-\kappa_n(r)\le\frac1n\le s'-s$ and hence $\kappa_n(r)-s\ge r-s'$ and in the last inequality we used that $t-s\ge\frac1n$. Now, combining this with \eqref{s2levy} and \eqref{step1numle}, we get 
\begin{equation*}
	\|A_{s,t}\|_{L_2(\Omega)}\le C  \| f\|_{\C^\gamma}(t-s)^{\frac12+\eps}n^{-\frac12-\frac\gamma2+2\eps}
\end{equation*}
and thus condition \eqref{con:s1} is satisfied. 

It is easy to see that both of the remaining conditions (condition (i) and (ii)) of \cref{T:SSLst} hold, and
we get for any $(S,T)\in\Delta_{[0,1]}$
\begin{equation}\label{numlevy2}
\Bigl\|\int_S^T (f(L_r)-f(L_{\kappa_n(r)}))\,dr\Bigr\|_{L_2(\Omega)}\le  C \| f\|_{\C^\gamma}(T-S)^{\frac12+\eps}n^{-\frac12-\frac\gamma\alpha+2\eps},
\end{equation}
for $C=C(\alpha,\gamma,\eps, d)>0$.

\textbf{Step 2: Case $m>2$}. Now we extend $L_2(\Omega)$ bound to $L_m(\Omega)$ bound.
We apply \cref{t:john} (John--Nirenberg inequality) with $S=0$, $T=1$ to the process 
\begin{equation*}
\A_t:=\int_0^t (f(L_r)-f(L_{\kappa_n(r)}))\,dr, \quad t\in[0,1].
\end{equation*}
Obviously, the process $\A$ is continuous because $f$ is bounded. 

Now let us verify condition \eqref{BMOcon}. As before, we run into an issue with conditioning: since the expression for $\A_t$ includes $L_{\kappa_n(r)}$, it may happen that $s \le r$ but $s > \kappa_n(r)$, in which case no smoothing occurs. To overcome this, we use a similar trick as before, distinguishing two cases. If $s$ is a grid point, then $s \le r$ automatically implies $s \le \kappa_n(r)$. Otherwise, we split the integral into two parts: a small part, where we apply a rough bound using $\|b\|_{\C^0}$, and a larger part, where we can exploit the conditioning.

Thus, we take arbitrary $(s,t)\in\Delta_{[0,1]}$. If $s$ is a gridpoint (so that $\kappa_n(s)=s$), we get
\begin{align}\label{casegrid}
\E^s|\A_t-\A_s|&=\E^s\Bigl|\int_s^t  (f(L_r)-f(L_{\kappa_n(r)}))\,dr\Bigr|\nn\\
&=\E\Bigl|\int_s^t  (f(L_r-L_s+x)-f(L_{\kappa_n(r)}-L_s+x))\,dr\Bigr|\evalat{x=L_s}.
\end{align}
It follows from \eqref{numlevy2} and stationarity of the increments of $L$ that for any $x\in\R^d$
\begin{align*}
\E\Bigl|\int_s^t  (f(L_r-L_s+x)-f(L_{\kappa_n(r)}-L_s+x))\,dr\Bigr|&=
\E \Bigl|\int_0^{t-s}  (f(L_{r}+x)-f(L_{\kappa_n(r)}+x))\,dr\Bigr|\\
&\le  C \|f\|_{\C^\gamma}n^{-\frac12-\frac\gamma\alpha+2\eps},
\end{align*}
for $C=C(\alpha,\gamma,\eps, d)>0$, and we used again that $s$ is a gridpoint. Substituting this into \eqref{casegrid}, we get 
\begin{equation}\label{casegridfin}
	\E^s|\A_t-\A_s|\le C \|f\|_{\C^\gamma}n^{-\frac12-\frac\gamma\alpha+2\eps}.
\end{equation}

If $s$ is not a gridpoint, we denote as before $s':=\kappa_n(s)+\frac2n$ --- the second gridpoint to the right of $s$.  We get
\begin{align}
\E^s|\A_t-\A_s|&\le \E^s|\A_{s'}-\A_s|+\E^s|\A_t-\A_{s'}|\le C \|f\|_{\C^0}|s'-s|+\E^s\E^{s'}|\A_t-\A_{s'}|\nn\\
&\le C\|f\|_{\C^0}n^{-1}+C \|f\|_{\C^\gamma}n^{-\frac12-\frac\gamma\alpha+2\eps}\le C \|f\|_{\C^\gamma}n^{-\frac12-\frac\gamma\alpha+2\eps},\label{wheretoargue}
\end{align}
for $C=C(\alpha,\gamma,\eps, d)>0$
where the penultimate inequality follows from \eqref{casegridfin}. Thus, all conditions of \cref{t:john} are satisfied and we get for any $m\ge2$
\begin{equation*}
	\Bigl\|\sup_{t\in[0,1]}\Bigl|\int_0^t (f(L_r)-f(L_{\kappa_n(r)}))\,dr\Bigr|\,\Bigr\|_{L_m(\Omega)}=
	\|\sup_{t\in[0,1]}|\A_t-\A_0|\,\|_{L_m(\Omega)}\le   C \|f\|_{\C^\gamma}n^{-\frac12-\frac\gamma\alpha+2\eps}
\end{equation*}
for $C=C(\alpha,\gamma,\eps, d,m)>0$.
This implies the statement of the lemma.
\end{proof}

\subsection{Estimates in conditional H\"older norms}\label{sec:44}

The next milestone on our way toward establishing \cref{t:levy} is extending the well-posedness results of \cref{t:VK} to SDEs driven by $\alpha$-stable noise. Let $X$ be a solution to SDE
\begin{equation*}
X_t=x+\int_0^t b(X_r)dr +L_t,
\end{equation*}
where $L$ is a symmetric $d$-dimensional $\alpha$-stable process, $x\in\R^d$, $b\in\C^\gamma(\R^d,\R^d)$, $\gamma>1-\frac\alpha2$, $\alpha\in(1,2)$.
However, if we apply the same strategy as in \cref{s:s3}, two important new obstacles appear.

The first problem is related to \cref{r:d}. When proving \cref{e:step1uni}, we had to impose the following regularity condition on the drift $\phi$:
\begin{equation}\label{regcond}
|\phi_t-\phi_s|\le \Gamma_0 |t-s|\quad a.s.
\end{equation}	
for some $\Gamma_0>0$.
In the setting of \cref{t:VK}, this condition was optimal: indeed, if $\phi_t=\int_0^t b(X_r)\,dr$, where the function $b$ is bounded, then we do not expect $\phi$ to have better a.s. regularity. However, in our new setting, we work with much nicer drifts belonging to a certain  H\"older space. Therefore, while  condition \eqref{regcond} still obviously holds, it does not capture the full information about $\phi$, and would therefore inevitably lead to suboptimal results. 

The second problem was described in \cref{r:vd}. If we perform exactly the same calculations as in the proof of \cref{e:step1uni} but now for $\alpha < 2$, then the first term on the right-hand side of \eqref{verydangerous} becomes
\begin{equation}\label{intprblem}
C\Gamma_0\|\phi-\psi\|_{\C^0L_m([S,T])}|u-s|\|b\|_{\C^\gamma}\int_u^t (r-u)^{\frac{\gamma-2}\alpha}\,dr
\end{equation}
For the integral to be finite, we need to require $\frac{\gamma-2}\alpha>-1$, that is 
$\gamma>2-\alpha$, which is much worse than our desired condition $\gamma>1-\frac\alpha2$, see \eqref{levycond}.

In this subsection, we will deal with the first problem and postpone the solution of the second problem to the next subsection.

To solve the first problem, let us look at  the regularity of
 $\phi_t:=x+\int_0^t b(X_r)dr$. Since $X$ is not continuous, we do not expect that $\phi\in\C^\theta$ a.s. for any $\theta>1$. Note however, that $X$ is stochastically continuous (that is, $X_{t+\eps}\to X_t$ in probability as $\eps\to0$) and it is not difficult to see that 
\begin{equation*}
\E	|\phi_t-\phi_s-(t-s)\phi'_s|\le \|b\|_{\C^\gamma} |t-s|^{1+\frac\gamma\alpha},
\end{equation*}
so ``stochastic regularity'' of $\phi$ is $1+\frac\gamma\alpha$. 

Next, observe that there was an additional smoothing when we passed from \eqref{crucial} to \eqref{verydangerous}, and what we actually need is an a.s. bound on $\E^s|\phi_t-\phi_s|$. Unfortunately, this still does not give us the desired exponent $1+\frac\gamma\alpha$, as the best we get is only
\begin{equation*}
	\E^s|\phi_t-\phi_s|\le C(t-s).
\end{equation*}
However, if we change the approximation of $L_t+\phi_t$ in the germ  \eqref{germper} from $L_r+\phi_s$ to $L_r+\phi_s+(t-s)\phi'_s$, then when passing from \eqref{crucial} to \eqref{verydangerous} we would need to bound $\E^s	|\phi_t-\phi_s-(t-s)\phi'_s|$, which is much smaller. 

We now take one step further and observe that the best possible approximation of a random variable by an $\F_s$-measurable random variable in $L_2(\Omega)$ is its conditional expectation. Therefore, motivated by the above discussion we introduce the following seminorm. For $m\ge1$, $(S,T)\in\Delta_{[0,1]}$, a measurable function $f\colon\Omega\times[S,T]\to\R^d$ and $\tau>0$ put
\begin{equation}
\dnew{f}{\tau}{m}{[S,T]}:=\sup_{(s,t)\in\Delta_{[S,T]}}\frac{\|f(t)-\E^s f(t)\|_{L_m(\Omega)}}{| t-s|^\tau}.\label{newnorm},
\end{equation}

The following useful lemma shows that the conditional expectation is the best approximation (up to a constant) not only in the $L_2(\Omega)$ sense, but in any $L_m(\Omega)$, $m\ge1$. It implies that the seminorm $\dnew{\cdot}{\tau}{m}{[S,T]}$ is weaker than $[\cdot]_{\C^\tau L_m([S,T])}$ used in \cref{s:s3}.

To simplify the notation we write here $\E^\G[\cdot]:=\E[\cdot|\G]$.

\begin{lemma}\label{lem:useful-lemma}
 Let $m\ge1$. Let $\G\subset \F$ be a $\sigma$-algebra. Let random variables $Y,Z\in L_m(\Omega)$ and suppose that $Z$ in $\G$--measurable. Then  
\begin{align}\label{YZcond}
&\|Y-\E^\G Y\|_{L_m(\Omega)}\le 2\|Y-Z\|_{L_m(\Omega)};\\
&\E^\G|Y-\E^\G Y|\le 2\E^\G |Y-Z|;\label{YZcond2}
\end{align}
\end{lemma}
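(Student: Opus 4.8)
The key observation is that since $Z$ is $\G$--measurable, we have $\E^\G Z = Z$, and therefore the error of approximating $Y$ by its conditional expectation can be rewritten purely in terms of the error $Y-Z$: namely
\begin{equation*}
Y-\E^\G Y=(Y-Z)-\E^\G(Y-Z).
\end{equation*}
So the plan is simply to apply the triangle inequality to this decomposition and then use the conditional Jensen inequality to control the conditional-expectation term by the original one.

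For \eqref{YZcond}, I would take $L_m(\Omega)$-norms of the displayed identity and bound
\begin{equation*}
\|Y-\E^\G Y\|_{L_m(\Omega)}\le \|Y-Z\|_{L_m(\Omega)}+\|\E^\G(Y-Z)\|_{L_m(\Omega)}\le 2\|Y-Z\|_{L_m(\Omega)},
\end{equation*}
where the second inequality is precisely the conditional Jensen inequality \eqref{useful} applied with the random variable $Y-Z$ and the $\sigma$-algebra $\G$. For \eqref{YZcond2}, I would instead apply $\E^\G|\cdot|$ to the same identity, use the (conditional) triangle inequality, and note that $\E^\G\bigl|\E^\G(Y-Z)\bigr|=\bigl|\E^\G(Y-Z)\bigr|\le \E^\G|Y-Z|$ by the conditional Jensen inequality in its pointwise form; this gives $\E^\G|Y-\E^\G Y|\le 2\,\E^\G|Y-Z|$.

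There is essentially no obstacle here: the only minor point to keep in mind is integrability, i.e.\ that $Y-Z\in L_m(\Omega)$ (which holds since $Y,Z\in L_m(\Omega)$) so that all the conditional expectations above are well defined, and that the second statement is an a.s.\ inequality between $\G$--measurable random variables rather than a norm inequality. No convexity beyond Jensen's inequality is needed, and both parts follow from the one-line decomposition above.
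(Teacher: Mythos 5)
Your proof is correct and is essentially identical to the paper's: both insert $Z$ via the triangle inequality, rewrite $\E^\G Y - Z = \E^\G(Y-Z)$ using that $Z$ is $\G$-measurable, and close with (conditional) Jensen. Writing the one-line identity $Y-\E^\G Y=(Y-Z)-\E^\G(Y-Z)$ up front is a tidy way to organize the same computation.
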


\begin{proof}
Fix $m\ge1$. Using that $Z$ is $\G$-measurable, we derive
\begin{align*}
\|Y- \E^\G Y\|_{L_m(\Omega)}&\le \|Y-Z\|_{L_m(\Omega)}+\|\E^\G Y-Z\|_{L_m(\Omega)}\\
&= \|Y-Z\|_{L_m(\Omega)}+\|\E^\G (Y-Z)\|_{L_m(\Omega)}\le 2\|Y-Z\|_{L_m(\Omega)},
\end{align*}
where the last inequality follows from Jensen's inequality \eqref{useful}. 

Very similarly,
\begin{equation*}
\E^\G|Y-\E^\G Y|\le  \E^\G |Y-Z|+\E^\G|Z-\E^\G Y|=\E^\G |Y-Z|+|\E^\G(Y-Z)|\le 2\E^\G |Y-Z|.\qedhere
\end{equation*}
\end{proof}

\begin{corollary}\label{c:normbound}
For any $(S,T)\in \Delta_{[0,1]}$, $m\ge1$, measurable function $f\colon\Omega\times [S,T]\to\R^d$ adapted to the filtration $(\F_t)$, $\tau>0$ one has
\begin{equation*}
\dnew{f}{\tau}{m}{[S,T]}\le2[f]_{\C^\tau L_m([S,T])}.
\end{equation*}
\end{corollary}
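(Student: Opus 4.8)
The statement follows directly from \cref{lem:useful-lemma}, so the plan is essentially to unwind the definitions and invoke the first inequality of that lemma pointwise in $(s,t)$. Fix $(S,T)\in\Delta_{[0,1]}$ and $m\ge1$. For an arbitrary pair $(s,t)\in\Delta_{[S,T]}$, I would apply \eqref{YZcond} with the $\sigma$-algebra $\G=\F_s$, with $Y:=f(t)$ and $Z:=f(s)$. The key point to record is that $Z=f(s)$ is indeed $\F_s$-measurable: this is exactly the hypothesis that $f$ is adapted to $(\F_t)$. Both $f(t)$ and $f(s)$ lie in $L_m(\Omega)$ whenever the right-hand side $[f]_{\C^\tau L_m([S,T])}$ is finite (and if it is infinite there is nothing to prove), so the lemma applies and yields
\[
\|f(t)-\E^s f(t)\|_{L_m(\Omega)}\le 2\,\|f(t)-f(s)\|_{L_m(\Omega)}.
\]

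Dividing both sides by $|t-s|^\tau$ gives
\[
\frac{\|f(t)-\E^s f(t)\|_{L_m(\Omega)}}{|t-s|^\tau}\le 2\,\frac{\|f(t)-f(s)\|_{L_m(\Omega)}}{|t-s|^\tau}\le 2\,[f]_{\C^\tau L_m([S,T])},
\]
where the last bound is just the definition of the seminorm $[f]_{\C^\tau L_m([S,T])}$ in \eqref{norms}. Since $(s,t)\in\Delta_{[S,T]}$ was arbitrary, taking the supremum over all such pairs on the left-hand side produces $\dnew{f}{\tau}{m}{[S,T]}$ and establishes $\dnew{f}{\tau}{m}{[S,T]}\le 2[f]_{\C^\tau L_m([S,T])}$, which is the claim.

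There is no real obstacle here — the only thing to be careful about is the measurability bookkeeping (using adaptedness of $f$ to legitimately take $Z=f(s)$ in \cref{lem:useful-lemma}) and the harmless remark that the inequality is trivial when the right-hand side is infinite. The corollary is purely a restatement of \eqref{YZcond} in the language of the seminorms \eqref{norms} and \eqref{newnorm}.
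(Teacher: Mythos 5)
Your proof is correct and matches the paper's argument exactly: apply \eqref{YZcond} from \cref{lem:useful-lemma} with $\G=\F_s$, $Y=f(t)$, $Z=f(s)$ (using adaptedness for $\F_s$-measurability of $f(s)$), divide by $|t-s|^\tau$, and take the supremum over $(s,t)\in\Delta_{[S,T]}$. The added remark about the inequality being trivial when the right-hand side is infinite is harmless bookkeeping the paper leaves implicit.
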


\begin{proof}
We just note that for any $(s,t)\in\Delta_{[S,T]}$ we have thanks to \cref{lem:useful-lemma}
\begin{equation*}
\|f(t)-\E^s f(t)\|_{L_m(\Omega)}\le 2 \|f(t)-f(s)\|_{L_m(\Omega)},
\end{equation*}
because $f_s$ is $\F_s$-measurable by assumption. Thus
\begin{equation*}
\dnew{f}{\tau}{m}{[S,T]}\le 
2\sup_{(s,t)\in\Delta_{[S,T]}}\frac{\|f(t)- f(s)\|_{L_m(\Omega)}}{| t-s|^\tau}=2[f]_{\C^\tau L_m([S,T])}\qedhere
\end{equation*}
\end{proof}

Finally, let us also bound the conditional seminorm of a solution to our SDE \eqref{mainSDElevy}.

\begin{corollary}\label{c:413}
Let $\alpha\in(1,2)$, $x\in\R^d$, $b\in\C^\gamma(\R^d,\R^d)$, $\gamma\in[0,1]$.
Let $X$ be a solution to \eqref{mainSDElevy} and denote $\phi:=X-L$. Then 
there exists a constant $C=C(\alpha,d)>0$ such that for any $(s,t)\in\Delta_{[0,1]}$
\begin{equation}\label{eq:useful-bound}
\E^s|\phi_t-\E^s\phi_t|\le C \|b\|_{\C^\gamma}(t-s)^{1+\frac\gamma\alpha}.
\end{equation}
\end{corollary}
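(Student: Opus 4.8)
The plan is to sidestep the intractable conditional mean $\E^s\phi_t$ by comparing $\phi_t$ with a convenient $\F_s$-measurable random variable via \cref{lem:useful-lemma}, and then to estimate the resulting increment using only two ingredients: the pathwise Lipschitz-in-time bound on $\phi$ and the $\alpha$-stable moment bound for $L$. No stochastic sewing is needed here. First I would unwind the definitions: since $X$ solves \eqref{mainSDElevy}, $\phi_t-\phi_s=\int_s^t b(X_r)\,dr$, hence $\E^s\phi_t=\phi_s+\int_s^t\E^s b(X_r)\,dr$ and
\[
\phi_t-\E^s\phi_t=\int_s^t\bigl(b(X_r)-\E^s b(X_r)\bigr)\,dr .
\]
Taking $\E^s|\cdot|$ inside the integral and applying \eqref{YZcond2} of \cref{lem:useful-lemma} with $\G=\F_s$, $Y=b(X_r)$ and the $\F_s$-measurable choice $Z=b(X_s)$ (legitimate since $X$ is adapted), one gets $\E^s|\phi_t-\E^s\phi_t|\le 2\int_s^t\E^s|b(X_r)-b(X_s)|\,dr$.

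Next I would bound the increment $\E^s|b(X_r)-b(X_s)|$. Writing $X_r-X_s=(\phi_r-\phi_s)+(L_r-L_s)$ and using $b\in\C^\gamma$ together with subadditivity of $z\mapsto z^\gamma$ on $[0,\infty)$ for $\gamma\in[0,1]$,
\[
\E^s|b(X_r)-b(X_s)|\le [b]_{\C^\gamma}\bigl(\E^s|\phi_r-\phi_s|^\gamma+\E^s|L_r-L_s|^\gamma\bigr).
\]
The drift term I would treat pathwise, before conditioning: $|\phi_r-\phi_s|=\bigl|\int_s^r b(X_u)\,du\bigr|\le\|b\|_{\C^0}(r-s)$ a.s., so $\E^s|\phi_r-\phi_s|^\gamma\le\|b\|_{\C^0}^\gamma(r-s)^\gamma$. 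The noise term uses stationarity and independence of the increments of $L$ together with the moment bound $\E|L_h|^\gamma\le C(\alpha,d)\,h^{\gamma/\alpha}$, which is available precisely because $\gamma\le1<\alpha$; this gives $\E^s|L_r-L_s|^\gamma=\E|L_{r-s}|^\gamma\le C(\alpha,d)(r-s)^{\gamma/\alpha}$.

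Finally, since $0<r-s\le t-s\le1$ and $\alpha>1$ forces $\gamma\ge\gamma/\alpha$, we have $(r-s)^\gamma\le(r-s)^{\gamma/\alpha}$, so both contributions collapse to a bound of the form $C(\alpha,d)\|b\|_{\C^\gamma}(r-s)^{\gamma/\alpha}$ (absorbing $[b]_{\C^\gamma}\le\|b\|_{\C^\gamma}$ and the cross-factor $\|b\|_{\C^0}^\gamma\le 1+\|b\|_{\C^\gamma}$); integrating in $r\in[s,t]$ then yields $\E^s|\phi_t-\E^s\phi_t|\le C(\alpha,d)\,\|b\|_{\C^\gamma}(t-s)^{1+\gamma/\alpha}$, which is \eqref{eq:useful-bound}. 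The argument is essentially routine — there is no real obstacle — and the only points requiring a little care are: choosing $b(X_s)$ (rather than a heat-semigroup average) as the comparison point, so that \cref{lem:useful-lemma} applies verbatim; estimating the drift increment $\phi_r-\phi_s$ pathwise, so that conditioning becomes trivial; and using $t-s\le1$ to merge the two time exponents $1+\gamma$ and $1+\gamma/\alpha$ into the smaller one.
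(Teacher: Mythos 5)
Your proof is correct and follows essentially the same route as the paper: both rest on \cref{lem:useful-lemma} with $b(X_s)$ as the $\F_s$-measurable comparison point (you apply \eqref{YZcond2} to the integrand $b(X_r)$ after pulling $\E^s$ inside the integral, while the paper applies it once to $\phi_t$ against $Z=\phi_s+(t-s)b(X_s)$, which unwinds to the identical estimate $2\int_s^t\E^s|b(X_r)-b(X_s)|\,dr$), and the subsequent split into the pathwise drift increment and the $\alpha$-stable moment of $L_r-L_s$ is the same. Your explicit remark about absorbing $\|b\|_{\C^0}^\gamma$ into the constant is a fair observation about a small imprecision that the paper shares.
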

\begin{proof}
Let $(s,t)\in\Delta_{[0,1]}$. Since the random variable $\phi_s+(t-s) b(X_s)$ is $\F_s$ measurable, by \cref{lem:useful-lemma}, we get
\begin{align*}
\E^s|\phi_t-\E^s \phi_t|&\le 2\E^s |\phi_t-\phi_s-(t-s)b(X_s)|=2 \E^s\Bigl|\int_s^t (b(X_r)-b(X_s))\,dr\Bigr|\\
&\le 2 \|b\|_{\C^\gamma}\int_s^t (\E^s |L_r-L_s|^\gamma+\E^s |\phi_r-\phi_s|^\gamma)\,dr\le C 
\|b\|_{\C^\gamma}|t-s|^{1+\frac\gamma\alpha},
\end{align*}
for $C=C(\alpha,d)>0$ and we used that $|\phi_r-\phi_s|\le \|b\|_{\C^0}|r-s|$. This proves \eqref{eq:useful-bound}.
\end{proof}

\subsection{Shifted stochastic sewing lemma}\label{s:sssl}

To overcome the second problem discussed in \cref{sec:44} (see also \cref{r:vd}), we use Gerencs\'er's idea to replace the stochastic sewing lemma (\cref{T:SSLst}) with a \textit{shifted} stochastic sewing lemma. Essentially, this allows us to change the factor $(r-u)$  in the integrand of \eqref{intprblem}  to 
$r-(u-(t-s))$. As a result, the  integral becomes finite and of order $(t-s)^{\frac{\gamma-2}\alpha+1}$ as desired, even when $\frac{\gamma-2}\alpha<-1$.

This is also the first example where we see a general principle in action: SSL is not a one-size-fits-all method. Its components can be adjusted to overcome a specific obstacle without losing the core structure. For further extensions of the shifted SSL, we refer to \cite{MP22} and \cite[Lemma~2.7]{anzeletti2025density}.

For $0\le S\le T$ we introduce  a modified simplex 
\begin{equation*}
\ms{[S,T]}:=\{(s, t)\in[S,T]^2:  s< t,\,\, s - (t - s) \ge S\}.
\end{equation*}

\begin{lemma}[Shifted stochastic sewing lemma, {\cite[Lemma~2.2]{Gerreg22}}]
\label{lem:shiftedmodifiedSSL}

Let $m\in[2,\infty)$, $0\le S\le T$. 
Suppose that there exist measurable functions ${\A\colon \Omega\times[S,T]\to \R^d}$, $A\colon\Omega\times \ms{[S,T]}\to \R^d$ and a complete filtration $(\F_t)_{t\in[S,T]}$ such that the following holds:
\begin{enumerate}[(i)]
\item 
one has
\begin{equation}\label{s-limpart}
	\A_T-\A_S=\lim_{n\to\infty} \sum_{i=1}^{n-1} A_{S+i\frac{T-S}n,S+(i+1)\frac{T-S}n}\,\, \text{in probability.}
\end{equation}

	\item for any $(s,t)\in \ms{[S,T]}$, the random variable $A_{s,t}$ is $\F_t$--measurable;
	\item there exist  $\Gamma_1, \Gamma_2\ge0$, $\gamma_1>\frac12$, $\gamma_2>1$ such that for every $(s,t)\in\ms{[S,T]}$ and $u=(s+t)/2$ we have 
	\begin{align}	
		&\| A_{s, t}\|_{L_m(\Omega)}\le \Gamma_1|t-s|^{\gamma_1},\label{s-con:s1}	\\
		&\|\E^{s-(t-s)} \delta A_{s,u,t}\|_{L_m(\Omega)}\le \Gamma_2|t-s|^{\gamma_2}.\label{s-con:s2}
	\end{align}
\end{enumerate}	
Then there exists a constant $C=C(\gamma_1,\gamma_2,d,m)>0$ independent of $S$, $T$ such that 
	\begin{equation}
		\|\A_{T}-\A_{S}\|_{L_m(\Omega)} \le C \Gamma_1 |T-S|^{\gamma_1}+
		C \Gamma_2 |T-S|^{\gamma_2}.
		\label{s-est:ssl1}
	\end{equation}
\end{lemma}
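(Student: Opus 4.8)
The plan is to follow the dyadic-partition proof of \cref{T:SSLst}, adjusting for the two features forced by the shift. Fix $0\le S\le T$, $m\ge 2$, put $t^n_i:=S+i2^{-n}(T-S)$ for $i=0,\dots,2^n$, and define the shifted Riemann sums
\[
A^{(n)}:=\sum_{i=1}^{2^n-1}A_{t^n_i,t^n_{i+1}},\qquad n\in\Z_+ .
\]
Each pair $(t^n_i,t^n_{i+1})$ with $i\ge 1$ lies in $\ms{[S,T]}$, since $t^n_i-(t^n_{i+1}-t^n_i)=t^n_{i-1}\ge S$, so $A^{(n)}$ is well defined; note also $A^{(0)}=0$ (empty sum), which is why no $\|A_{S,T}\|$–term occurs in \eqref{s-est:ssl1}. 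By hypothesis \eqref{s-limpart} (restricted to $n=2^k$) the sequence $A^{(n)}$ converges in probability to $\A_T-\A_S$, so by Fatou's lemma along an a.s.–convergent subsequence it suffices to prove $\sum_{n\ge0}\|A^{(n+1)}-A^{(n)}\|_{L_m(\Omega)}\le C\Gamma_1|T-S|^{\gamma_1}+C\Gamma_2|T-S|^{\gamma_2}$.

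Refining level $n$ to level $n+1$, each block $[t^n_i,t^n_{i+1}]$ ($i\ge1$) is split at its midpoint $t^{n+1}_{2i+1}$, and an extra leftmost block $[t^{n+1}_1,t^{n+1}_2]$ appears, giving
\[
A^{(n+1)}-A^{(n)}=A_{t^{n+1}_1,t^{n+1}_2}-\sum_{i=1}^{2^n-1}\delta A_{t^n_i,\,t^{n+1}_{2i+1},\,t^n_{i+1}}.
\]
Since $(t^{n+1}_1,t^{n+1}_2)\in\ms{[S,T]}$, \eqref{s-con:s1} bounds the boundary term by $\Gamma_1(2^{-(n+1)}(T-S))^{\gamma_1}$, which is summable as $\gamma_1>\tfrac12$. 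In the main sum, writing the shifted point $t^n_i-(t^n_{i+1}-t^n_i)=t^n_{i-1}$, I will decompose
\[
\delta A_{t^n_i,\cdot,t^n_{i+1}}=\bigl(\delta A_{t^n_i,\cdot,t^n_{i+1}}-\E^{t^n_{i-1}}\delta A_{t^n_i,\cdot,t^n_{i+1}}\bigr)+\E^{t^n_{i-1}}\delta A_{t^n_i,\cdot,t^n_{i+1}} .
\]
The terms $\E^{t^n_{i-1}}\delta A_{t^n_i,\cdot,t^n_{i+1}}$ are handled directly by \eqref{s-con:s2}: each is $\le\Gamma_2(2^{-n}(T-S))^{\gamma_2}$, and with at most $2^n$ of them the total is $\le\Gamma_2 2^{-n(\gamma_2-1)}(T-S)^{\gamma_2}$, summable since $\gamma_2>1$. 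For the centered terms, bounding $\delta A_{s,(s+t)/2,t}=A_{s,t}-A_{s,(s+t)/2}-A_{(s+t)/2,t}$ through \eqref{s-con:s1} (all three pairs lie in $\ms{[S,T]}$ whenever $(s,t)$ does) together with the elementary inequality $\|Y-\E[Y\mid\mathcal G]\|_{L_m(\Omega)}\le 2\|Y\|_{L_m(\Omega)}$ gives $\|\delta A_{t^n_i,\cdot,t^n_{i+1}}-\E^{t^n_{i-1}}\delta A_{t^n_i,\cdot,t^n_{i+1}}\|_{L_m(\Omega)}\le C\Gamma_1(2^{-n}(T-S))^{\gamma_1}$.

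The step I expect to require the most care is recovering a martingale structure. Because conditioning occurs one block to the \emph{left} of $[t^n_i,t^n_{i+1}]$ rather than at $t^n_i$, the centered terms $\{\delta A_{t^n_i,\cdot,t^n_{i+1}}-\E^{t^n_{i-1}}\delta A_{t^n_i,\cdot,t^n_{i+1}}\}_{i=1}^{2^n-1}$ do \emph{not} form a martingale–difference sequence in their natural order — consecutive ones overlap. I will therefore split the sum over $i$ into its even and odd parts. In the even part the summand indexed $i=2k$ is $\F_{t^n_{2k+1}}$–measurable with $\E^{t^n_{2k-1}}$–conditional expectation equal to $0$, while the summand indexed $i=2k-2$ is $\F_{t^n_{2k-1}}$–measurable; by the tower property this makes the even summands a martingale–difference sequence relative to the filtration $(\F_{t^n_{2k+1}})_k$, and the odd part is treated identically. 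Applying the $L_m$–version \eqref{BDG} of the BDG inequality to each parity class, each having at most $2^n$ summands of $L_m$–norm $\le C\Gamma_1(2^{-n}(T-S))^{\gamma_1}$, yields a contribution $\le C\Gamma_1 2^{-n(\gamma_1-1/2)}(T-S)^{\gamma_1}$. Summing the boundary term, the two parity sums, and the $\E^{t^n_{i-1}}\delta A$–sum over $n\ge 0$, and using $\gamma_1>\tfrac12$ and $\gamma_2>1$, produces exactly \eqref{s-est:ssl1}.
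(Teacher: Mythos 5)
Your proof is correct and follows essentially the same route as the paper. The paper packages the even/odd-index martingale-difference trick into a standalone Proposition~\ref{p:cBDG} and then invokes it, whereas you reconstruct that same decomposition inline (splitting the centered terms $\delta A - \E^{t^n_{i-1}}\delta A$ by parity and applying the $L_m$-BDG inequality to each class with the shifted filtrations $(\F_{t^n_{2k+1}})_k$ and $(\F_{t^n_{2k}})_k$); the dyadic setup, the identification $A^{(0)}=0$, the handling of the extra boundary block $A_{t^{n+1}_1,t^{n+1}_2}$, the triangle-inequality bound on $\sum_i\E^{t^n_{i-1}}\delta A$, and the resulting geometric series are all identical to the paper's argument.
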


\begin{remark}
The key difference between the shifted SSL and the usual SSL is condition \eqref{s-con:s2}, which allows for additional smoothing by replacing the conditional expectation $\E^s$ with $\E^{s - (t - s)}$. We will see the benefits of this later in \cref{s:levywp}.
\end{remark}

The proof relies on the following simple bound, which is a corollary of the BDG inequality.
\begin{proposition}\label{p:cBDG}
Let $n\in\N$. Let  $(X_i)_{i=2,\hdots,n}$ be a sequence of random vectors in $\R^d$ adapted to 
the filtration $(\G_i)_{i=0,\hdots,n}$. Then for any  $m\ge2$ there exists a constant $C=C(m)>0$ independent of $n$ such that
\begin{equation}\label{matebound}
\Bigl\|\sum_{i=2}^n X_i\Bigr\|_{L_m(\Omega)}
\le C\sum_{i=2}^n\| \E[X_i|\mathcal{F}_{i-2}]\|_{L_m(\Omega)}+
C\Bigl(\sum_{i=2}^n\| X_i\|^2_{L_m(\Omega)}\Bigr)^{1/2}.
\end{equation}
\end{proposition}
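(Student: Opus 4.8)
The plan is to decompose each $X_i$ into its $\G_{i-2}$-conditional mean and a fluctuation, and then to recognize the fluctuations as two martingale difference sequences — one indexed by the even integers, one by the odd integers — to which the BDG inequality \eqref{BDG} applies.

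Concretely, I would first write
\begin{equation*}
\sum_{i=2}^n X_i=\sum_{i=2}^n \E[X_i|\G_{i-2}]+\sum_{i=2}^n\bigl(X_i-\E[X_i|\G_{i-2}]\bigr),
\end{equation*}
and bound the first sum by $\sum_{i=2}^n\|\E[X_i|\G_{i-2}]\|_{L_m(\Omega)}$ using the triangle inequality; this is exactly the first term on the right-hand side of \eqref{matebound}. For the second sum I would split according to the parity of $i$, obtaining $\sum_{i\text{ even}}(X_i-\E[X_i|\G_{i-2}])+\sum_{i\text{ odd}}(X_i-\E[X_i|\G_{i-2}])$. For the even part, setting $Y_j:=X_{2j}-\E[X_{2j}|\G_{2j-2}]$, one has that $Y_j$ is $\G_{2j}$-measurable and $\E[Y_j\,|\,\G_{2(j-1)}]=0$, so $(Y_j,\G_{2j})_j$ is a martingale difference sequence; hence \eqref{BDG} gives $\|\sum_j Y_j\|_{L_m(\Omega)}\le C(\sum_j\|Y_j\|_{L_m(\Omega)}^2)^{1/2}$, and $\|Y_j\|_{L_m(\Omega)}\le 2\|X_{2j}\|_{L_m(\Omega)}$ by \eqref{useful}. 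The odd part is treated identically with the subfiltration $(\G_{2j-1})_j$. Summing the two contributions gives $C(\sum_{i=2}^n\|X_i\|_{L_m(\Omega)}^2)^{1/2}$, which is the second term in \eqref{matebound}, with a constant $C$ depending only on $m$.

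There is no genuine difficulty here; the only point to be careful about is the parity splitting. Since the conditioning is with respect to $\G_{i-2}$ rather than $\G_{i-1}$, the full sequence of fluctuations $X_i-\E[X_i|\G_{i-2}]$ is \emph{not} a martingale difference sequence, but once one passes to the even-indexed and odd-indexed subsequences (each adapted to the appropriate every-other-step subfiltration) the martingale property is restored and the BDG inequality from \cref{p:BDG} applies verbatim.
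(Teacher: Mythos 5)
Your proof is correct and is essentially identical to the paper's own argument: both decompose $\sum X_i$ into the centered fluctuations plus the conditional means, split the fluctuations by parity so that each parity class is a martingale difference sequence (for the subfiltration $(\G_{2j})_j$ or $(\G_{2j+1})_j$), and then apply the BDG bound \eqref{BDG} together with Jensen's inequality \eqref{useful}.
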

\begin{proof}
We have
\begin{align}
\Bigl\|\sum_{i=2}^n X_i \Bigr\|_{L_m(\Omega)}&\le\Bigl\|\sum_{\substack{i\in[2,n]\\\text{ $i$ is even}}} (X_i- \E [X_i|\G_{i-2})\Bigr\|_{L_m(\Omega)}+\Bigl\|\sum_{\substack{i\in[2,n]\\\text{ $i$ is odd}}} (X_i- \E [X_i|\G_{i-2})\Bigr\|_{L_m(\Omega)}\nn\\
&\phantom{\le}+\sum_{i=2}^n\| \E[X_i|\mathcal{F}_{i-2}]\|_{L_m(\Omega)}\label{matebdg1}.
\end{align}
The sequence $(X_i- \E [X_i|\G_{i-2})_{i\in[2,n],\text{ $i$ is even}}$ is a martingale difference sequence. Therefore, using  \eqref{BDG} and Jensen's inequality, we derive
\begin{align*}
\Bigl\|\sum_{\substack{i\in[2,n]\\\text{ $i$ is even}}} (X_i- \E [X_i|\G_{i-2}])\Bigr\|^2_{L_m(\Omega)}&\le 
C\sum_{\substack{i\in[2,n]\\\text{ $i$ is even}}} \|X_i- \E [X_i|\G_{i-2}]\|^2_{L_m(\Omega)}\\
&\le \
C\sum_{i\in[2,n]} (\|X_i\|^2_{L_m(\Omega)}+ \|\E [X_i|\G_{i-2}]\|^2_{L_m(\Omega)})\\
&\le 
C\sum_{i\in[2,n]} \|X_i\|^2_{L_m(\Omega)}
\end{align*}
for $C=C(m)$. Clearly, the same bound holds for the second term in the right-hand side of \eqref{matebdg1}. Substituting this into \eqref{matebdg1}, we get the desired inequality \eqref{matebound}.
\end{proof}

\begin{proof}[Proof of \cref{lem:shiftedmodifiedSSL}]
Let us modify appropriately the proof of the SSL. Let $m\ge2$. Fix $0\le S\le T$ and consider the dyadic partition $t^n_i=S+\frac{i}{2^{n}}(T-S)$, $i=0, 1,\hdots, 2^{n}$, $n\in\Z_+$, of the interval $[S,T]$. Put 
\begin{equation*}
A^{(n)}:=\sum_{i=1}^{2^n-1} A_{t^n_i,t^n_{i+1}},\,n\in\N.
\end{equation*}
Note that the very first element, $A_{t_0^n,t_1^n}$ is not included in the sum as the pair $(t_0^n,t_1^n)=(S,S+2^{-n})$ does not belong to $\ms{[S,T]}$. On the other hand, if $i\in[1,2^n-1]$ an $i$ is an integer, that $(t_i^n,t_{i+1}^n)\in \ms{[S,T]}$ and thus $A^{(n)}$ is well-defined.

As in the proof of \cref{T:SSLst}  we deduce that by Fatou's lemma and condition~\eqref{s-limpart}
\begin{equation}\label{s-whatwehaveSSL}
\|\A_T-\A_S\|_{L_m(\Omega)}\le \lim_{n\to\infty}\| A^{(n)}\|_{L_m(\Omega)}\le \|A_{S,T}\|_{L_m(\Omega)}+ \sum_{n=0}^\infty\| A^{(n+1)}-A^{(n)}\|_{L_m(\Omega)}.
\end{equation}
For $n\in\N$ we apply bound \eqref{matebound} with $2^n$ in place of $n$, $X_i:=\delta A_{t^n_{i-1},t^{n+1}_{2i-1},t^n_{i}}$, $\G_i:=\F_{t^n_{i}}$. Using assumptions \eqref{s-con:s1} and \eqref{s-con:s2}, we derive 
\begin{align}\label{threesplit}
&\|A^{(n+1)}-A^{(n)}\|_{L_m(\Omega)}\nn\\
&\quad \le\Bigl\|\sum_{i=2}^{2^n} \delta A_{t^n_{i-1},t^{n+1}_{2i-1},t^n_{i}}\Bigr\|_{L_m(\Omega)}+\|A_{t^{n+1}_1,t^{n+1}_2}\|_{L_m(\Omega)}\nn\\
&\quad\le C\sum_{i=2}^{2^n} \|\E^{t^n_{i-2}}\delta A_{t^n_{i-1},t^{n+1}_{2i-1},t^n_{i}}\|_{L_m(\Omega)}+C\Bigl(\sum_{i=2}^{2^n} \|\delta A_{t^n_{i-1},t^{n+1}_{2i-1},t^n_{i}}\|^2_{L_m(\Omega)}\Bigr)^{1/2}+\|A_{t^{n+1}_1,t^{n+1}_2}\|_{L_m(\Omega)}\nn\\
&\quad\le C \Gamma_2 2^{-n(\gamma_2-1)}|t-s|^{\gamma_2}+ C \Gamma_1 2^{-n(\gamma_1-\frac12)}|t-s|^{\gamma_1}+\Gamma_1 2^{-n\gamma_1}|t-s|^{\gamma_1},
\end{align}
where $C=C(d,m)>0$. To get the first term in the last inequality we used that, by construction, 
${t^n_{i-1}-(t^n_{i}-t^n_{i-1})}=t^n_{i-2}$
and therefore bound \eqref{s-con:s2} is applicable. To get the second term in the last inequality, we used \eqref{s-con:s1} and a simple inequality  
\begin{equation*}
\|\delta A_{s,u,t}\|_{L_m(\Omega)}\le \| A_{s,t}\|_{L_m(\Omega)}+
\|A_{s,u}\|_{L_m(\Omega)}+\| A_{u,t}\|_{L_m(\Omega)}\le 3 \Gamma_1 |t-s|^{\gamma_1}, \,\,(s,u,t)\in\Delta^3_{[S,T]}.
\end{equation*}
Inequality \eqref{threesplit} is where the key improvement of the shifted SSL over the usual SSL occurs. The updated BDG-type bound \eqref{matebound} allows us to insert additional smoothing and replace $\E^{t^n_{i-1}}[\dots]$, which appeared in the proof of the SSL, with $\E^{t^n_{i-2}}[\dots]$ at no cost.

Summing up inequalities \eqref{threesplit} over $n$ and substituting them back into \eqref{s-whatwehaveSSL}, we get 
\begin{equation*}
\|\A_T-\A_S\|_{L_m(\Omega)}\le C \Gamma_2 |T-S|^{\gamma_2}+C\Gamma_1|T-S|^{\gamma_1}
\end{equation*}
for $C=C(\gamma_1,\gamma_2,d,m)>0$.  Here we used once again using \eqref{s-con:s1} to bound $\|A_{S,T}\|_{L_m(\Omega)}$ in \eqref{s-whatwehaveSSL}. This shows \eqref{s-est:ssl1}.
\end{proof}

\subsection{Strong well-posedness for SDEs with L\'evy noise}\label{s:levywp}

Now, equipped with the tools developed in the previous two subsections, we are ready to establish the well-posedness of SDEs with irregular drift driven by an $\alpha$-stable process, that is, \cref{t:VKlevy}.
The idea is for $0\le S\le T$ to bound moments of
\begin{equation*}
\A_t:=\int_0^t b(L_r+\phi_r)-b(L_r+\psi_r)\,dr,\quad t\in[S,T],
\end{equation*}
where $\phi$, $\psi$ are  relatively regular perturbations, in terms of $\|\phi-\psi\|$, that is, to extend \cref{e:step1uni} to the L\'evy case. To achieve this, we change the germ, replace some of the seminorms used in \cref{e:step1uni} by the conditional seminorms introduced in \cref{sec:44}, and we replace SSL by a shifted SSL. This refined approach yields \cref{t:VKlevy} in the full regime  $b\in\C^\gamma$, $\gamma>1-\frac\alpha2$, recall condition \eqref{levycond}.

Let us now discuss what to take as the germ in the shifted SSL to approximate the process $\A$ introduced above. A natural first idea is to proceed as in the proof of \cref{e:step1uni} and set
\begin{equation*}
A_{s.t}:=\E^s\int_s^t b(L_r+\phi_s)-b(L_r+\psi_s)\,dr,\quad (s,t)\in\ms{[S,T]}.
\end{equation*}
However, if we now recall the discussion in \cref{sec:44}, in order to capture the fact that the stochastic regularity of $\phi$ exceeds $1$ (see \cref{c:413}), we need a better approximation of $\phi_r$ than simply $\phi_s$. At the same time, this approximation must be $\F_s$-measurable, since we require access to the conditional law $\law(L_r+\text{approximation}|\F_s)$. The best candidate is $\E^s\phi_r$. Thus, the second idea would be to set
\begin{equation*}
	A_{s.t}:=\E^s\int_s^t b(L_r+\E^s\phi_r)-b(L_r+\E^s\psi_r)\,dr,\quad (s,t)\in\ms{[S,T]}.
\end{equation*}
The problem now is that, to make all the integrals in the calculation of $\E^s \delta A_{s,u,t}$ finite, we need some extra smoothing, see the discussion in \cref{r:vd} and at the beginning of \cref{sec:44}. Therefore, we introduce our final choice of the germ:
\begin{equation}\label{astfinal}
	A_{s.t}:=\E^{s-(t-s)}\int_s^t b(L_r+\E^{s-(t-s)}\phi_r)-b(L_r+\E^{s-(t-s)}\psi_r)\,dr,\quad (s,t)\in\ms{[S,T]}.
\end{equation}
Note that we have also changed our approximation to $\E^{s-(t-s)}\phi_r$ in order to keep access to
$\law(L_r+\E^{s-(t-s)}\phi_r|\F_{s-(t-s)})$.

Now we are ready to prove the following key bound. Recall the definition of the seminorm  $\dnew{\cdot}{\tau}{m}{[S,T]}$  in \eqref{newnorm}.
\begin{lemma}\label{L:32}
Let $m\ge2$, $\alpha\in(1,2)$, $\gamma\in(1-\frac\alpha2,1]$, $\tau\in(0,1]$, $\theta>0$. Let $b\in\C^\gamma(\R^d,\R)$. Let $\psi,\psi\colon\Omega\times[0,1]\to\R$ be bounded measurable functions adapted to $(\F_t)_{t\in[0,1]}$. Assume that 
\begin{equation}\label{l-gammatau}
\tau+\frac{\gamma-1}{\alpha}>0,\quad \theta+\frac{\gamma-2}{\alpha}>0.
\end{equation}
Suppose further that
there exists a constant $\Gamma_0\ge0$ such that for any $0\le s\le t\le1$
\begin{equation}\label{l-gamma0cond}
\E^s|\phi_t-\E^s\phi_t|+\E^s|\psi_t-\E^s\psi_t|\le \Gamma_0|t-s|^\theta.
\end{equation}
Then  there exists a constant $C=C(\alpha,\gamma,\tau,d,m)>0$ such that 
for any  $(S,T)\in\Delta_{[0,1]}$ one has
\begin{align}\label{l-seckey}
&\Bigl \| \int_S^T \bigl(b(L_r+\phi_r)-b(L_r+\psi_r)\bigr)\,dr \Bigr\|_{L_m(\Omega)}\nn\\
&\,\,\le C\|b\|_{\C^\gamma}(T-S)^{1+\frac{\gamma-1}\alpha}\bigl((1+\Gamma_0)\|\phi-\psi\|_{\C^0L_m([S,T])}
+ \dnew{\phi-\psi}{\tau}{m}{[S,T]} (T-S)^{\tau}\bigr).
\end{align}
\end{lemma}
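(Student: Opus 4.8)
The plan is to apply the shifted stochastic sewing lemma (\cref{lem:shiftedmodifiedSSL}) on $[S,T]$ to the process $\A_t:=\int_S^t\bigl(b(L_r+\phi_r)-b(L_r+\psi_r)\bigr)\,dr$ together with the germ $A_{s,t}$ from \eqref{astfinal}, and to remove the smoothness of $b$ only at the end by a Fatou argument, exactly as in Step~2 of the proof of \cref{e:step1uni}: first assume $b\in\C^\infty$ while keeping track only of $\|b\|_{\C^\gamma}$, and finally let $b_n:=\Pa_{1/n}b\to b$. Condition (ii) of \cref{lem:shiftedmodifiedSSL} is immediate, since $A_{s,t}$ is in fact $\F_{s-(t-s)}$-measurable. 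Condition (i), convergence of the shifted Riemann sums to $\A_T-\A_S$, is checked as at the end of Step~1 of the proof of \cref{e:step1uni}: for Lipschitz $b$ one replaces $\phi_r,\psi_r$ inside $b$ by their conditional expectations at a cost bounded by $\|b\|_{\C^1}$ times increments controlled through \eqref{l-gamma0cond}, while the residual conditional-correction terms are handled by the BDG inequality after an even/odd subsampling (as in the proof of \cref{p:cBDG}), and the two endpoint integrals vanish because $b$ is bounded.

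Next I would verify \eqref{s-con:s1}. Set $v:=s-(t-s)$. By the identity \eqref{l-1bound} together with the tower property, $\E^{v}b(L_r+\E^{v}\phi_r)=\Pa_{r-v}b(L_v+\E^{v}\phi_r)$ for $r\in[s,t]$, and likewise for $\psi$; since $r-v\in[t-s,2(t-s)]$, the first gradient estimate of \cref{l:fractional} gives $\|\Pa_{r-v}b\|_{\C^1}\le C(t-s)^{\frac{\gamma-1}{\alpha}}\|b\|_{\C^\gamma}$. Using the conditional Jensen inequality \eqref{useful} one then obtains $\|A_{s,t}\|_{L_m(\Omega)}\le C\|b\|_{\C^\gamma}\|\phi-\psi\|_{\C^0L_m([S,T])}(t-s)^{1+\frac{\gamma-1}{\alpha}}$, so \eqref{s-con:s1} holds with $\gamma_1=1+\frac{\gamma-1}{\alpha}$, and $\gamma_1>\tfrac12$ is precisely the standing hypothesis $\gamma>1-\tfrac\alpha2$. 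Already here the shift is essential: without it the relevant heat-kernel time could be as small as $r-s$.

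The technical core is \eqref{s-con:s2}, that is, bounding $\|\E^{v}\delta A_{s,u,t}\|_{L_m(\Omega)}$ for $u=(s+t)/2$, $v=s-(t-s)$. Expanding $\delta A_{s,u,t}=A_{s,t}-A_{s,u}-A_{u,t}$ and splitting the result into integrals over $[s,u]$ and $[u,t]$, the tower property and \eqref{l-1bound} reduce each integrand, for fixed $r$, to a second-order difference $F(b_1)-F(b_2)-F(b_3)+F(b_4)$, where $F=\Pa_{\rho}b$ for some $\rho$ comparable to $t-s$ (this is where the shift pays off) and the $b_i$ have the form $L_{(\cdot)}+\E^{a}\phi_r$ or $L_{(\cdot)}+\E^{a}\psi_r$ at conditioning times $a\ge v$. \cref{l:l29} splits each such difference into a ``first-order'' term $\|F\|_{\C^1}\,|(\E^{a}-\E^{b})(\phi_r-\psi_r)|$ and a ``second-order'' term $\|\nabla F\|_{\C^1}\,|(\E^{a}-\E^{b})\phi_r|\,|\E^{b}(\phi_r-\psi_r)|$. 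For the first, \cref{l:fractional} gives $\|F\|_{\C^1}\le C(t-s)^{\frac{\gamma-1}{\alpha}}\|b\|_{\C^\gamma}$, and $\|(\E^{a}-\E^{b})(\phi_r-\psi_r)\|_{L_m(\Omega)}\le C\,\dnew{\phi-\psi}{\tau}{m}{[S,T]}\,|r-v|^{\tau}$ (using $\E^{a}\E^{b}=\E^{a\wedge b}$ and \eqref{newnorm}), producing the exponent $1+\frac{\gamma-1}{\alpha}+\tau$. For the second, to avoid the moment loss of \cref{r:d} one must not split the product by H\"older's inequality: instead one conditions first on the later $\sigma$-algebra, pulls out the measurable $(\phi_r-\psi_r)$-factor, and applies the a.s.\ conditional bound \eqref{l-gamma0cond} to the remaining pure-$\phi$ factor, obtaining the exponent $1+\frac{\gamma-2}{\alpha}+\theta$ with prefactor $C\Gamma_0\|b\|_{\C^\gamma}\|\phi-\psi\|_{\C^0L_m([S,T])}$ (here $\|\nabla F\|_{\C^1}\le C(t-s)^{\frac{\gamma-2}{\alpha}}\|b\|_{\C^\gamma}$, and the otherwise non-integrable time singularity is tamed because the shift keeps all times $\gtrsim t-s$, cf.\ \cref{r:vd}). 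By \eqref{l-gammatau} both exponents exceed $1$, so \eqref{s-con:s2} holds with $\gamma_2=\min\bigl\{1+\tfrac{\gamma-1}{\alpha}+\tau,\,1+\tfrac{\gamma-2}{\alpha}+\theta\bigr\}$.

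Finally, feeding $\Gamma_1,\Gamma_2,\gamma_1,\gamma_2$ into \eqref{s-est:ssl1} and using $T-S\le1$ to absorb the $(T-S)^{1+\frac{\gamma-2}{\alpha}+\theta}$ factor into $(T-S)^{1+\frac{\gamma-1}{\alpha}}$ yields \eqref{l-seckey} for smooth $b$; the Fatou step then gives the general case. I expect the main obstacle to be the bookkeeping in \eqref{s-con:s2}: arranging the four-point decomposition and the nested conditionings so that, on the second-order term, the a.s.\ bound \eqref{l-gamma0cond} lands on the pure-$\phi$ difference of conditional expectations while the $L_m(\Omega)$-norm lands on the $(\phi-\psi)$-factor. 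This is exactly the combination the section was built to supply, namely the conditional seminorm \eqref{newnorm} (which is what makes the a.s.\ bound \eqref{l-gamma0cond} usable, cf.\ \cref{c:413}) and the extra smoothing of the shifted SSL.
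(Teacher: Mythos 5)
Your proposal is correct and follows essentially the same route as the paper: the same germ \eqref{astfinal}, the same reduction via \eqref{l-1bound} and \cref{l:l29} into a first-order term estimated through the conditional seminorm $\dnew{\phi-\psi}{\tau}{m}{[S,T]}$ and a second-order term estimated through the a.s.\ bound \eqref{l-gamma0cond}, with the shift supplying the lower bound $\gtrsim t-s$ on the heat-kernel time that keeps the $(\cdot)^{\frac{\gamma-2}{\alpha}}$ singularity under control, and a Fatou passage at the end to remove smoothness of $b$. The only small cosmetic difference is that for \eqref{s-con:s1} you bound $\|\Pa_{r-v}b\|_{\C^1}$ uniformly by $(t-s)^{\frac{\gamma-1}{\alpha}}$, whereas the paper uses the integrable singularity $(r-s)^{\frac{\gamma-1}{\alpha}}$ (so, contrary to your aside, the shift is not strictly needed for \eqref{s-con:s1}, only for \eqref{s-con:s2}); this does not affect the final estimate.
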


\begin{remark}
We see that compared with \cref{e:step1uni}, regularity condition \eqref{gamma0cond} is now replaced by stochastic regularity condition \eqref{l-gamma0cond}.
\end{remark}
\begin{proof}
Fix $m\ge2$. $0\le S\le T\le 1$. Let us verify that all the conditions of the shifted SSL (\cref{lem:shiftedmodifiedSSL}) are satisfied for the processes 
\begin{equation*}
\A_{t}:=\int_0^t b(L_r+\phi_r)-b(L_{r}+\psi_r)\,dr,\qquad t\in[S,T].
\end{equation*}
and the germ $A_{s,t}$ defined in \eqref{astfinal}. 

Let $(s,t)\in\ms{[S,T]}$. Recall identity \eqref{l-1bound} and the definition of the semigroup $\Pa$ in \cref{s:levyint}. We get
\begin{align*}
|A_{s,t}|&\le \int_s^t |\Pa_{r-(s-(t-s))} b(L_{s-(t-s)}+\E^{s-(t-s)}\phi_r)-\Pa_{r-(s-(t-s))} b(L_{s-(t-s)}+\E^{s-(t-s)}\psi_r)|\,dr\\
	& \le\int_s^t \|\Pa_{r-(s-(t-s))} b\|_{\C^1}|\E^{s-(t-s)}(\phi_r-\psi_r)|\,dr.
\end{align*}
Thus, by \eqref{ineqlevy} and Jensen's inequality, we have
\begin{align}\label{astlevy}
	\|A_{s,t}\|_{L_m(\Omega)}& \le C\|b\|_{\C^\gamma}\int_s^t (r-s)^{\frac{\gamma-1}{\alpha}}\|\E^{s-(t-s)}(\phi_r-\psi_r)\|_{L_m(\Omega)}\,dr\nn\\
	& \le C\|b\|_{\C^\gamma} (t-s)^{1+\frac{\gamma-1}{\alpha}} \sup_{r\in[S,T]}\|\phi_r-\psi_r\|_{L_m(\Omega)}\nn\\
	& \le C \|b\|_{\C^\gamma} \|\phi-\psi\|_{\C^0L_m([S,T])}(t-s)^{1+\frac{\gamma-1}{\alpha}}
\end{align}
for $C=C(\alpha,\gamma,d)>0$. Note that by the assumption $\gamma>1-\alpha/2$, we have $1+\frac{\gamma-1}{\alpha}>1/2$ and therefore, condition \eqref{s-con:s1} is satisfied.

Now let us verify condition \eqref{s-con:s2}. As required, we take $(s,t)\in\ms{[S,T]}$, and $u:=(t+s)/2$. It will be convenient to denote $s_1:=s-(t-s)$, $s_2:=s-(u-s)$, $s_3:=s$, $s_4:=u$, $s_5:=t$. One has $s_1\le s_2\le s_3\le s_4\le s_5$, see \cref{fig:arrow-ticks}. We have 
\begin{figure}
	\centering
	\includegraphics[width=0.99\textwidth, trim=0 35 0 35, clip]{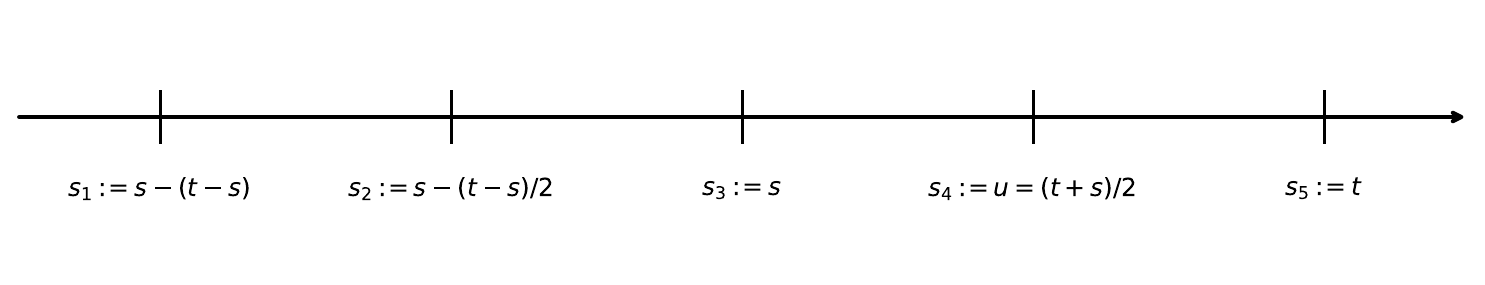}
	\caption{Time points for bounding $\E^{s-(t-s)}\delta A_{s,u,t}$}
	\label{fig:arrow-ticks}
\end{figure} 
\begin{align*}\delta A_{s,u,t}&=\delta A_{s_3,s_4,s_5}\nn\\
&=\E^{s_1}\int_{s_3}^{s_5}(b(L_r+\E^{s_1}\phi_r)-b(L_{r}+\E^{s_1}\psi_r)\,dr\\ 
&\phantom{=}-\E^{s_2}\int_{s_3}^{s_4}(b(L_r+\E^{s_2}\phi_r)-b(L_{r}+\E^{s_2}\psi_r)\\
&\phantom{=}-\E^{s_3}\int_{s_4}^{s_5}(b(L_r+\E^{s_3}\phi_r)-b(L_{r}+\E^{s_3}\psi_r)\,dr,
\end{align*}
where we used the identity $u-(t-u)=s=s_3$.
Therefore,
\begin{align}\label{step1levy}
		&\E^{s-(t-s)}\delta A_{s,u,t}\nn\\
		&\quad=\E^{s_1}\delta A_{s_3,s_4,s_5}\nn\\
		&\quad=\E^{s_1}\E^{s_2}\int_{s_3}^{s_4}(b(L_r+\E^{s_1}\phi_r)-b(L_{r}+\E^{s_1}\psi_r)-b(L_r+\E^{s_2}\phi_r)+b(L_{r}+\E^{s_2}\psi_r))\,dr\nn\\
		&\qquad+\E^{s_1}\E^{s_3} \int_{s_4}^{s_5}(b(L_r+\E^{s_1}\phi_r)-b(L_{r}+\E^{s_1}\psi_r)-b(L_r+\E^{s_3}\phi_r)+b(L_{r}+\E^{s_3}\psi_r))\,dr\nn\\
		&\quad=:I_1+I_2.
	\end{align}
We begin with the analysis of $I_1$. This is where the benefits of the shifted sewing and conditional norms become apparent. Please compare it with the similar analysis we carried out in the Gaussian case in \eqref{step1}--\eqref{cond2pert}. Using the identity \eqref{l-1bound} and \cref{l:l29}, we derive
\begin{align*}
&|I_1|\le \E^{s_1}\int_{s_3}^{s_4}\bigl|\Pa_{r-s_2}b(L_{s_2}+\E^{s_1}\phi_r)-\Pa_{r-s_2}b(L_{s_2}+\E^{s_1}\psi_r)\nn\\
&\hskip13ex-\Pa_{r-s_2}b(L_{s_2}+\E^{s_2}\phi_r)-
b(\Pa_{r-s_2}L_{s_2}+\E^{s_2}\psi_r)\bigr|\,dr\nn\\
&\quad\le \int_{s_3}^{s_4} \|\nabla\Pa_{r-s_2}b\|_{\C^1}\bigl|\E^{s_1}(\phi_r-\psi_r)\bigr|\,\E^{s_1}[|\E^{s_1}\phi_r-\E^{s_2}\phi_r|]\,dr\nn\\
&\qquad+\int_{s_3}^{s_4} \|\Pa_{r-s_2}b\|_{\C^1}\E^{s_1}|\E^{s_1}(\phi_r-\psi_r)-\E^{s_2}(\phi_r-\psi_r)|\,dr.
\end{align*}
Now we can use stochastic regularity of $\phi$ and Jensen's inequality to get
\begin{align*}
\E^{s_1}|\E^{s_1}\phi_r-\E^{s_2}\phi_r|&=\E^{s_1} |\E^{s_2}(\E^{s_1}\phi_r-\phi_r)|\le 
\E^{s_1}\E^{s_2} |\E^{s_1}\phi_r-\phi_r|=\E^{s_1}|\E^{s_1}\phi_r-\phi_r|\\
&\le \Gamma_0 |r-s_1|^\theta,
\end{align*}
recall assumption \eqref{l-gamma0cond}. Very similarly,
\begin{equation*}
\E^{s_1}|\E^{s_1}(\phi_r-\psi_r)-\E^{s_2}(\phi_r-\psi_r)|\le \E^{s_1}|\E^{s_1}(\phi_r-\psi_r)-(\phi_r-\psi_r)|.
\end{equation*}
Therefore, using heat kernel bounds \eqref{ineqlevy}, we get
\begin{align}\label{I1fin}
\|I_1\|_{L_m(\Omega)}&\le C\Gamma_0\|b\|_{\C^\gamma}\int_{s_3}^{s_4}(r-s_2)^{\frac{\gamma-2}\alpha}\bigl\|\E^{s_1}(\phi_r-\psi_r)\bigr\|_{L_m(\Omega)}(r-s_1)^\theta\,dr\nn\\
&\phantom{\le}+C\|b\|_{\C^\gamma}\int_{s_3}^{s_4} (r-s_2)^{\frac{\gamma-1}\alpha}\|\E^{s_1}(\phi_r-\psi_r)-(\phi_r-\psi_r)\|_{L_m(\Omega)}\,dr\nn\\ 
&\le C\Gamma_0\|b\|_{\C^\gamma}\|\phi-\psi\|_{\C^0 L_m([S,T])}(s_3-s_2)^{\frac{\gamma-2}\alpha}(s_4-s_3)(s_4-s_1)^{\theta}\nn\\
&\phantom{\le}+C\|b\|_{\C^\gamma}\dnew{\phi-\psi}{\tau}{m}{[S,T]}(s_3-s_2)^{\frac{\gamma-1}\alpha}(s_4-s_3)(s_4-s_1)^{\tau}\nn\\
&\le C\Gamma_0\|b\|_{\C^\gamma}\|\phi-\psi\|_{\C^0 L_m([S,T])}(t-s)^{\frac{\gamma-2}{\alpha}+1+\theta}\nn\\
&\phantom{\le}+ C\|b\|_{\C^\gamma}\dnew{\phi-\psi}{\tau}{m}{[S,T]}(t-s)^{\tau+\frac{\gamma-1}{\alpha}+1},
\end{align}
for $C=C(\alpha,\gamma,d,m)>0$, where the last inequality follows from the fact $s_3-s_2=s_4-s_3=(t-s)/2$ and $s_4-s_1=(u-s)+(t-s)=\frac32 (t-s)$. Note that the integral in \eqref{I1fin} is finite even though $\frac{\gamma-2}{\alpha}$ may be less than $-1$. This is where the shifted stochastic sewing played a crucial role; otherwise, the integrand would be $(r - s_3)^{\frac{\gamma - 2}{\alpha}}$, and the integral would diverge.

By exactly the same argument (we just need to take $s_3$ in place of $s_2$, $s_4$ in place of $s_3$ and $s_5$ in place of $s_4$), we get
the exact same bound for $I_2$, and then by \eqref{step1levy}, for $\E^{s-(t-s)}\delta A_{s,u,t}$ as well:
\begin{align}\label{levydasut}
\|\E^{s-(t-s)}\delta A_{s,u,t}\|_{L_m(\Omega)}&\le C\Gamma_0\|b\|_{\C^\gamma}\|\phi-\psi\|_{\C^0 L_m([S,T])}(t-s)^{\frac{\gamma-2}{\alpha}+1+\theta}\nn\\
	&\phantom{\le}+ C\|b\|_{\C^\gamma}\dnew{\phi-\psi}{\tau}{m}{[S,T]}(t-s)^{\tau+\frac{\gamma-1}{\alpha}+1}.
\end{align}
Since by the assumptions of the lemma $\frac{\gamma-2}{\alpha}+1+\theta>1$ and $\tau+\frac{\gamma-1}{\alpha}+1>1$, we see that condition \eqref{s-con:s2} is satisfied.

Next, it is immediate that condition (ii) of the shifted SSL is satisfied and verification of  \eqref{s-limpart} is done along the same lines as in the proof of \cref{e:step1uni} and is left as an exercise to the reader. 	

Thus, all the conditions of \cref{lem:shiftedmodifiedSSL} are satisfied. Now bound \eqref{l-seckey} follows  from~\eqref{astlevy} and \eqref{levydasut}.
\end{proof}

Now we can transform \cref{L:32} into a stability bound for solutions of the SDE~\eqref{mainSDElevy}. The idea is to bound the difference between the two solutions in the $[\cdot]_{\C^{1/2}L_2([S,T])}$ seminorm by half of itself plus the difference between the two drifts. First, we do this for the case when the interval $[S,T]$ is small enough, and then we use \cref{l:ltg} to extend the bound to the whole interval.

\begin{corollary}\label{c:stab}
Let $\alpha\in(1,2)$, $\gamma\in(1-\frac\alpha2,1]$. 
Let $b^1,b^2\in\C^\gamma(\R^d,\R^d)$, $x\in\R^d$.  Let $X^1$, $X^2$ be two solutions adapted to the same filtration $(\F_t)_{t\in[0,1]}$ to the following SDEs: 
\begin{equation*}
X_t^1=x+\int_0^t b^1(X_r^1)\,dr+L_t,\quad X_t^2=x+\int_0^t b^2(X_r^2)\,dr+L_t.
\end{equation*}	
Then there exists a constant $C=C(\alpha,\gamma,d,\max(\|b^1\|_{\C^\gamma},\|b^2\|_{\C^\gamma}))>0$
such that
\begin{equation}\label{stabineqfin}
\|X^1-X^2\|_{\C^{0}L_2([0,1])} \le C\|b^1-b^2\|_{\C^0}.
\end{equation}
\end{corollary}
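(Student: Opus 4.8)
The plan is to run the strong-uniqueness argument of \cref{s:SU} in the L\'evy setting, using \cref{L:32} in place of \cref{e:step1uni} and, crucially, replacing the a priori pathwise Lipschitz control of the drift (which was optimal for bounded $b$ but is wasteful here) by the conditional seminorm $\dnew{\cdot}{\tau}{m}{\cdot}$, whose size for a solution of \eqref{mainSDElevy} is exactly what \cref{c:413} provides. Write $\phi^i:=X^i-L$, so that $\phi^i_t=\int_0^t b^i(X^i_r)\,dr$, $X^1-X^2=\phi^1-\phi^2=:g$, and $g_0=0$. Since $b^i\in\C^\gamma$ is bounded, each $\phi^i$ is bounded, measurable, adapted, and a.s.\ Lipschitz with constant $\|b^i\|_{\C^0}$; in particular $[g]_{\C^{\frac12} L_2([0,1])}<\infty$, which is what will make the forthcoming buckling rigorous. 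For $(s,t)\in\Delta_{[S,T]}$ I would decompose
\begin{equation*}
g_t-g_s=\int_s^t\bigl(b^1(L_r+\phi^1_r)-b^1(L_r+\phi^2_r)\bigr)\,dr+\int_s^t\bigl(b^1(X^2_r)-b^2(X^2_r)\bigr)\,dr,
\end{equation*}
and bound the last integral in $L_2(\Omega)$ trivially by $\|b^1-b^2\|_{\C^0}(t-s)$.

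For the first integral I would apply \cref{L:32} coordinatewise (it is stated for scalar-valued $b$) with $b=b^1$, $\phi=\phi^1$, $\psi=\phi^2$, $m=2$, $\tau=\tfrac12$, and $\theta=1+\tfrac{\gamma}{\alpha}$. For this choice the two parameter conditions $\tau+\tfrac{\gamma-1}{\alpha}>0$ and $\theta+\tfrac{\gamma-2}{\alpha}>0$ both reduce precisely to the standing assumption $\gamma>1-\tfrac\alpha2$ of \eqref{levycond}; condition \eqref{l-gamma0cond} holds with $\Gamma_0=C\max(\|b^1\|_{\C^\gamma},\|b^2\|_{\C^\gamma})$ by \cref{c:413}; and $\dnew{g}{\frac12}{2}{[s,t]}\le 2[g]_{\C^{\frac12} L_2([S,T])}$ by \cref{c:normbound}. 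Feeding this into \eqref{l-seckey}, adding the trivial bound, dividing by $(t-s)^{\frac12}$, estimating $\|g\|_{\C^0L_2}$ through \eqref{norm2normgen}, and taking the supremum over $(s,t)\in\Delta_{[S,T]}$ (all exponents of $t-s$ that appear are positive) yields, for some $C_1=C_1(\alpha,\gamma,d,\max(\|b^1\|_{\C^\gamma},\|b^2\|_{\C^\gamma}))$,
\begin{equation*}
[g]_{\C^{\frac12} L_2([S,T])}\le C_1(T-S)^{\frac12+\frac{\gamma-1}{\alpha}}\|g_S\|_{L_2(\Omega)}+C_1(T-S)^{1+\frac{\gamma-1}{\alpha}}[g]_{\C^{\frac12} L_2([S,T])}+\|b^1-b^2\|_{\C^0}(T-S)^{\frac12}.
\end{equation*}

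Since $1+\tfrac{\gamma-1}{\alpha}>0$, I would fix $\ell>0$ with $C_1\ell^{1+\frac{\gamma-1}{\alpha}}\le\tfrac12$ and absorb the middle term for $T-S\le\ell$, obtaining $[g]_{\C^{\frac12} L_2([S,T])}\le M\|g_S\|_{L_2(\Omega)}+K$ with $M:=2C_1\ell^{\frac12+\frac{\gamma-1}{\alpha}}$ (independent of $b^1-b^2$) and $K:=2\ell^{\frac12}\|b^1-b^2\|_{\C^0}$; this uses $T-S\le 1$ for the last term and the a priori finiteness of the seminorm to justify the absorption. Because $g_0=0$, \cref{l:ltg}(ii) then upgrades this local estimate to $[g]_{\C^{\frac12} L_2([0,1])}\le CK=C\|b^1-b^2\|_{\C^0}$ with $C$ depending only on $\alpha,\gamma,d$ and $\max(\|b^1\|_{\C^\gamma},\|b^2\|_{\C^\gamma})$, and \eqref{norm2normgen} finally gives $\|X^1-X^2\|_{\C^0L_2([0,1])}=\|g\|_{\C^0L_2([0,1])}\le[g]_{\C^{\frac12} L_2([0,1])}\le C\|b^1-b^2\|_{\C^0}$, which is \eqref{stabineqfin}. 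I do not expect a serious obstacle: all the hard analysis is already packaged in \cref{L:32} (via the shifted SSL) and \cref{c:413}. The only points needing care are the coordinatewise reduction, the verification that the parameter constraints collapse to $\gamma>1-\tfrac\alpha2$, and organizing the buckling so that the prefactor $M$ entering \cref{l:ltg}(ii) is genuinely independent of $\|b^1-b^2\|_{\C^0}$.
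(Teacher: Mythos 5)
Your argument coincides with the paper's proof of \cref{c:stab}: the same decomposition of the increment into the two integrals, the same invocation of \cref{L:32} with $m=2$, $\tau=\tfrac12$, $\theta=1+\tfrac\gamma\alpha$ (verified via \cref{c:413}), the same buckling on short intervals, the same passage to $[0,1]$ by \cref{l:ltg}(ii), and the same final step via \eqref{norm2normgen} and $g_0=0$. Your only additions — spelling out that both parameter constraints collapse to $\gamma>1-\tfrac\alpha2$, noting the coordinatewise reduction and the a priori finiteness of the seminorm, and tracking the $(T-S)$-exponents more precisely — are harmless refinements of the paper's presentation, not a different route.
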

\begin{proof}
Denote $\phi:=X^1-L$, $\psi:=X^2-L$, $M:=\max(\|b^1\|_{\C^\gamma},\|b^2\|_{\C^\gamma})$.  

\textbf{Step 1: Estimate on short time intervals}. 
It follows from \cref{c:413}, that there exists a constant $C=C(\alpha,d)>0$ such that 
\begin{equation}\label{stochreg}
	\E^s|\phi_t-\E^s\phi_t|+\E^s|\psi_t-\E^s\psi_t|\le C M(t-s)^{1+\frac\gamma\alpha},\quad (s,t)\in\Delta_{[0,1]}.
\end{equation}
We apply now \cref{L:32} with $\theta= 1+\frac\gamma\alpha$, $\tau=\frac12$, $m=2$. It is easy to see, that condition \eqref{l-gammatau} holds by assumption $\gamma>1-\alpha/2$ and condition \eqref{l-gamma0cond} holds thanks to the stochastic regularity result \eqref{stochreg}. Therefore, it follows from \eqref{l-seckey} that there exists $C=C(\alpha,\gamma,d,M)>0$ such that for any  $(s,t)\in\Delta_{[0,1]}$ one has
\begin{align*}
&\|(\phi_t-\phi_s)-(\psi_t-\psi_s)\|_{L_2(\Omega)}\\
&\quad\le\Bigl \| \int_s^t \bigl(b^1(L_r+\phi_r)-b^1(L_r+\psi_r)\bigr)\,dr \Bigr\|_{L_2(\Omega)}+
 \Bigl\| \int_s^t (b^1-b^2)(L_r+\psi_r)\,dr \Bigr\|_{L_2(\Omega)}
\nn\\
&\quad\le C (t-s)^{1+\frac{\gamma-1}\alpha}\bigl(\|\phi-\psi\|_{\C^0L_2([s,t])}+ [\phi-\psi]_{\C^{\frac12}L_2([s,t])} (t-s)^{\frac12}\bigr)+(t-s)\|b^1-b^2\|_{\C^0}.
\end{align*}
Note that here we just used the rough bound  $\dnew{\cdot}{\frac12}{2}{[s,t]}\le2[\cdot]_{\C^\frac12 L_2([s,t])}$ from \cref{c:normbound}.  However, later in \cref{s:levynum}, when deriving bounds for the convergence rate of the Euler scheme, we will need a more precise bound involving $\dnew{\cdot}{\frac12}{2}{[s,t]}$. 

Let $(S,T)\in\Delta_{[0,1]}$. We divide both sides of the above inequality by $(t-s)^{1/2}$ and take supremum over all $(s,t)\in\Delta_{[S,T]}$. Note that the condition $\gamma>1-\alpha/2$ implies $1+\frac{\gamma-1}\alpha>\frac12$. Therefore, we get 
\begin{align}\label{phipsistab}
[\phi-\psi]_{\C^{\frac12}L_2([S,T])}&\le  C \|\phi-\psi\|_{\C^0L_2([S,T])}+ C(T-S)^{\frac12}[\phi-\psi]_{\C^{\frac12}L_2([S,T])}+\|b^1-b^2\|_{\C^0}\nn\\
	&\le C_0\|\phi_S\!-\!\psi_S\|_{L_2(\Omega)}+ C_0 (T\!-\!S)^{\frac12} [\phi-\psi]_{\C^{\frac12}L_2([S,T])}+\|b^1\!-b^2\|_{\C^0}.
\end{align}
where $C_0=C_0(\alpha,\gamma,d,M)>0$ and in the last inequality we used \eqref{norm2normgen}.
Take now $\ell\in(0,1]$ small enough such that
\begin{equation*}
 C_0 \ell^{\frac12}\le \frac12.
\end{equation*}
Then we derive from \eqref{phipsistab} that for any $(S,T)\in\Delta_{[0,1]}$ with $T-S\le \ell$ we have 
\begin{equation}\label{sboundstab}
[\phi-\psi]_{\C^{\frac12}L_2([S,T])}\le 2 C_0\|\phi_S-\psi_S\|_{L_2(\Omega)}+ 2\|b^1-b^2\|_{\C^0}.
\end{equation}
\textbf{Step 2: Passage to arbitrary time intervals}.
We apply \cref{l:ltg}(ii) to $f:=\phi-\psi$. 
It follows from \eqref{sboundstab} that condition \eqref{genholderm} holds and therefore  there exists $C=C(\ell,C_0)$  such that 
\begin{equation*}
	[\phi-\psi]_{\C^{\frac12} L_2([0,1])} \le C\|b^1-b^2\|_{\C^0}.
\end{equation*}
Since $\phi_0=\psi_0$ and $\phi-\psi=X^1-X^2$, we get the desired bound \eqref{stabineqfin}.
\end{proof}

The obtained stability bound implies, almost immediately, existence and uniqueness of solutions to the SDE~\eqref{mainSDElevy}. We note that, in contrast to the proof of the existence part of \cref{t:VK}, we can no longer use Girsanov's theorem to obtain weak existence ``for free'', so we have to establish strong existence directly.

\begin{proof}[Proof of \cref{t:VKlevy}]
	
\textbf{Strong uniqueness}.
Let $X,Y$ be two solutions to SDE \eqref{mainSDElevy} adapted to the filtration $(\F_t)_{t\in[0,1]}$. By \cref{c:stab}, we have 
\begin{equation*}
	\|X-Y\|_{L_2([0,1])}=0.
\end{equation*}
Therefore, $X_t=Y_t$ a.s. for all $t\in[0,1]$.

\textbf{Strong existence}.
Let $(b^n)_{n\in\N}$ be a sequence of smooth functions $\R^d\to\R^d$ converging to $b$ in supremum norm, such that $\|b^n\|_{\C^\gamma}\le 
\|b\|_{\C^\gamma}$ (for example, one can take $b_n:=P_{1/n}b$). Consider an SDE
\begin{equation}\label{nsde}
X^n_t=x+\int_0^t b^n(X^n_r)\,dr +L_t,\quad t\in[0,1].
\end{equation}
Since $b^n$ is smooth, this equation has a strong solution. Thus, $X^n$ is adapted to the completed natural filtration of the Lévy process, $\F^L_t:=\sigma(L_s,\mathcal{N}; s\le t)$, where $\mathcal{N}$ denotes all $\P$-null sets.

Let $\phi^n:=X^n-L$. By \cref{c:stab}, there exists a constant $C=C(\alpha,\gamma,d,\|b\|_{\C^\gamma})>0$ such that for any $n,m\in\N$, $t\in[0,1]$ we have 
\begin{equation*}
\|\phi^n_t-\phi^m_t\|_{L_2(\Omega)} \le C\|b^n-b^m\|_{\C^0}.
\end{equation*}
Since $\|b^n-b^m\|_{\C^0}\to0$ as $n,m\to\infty$, we see that there exists a process $\phi\colon\Omega\times[0,1]\to\R^d$ adapted to the filtration $(\F^L_t)_{t\in[0,1]}$ such that
\begin{equation}\label{limphistr}
\lim_{n\to\infty}\|\phi_t-\phi^n_t\|_{L_2(\Omega)}=0.
\end{equation}
Furthermore, by Fatou's lemma, for any $(s,t)\in\Delta_{[0,1]}$ we have 
\begin{equation*}
\|\phi_t-\phi_s\|_{L_2(\Omega)}\le 	\lim_{n\to\infty}\|\phi^n_t-\phi^n_s\|_{L_2(\Omega)}\le |t-s|\|b\|_{\C^0}.
\end{equation*}
Therefore, by the Kolmogorov continuity theorem (recall \cref{t:kolmi}), we see that the process $\phi$ has a continuous modification, which we will denote $\wt\phi$. We claim now that $X:=\wt\phi+L$ is a strong solution to SDE \eqref{mainSDElevy}.

Indeed, $X$ is c\`adl\`ag by construction and $X$ is adapted to $(\F^L_t)$. Note also that for any $t\in[0,1]$
\begin{align*}
\E \Bigl|\int_0^t(b(X_r)- b^n(X_r^n))dr\Bigr|&\le \E \Bigl|\int_0^t (b-b^n)(X^n_r)dr\Bigr|+
\E\Bigl|\int_0^t(b(X_r)- b(X_r^n))dr\Bigr|\\
&\le \|b-b^n\|_{\C^0} +\|b\|_{\C^\gamma}\int_0^t ((\E |X_r- X_r^n|^\gamma) \wedge1 )dr\to 0,
\end{align*}
as $n\to\infty$ by the dominated convergence theorem (we used that $X-X^n=\wt\phi-\phi^n$ and \eqref{limphistr}). Thus, we can pass to the limit in \eqref{nsde} as $n\to\infty$ to deduce that for any $t\in[0,1]$ we have a.s.
\begin{equation*}
X_t=x+\int_0^t b(X_r)\,dr +L_t.
\end{equation*}
Hence, $X$ is indeed a strong solution to \eqref{mainSDElevy}.
\end{proof}

\subsection{Numerics for SDEs with L\'evy noise}\label{s:levynum}

Now, after lots of  hard work, we can finally harvest the results and obtain the rate of convergence of the Euler scheme to the solution of an SDE driven by a L\'evy process.
We fix $\alpha\in(1,2)$, $x\in\R^d$, $b\in\C^\gamma(\R^d,\R^d)$, $\gamma\in[0,1]$.

Let $X$ be a solution to \eqref{mainSDElevy} and for $n\in\N$ we denote by $X^n$ the corresponding Euler  approximation:
\begin{equation*}
X^n_t:=x+\int_0^t b(X^n_{\kappa_n(r)})\,dr +L_t.
\end{equation*}
Denote
\begin{equation*}
	\phi:=X-L,\quad \phi^n=X^n-L.
\end{equation*}
As  in \cref{s:numbd}, we aim to bound the approximation error 
\begin{equation}\label{whwbl}
	[\phi-\phi^n]_{\C^{1/2}L_2([S,T])}
\end{equation}
by half of itself plus terms of the form $C n^{-rate}$. It is sufficient to do this only for small intervals $[S,T]$, since the global bound then follows from \cref{l:ltg}. As a first attempt, we use the same decomposition for the error \eqref{whwbl} as in the Brownian case, see \cref{s:numbd}. We write  for $(s,t)\in\Delta_{[0,1]}$ 
\begin{align}
(\phi_t-\phi^n_t)-(\phi_s-\phi^n_s)&=\int_s^t (b(X_r)-b(X^n_r))\,dr+\int_s^t (b(X^n_{r})-b(X^n_{\kappa_n(r)}))\,dr
	\nn\\
	&=\int_s^t (b(L_r+\phi_r)-b(L_r+\phi^n_{r}))\,dr
	\nn\\
	&\phantom{=}+\int_s^t (b(L_r+\phi^n_{r})-b(L_{\kappa_n(r)}+\phi^n_{\kappa_n(r)}))\,dr\nn\\
	&=:Err_{n,1}(s,t)+Err_{n,2}(s,t).\label{eq:edlevy}	
\end{align}
Note however, that  in the Brownian case a bound on $\int (b(W_r)-b(W_{\kappa_n(r)}))\,dr$ implied almost immediately a bound on $\int (b(W_r+\phi^n_{r})-b(W_{\kappa_n(r)}+\phi^n_{\kappa_n(r)}))\,dr$ by Girsanov's theorem. This tool is clearly not available here. Therefore, we have to change our error decomposition and split the second error term, $Err_{n,2}$, into two. We have
\begin{align}
	Err_{n,2}(s,t)&=\int_s^t (b(L_r+\phi^n_{r})-b(L_r+\phi^n_{\kappa_n(r)}))\,dr+\int_s^t (b(L_r+\phi^n_{\kappa_n(r)})-b(L_{\kappa_n(r)}+\phi^n_{\kappa_n(r)}))\,dr\nn\\
	&=:Err_{n,21}(s,t)+Err_{n,22}(s,t).\label{eq:edlevy2}	
\end{align}

Now we observe that our main integral bound for L\'evy processes, \cref{L:32}, can handle the first two error terms: $Err_{n,1}$ and $Err_{n,21}$. We only need to check that the drifts $\phi^n$ and $t\mapsto \phi^n_{\kappa_n(t)}$ have enough stochastic regularity, so that conditions of  \cref{L:32} are satisfied. This is not very difficult to check. We argue as in \cref{c:413}, but with one important modification: it is no longer true that $\E^s|L_{\kappa_n(r)}-L_s|$ can be bounded by $C|r-s|^{1/\alpha}$ whenever $s \le r$. Indeed, it may happen that $\kappa_n(r)<s \le r$, in which case $L_{\kappa_n(r)} - L_s$ is no longer independent of $\F_s$. Therefore, we have to treat this case separately.

\begin{lemma}\label{l:stochreg}
There exists a constant $C=C(\alpha,d)>0$ such that for any $n\in\N$, $(s,t)\in\Delta_{[0,1]}$
	\begin{align}\label{eq:uslevy}
		&\E^s|\phi^n_t-\E^s\phi^n_t|\le C \|b\|_{\C^\gamma}(t-s)^{1+\frac\gamma\alpha};\\
		&\E^s|\phi^n_{\kappa_n(t)}-\E^s\phi^n_{\kappa_n(t)}|\le C \|b\|_{\C^\gamma}(t-s)^{1+\frac\gamma\alpha}.		\label{eq:uslevy2}
	\end{align}
\end{lemma}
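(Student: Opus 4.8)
The plan is to mirror the argument of \cref{c:413}, the only genuinely new point being a careful treatment of the values of $r$ for which $\kappa_n(r)$ falls below $s$ --- precisely when $L_{\kappa_n(r)}-L_s$ ceases to be independent of $\F_s$, as flagged in the text preceding the lemma. Throughout I will use: the a.s.\ Lipschitz bound $|\phi^n_t-\phi^n_s|\le\|b\|_{\C^0}|t-s|$, which follows from $\frac{d}{dr}\phi^n_r=b(X^n_{\kappa_n(r)})$ and $b\in\C^0$; the H\"older bound $|b(x)-b(y)|\le\|b\|_{\C^\gamma}|x-y|^\gamma$; the moment estimate $\E|L_v|^\gamma\le C(\alpha,d)\,v^{\gamma/\alpha}$ for $v\ge0$ (legitimate because $\gamma\le1<\alpha$) together with the stationarity and independence of the increments of $L$; and the adaptedness of $X^n$, so that $b(X^n_v)$ is $\F_v$-measurable and hence $\F_s$-measurable whenever $v\le s$. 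The case $\gamma=0$ is immediate, since then $\E^s|\phi^n_t-\E^s\phi^n_t|\le2\E^s|\phi^n_t-\phi^n_s|\le2\|b\|_{\C^0}(t-s)$, so I assume $\gamma\in(0,1]$ below.

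For \eqref{eq:uslevy}, I would start from the identity $\phi^n_t-\E^s\phi^n_t=\int_s^t\bigl(b(X^n_{\kappa_n(r)})-\E^s b(X^n_{\kappa_n(r)})\bigr)\,dr$, obtained from $d\phi^n_r=b(X^n_{\kappa_n(r)})\,dr$ and conditional Fubini. The conditional triangle inequality together with \cref{lem:useful-lemma} (inequality \eqref{YZcond2} with $\G=\F_s$) then gives, for an arbitrary family of $\F_s$-measurable proxies $(Z_r)_{r\in[s,t]}$,
\begin{equation*}
\E^s|\phi^n_t-\E^s\phi^n_t|\le 2\int_s^t\E^s\bigl|b(X^n_{\kappa_n(r)})-Z_r\bigr|\,dr.
\end{equation*}
On $\{r:\kappa_n(r)\le s\}$ I take $Z_r:=b(X^n_{\kappa_n(r)})$ itself, which is $\F_s$-measurable, so that this portion of the integral simply vanishes --- this is exactly how the troublesome region is discarded. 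On $\{r:\kappa_n(r)>s\}$ I take $Z_r:=b(X^n_s)$ and estimate
\begin{equation*}
\E^s\bigl|b(X^n_{\kappa_n(r)})-b(X^n_s)\bigr|\le\|b\|_{\C^\gamma}\bigl(\E^s|L_{\kappa_n(r)}-L_s|^\gamma+\E^s|\phi^n_{\kappa_n(r)}-\phi^n_s|^\gamma\bigr).
\end{equation*}
By independence of $L_{\kappa_n(r)}-L_s$ from $\F_s$ --- which is exactly where $\kappa_n(r)>s$ enters --- and stationarity of the increments, $\E^s|L_{\kappa_n(r)}-L_s|^\gamma=\E|L_{\kappa_n(r)-s}|^\gamma\le C(\kappa_n(r)-s)^{\gamma/\alpha}\le C(r-s)^{\gamma/\alpha}$; and $\E^s|\phi^n_{\kappa_n(r)}-\phi^n_s|^\gamma\le\bigl(\|b\|_{\C^0}(\kappa_n(r)-s)\bigr)^\gamma\le\|b\|_{\C^0}^\gamma(r-s)^{\gamma/\alpha}$, using $\kappa_n(r)\le r$, $r-s\le1$ and $\gamma\ge\gamma/\alpha$. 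Integrating over $r\in[s,t]$ produces $C\|b\|_{\C^\gamma}(t-s)^{1+\gamma/\alpha}$, exactly as in \cref{c:413}, which is \eqref{eq:uslevy}. For \eqref{eq:uslevy2} I would first note that if $\kappa_n(t)\le s$ then $\phi^n_{\kappa_n(t)}$ is $\F_s$-measurable and both sides vanish; if $\kappa_n(t)>s$, then $\phi^n_{\kappa_n(t)}-\E^s\phi^n_{\kappa_n(t)}=\int_s^{\kappa_n(t)}\bigl(b(X^n_{\kappa_n(r)})-\E^s b(X^n_{\kappa_n(r)})\bigr)\,dr$ and I repeat the argument above verbatim on $[s,\kappa_n(t)]$; since $\kappa_n(t)-s\le t-s$, the resulting integral is still bounded by $C\|b\|_{\C^\gamma}(t-s)^{1+\gamma/\alpha}$.

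I do not expect a real obstacle beyond this bookkeeping. The one conceptual point --- already isolated in the discussion before the lemma --- is that the smoothing furnished by conditioning on $\F_s$ and by the heat kernel is unavailable precisely when $\kappa_n(r)\le s$, and the fix is simply to observe that in that regime the integrand is already $\F_s$-measurable and can be cancelled outright by choosing the proxy equal to it, while on the complementary set the independence needed for the stable-increment estimate is restored.
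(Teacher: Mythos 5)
Your proposal is correct and follows essentially the same strategy as the paper's proof. The paper applies \cref{lem:useful-lemma} once globally, using the single $\F_s$-measurable proxy $\phi^n_{s'}+(t-s')b(X^n_s)$ (with $s'$ the next gridpoint) and integrating from $s'$ to $t$, whereas you apply \eqref{YZcond2} pointwise in $r$ under the integral with proxies $Z_r$ chosen case-by-case; the two bookkeeping schemes discard the region $\{\kappa_n(r)\le s\}$ in equivalent ways and yield the same bound.
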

\begin{proof}
Let $(s,t)\in\Delta_{[0,1]}$. Denote by $s'$  the smallest grid point which is bigger or equal to s, that is, $s':=\frac1n\lceil ns\rceil$. If $s\le t\le s'$, then by construction $\phi_t^n$ is $\F_{s}$-measurable and thus the left-hand side of \eqref{eq:uslevy} is zero. Otherwise, if $s\le s'<t$, we rely again on \cref{lem:useful-lemma}. We note that the random variable $\phi^n_{s'}+(t-s') b(X^n_s)$ is $\F_s$ measurable and therefore 
	\begin{align*}
		\E^s|\phi^n_t-\E^s \phi_t|&\le 2\E^s |\phi^n_t-\phi^n_{s'}-(t-s')b(X^n_s)|=2 \E^s\Bigl|\int_{s'}^t (b(X^n_{\kappa_n(r)})-b(X^n_s))\,dr\Bigr|\\
		&\le 2 \|b\|_{\C^\gamma}\int_{s'}^t (\E^s |L_{\kappa_n(r)}-L_s|^\gamma+\E^s |\phi^n_{\kappa_n(r)}-\phi^n_s|^\gamma)\,dr\le C 
		\|b\|_{\C^\gamma}|t-s|^{1+\frac\gamma\alpha},
	\end{align*}
	for $C=C(\alpha,d)>0$ and we used that $|\phi^n_{\kappa_n(r)}-\phi^n_s|\le \|b\|_{\C^0}|\kappa_n(r)-s|\le |b\|_{\C^0}|r-s|$ and $L_{\kappa_n(r)}-L_s$ is independent of $\F_s$. This proves \eqref{eq:uslevy}.
	
To show \eqref{eq:uslevy2}, we just note that if $\kappa_n(t)\le s$, then the left-hand side of this inequality is $0$ and the bound is trivial. Otherwise, we just apply \eqref{eq:uslevy} with $\kappa_n(t)$ in place of $t$. We get 
\begin{equation*}
\E^s|\phi^n_{\kappa_n(t)}-\E^s\phi^n_{\kappa_n(t)}|\le C \|b\|_{\C^\gamma}(\kappa_n(t)-s)^{1+\frac\gamma\alpha}\le  C \|b\|_{\C^\gamma}(t-s)^{1+\frac\gamma\alpha}.		\qedhere
\end{equation*}
\end{proof}

Now we have all the tools to  bound $Err_{n,1}$ and $Err_{n,21}$, defined in \eqref{eq:edlevy}	 and \eqref{eq:edlevy2} by applying \cref{L:32}. 

\begin{corollary}\label{l:2terms}
Let  $\gamma\in(1-\frac\alpha2,1]$, $\eps>0$.
	There exists a constant $C={C(\|b\|_{\C^\gamma},\alpha,\gamma,\eps, d)>0}$ such that for any $n\in\N$, $(s,t)\in\Delta_{[0,1]}$
	\begin{align}\label{errterm1}
		&\|Err_{n,1}(s,t)\|_{L_2(\Omega)}\le C\|\phi-\phi^n\|_{\C^0L_2([s,t])}(t-s)^{1+\frac{\gamma-1}\alpha}
		+ C[\phi-\phi^n]_{\C^\frac12 L_2([s,t])} (t-s)^{\frac32+\frac{\gamma-1}\alpha}
		;\\
		&\|Err_{n,21}(s,t)\|_{L_2(\Omega)} \le C n^{-1}(t-s)^{1+\frac{\gamma-1}\alpha}
		+ C n^{-\frac12-(\frac\gamma\alpha\wedge\frac12)+\eps} (t-s)^{\frac32+\frac{\gamma-1}\alpha}.		\label{errterm2}
	\end{align}
\end{corollary}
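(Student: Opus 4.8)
The plan is to obtain both estimates as direct consequences of the key integral bound \cref{L:32}, used with $m=2$ and $\tau=\frac12$. Both error terms have exactly the form to which that lemma applies: $Err_{n,1}(s,t)=\int_s^t\bigl(b(L_r+\phi_r)-b(L_r+\phi^n_r)\bigr)\,dr$ is an integral of $b$ along two perturbed Lévy paths, and so is $Err_{n,21}(s,t)=\int_s^t\bigl(b(L_r+\phi^n_r)-b(L_r+\phi^n_{\kappa_n(r)})\bigr)\,dr$ once we regard $r\mapsto\phi^n_{\kappa_n(r)}$ as a second perturbation. Thus the only real work is to verify the hypotheses of \cref{L:32} for these two pairs — in particular the stochastic regularity condition \eqref{l-gamma0cond} — and, for $Err_{n,21}$, to estimate the $\C^0L_2$-norm and the conditional seminorm of the ``frozen'' perturbation $\phi^n_\cdot-\phi^n_{\kappa_n(\cdot)}$.

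For $Err_{n,1}$, I would apply \cref{L:32} with drift $b$, perturbations $\phi$ and $\phi^n$ (both bounded, continuous, adapted), $\tau=\frac12$, $\theta=1+\frac\gamma\alpha$, $m=2$. Condition \eqref{l-gammatau} holds precisely because $\gamma>1-\frac\alpha2$, and \eqref{l-gamma0cond} with $\Gamma_0=C\|b\|_{\C^\gamma}$ follows from \cref{c:413} applied to $\phi$ and from \cref{l:stochreg}, inequality \eqref{eq:uslevy}, applied to $\phi^n$. Then \eqref{l-seckey}, combined with the comparison $\dnew{\phi-\phi^n}{\frac12}{2}{[s,t]}\le 2[\phi-\phi^n]_{\C^{\frac12}L_2([s,t])}$ from \cref{c:normbound}, yields \eqref{errterm1} with $C=C(\|b\|_{\C^\gamma},\alpha,\gamma,d)$.

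For $Err_{n,21}$, I would apply \cref{L:32} with drift $b$, perturbations $\phi^n$ and $r\mapsto\phi^n_{\kappa_n(r)}$, $\tau=\frac12$, $\theta=1+\frac\gamma\alpha$, $m=2$; the stochastic regularity of the two perturbations is supplied by \eqref{eq:uslevy} and \eqref{eq:uslevy2} of \cref{l:stochreg}. Since $\phi^n_r-\phi^n_{\kappa_n(r)}=(r-\kappa_n(r))\,b(X^n_{\kappa_n(r)})$, one immediately gets $\|\phi^n_\cdot-\phi^n_{\kappa_n(\cdot)}\|_{\C^0L_2([s,t])}\le\|b\|_{\C^0}n^{-1}$. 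The substantive point is the conditional-seminorm estimate $\dnew{\phi^n_\cdot-\phi^n_{\kappa_n(\cdot)}}{\frac12}{2}{[s,t]}\le C\|b\|_{\C^\gamma}\,n^{-\frac12-(\frac\gamma\alpha\wedge\frac12)+\eps}$. To prove it I would fix $(s',t')\in\Delta_{[s,t]}$, write $a:=\kappa_n(t')$ and $f_r:=\phi^n_r-\phi^n_{\kappa_n(r)}=(r-\kappa_n(r))b(X^n_{\kappa_n(r)})$. If $a\le s'$, then $f_{t'}$ is $\F_{s'}$-measurable and $f_{t'}-\E^{s'}f_{t'}=0$. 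If $a>s'$, apply \cref{lem:useful-lemma} with the $\F_{s'}$-measurable comparison variable $Z:=(t'-a)b(X^n_{s'})$ to get $\|f_{t'}-\E^{s'}f_{t'}\|_{L_2}\le 2(t'-a)\|b(X^n_a)-b(X^n_{s'})\|_{L_2}$; then set $\gamma':=\gamma\wedge(\tfrac\alpha2-\alpha\eps_0)<\tfrac\alpha2$ (so that $L$ has finite $2\gamma'$-moments and, by the embedding $\C^\gamma\hookrightarrow\C^{\gamma'}$, $\|b\|_{\C^{\gamma'}}\le C\|b\|_{\C^\gamma}$) and estimate, splitting $X^n_a-X^n_{s'}$ into the drift increment (bounded by $\|b\|_{\C^0}(a-s')$) and $L_a-L_{s'}$, $\|b(X^n_a)-b(X^n_{s'})\|_{L_2}\le\|b\|_{\C^{\gamma'}}\bigl(\E|X^n_a-X^n_{s'}|^{2\gamma'}\bigr)^{1/2}\le C\|b\|_{\C^\gamma}(a-s')^{\gamma'/\alpha}$. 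Hence $\|f_{t'}-\E^{s'}f_{t'}\|_{L_2}\le C\|b\|_{\C^\gamma}(t'-a)(a-s')^{\gamma'/\alpha}$, and using $t'-a\le\min(n^{-1},t'-s')\le n^{-\beta}(t'-s')^{1-\beta}$ with $\beta:=\tfrac12+\tfrac{\gamma'}\alpha\le1$ together with $a-s'\le t'-s'$ gives $\|f_{t'}-\E^{s'}f_{t'}\|_{L_2}\le C\|b\|_{\C^\gamma}n^{-\frac12-\frac{\gamma'}\alpha}(t'-s')^{1/2}$. Dividing by $(t'-s')^{1/2}$, taking the supremum over $(s',t')$, and choosing $\eps_0$ small relative to $\eps$ yields the claimed bound; plugging the two norm estimates into \eqref{l-seckey} then produces \eqref{errterm2}.

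I expect the conditional-seminorm estimate for the frozen drift to be the main obstacle, since it forces one to handle simultaneously: the ``bad'' geometry $\kappa_n(t')<s'<t'$, where no heat-kernel smoothing is available and one must fall back on the best-approximation property \cref{lem:useful-lemma}; the absence of $\alpha$-th moments of the Lévy increments, which is why one works with $\gamma'<\alpha/2$ and pays an arbitrarily small $\eps$; and the precise interpolation between the mesh scale $n^{-1}$ and the time scale $(t'-s')^{1/2}$ demanded by the stochastic-sewing exponents $\gamma_1>\frac12$. Everything else is routine bookkeeping on top of \cref{L:32}, \cref{c:413}, \cref{l:stochreg}, and \cref{c:normbound}.
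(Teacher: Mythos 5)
Your proposal is correct and follows essentially the same route as the paper's proof: apply \cref{L:32} with $m=2$, $\tau=\frac12$, $\theta=1+\frac\gamma\alpha$ to the two pairs of perturbations, use \cref{c:normbound} for the first bound, and prove the conditional-seminorm estimate for the frozen drift via \cref{lem:useful-lemma}, the case split on $\kappa_n(t')\lessgtr s'$, the exponent reduction $\gamma'<\alpha/2$ forced by the stable moment constraint, and the interpolation $\min(n^{-1},t'-s')\le n^{-\beta}(t'-s')^{1-\beta}$. The paper's write-up of the conditional-seminorm bound is organized slightly differently (it first factors out $(r-\kappa_n(r))$ and applies \cref{lem:useful-lemma} to $b(X^n_{\kappa_n(r)})$ rather than to the whole increment), but the computations and the final exponent $n^{-\frac12-(\frac\gamma\alpha\wedge\frac12)+\eps}$ are identical.
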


\begin{proof}
We apply \cref{L:32} with $m=2$, $\tau=\frac12$, $\theta=1+\frac\gamma\alpha$.  We see that condition \eqref{l-gammatau} is satisfied by our standing assumption $\gamma>1-\frac\alpha2$.
	
First we take $\phi$ as before and $\psi:=\phi^n$.	Then by \cref{l:stochreg} the stochastic regularity condition \eqref{l-gamma0cond} is satisfied and we derive from \eqref{l-seckey} for any $(s,t)\in\Delta_{[0,1]}$
\begin{equation*}
\|Err_{n,1}(s,t)\|_{L_2(\Omega)}\le C\|\phi-\phi^n\|_{\C^0L_2([s,t])}(t-s)^{1+\frac{\gamma-1}\alpha}
+ C[\phi-\phi^n]_{\C^\frac12 L_2([s,t])} (t-s)^{\frac32+\frac{\gamma-1}\alpha}
\end{equation*}
for $C=C(\|b\|_{\C^\gamma},\alpha,\gamma,d)>0$, which is \eqref{errterm1}. 

Next, we take now in \cref{L:32} $\phi^n$ in place of $\phi$ and $\psi(t):=\phi^n_{\kappa_n(t)}$ and the same parameters $m, \tau,\theta$ as above. \cref{l:stochreg} guarantees that condition \eqref{l-gamma0cond} holds and, very similarly, we get from  \eqref{l-seckey} that for any $(s,t)\in\Delta_{[0,1]}$
\begin{equation}\label{err21}
	\|Err_{n,21}(s,t)\|_{L_2(\Omega)}\le C\|\phi^n-\phi^n_{\kappa_n(\cdot)}\|_{\C^0L_2([s,t])}(t-s)^{1+\frac{\gamma-1}\alpha}
	+ C\dnew{\phi^n-\phi^n_{\kappa_n(\cdot)}}{\frac12}{2}{[s,t]} (t-s)^{\frac32+\frac{\gamma-1}\alpha}
\end{equation}
for $C=C(\|b\|_{\C^\gamma},\alpha,\gamma,d)>0$. Now let us work with the norms and seminorms on the right-hand side of the above inequality. By definition, we have 
\begin{equation*}
|\phi^n_r-\phi^n_{\kappa_n(r)}|=(r-\kappa_n(r))|b(X^n_{\kappa_n(r)})|\le \frac1n\|b\|_{\C^0},\quad r\in[0,1].
\end{equation*}
Therefore 
\begin{equation}\label{norm1num}
\|\phi^n-\phi^n_{\kappa_n(\cdot)}\|_{\C^0L_2([s,t])}\le \frac1n\|b\|_{\C^0}.
\end{equation}
Let us now bound $\dnew{\phi^n-\phi^n_{\kappa_n(\cdot)}}{\frac12}{2}{[s,t]}$. This is exactly the point where it becomes clear why in \cref{L:32} we had to bound the left-hand side of \eqref{l-seckey} by a term involving the conditional seminorm $\dnew{\cdot}{\tau}{2}{[s,t]}$ rather than the cruder H\"older seminorm $[\cdot]_{\C^\tau L_2([s,t])}$, recall \cref{c:normbound}. Indeed, if 
$(r',r)\in\Delta_{[s,t]}$ are such that $\kappa_n(r)-\frac1n<r'<\kappa_n(r)=r$,  then 
\begin{equation*}
|(\phi^n_r-\phi^n_{\kappa_n(r)})-(\phi^n_{r'}-\phi^n_{\kappa_n(r')})|=(r'-\kappa_n(r'))|b(X^n_{\kappa_n(r')})|=(r'-r+\frac1n)|b(X^n_{\kappa_n(r')})|
\end{equation*}
and therefore 
\begin{equation}\label{bad}
[\phi^n-\phi^n_{\kappa_n(\cdot)}]_{\C^\tau L_2([s,t])}=\infty
\end{equation}
for any $\tau>0$. On the other hand, we have for any $(r',r)\in\Delta_{[s,t]}$
\begin{equation}\label{phidif}
\|(\phi^n_r-\phi^n_{\kappa_n(r)})-\E^{r'}(\phi^n_{r}-\phi^n_{\kappa_n(r)})\|_{L_2(\Omega)}=
(r-\kappa_n(r))\|b(X^n_{\kappa_n(r)})-\E^{r'}b(X^n_{\kappa_n(r)})\|_{L_2(\Omega)}.
\end{equation} 
If $\kappa_n(r)\le r'$ then the left-hand side of the above identity is zero. Otherwise, if 
$r'<\kappa_n(r)\le r$, then by \cref{lem:useful-lemma},
\begin{align*}
\|b(X^n_{\kappa_n(r)})-\E^{r'}b(X^n_{\kappa_n(r)})\|_{L_2(\Omega)}&\le 2
\|b(X^n_{\kappa_n(r)})-b(X^n_{r'})\|_{L_2(\Omega)}\\
&\le 2\|b\|_{\C^\gamma} \| |L_{\kappa_n(r)}-L_{r'}|^{\gamma\wedge(\frac\alpha2-\eps\alpha)}+ |\phi^n_{\kappa_n(r)}-\phi^n_{r'}|^\gamma\|_{L_2(\Omega)}\\
&\le 4\|b\|_{\C^\gamma} |\kappa_n(r)-r'|^{(\frac\gamma\alpha\wedge\frac12)-\eps}
\le 4\|b\|_{\C^\gamma} |r-r'|^{(\frac\gamma\alpha\wedge\frac12)-\eps}.
\end{align*} 
Here we used again that the function $\phi^n$ is Lipschitz. Substituting this into \eqref{phidif} and using that in this regime we have $r-\kappa_n(r)\le (r-r')\wedge \frac1n$, we get that for any $(r',r)\in\Delta_{[s,t]}$
\begin{equation*}
	\|(\phi^n_r-\phi^n_{\kappa_n(r)})-\E^{r'}(\phi^n_{r}-\phi^n_{\kappa_n(r)})\|_{L_2(\Omega)}
	\le  4\|b\|_{\C^\gamma}(r-r')^{\frac12} n^{-\frac12-(\frac\gamma\alpha\wedge\frac12)+\eps},
\end{equation*} 
which implies by the definition of the seminorm $[\cdot]_{\C^{\frac12} L_2([s,t])}$ in \eqref{newnorm}
\begin{equation*}
\dnew{\phi^n-\phi^n_{\kappa_n(\cdot)}}{\frac12}{2}{[s,t]}\le 	4\|b\|_{\C^\gamma} n^{-\frac12-(\frac\gamma\alpha\wedge\frac12)+\eps}.
\end{equation*} 
This bound is much better than \eqref{bad} and justifies the introduction of the conditional seminorm. Now we substitute this bound together with \eqref{norm1num} into \eqref{err21} and obtain the bound for the second error term \eqref{errterm2}.
\end{proof}	

It remains to bound the term $Err_{n,22}$ defined in \eqref{eq:edlevy2}. Recall that we already obtained a simpler bound in the case $\phi \equiv 0$ in \cref{t:levydif}. In fact, exactly the same bound remains valid in the general case. The idea is to combine the techniques used in the proof of \cref{t:levydif} with a shifted stochastic lemma. We present here only the final result and leave its proof as \cref{e:err3}.

\begin{lemma}\label{l:err3}
Let  $\gamma\in(1-\frac\alpha2,1]$, $\eps\in(0,\frac12)$.
There exists a constant $C={C(\|b\|_{\C^\gamma},\alpha,\gamma,\eps,d)>0}$ such that for any $n\in\N$, $(s,t)\in\Delta_{[0,1]}$ we have 
\begin{equation}\label{errterm3}
\|Err_{n,22}(s,t)\|_{L_2(\Omega)}\le 
 C n^{-\frac12-(\frac\gamma\alpha\wedge\frac12)+2\eps} (t-s)^{\frac12+\eps}.
\end{equation}
\end{lemma}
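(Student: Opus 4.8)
The plan is to combine the two ingredients announced just before the statement: the near/far splitting used in the proof of \cref{t:levydif}, together with the shifted stochastic sewing lemma \cref{lem:shiftedmodifiedSSL}, which is needed here (and was not in \cref{t:levydif}) because the perturbation $\phi^n_{\kappa_n(\cdot)}$ is now present — exactly as the shifted SSL was needed in the proof of \cref{L:32}. The key simplification compared with \cref{L:32} is that in $Err_{n,22}$ the perturbation enters the two copies of $b$ with the \emph{same} value $\phi^n_{\kappa_n(r)}$; consequently, after conditioning, the four-point increment appearing in $\delta A$ collapses to a two-point increment of the heat-kernel difference $\Pa_{r-\cdot}b-\Pa_{\kappa_n(r)-\cdot}b$, and it is precisely this $\kappa_n$-difference that produces the negative power of $n$. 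Fix $\eps\in(0,\tfrac12)$; as in Step~0 of the proof of \cref{t:levydif} we may assume without loss of generality $\gamma<\tfrac\alpha2$. For a fixed $(s,t)\in\Delta_{[0,1]}$ I would apply \cref{lem:shiftedmodifiedSSL} on $[s,t]$ to $\A_r:=\int_0^r\bigl(b(L_v+\phi^n_{\kappa_n(v)})-b(L_{\kappa_n(v)}+\phi^n_{\kappa_n(v)})\bigr)\,dv$ and the germ
\begin{equation*}
A_{s',t'}:=\E^{s'-(t'-s')}\int_{s'}^{t'}\Bigl(b\bigl(L_v+\E^{s'-(t'-s')}\phi^n_{\kappa_n(v)}\bigr)-b\bigl(L_{\kappa_n(v)}+\E^{s'-(t'-s')}\phi^n_{\kappa_n(v)}\bigr)\Bigr)\,dv,\quad (s',t')\in\ms{[s,t]}.
\end{equation*}

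To verify \eqref{s-con:s1}, I would reproduce almost verbatim the estimate of $\|A_{s',t'}\|_{L_2}$ from the proof of \cref{t:levydif}. Writing $\sigma:=s'-(t'-s')$, one distinguishes $t'-s'\le\tfrac2n$ and $t'-s'>\tfrac2n$; in the first case one uses the crude bound $\|b\|_{\C^\gamma}\||L_v-L_{\kappa_n(v)}|^\gamma\|_{L_2}\lesssim\|b\|_{\C^\gamma}n^{-\gamma/\alpha}$ (finite because $\gamma<\alpha/2$) and trades powers of $\tfrac1n$ for powers of $t'-s'$; in the second case one splits the $v$-integral at the second grid point to the right of $s'$, bounds the near part crudely by $C\|b\|_{\C^\gamma}n^{-1-\gamma/\alpha}$, and observes that on the far part $\kappa_n(v)\ge\sigma$, so that conditioning on $\F_\sigma$ turns the integrand into $(\Pa_{v-\sigma}b-\Pa_{\kappa_n(v)-\sigma}b)(L_\sigma+\E^\sigma\phi^n_{\kappa_n(v)})$ — the common perturbation merely shifts the spatial argument — which is controlled by \eqref{ineqlevy2} exactly as in \cref{t:levydif}. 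This yields condition \eqref{s-con:s1} with $\Gamma_1=C\|b\|_{\C^\gamma}n^{-\frac12-(\frac\gamma\alpha\wedge\frac12)+2\eps}$ and $\gamma_1=\tfrac12+\eps$.

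The main obstacle is condition \eqref{s-con:s2}. With $u=(s'+t')/2$ and the shifted time points $\sigma_1=s'-(t'-s')<\sigma_2=s'-(u-s')<s'<u<t'$ (as in \eqref{step1levy}), expanding $\E^{\sigma_1}\delta A_{s',u,t'}$ and conditioning reduces each of the two resulting pieces to an integral of $g_v(L_{\sigma_2}+\E^{\sigma_1}\phi^n_{\kappa_n(v)})-g_v(L_{\sigma_2}+\E^{\sigma_2}\phi^n_{\kappa_n(v)})$ with $g_v:=\Pa_{v-\sigma_2}b-\Pa_{\kappa_n(v)-\sigma_2}b$ (and $\sigma_2$ replaced by $s'$ in the second piece), which is at most $\|\nabla g_v\|_{\C^0}\,\bigl|\E^{\sigma_1}\phi^n_{\kappa_n(v)}-\E^{\sigma_2}\phi^n_{\kappa_n(v)}\bigr|$. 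The difference of conditional expectations is controlled by \cref{lem:useful-lemma} and the stochastic regularity bound \cref{l:stochreg}, giving $\lesssim\|b\|_{\C^\gamma}(t'-s')^{1+\gamma/\alpha}$, while $\|\nabla g_v\|_{\C^0}$ is estimated by interpolating, with a parameter $\beta\in[0,1]$, between $\|\nabla\Pa_{\kappa_n(v)-\sigma_2}b\|_{\C^0}\lesssim\|b\|_{\C^\gamma}(\kappa_n(v)-\sigma_2)^{(\gamma-1)/\alpha}$ and $\|\nabla(\Pa_{v-\sigma_2}-\Pa_{\kappa_n(v)-\sigma_2})b\|_{\C^0}\lesssim\|b\|_{\C^\gamma}(\kappa_n(v)-\sigma_2)^{(\gamma-1)/\alpha-1-\eps}(v-\kappa_n(v))$, so that $\|\nabla g_v\|_{\C^0}\lesssim\|b\|_{\C^\gamma}(\kappa_n(v)-\sigma_2)^{(\gamma-1)/\alpha-\beta(1+\eps)}n^{-\beta}$. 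The shift is essential here: after conditioning on $\F_{\sigma_1}$ the denominator is $\kappa_n(v)-\sigma_2\gtrsim t'-s'$ for $v\in[s',u]$ (when $t'-s'\gtrsim\tfrac1n$), whereas without the shift one would meet the non-integrable singularity $(v-s')^{(\gamma-2)/\alpha}$, cf.\ \cref{r:vd}. Integrating in $v$ over $[s',u]$ gives $\|\E^{\sigma_1}\delta A_{s',u,t'}\|_{L_2}\lesssim\|b\|_{\C^\gamma}n^{-\beta}(t'-s')^{(2\gamma-1)/\alpha+2-\beta(1+\eps)}$, and the choice $\beta=\tfrac12+\tfrac\gamma\alpha$ makes this $\le\Gamma_2(t'-s')^{\gamma_2}$ with $\Gamma_2=C\|b\|_{\C^\gamma}n^{-\frac12-(\frac\gamma\alpha\wedge\frac12)+2\eps}$ and $\gamma_2>1$; the last inequality $\gamma_2>1$ holds precisely because $\gamma>1-\tfrac\alpha2$, which is where our standing hypothesis on $\gamma$ gets used. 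The remaining regime $t'-s'\lesssim\tfrac1n$ is the delicate point: for all but $O(n(t-s))$ of the dyadic intervals the interval $[s',t']$ contains no grid point, $\phi^n_{\kappa_n(\cdot)}$ is then constant there, and a cancellation identical to the one in \cref{t:levydif} forces $\E^{\sigma_1}\delta A_{s',u,t'}=0$; the few remaining "bad" intervals are handled by a direct estimate inside the dyadic argument of \cref{lem:shiftedmodifiedSSL}, using $|\phi^n_{\kappa_n(v)}-\phi^n_{\kappa_n(v')}|\le\|b\|_{\C^0}/n$. Conditions (i) and (ii) of \cref{lem:shiftedmodifiedSSL} are routine — (ii) is immediate, and (i) is checked as in the proof of \cref{e:step1uni}, using that $b$ is bounded. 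Feeding $\Gamma_1,\gamma_1,\Gamma_2,\gamma_2$ into \eqref{s-est:ssl1} and using $(t-s)^{\gamma_2}\le(t-s)^{\frac12+\eps}$ for $t-s\le1$ gives \eqref{errterm3}. Only the $L_2$ bound is claimed; the $L_m$ analogue would follow by applying the John--Nirenberg inequality \cref{t:john} to $\A$, exactly as in Step~2 of the proof of \cref{t:levydif}.
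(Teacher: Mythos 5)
Your strategy matches the paper's intended approach for this lemma, which is \cref{e:err3}: the same germ, the shifted SSL, and — for condition \eqref{s-con:s2} — a bound on $\|\nabla g_v\|_{\C^0}$ with $g_v=\Pa_{v-\sigma_2}b-\Pa_{\kappa_n(v)-\sigma_2}b$ combined with the stochastic regularity of $\phi^n_{\kappa_n(\cdot)}$ from \cref{l:stochreg}. The regime $t'-s'\gtrsim\frac1n$ is handled correctly, and your interpolation reproduces the exercise's exponents; you should note, though, that the bound you invoke on $\|\nabla(\Pa_{v-\sigma_2}-\Pa_{\kappa_n(v)-\sigma_2})b\|_{\C^0}$ is not one of the three estimates in \cref{l:fractional} — it follows by writing $g_v=\Pa_{(\kappa_n(v)-\sigma_2)/2}\bigl(\Pa_{v-\sigma_2-(\kappa_n(v)-\sigma_2)/2}b-\Pa_{(\kappa_n(v)-\sigma_2)/2}b\bigr)$ and combining \eqref{ineqlevy} with \eqref{ineqlevy2}.

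Your treatment of the remaining regime $t'-s'\lesssim\frac1n$ is, however, flawed in two independent ways. First, the cancellation claim is wrong: if $[s',t']$ contains no grid point, then $\psi_r:=\phi^n_{\kappa_n(r)}$ is indeed constant in $r$, but it is still a \emph{random variable}. The three germs $A_{s',t'}$, $A_{s',u}$, $A_{u,t'}$ condition at the three distinct shift points $\sigma_1=s'-(t'-s')<\sigma_2=s'-(u-s')<s'$, so $\E^{\sigma_1}\psi_r$, $\E^{\sigma_2}\psi_r$, $\E^{s'}\psi_r$ do \emph{not} coincide in general, and the martingale-like cancellation does not occur: $\E^{\sigma_1}\delta A_{s',u,t'}\ne 0$. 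What actually makes an integrand vanish is not that $\psi$ is constant in $r$ but that $\psi_r$ be $\F_{\sigma_1}$-measurable; using the $\F_{(\kappa_n(r)-\frac1n)\vee0}$-measurability of $\psi_r$, this is a pointwise-in-$r$ condition, namely $r\in[\kappa_n(\sigma_1),\kappa_n(\sigma_1)+\frac2n)$ — precisely the observation in \cref{e:err3} part (v). Second, you cannot cite \cref{lem:shiftedmodifiedSSL} while promising to fix ``the few remaining bad intervals inside the dyadic argument'': hypotheses \eqref{s-con:s1}--\eqref{s-con:s2} must hold uniformly over $\ms{[s,t]}$ to invoke the lemma, and modifying the dyadic sum in its proof amounts to proving a different statement from scratch. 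The exercise instead establishes, for \emph{all} $(s',t')$ with $t'-s'\le\frac4n$, a uniform direct estimate $\|\E^{\sigma_1}\delta A_{s',u,t'}\|_{L_m}\le Cn^{-\frac12-(\frac\gamma\alpha\wedge\frac12)+2\eps}|t'-s'|^{1+\eps}$ (part (vi)), using the pointwise vanishing above together with bounds on the nonvanishing part of the $r$-integral — and this uniform bound is what lets the shifted SSL be applied as a black box.
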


Now we can finally prove \cref{t:levy}. We just need to combine the error bounds, substitute them into the error decompositions \eqref{eq:edlevy} and \eqref{eq:edlevy2}, and apply the tactics discussed before: first we bound $[\phi-\phi^n]_{\C^{\frac12} L_2([s,t])}$ by half of  itself and terms of the form $C n^{-rate}$ for small time intervals $t-s$, and then we use \cref{l:ltg} to pass to arbitrary time intervals.

\begin{proof}[Proof of \cref{t:levy}]
We prove the theorem only in the case $m=2$ and without the supremum over time inside the expectation. In the general case, one must consider more sophisticated norms and use the John–Nirenberg inequality, in the spirit of the proof of \cref{t:levydif}. To keep the proof relatively simple, we do not treat this extension here and refer the interested reader to \cite{BDGLevy} for the full proof.

Fix $\eps>0$ such that $\frac12+\frac{\gamma-1}\alpha>\eps$. For $(s,t)\in\Delta_{[0,1]}$, we combine \eqref{eq:edlevy}, \eqref{eq:edlevy2} and the error bounds \eqref{errterm1}, \eqref{errterm2}, \eqref{errterm3} to derive 
\begin{align*}
&\|(\phi_t-\phi^n_t)-(\phi_s-\phi^n_s)\|_{L_2(\Omega)}\\
&\quad\le  C_0
(t-s)^{\frac12+\eps}( n^{-\frac12-(\frac\gamma\alpha\wedge\frac12)+2\eps} +\|\phi-\phi^n\|_{\C^0L_2([s,t])}+[\phi-\phi^n]_{\C^\frac12 L_2([s,t])})
\end{align*}
for $C_0=C_0(\|b\|_{\C^\gamma},\alpha,\gamma,\eps, d)$. 
Let now $0\le S\le T\le 1$. Then dividing the above inequality by $(t-s)^{\frac12}$ and taking the supremum over all $s,t\in\Delta_{[S,T]}$ we get 
\begin{align*}
[\phi-\phi^n]_{\C^{\frac12}L_2([S,T])}&\le  C_0(T-S)^{\eps}( n^{-\frac12-(\frac\gamma\alpha\wedge\frac12)+2\eps} +\|\phi-\phi^n\|_{\C^0L_2([S,T])}+[\phi-\phi^n]_{\C^\frac12 L_2([S,T])})\\
&\le C_0(T-S)^{\eps}( n^{-\frac12-(\frac\gamma\alpha\wedge\frac12)+2\eps} +\|\phi_S-\phi^n_S\|_{L_2(\Omega)}+2[\phi-\phi^n]_{\C^\frac12 L_2([S,T])}).
\end{align*}
where in the last inequality we also used \eqref{norm2normgen}. Take now $\ell\in(0,1]$ small enough so that 
\begin{equation*}
2C_0\ell^{\eps}\le \frac12.
\end{equation*}	
Then we derive for any $(S,T)\in\Delta_{[0,1]}$ such that $T-S\le \ell$
\begin{equation*}
[\phi-\phi^n]_{\C^{\frac12}L_2([S,T])}\le  n^{-\frac12-(\frac\gamma\alpha\wedge\frac12)+2\eps} +\|\phi_S-\phi^n_S\|_{L_2(\Omega)}.
\end{equation*}
By \cref{l:ltg}(ii), there exists a constant $C=C(\|b\|_{\C^\gamma},\alpha,\gamma,\eps, d)$ such that 
\begin{equation*}
[\phi-\phi^n]_{\C^{\frac12}L_2([0,1])}\le  C n^{-\frac12-(\frac\gamma\alpha\wedge\frac12)+2\eps}.
\end{equation*}
Recalling that $\phi_0=\phi^n_0$, we get
\begin{equation*}
\sup_{t\in[0,1]}\|X_t-X^n_t\|_{L_2(\Omega)}=\|\phi-\phi^n\|_{\C^{0}L_2([0,1])} \le [\phi-\phi^n]_{\C^{\frac12}L_2([0,1])}\le C n^{-\frac12-(\frac\gamma\alpha\wedge\frac12)+2\eps},
\end{equation*}
which is the desired bound.
\end{proof}

\subsection{Exercises}\label{s:levyex}

\cref{t:appradf} and \cref{t:mainnumt} provide convergence rates of the Euler scheme for SDEs with Hölder-continuous drifts. While in general these rates are known to be optimal \cite{ellinger2025optimal}, finer properties of the drift, such as Besov regularity, can lead to better rates. The goal of the first group of exercises is to show that for an indicator drift in $d=1$, the correct rate is $n^{-3/4}$ rather than $n^{-1/2}$ as suggested by \cref{t:appradf}.
To establish this, we need a number of new tools, which are also of independent interest. 

The  idea is to establish an analogue of \cref{t:appradf} for Besov drifts in three steps;  step 1 and step 2 are a modification of  \cite{DGL23}. First, we prove an analogue of \eqref{nummain} when $S$ is large enough compared with $T-S$ and $m=2$. Next, we extend the result to general $S$ and $T$ using the taming singularities technique. Finally, we apply the John-Nirenberg inequality to pass to arbitrary $m\ge2$.

\begin{exercise}[Taming singularities lemma, \cite{BFG,le2021taming}]\label{e:mbb2}
	Let $(E,d)$ be a metric space, $T>0$. Suppose there exist constants $\tau,\eta \ge 0$ with $\tau>\eta$, and $\Gamma>0$, such that a function $Y\colon(0,T]\to E$ satisfies
	\begin{equation*}
	d(Y_s,Y_t) \le \Gamma s^{-\eta}(t-s)^{\tau}\quad\text{for any $0< s\le t\le T$ such that $t-s\le s$.}
\end{equation*}
Show that $Y$ is can be continuously extended at $0$ and there exists a constant $C=C(\eta,\tau,T)>0$ such that
\begin{equation}\label{rezult}
	d(Y_s,Y_t)\le C \Gamma(t-s)^{\tau- \eta}\quad\text{for any $0\le s\le t\le T$.}
\end{equation}	
\hn First, prove \eqref{rezult} for $s=0$ by splitting the interval $[0,t]$ into subintervals $[t_{i+1},t_i]$, where $t_0=t$ and $t_i \searrow 0$ is a very carefully chosen sequence with $t_{n+1}\ge t_{n}-t_{n+1}$, and apply the triangle inequality.
\end{exercise}

We need to develop basic tools to deal with Besov functions of positive regularity. We start with the definition.

\begin{definition}[see, e.g., \cite{Bogachev}] Let $p\in[1,\infty)$, $\gamma\in(0,1]$. We say that a function $f\in L_p(\R^d,\R)$ belongs to the Besov (called also Nikolskii–Besov) space $\B^\gamma_p(\R^d)$ if
\begin{equation*}
\|f\|_{\B^\gamma_p(\R^d)}:=\|f\|_{L_p(\R^d)}+\sup_{h\in(0,1]}\frac{\|f(h+\cdot)-f\|_{L_p(\R^d)}}{h^\gamma}.
\end{equation*}	
\end{definition}	
Thus, Besov spaces extend the notion of Hölder spaces to the case $p<\infty$. Let us see in which Besov spaces the indicator function lies.
\begin{exercise}\label{e:526}
Show that the indicator function $f(x):=\I_{\{x\in[0,1]\}}$, $x\in\R$, belongs to $\B^{1/p}_p(\R)$ for any $p\in[1,\infty)$.
\end{exercise}

Next, we need to provide heat kernel estimates for Besov functions, extending \cref{l:gb2}.
\begin{exercise}\label{ex:527}
Let $f\in\B^\gamma_p(\R^d,\R)$, $\gamma\in(0,1]$, $p\in[1,\infty)$, $0< s\le t$. Show that
\begin{equation*}
\|P_tf-P_s  f\|_{L_p(\R^d)}\le  C\| f\|_{\B^\gamma_p} s^{\frac\gamma2-1}(t-s),
\end{equation*} 	
for $C=C(\gamma,d,p)>0$.

\hn Argue as in the proofs of \cref{l:gb,l:gb2}. Use that by the definition of  $\B^\gamma_p$
\begin{equation*}
\Bigl\|\int_{\R^d}\Delta p_t(y) (f(\cdot-y)-f(\cdot))\,dy\Bigr\|_{L_p(\R^d)}\le\|f\|_{\B^\gamma_p}\int_{\R^d} |\Delta p_t(y)| |y|^\gamma \,dy.
\end{equation*}
\end{exercise}

\cref{ex:527} allows us to improve an important estimate in the proof of \cref{t:appradf}. We used a very crude bound there involving $\|f\|_{\C^\gamma}$:
\begin{equation*}
\|P_{t_2} f (W_s) -P_{t_1} f(W_s)\|_{L_m(\Omega)} \le \|P_{t_2} f-P_{t_1} f\|_{\C^0}\le C\|f\|_{\C^\gamma} (t_2-t_1) t_1^{\frac\gamma2-1}.
\end{equation*} 
for $0 \le t_1 \le t_2$, $s \in [0,1]$, see \eqref{seethis}. As the next exercise shows, $\|f\|_{\C^\gamma}$ can be replaced by  $\|f\|_{\B^\gamma_p}$ at the cost of introducing a singularity (of which we should not be afraid!; recall the taming singularities lemma, \cref{e:mbb2}).
\begin{exercise}\label{ex:528}
Let $f\in\B^\gamma_p(\R^d,\R)$, $\gamma\in(0,1]$, $p\in[1,\infty)$, $0< t_1 \le t_2$, $s\in(0,1]$. Show that 
\begin{equation*}
\|P_{t_2} f (W_s) -P_{t_1} f(W_s)\|_{L_p(\Omega)} \le \|P_{t_2} f-P_{t_1} f\|_{L_p(\R^d)}\|p_s\|_{L_\infty}^{1/p}\le C\|f\|_{\B^\gamma_p} (t_2-t_1) t_1^{\frac\gamma2-1}s^{-\frac{d}{2p}}.
\end{equation*} 	
for $C=C(\gamma,d,p)>0$.
\end{exercise}

\begin{exercise}\label{e:529}
In this exercise we perform Step 1 of the proof strategy discussed above and establish an analogue of \eqref{nummain} for Besov $f$ though for restricted values of $S,T$. Thus, let $\gamma\in(0,1]$, $p\in[2,\infty)$. Let $f\colon\R^d\to\R$ be a bounded measurable function, $f\in \B_p^\gamma(\R^d,\R)$, $n\in\N$, $\eps>0$.  Suppose that $0<S<T\le1$, $T-S\le S$. We would like to apply the SSL to the process 
\begin{equation*}
\A_t:=\int_0^t (f(W_r)-f(W_{\kappa_n(r)}))\,dr,\quad t\in[S,T]
\end{equation*}	
and the germ 
\begin{equation*}
	A_{s.t}:=\E^{s}\int_s^t f(W_r)-f(W_{\kappa_n(r)})\,dr,\quad (s,t)\in\Delta_{[S,T]}.
\end{equation*}
\begin{enumerate}[(i)]
\item Show that for any $(s,u,t)\in\Delta^3_{[S,T]}$, we have $\E^s\delta A_{s,u,t}=0$.
\item First, assume that $S\in(0;\frac2n]$. Then $T-S\le S\le \frac2n$.
\begin{enumerate}[label=(ii\alph*)]
\item Suppose that $\gamma\le\frac{d}p$. Show that 
\begin{equation*}
|A_{s,t}|\le 2 \|f\|_{\C^0}|t-s|\le 8\|f\|_{\C^0}|t-s|^{\frac12+\eps}n^{-\frac12-\frac\gamma2+\eps}S^{-\frac{d}{2p}},\quad (s,t)\in\Delta_{[S,T]}.
\end{equation*}

\item  Suppose that $\gamma>\frac{d}p$. Argue exactly as in \eqref{easynum} to derive for $(s,t)\in\Delta_{[S,T]}$
\begin{equation*}
\|A_{s,t}\|_{L_p(\Omega)}\le  C \| f\|_{\mathcal{C}^{\gamma-\frac{d}p}} n^{-\frac12-\frac\gamma2+\frac{d}{2p}+\eps}(t-s)^{\frac12+\eps}\le  C \| f\|_{\B^{\gamma}_p} n^{-\frac12-\frac\gamma2+\eps}(t-s)^{\frac12+\eps}S^{-\frac{d}{2p}}.
\end{equation*}

\hn Use without the proof that by the Besov embedding theorem (see, e.g., \cite[Proposition~2.39]{bahouri}) we have $\| f\|_{\mathcal{C}^{\gamma-\frac{d}p}}\le C \|f\|_{\B_p^\gamma}$ for $C=C(\gamma,d,p)$.

\end{enumerate}
\item Now we consider the case $S\ge\frac2n$. Denote  $s': =\kappa_n(s) + \frac2n$, that is $s'$ is the second gridpoint to the right of $s$. For $(s,t)\in\Delta_{[S,T]}$ we split the term $A_{s,t}$
\begin{align*}
&\|A_{s,t}\|_{L_p(\Omega)}\nn\\
&\quad\le \int_s^{s'\wedge t}\|
	  f(W_r)-f(W_{\kappa_n(r)})\|_{L_p(\Omega)}\, dr+
	\Bigl\|\int_{s'\wedge t}^t
	\E^s \big( f(W_r)-f(W_{\kappa_n(r)})\big) dr\Bigr\|_{L_2(\Omega)}\nn\\\
	&\quad =:I_1+I_2 
\end{align*}
\begin{enumerate}[label=(iii\alph*)]
\item\label{iiia} Show that for any $r\in(0,1)$
\begin{equation*}
\|
f(W_r)-f(W_{\kappa_n(r)})\|_{L_p(\Omega)}\le C\|f\|_{\B^\gamma_p} ({\kappa_n(r)})^{-\frac{d}{2p}}(r-\kappa_n(r))^{\frac\gamma2}, 
\end{equation*}
for $C=C(\gamma,d,p)>0$.

\hn Use the definition of the space $\B^\gamma_p$ and independence of $W_{\kappa_n(r)}$ and $W_r-W_{\kappa_n(r)}$.

\item Deduce from \ref{iiia} that for $C=C(\gamma,d,p)>0$
\begin{equation*}
I_1\le  C \| f\|_{\B^\gamma_p}((s'\wedge t)-s) n^{-\frac\gamma2}S^{-\frac{d}{2p}} \le C \| f\|_{\B^\gamma_p} n^{-\frac12-\frac\gamma2+\eps}(t-s)^{\frac12+\eps}S^{-\frac{d}{2p}}.
\end{equation*}

\hn Use that $\kappa_n(r)\ge S-\frac1n\ge S/2$.

\item 
Show that either $s'\wedge t=t$  and then $I_2\equiv0$, or  $s'\wedge t=s'$ and then $t\ge s'\ge s+\frac1n$ and for $C=C(\gamma,\eps,d,p)>0$
\begin{equation*}
I_2\le 	C\|f\|_{\B^\gamma_p}|t-s|^{\frac12+\eps}n^{-\frac12-\frac\gamma2+2\eps} S^{-\frac{d}{2p}}.
\end{equation*}

\hn Argue as in \eqref{four34} and use \cref{ex:528} when needed.

\item Conclude that for $C=C(\gamma,\eps,d,p)>0$
\begin{equation*}
\|A_{s,t}\|_{L_p(\Omega)}\le 	C\|f\|_{\B^\gamma_p}|t-s|^{\frac12+\eps}n^{-\frac12-\frac\gamma2+2\eps} S^{-\frac{d}{2p}},\quad (s,t)\in\Delta_{[S,T]}.
\end{equation*}
\end{enumerate}

\item\label{part4ee} Deduce from parts (i)--(iii) that all the conditions of the SSL are satisfied and therefore for any $0<S<T\le1$, $T-S\le S$ one has
\begin{equation*}
\Bigl\|\int_S^T (f(W_r)-f(W_{\kappa_n(r)}))\,dr\Bigr\|_{L_p(\Omega)}\le  C  (\| f\|_{\C^0}+\| f\|_{\B_p^\gamma})(T-S)^{\frac12+\eps}n^{-\frac12-\frac\gamma2+2\eps}S^{-\frac{d}{2p}}.
\end{equation*}
for $C=C(\gamma,\eps,d,p)>0$ independent of $S$, $T$, $n$.
\end{enumerate}
\end{exercise}

\begin{exercise}\label{e:530}
We proceed to Step 2 of the proof. Let $\gamma\in(0,1]$, $p\in[2,\infty)$, $\eps>0$, $f\colon\R^d\to\R$ be a bounded measurable function. Assume additionally that $d/p\le 1$. Use the taming singularities lemma (\cref{e:mbb2}), to derive from \cref{e:529}\ref{part4ee} that there exists a constant $C=C(\gamma,\eps,d,p)>0$ such that for any $(S,T)\in\Delta_{[0,1]}$, $n\in\N$ we have
\begin{equation*}
	\Bigl\|\int_S^T (f(W_r)-f(W_{\kappa_n(r)}))\,dr\Bigr\|_{L_p(\Omega)}\le  C  (\| f\|_{\C^0}+\| f\|_{\B_p^\gamma})(T-S)^{\frac12-\frac{d}{2p}+\eps}n^{-\frac12-\frac\gamma2+2\eps}.
\end{equation*}
\end{exercise}

\begin{exercise}\label{e:531}
Finally, we can now get the main estimate: bound on convergence rate of integral functionals of Brownian motion for Besov functions. Let $\gamma\in(0,1]$, $p\in[2,\infty)$, $\eps>0$, $d/p\le 1$, $n\in\N$, $f\colon\R^d\to\R$ be a bounded measurable function.
\begin{enumerate}[(i)]
\item\label{p1jnend} Let $s\in[0,1]$ be a gridpoint, that is,  $\kappa_n(s)=s$. Deduce from \cref{e:530} that for any $t\in[s,1]$ one has 
\begin{equation}\label{eqjnbesov}
\E^s\Bigl|\int_s^t  (f(W_r)-f(W_{\kappa_n(r)}))\,dr\Bigr|\le C  (\| f\|_{\C^0}+\| f\|_{\B_p^\gamma})(t-s)^{\frac12-\frac{d}{2p}+\eps}n^{-\frac12-\frac\gamma2+2\eps}
\end{equation}
for $C=C(\gamma,\eps,d,p)>0$ independent of $n$.

\hn Use the independence of the increments of $W$ and that $\kappa_n(r)-s=\kappa_n(r-s)$ since $s$ is a gridpoint.
\item\label{threefinal} Use part \ref{p1jnend} to show that \eqref{eqjnbesov} holds for arbitrary $(s,t)\in\Delta_{[0,1]}$.

\hn If $s$ is not a gridpoint, split the integral $\int_s^t=\int_s^{s'}+\int_{s'}^t$, where $s'=\kappa_n(s)+\frac1n$ and argue as in \eqref{wheretoargue}.
\item Use the John-Nirenberg inequality (\cref{t:john}) to deduce from part \ref{threefinal} that for any $m\ge1$
 there exists a constant $C=C(\gamma,\eps,d,p,m)>0$ such that for any $n\in\N$ one has 
\begin{equation*}
\Bigl\|\sup_{t\in[0,1]}\Bigl|\int_0^t (f(W_r)-f(W_{\kappa_n(r)}))\,dr\Bigr|\,\Bigr\|_{L_m(\Omega)}\le   C  (\| f\|_{\C^0}+\| f\|_{\B_p^\gamma})n^{-\frac12-\frac\gamma2+2\eps}
\end{equation*}
\end{enumerate}
\end{exercise}

\begin{exercise} Finally, let us give an illustration of the just-obtained result.
Let $d=1$. Show that for any $m\ge1$, $\eps>0$, there exists a constant $C=C(\eps,m)>0$ such that for any $n\in\N$ one has
\begin{equation*}
\Bigl\|\sup_{t\in[0,1]}\Bigl|\int_0^t (\I(W_r\in[0,1])-\I(W_{\kappa_n(r)}\in[0,1]))\,dr\Bigr|\,\Bigr\|_{L_m(\Omega)}\le   C  n^{-\frac34+\eps}.
\end{equation*}
\hn Use \cref{e:526}.

Thus, while \cref{t:appradf} provided the rate of convergence for the indicator functions of order $C n^{-\frac12}$, we see that more sophisticated considerations allow us to improve the rate to $C n^{-3/4}$, which is known to be optimal \cite{AltGu}.
\end{exercise}

\begin{exercise}\label{levyboundex} In this exercise we prove \cref{l:fractional}. Let $f\in \C^\gamma(\R^d,\R)$, $\gamma\in[0,1]$. 
\begin{enumerate}[(i)]
	\item Show that for any $x,y\in\R^d$ we have
	\begin{equation*}
	|\E (f(L_1+x)-f(L_1+y))|\le C \|f\|_{\C^0}|x-y|\le C \|f\|_{\C^\gamma}|x-y|.
	\end{equation*}
	You can use without proof that $|\nabla \pa_1|\in L_1(\R^d)$, see, e.g.,  \cite[Theorem~2.1(a,b)]{Knopova}.
\item Use part (i) and the scaling $\law(L_t)=\law(t^{\frac1\alpha}L_1)$, $t>0$, to derive \eqref{ineqlevy}. 
\item Use that the process $L_t$ has zero mean to show that for any $g\colon\R^d\to\R$, $x\in\R^d$, $t>0$ one has 
\begin{equation*}
P_t g(x)-g(x)=\int_{\R^d}p_t(y)\bigl(g(x+y)-g(x)-y^\t\nabla g(x)\bigr)\,dy.
\end{equation*}	
\item Let $\beta\in(1,\alpha)$. Deduce from part (iii) that there exists $C=C(\alpha,\beta,d)$ such that
for any $g\in\C^\beta(\R^d,\R)$ we have 
\begin{equation*}
\|P_t g-g\|_{\C^0}\le C t^{\frac\beta\alpha}\|g\|_{\C^\beta}
\end{equation*}	
\item Let $0 \le s \le t$. Apply part (iv) to the function $g:=P_s f$ and use \eqref{ineqlevy} to derive \eqref{ineqlevy2} up to the loss of arbitrary $\eps$ in the exponent of $(t-s)$. 
\end{enumerate}
\end{exercise}

\begin{exercise}\label{e:err3} 
In this exercise, we prove \cref{l:err3}. The main idea is to apply the shifted stochastic sewing lemma (\cref{lem:shiftedmodifiedSSL}) to the process
\begin{equation*}
\A_t:=\int_0^t (b(L_r+\phi^n_{\kappa_n(r)})-b(L_{\kappa_n(r)}+\phi^n_{\kappa_n(r)}))\,dr,\quad t\in[S,T].
\end{equation*}	
Motivated by the discussion in \cref{s:levywp}, we take as a germ 
\begin{equation*}
	A_{s.t}:=\E^{s-(t-s)}\int_s^t b(L_r+\E^{s-(t-s)}\phi^n_{\kappa_n(r)})-b(L_{\kappa_n(r)}+\E^{s-(t-s)}\phi^n_{\kappa_n(r)})\,dr,\quad (s,t)\in\ms{[S,T]}.
\end{equation*}
To simplify the notation, denote $\psi_r:=\phi^n_{\kappa_n(r)}$, $r\in[0,1]$.

\begin{enumerate}[(i)]
	\item Show that the germ satisfies
	\begin{equation*}
	\|A_{s.t}\|_{L_2(\Omega)}\le C 	 n^{-\frac12-(\frac\gamma\alpha\wedge\frac12)+2\eps}, (t-s)^{\frac12+\eps},\quad (s,t)\in\ms{[S,T]}
	\end{equation*} 
	for $C=C(\|b\|_{\C^\gamma},\alpha,\gamma,\eps,d)>0$.
	
	\hn Argue very similarly to the proof of Step 1 of \cref{t:levydif} and use \eqref{ineqlevy2}.
	
	\item Let $(s,t)\in\ms{[S,T]}$, $u=(t+s)/2$. As in the proof of \cref{L:32} denote  $s_1:=s-(t-s)$, $s_2:=s-(u-s)$, $s_3:=s$, $s_4:=u$, $s_5:=t$, recall \cref{fig:arrow-ticks}. Show that 
	\begin{align*}
	\E^{s-(t-s)}\delta A_{s,u,t}&=\E^{s_1}\E^{s_2}\int_{s_3}^{s_4} b(L_r+\E^{s_1}\psi_r)-b(L_{\kappa_n(r)}+\E^{s_1}\psi_r)
	\nn\\
	&\hspace{3cm}-b(L_r+\E^{s_2}\psi_r)+b(L_{\kappa_n(r)}+\E^{s_2}\psi_r)\,dr
	\nn\\
	&\quad+\E^{s_1}\E^{s_3}\int_{s_4}^{s_5} b(L_r+\E^{s_1}\psi_r)-b(L_{\kappa_n(r)}+\E^{s_1}\psi_r)
	\nn\\
	&\hspace{3cm}-b(L_r+\E^{s_3}\psi_r)+b(L_{\kappa_n(r)}+\E^{s_3}\psi_r)\,dr\\
	&=:I_1+I_2	
	\end{align*}
	\item\label{part3ll} Show that if $t-s\ge \frac4n$, then $\kappa_n(r)-s_2>0$ and we have 
	\begin{align*}
		I_1&=\E^{s_1}\int_{s_3}^{s_4}\big(\Pa_{r-s_2}-\Pa_{\kappa_n(r)-s_2}\big)b(L_{s_2}+\E^{s_1}\psi_r\big)
	-\big(\Pa_{r-s_2}-\Pa_{\kappa_n(r)-s_2}\big)b(L_{s_2}+\E^{s_2}\psi_r\big)\,dr.
	\end{align*}
	\item Use the identity from part \ref{part3ll} (and a similar identity for $I_2$) to derive that in case $t-s\ge4/n$ we have 
	\begin{equation*}
		\|\E^{s-(t-s)}\delta A_{s,u,t}\|_{L_m(\Omega)}\le  C 	 n^{-\frac12-(\frac\gamma\alpha\wedge\frac12)+2\eps} |t-s|^{\frac32+\frac{\gamma-1}\alpha-\eps}
	\end{equation*}
	for $C=C(\|b\|_{\C^\gamma},\alpha,\gamma,\eps,d)>0$ 
	
	\hn Use \cref{l:fractional} and stochastic regularity result \cref{l:stochreg}.
	
	\item\label{partvll}  Show that for any $r\in[0,1]$, $\psi_r$ is $\F_{(\kappa_n(r)-\frac1n)\vee0}$ measurable and therefore if $r\in[\kappa_n(s_1),\kappa_n(s_1)+\frac2n)$, then the integrand in $I_1$ is zero.
	
	\item Use part \ref{partvll} to show that in the case $t-s\le4/n$ we have 
	\begin{equation*}
		\|\E^{s-(t-s)}\delta A_{s,u,t}\|_{L_m(\Omega)}\le C 	 n^{-\frac12-(\frac\gamma\alpha\wedge\frac12)+2\eps} |t-s|^{1+\eps}.
	\end{equation*}
	\item Conclude. Show that all conditions of the shifted stochastic sewing lemma are satisfied, and therefore the bound \eqref{errterm3} holds.
\end{enumerate}

\end{exercise}



\begingroup
\footnotesize
\bibliographystyle{alpha1}
\bibliography{biblio}
\endgroup
\end{document}